\title[Small  $G$-varieties]{Small $G$-varieties}
\author{Hanspeter Kraft, Andriy Regeta, and Susanna Zimmermann}
\date{\today}
\thanks{During this work, the second and third authors were supported by the Swiss National Science Foundation. The third author received funding from Projet PEPS ``jc/jc" 2018--2019, the ANR Project FIBALGA ANR-18-CE40-0003-01 and the Project \'Etoiles Montantes of the R\'egion Pays de la Loire}
\address{\noindent Departement Mathematik und Informatik, Universit\"at Basel\newline\indent
Spiegelgasse 1, CH-4051 Basel}
\email{hanspeter.kraft@unibas.ch}
\address{\noindent Institut f\"{u}r Mathematik, Friedrich-Schiller-Universit\"{a}t Jena, \newline
\indent  Jena 07737, Germany}
\email{andriyregeta@gmail.com}
\address{\noindent Laboratoire angevin de recherche en math\'ematiques (LAREMA), CNRS,
\newline\indent2 Bd Lavoisier, 49045 Angers cedex 01}
\email{susanna.zimmermann@univ-angers.fr}
\newtheorem{thm}{Theorem}[section]
\newtheorem*{thm*}{Theorem}
\newtheorem*{conj*}{Conjecture}
\newtheorem*{mthm*}{Main Theorem}
\newtheorem*{mlem*}{Main Lemma}
\newtheorem{prop}[thm]{Proposition}
\newtheorem*{prop*}{Proposition}
\newtheorem{lem}[thm]{Lemma}
\newtheorem*{lem*}{Lemma}
\newtheorem{cor}[thm]{Corollary}
\newtheorem*{cor*}{Corollary}
\theoremstyle{definition}
\newtheorem{defn}[thm]{Definition}
\newtheorem{exa}[thm]{Example}
\newtheorem{set-up}[thm]{Set-up}
\newtheorem{rem}[thm]{Remark}
\DeclareMathOperator{\End}{End}
\DeclareMathOperator{\id}{id}
\DeclareMathOperator{\SL}{SL}
\DeclareMathOperator{\PSL}{PSL}
\DeclareMathOperator{\Sp}{Sp}
\DeclareMathOperator{\Aut}{Aut}
\DeclareMathOperator{\Hom}{Hom}
\DeclareMathOperator{\SO}{SO}
\DeclareMathOperator{\GL}{GL}
\DeclareMathOperator{\Lie}{Lie}
\DeclareMathOperator{\Norm}{Norm}
\DeclareMathOperator{\gl}{\mathfrak{gl}}
\DeclareMathOperator{\codim}{codim}
\DeclareMathOperator{\Cent}{Cent}
\DeclareMathOperator{\pr}{pr}
\DeclareMathOperator{\gr}{gr}
\DeclareMathOperator{\rk}{rank}
\newcommand{\name}[1]{\textsc{#1\/}}
\newcommand{\NN}{{\mathbb N}}
\newcommand{\ZZ}{{\mathbb Z}}
\newcommand{\PP}{{\mathbb P}}
\newcommand{\RR}{{\mathbb R}}
\newcommand{\QQ}{{\mathbb Q}}
\newcommand{\NNN}{\mathcal N}
\newcommand{\VVV}{\mathcal V}
\newcommand{\OOO}{\mathcal O}
\newcommand{\CCC}{\mathcal C}
\newcommand{\PPP}{\mathcal P}
\newcommand{\simto}{\xrightarrow{\sim}}
\newcommand{\simot}{\xleftarrow{\sim}}
\newcommand{\be}{\begin{enumerate}}
\newcommand{\ee}{\end{enumerate}}
\newcommand{\eps}{\varepsilon}
\renewcommand{\phi}{\varphi}
\newcommand{\quot}{/\!\!/}
\newcommand{\K}{\mathbb{K}}
\newcommand{\Kst}{{\K^{*}}}
\newcommand{\Kn}{{\K^{n}}}
\newcommand{\Ktt}{\K[t_{1},\ldots,t_{r}]}
\newcommand{\Kt}{\K[\mathbf{t}]}
\newcommand{\onto}{\twoheadrightarrow}
\newcommand{\into}{\hookrightarrow}
\newcommand{\ps}{\par\smallskip}
\newcommand{\pmed}{\par\medskip}
\renewcommand{\gg}{\mathfrak g}
\newcommand{\bb}{\mathfrak b}
\newcommand{\hh}{\mathfrak h}
\renewcommand{\ll}{\mathfrak l}
\newcommand{\uu}{\mathfrak u}
\newcommand{\pp}{\mathfrak p}
\newcommand{\nn}{\mathfrak n}
\newcommand{\zz}{\mathfrak z}
\newcommand{\dd}{\mathfrak d}
\newcommand{\sP}{\mathfrak{sp}}
\newcommand{\sO}{\mathfrak{so}}
\newcommand{\so}{\mathfrak{so}}
\newcommand{\sL}{\mathfrak{sl}}
\renewcommand{\ss}{\mathfrak s}
\newcommand{\mm}{\mathfrak{m}}
\newcommand{\Tone}{{\mathsf{T}_{1}}}
\newcommand{\A}{\mathsf{A}}
\newcommand{\An}{{\mathsf{A}_n}}
\newcommand{\Aone}{{\mathsf{A}_{1}}}
\newcommand{\Atwo}{{\mathsf{A}_{2}}}
\newcommand{\Athree}{{\mathsf{A}_{3}}}
\newcommand{\Bn}{{\mathsf{B}_{n}}}
\newcommand{\Btwo}{{\mathsf{B}_{2}}}
\newcommand{\Ctwo}{{\mathsf{C}_{2}}}
\newcommand{\B}{\mathsf{B}}
\newcommand{\Cn}{{\mathsf{C}_{n}}}
\newcommand{\C}{{\mathsf{C}}}
\newcommand{\D}{\mathsf{D}}
\newcommand{\Dn}{{\mathsf{D}_{n}}}
\newcommand{\Dfour}{{\mathsf{D}_{4}}}
\newcommand{\Dfive}{{\mathsf{D}_{5}}}
\newcommand{\Dsix}{{\mathsf{D}_{6}}}
\newcommand{\Dseven}{{\mathsf{D}_{7}}}
\newcommand{\Gtwo}{{\mathsf{G}_{2}}}
\newcommand{\Ffour}{{\mathsf{F}_{4}}}
\newcommand{\Esix}{{\mathsf{E}_{6}}}
\newcommand{\Eseven}{{\mathsf{E}_{7}}}
\newcommand{\Eeight}{{\mathsf{E}_{8}}}
\newcommand{\SLtwo}{\SL_{2}}
\newcommand{\SLn}{\SL_{n}}
\newcommand{\Ob}{\overline{O}}
\newcommand{\Olb}{\overline{O_{\lambda}}}
\newcommand{\Olam}{O_{\lambda}}
\newcommand{\Olamb}{\overline{\Olam}}
\newcommand{\Obn}{\overline{O}^{\,n}}
\newcommand{\Olbn}{\overline{O_{\lambda}}^{\,n}}
\def\DynkinNodeSize{1mm}
\def\DynkinArrowLength{2mm}
\tikzset{
% a diagram node
  dnode/.style={
    circle,
    inner sep=0pt,
    minimum size=\DynkinNodeSize,
    fill=black,
    draw},
  middlearrow/.style={
    decoration={markings,
      mark=at position 0.6 with
      {\draw (0:0mm) -- +(+135:\DynkinArrowLength); \draw (0:0mm) -- +(-135:\DynkinArrowLength);},
    },
    postaction={decorate}
  },
  leftrightarrow/.style={
    decoration={markings,
      mark=at position 0.999 with
      {
      \draw (0:0mm) -- +(+135:\DynkinArrowLength); \draw (0:0mm) -- +(-135:\DynkinArrowLength);
      },
      mark=at position 0.001 with
      {
      \draw (0:0mm) -- +(+45:\DynkinArrowLength); \draw (0:0mm) -- +(-45:\DynkinArrowLength);
      },
    },
    postaction={decorate}
  },
% single edge
  sedge/.style={
  },
% directed double edge
  dedge/.style={
    middlearrow,
    double distance=0.5mm,
  },
  % directed triple edge
  tedge/.style={
    middlearrow,
    double distance=1.0mm+\pgflinewidth,
    postaction={draw}, % third line
  },
  % double edge with two arrows, for \tilde{A}_1 residues
  infedge/.style={
    leftrightarrow,
    double distance=0.5mm,
  },
}
\newcommand{\DynkinAn}{
\raisebox{-3.6mm}{\scriptsize
\begin{tikzpicture}[scale=.8]%[every node/.style={dnode}]
%    \draw (-1,0) node[anchor=east]  {\strut};
    \node[dnode,fill=black,label=below:$\alpha_1$] (1) at (0,0) {};
    \node[dnode,fill=black,label=below:$\alpha_2$] (2) at (1,0) {};
    \node[dnode,fill=black,label=below:$\alpha_{n-1}$] (3) at (3,0) {};
    \node[dnode,fill=black,label=below:$\alpha_{n}$] (4) at (4,0) {};
%    \node[dnode,fill=black,label=below:$\alpha_n$] (5) at (5,0) {};
    \path (1) edge[sedge] (2)
          (2) edge[sedge,dashed] (3)
          (3) edge[sedge] (4)
%          (4) edge[sedge] (5)
          ;
\end{tikzpicture}
}}
\newcommand{\DynkinBn}{
\raisebox{-3.6mm}{\scriptsize
\begin{tikzpicture}[scale=.8]%[every node/.style={dnode}]
   % \draw (-1,0) node[anchor=east]  {$B_n$};
    \node[dnode,label=below:$\alpha_1$] (1) at (0,0) {};
    \node[dnode,label=below:$\alpha_2$] (2) at (1,0) {};
    \node[dnode,label=below:$\alpha_{n-1}$] (3) at (3,0) {};
    \node[dnode,label=below:$\alpha_{n}$] (4) at (4,0) {};
%    \node[dnode,label=below:$\alpha_n$] (5) at (5,0) {};
    \path (1) edge[sedge] (2)
          (2) edge[sedge,dashed] (3)
          (3) edge[dedge] (4)
%          (4) edge[dedge] (5)
          ;
\end{tikzpicture}
}}
\newcommand{\DynkinCn}{
\raisebox{-3.6mm}{\scriptsize
\begin{tikzpicture}[scale=.8]%[every node/.style={dnode}]
 %   \draw (-1,0) node[anchor=east]  {$C_n$};
    \node[dnode,label=below:$\alpha_1$] (1) at (0,0) {};
    \node[dnode,label=below:$\alpha_2$] (2) at (1,0) {};
    \node[dnode,label=below:$\alpha_{n-1}$] (3) at (3,0) {};
    \node[dnode,label=below:$\alpha_{n}$] (4) at (4,0) {};
%    \node[dnode,label=below:$\alpha_n$] (5) at (5,0) {};
    \path (1) edge[sedge] (2)
          (2) edge[sedge,dashed] (3)
          (4) edge[dedge] (3)
%          (5) edge[dedge] (4)
          ;
\end{tikzpicture}
}}
\newcommand{\DynkinDn}{
\raisebox{-8mm}{\scriptsize
\begin{tikzpicture}[scale=.8]%[every node/.style={dnode}]
%    \draw (-1,0) node[anchor=east]  {$D_n$};
    \node[dnode,label=below:$\alpha_1$] (1) at (0,0) {};
    \node[dnode,label=below:$\alpha_2$] (2) at (1,0) {};
%    \node[dnode,label=below:$\alpha_{n-3}$] (3) at (3,0) {};
    \node[dnode,label=below:$\alpha_{n-2}$] (3) at (3,0) {};
    \node[dnode,label=above:$\alpha_{n-1}$] (4) at (4,0.5) {};
    \node[dnode,label=below:$\alpha_n$] (5) at (4,-0.5) {};
    \path (1) edge[sedge] (2)
          (2) edge[sedge,dashed] (3)
%          (3) edge[sedge] (4)
          (3) edge[sedge] (4)
              edge[sedge] (5)
          ;
\end{tikzpicture}
}}
\newcommand{\DynkinEsix}{
\raisebox{-3.6mm}{\scriptsize
\begin{tikzpicture}[scale=.7] %[every node/.style={dnode}]
%    \draw (-1,0.5) node[anchor=east]  {$E_n$};
    \node[dnode,label=below:$\alpha_1$] (1) at (0,0) {};
    \node[dnode,label=above:$\alpha_2$] (2) at (2,1) {};
    \node[dnode,label=below:$\alpha_3$] (3) at (1,0) {};
    \node[dnode,label=below:$\alpha_4$] (4) at (2,0) {};
    \node[dnode,label=below:$\alpha_5$] (5) at (3,0) {};
    \node[dnode,label=below:$\alpha_6$] (6) at (4,0) {};
%    \node[dnode,label=below:$n$] (n) at (5,0) {};
    \path (1) edge[sedge] (3)
          (3) edge[sedge] (4)
          (4) edge[sedge] (5)
 		      edge[sedge] (2)
          (5) edge[sedge] (6);
%          (6) edge[sedge,dashed] (n);
\end{tikzpicture}
}}
\newcommand{\DynkinEseven}{
\raisebox{-3.6mm}{\scriptsize
\begin{tikzpicture}[scale=.7] %[every node/.style={dnode}]
%    \draw (-1,0.5) node[anchor=east]  {$E_n$};
    \node[dnode,label=below:$\alpha_1$] (1) at (0,0) {};
    \node[dnode,label=above:$\alpha_2$] (2) at (2,1) {};
    \node[dnode,label=below:$\alpha_3$] (3) at (1,0) {};
    \node[dnode,label=below:$\alpha_4$] (4) at (2,0) {};
    \node[dnode,label=below:$\alpha_5$] (5) at (3,0) {};
    \node[dnode,label=below:$\alpha_6$] (6) at (4,0) {};
    \node[dnode,label=below:$\alpha_7$] (7) at (5,0) {};
    \path (1) edge[sedge] (3)
          (3) edge[sedge] (4)
          (4) edge[sedge] (5)
 		      edge[sedge] (2)
          (5) edge[sedge] (6)
          (6) edge[sedge] (7);
\end{tikzpicture}
}}
\newcommand{\DynkinEeight}{
\raisebox{-3.6mm}{\scriptsize
\begin{tikzpicture}[scale=.7] %[every node/.style={dnode}]
%    \draw (-1,0.5) node[anchor=east]  {$E_n$};
    \node[dnode,label=below:$\alpha_1$] (1) at (0,0) {};
    \node[dnode,label=above:$\alpha_2$] (2) at (2,1) {};
    \node[dnode,label=below:$\alpha_3$] (3) at (1,0) {};
    \node[dnode,label=below:$\alpha_4$] (4) at (2,0) {};
    \node[dnode,label=below:$\alpha_5$] (5) at (3,0) {};
    \node[dnode,label=below:$\alpha_6$] (6) at (4,0) {};
    \node[dnode,label=below:$\alpha_7$] (7) at (5,0) {};
    \node[dnode,label=below:$\alpha_8$] (8) at (6,0) {};
    \path (1) edge[sedge] (3)
          (3) edge[sedge] (4)
          (4) edge[sedge] (5)
 		      edge[sedge] (2)
          (5) edge[sedge] (6)
          (6) edge[sedge] (7)
          (7) edge[sedge] (8);
\end{tikzpicture}
}}
\newcommand{\DynkinF}{
\raisebox{-3.6mm}{\scriptsize
\begin{tikzpicture}[scale=.7]%[every node/.style={dnode}]
%    \draw (-1,0) node[anchor=east]  {$F_4$};
    \node[dnode,label=below:$\alpha_1$] (1) at (0,0) {};
    \node[dnode,label=below:$\alpha_2$] (2) at (1,0) {};
    \node[dnode,label=below:$\alpha_3$] (3) at (2,0) {};
    \node[dnode,label=below:$\alpha_4$] (4) at (3,0) {};
    \path (1) edge[sedge] (2)
          (2) edge[dedge] (3)
          (3) edge[sedge] (4)
          ;
\end{tikzpicture}
}}
\newcommand{\DynkinG}{
\raisebox{-3.6mm}{\scriptsize
\begin{tikzpicture}[scale=.8]%[every node/.style={dnode}]
%    \draw (-1,0) node[anchor=east]  {$G_2$};
    \node[dnode,label=below:$\alpha_1$] (1) at (0,0) {};
    \node[dnode,label=below:$\alpha_2$] (2) at (1,0) {};
    \path (2) edge[tedge] (1)
          ;
\end{tikzpicture}
}}
\begin{document}

\begin{abstract}
An affine varieties with an action of a semisimple group $G$ is called ``small'' if every non-trivial $G$-orbit in $X$ is isomorphic to the orbit of a highest weight vector. Such a variety $X$ carries a canonical action of the multiplicative group $\Kst$ commuting with the $G$-action. We show that $X$ is determined by the $\Kst$-variety $X^U$ of fixed points under a maximal unipotent subgroups $U \subset G$. Moreover, if $X$ is smooth, then $X$ is a $G$-vector bundle over the quotient $X\quot G$. 

If $G$ is of type $\An$ ($n\geq2$), $\Cn$, $\Esix$, $\Eseven$ or $\Eeight$, we show that all affine $G$-varieties up to a certain dimension are small. As a consequence we have the following result. If $n \geq 5$, every smooth affine $\SL_n$-variety of dimension $<2n$ is an $\SL_n$-vector bundle over the smooth quotient $X\quot\SL_n$, with fiber isomorphic to the natural representation or its dual.
\end{abstract}

\maketitle
\setcounter{tocdepth}{2}

{\small
\tableofcontents
}

%\input Dynkin.tex

%\ps
%%%%%%%%%%%%%%%%%%%%%%%%%%%%%%%%%
\section{Introduction}
%%%%%%%%%%%%%%%%%%%%%%%%%%%%%%%%%
%
Our base field $\K$ is algebraically closed of characteristic zero. If a semisimple algebraic group $G$ acts on an affine variety $X$, then the closure of an orbit $G x$ is a union of $G$-orbits and contains a unique closed orbit. 
A very interesting special case is when the closure is  the union of the orbit $G x$ and a fixed point $x_0\in X$: $\overline{G x} = G x \cup\{x_0\}$. Such an orbit is called a {\it minimal orbit}. It turns out that this condition does not depend on the embedding of the orbit $G x$ into an affine $G$-variety. In fact, the minimal orbits are isomorphic to highest weight orbits $O_\lambda$ in irreducible representations  $V_\lambda$ of $G$.

If a $G$-variety $X$ is affine and all orbits in $X$ are either minimal or fixed points, then the variety $X$ is called {\it small}. 

The following result shows that smooth small $G$-varieties have a very special structure.

 \begin{thm}\label{mainthm.cor}
Let $G$ be a simple group and $X$ a smooth irreducible small $G$-variety. 
Then $G\simeq \SL_n$ or $G\simeq \Sp_{2n}$, and the algebraic quotient $X\to X\quot G$ is a $G$-vector bundle with fiber
\be
\item[$\bullet$] the standard representations $\K^{n}$ or its dual $(\K^{n})^{\vee}$ if $G=\SL_n$,
\item[$\bullet$] the standard representation $\K^{2n}$ if $G=\Sp_{2n}$.
\ee
In particular, every fiber is the closure of a minimal orbit. 
\end{thm}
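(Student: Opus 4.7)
The plan is to combine the paper's earlier Main Theorem---asserting that every smooth small $G$-variety $X$ is a $G$-vector bundle over the quotient $X\quot G$---with a classification of the possible fibers. Once the bundle structure is granted, the corollary reduces to determining which linear $G$-modules can occur as fibers. Since a minimal orbit is by definition not closed (its closure acquires a fixed point), the only closed $G$-orbits in a small variety are fixed points; hence $X\quot G=X^G$ and each fiber of the bundle is a $G$-module.

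First I would reduce to classifying small $G$-modules for simple $G$. Smallness of $X$ restricts fiberwise: every $G$-orbit in a fiber $F$ is either $\{0\}$ or minimal. This already forces $F^G=0$, because a nonzero $G$-fixed vector $v_0$ together with a highest weight vector $w\in F$ would produce orbits $G(v_0+tw)=v_0+tGw$ whose closures do not contain $0$, contradicting the fact that every nontrivial orbit closure in a small variety picks up the fixed point. Next I would show $F$ must be irreducible and that $G$ acts transitively on $F\setminus\{0\}$. For irreducibility, a decomposition $F=V_1\oplus V_2$ with both summands nonzero gives, for highest weight vectors $v_i\in V_i$, an orbit $G(v_1+v_2)$ whose closure projects onto $\overline{Gv_1}$ and $\overline{Gv_2}$ and hence contains additional non-fixed boundary orbits, contradicting minimality. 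Isotypic multiplicities are excluded by using the canonical commuting $\Kst$-action on the small variety $F$ (from the paper's earlier Main Theorem on the $X^U$-description), which would otherwise produce a positive-dimensional family of pairwise distinct minimal orbits through the locus of $U$-fixed vectors. With $F$ irreducible, any nonzero $v\in F$ has $\overline{Gv}$ a $G$-stable cone whose only boundary orbit is $\{0\}$; smallness then forces $\overline{Gv}=Gv\cup\{0\}$, and varying $v$ collapses all nonzero orbits into a single one.

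Finally I would invoke the classical classification of pairs $(G,V)$, with $G$ simple and $V$ an irreducible representation on which $G$ acts transitively on $V\setminus\{0\}$: the only such pairs are $(\SL_n,\K^n)$, $(\SL_n,(\K^n)^{\vee})$, and $(\Sp_{2n},\K^{2n})$. This simultaneously constrains $G$ and identifies the fiber; in each case the fiber itself is the closure $\overline{O_{\omega_1}}=O_{\omega_1}\cup\{0\}$ of the unique minimal orbit, matching the assertion.

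The main obstacle, granting the structural Main Theorem, is the irreducibility and single-orbit step for the fiber. Direct sums of small irreducibles are easy to dismiss by projection arguments, but ruling out isotypic multiplicities (for instance two copies of $\K^n$ for $\SL_n$) is more delicate and is where the commuting $\Kst$-action, together with the description of small $G$-varieties via the $\Kst$-variety $X^U$, does the essential work by constraining the possible weights and multiplicities among highest weight vectors.
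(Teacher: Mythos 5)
Your overall plan---obtain the $G$-vector bundle structure from the structural theorem and then classify the possible fibers---matches the paper's. The endgame differs genuinely: the paper identifies the possible fibers from its own Lemma~\ref{min-parabolics.lem} and Table~\ref{tab3}, where the last column records for each fundamental weight whether $\overline{O_\omega}$ is all of $V_\omega$; you instead invoke the classical classification of pairs $(G,V)$ with $G$ simple acting transitively on $V\setminus\{0\}$. Both give the same list, and citing the classical result is a perfectly legitimate shortcut.

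However, the middle step of your proposal, where you re-derive that the fiber is an irreducible module with a dense orbit, is both unnecessary and contains a real error. It is unnecessary because Theorem~\ref{mainthm2b} already tells you that when $X$ is smooth the bundle has fiber $V_\lambda=\overline{O_\lambda}$; you do not need to analyze an arbitrary small $G$-module $F$. The error is in the $F^G=0$ argument: you claim that for a nonzero fixed vector $v_0$ and a highest weight vector $w$, the closures of the orbits $G(v_0+tw)=v_0+tGw$ ``do not contain $0$, contradicting the fact that every nontrivial orbit closure in a small variety picks up the fixed point.'' But the closure is $(v_0+tGw)\cup\{v_0\}$, which does contain a fixed point, namely $v_0$. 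Smallness only requires the closure of a minimal orbit to contain \emph{some} fixed point, not the origin, so there is no contradiction. The correct and immediate reason that $F^G=0$ is that $X^G\simto X\quot G$ for a fix-pointed action, so the fiber over a point $y\in X\quot G\simeq X^G$ meets $X^G$ only in $y$ itself. Similarly, your irreducibility and multiplicity-one arguments are vague (and the direct-sum projection argument as stated would fail for two isotypic summands, as you partly acknowledge); the paper handles this cleanly in Remark~\ref{small-G-module.rem} by a dimension count $\dim W=\dim W^U-1+d$, which you could simply cite. So: right strategy, a valid alternative endgame via the classical classification, but the intermediate fiber analysis as written is flawed and in any case superfluous given Theorem~\ref{mainthm2b}.
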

For $G = \SL_n$ or $G = \Sp_{2n}$ it turns out that an affine $G$-variety is small if its dimension is small enough. More precisely, we have the following result.
\begin{thm}\label{mainthm.lem}\strut
\be
\item
For $n\geq 5$ an irreducible affine $\SL_{n}$-variety $X$ of dimension $< 2n-2$ is small. In particular, if $X$ is also smooth, then $X$ is an $\SL_{n}$-vector bundle over $X\quot\SL_{n}$ with fiber $\K^{n}$ or $(\K^{n})^{\vee}$. 
\item
For $n\geq3$, an irreducible affine $\Sp_{2n}$-variety $X$ of dimension $<4n-4$ is small. In particular, if $X$ is also smooth, then it is an $\Sp_{2n}$-vector bundle over $X\quot\Sp_{2n}$ with fiber $\K^{2n}$.
\ee
\end{thm}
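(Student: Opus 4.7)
The plan is to prove $X$ is small, after which the smoothness assertion follows immediately from Theorem~\ref{mainthm.cor}. The strategy proceeds in two steps: first reduce, via an equivariant embedding, to a classification of orbits of $G$ in a $G$-module, and then carry out that classification by controlling stabilizers.

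Any irreducible affine $G$-variety $X$ admits a closed $G$-equivariant embedding into a rational $G$-module $V$, so every $G$-orbit of $X$ is realized as a $G$-orbit in $V$ of dimension at most $\dim X$. Hence the smallness of $X$ follows from the following assertion, which I regard as the main technical obstacle: every non-fixed $G$-orbit in a rational $G$-module, of dimension less than $2n-2$ for $G=\SL_n$ with $n\geq 5$ (resp.\ $4n-4$ for $G=\Sp_{2n}$ with $n\geq 3$), is isomorphic to a minimal orbit.

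I would approach this through the stabilizer $H=G_v$ of a point $v\in V$, using a Levi decomposition $H=R\ltimes U'$. The heart of the argument is a codimension bound on the reductive part $R$: every proper reductive subgroup of $\SL_n$ has codimension at least $2n-2$, with equality only for the Levi $\GL_{n-1}$; and every proper reductive subgroup of $\Sp_{2n}$ has codimension at least $4n-4$, with equality only for $\Sp_2\times\Sp_{2n-2}$. These bounds follow from a reducible/irreducible dichotomy on the defining representation: in the reducible case, reductivity yields $R\subseteq S(\GL(W)\times\GL(W'))$ for a decomposition $\K^n=W\oplus W'$, of codimension at least $2(n-1)$; in the irreducible case Dynkin's classification applies, and for $n\geq 5$ all irreducible proper reductive subgroups (such as $\SO_n$ and $\Sp_n$ for $n$ even) have strictly larger codimension. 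The extremal subgroups $\GL_{n-1}$ and $\Sp_2\times\Sp_{2n-2}$ are precisely the reason the hypothesis requires strict inequality. Having thereby constrained $R$, one matches $H=R\ltimes U'$ against the short list of highest weight stabilizers in $G$: for $\SL_n$ the admissible highest weight orbits of dimension $<2n-2$ are attached to $V_{\omega_1}=\K^n$ and $V_{\omega_{n-1}}=(\K^n)^\vee$ (dimension $n$) and to $V_{\omega_2}=\Lambda^2\K^n$ and $V_{\omega_{n-2}}$ (dimension $2n-3$), together with the analogous orbits in $V_{a\omega_i}$; for $\Sp_{2n}$ only $V_{\omega_1}=\K^{2n}$ fits, since the next candidate $V_{\omega_2}$ already has orbit dimension $4n-4$. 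This identifies $H$ as a highest weight stabilizer and completes the classification.

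With every orbit of $X$ now either fixed or minimal, $X$ is small. If $X$ is additionally smooth, Theorem~\ref{mainthm.cor} realizes $X$ as a $G$-vector bundle over $X\quot G$ whose fiber is the closure of a minimal orbit; smoothness forces the fiber to be $\K^n$, $(\K^n)^\vee$ (for $\SL_n$) or $\K^{2n}$ (for $\Sp_{2n}$), since the other candidate closures such as the Grassmannian cone $\overline{O_{\omega_2}}\subset\Lambda^2\K^n$ and the Veronese cone in $S^2\K^n$ are singular at their vertices.
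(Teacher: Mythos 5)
Your overall architecture is correct and closely parallels the paper: embed $X$ equivariantly into a $G$-module, reduce to a classification of low-dimensional quasi-affine $G$-orbits, invoke Theorem~\ref{mainthm.cor} for the vector-bundle conclusion. Your computation of the minimal codimension $r_G$ of a proper reductive subgroup ($2n-2$ for $\SL_n$, $n\geq 5$, realized by $\GL_{n-1}$; $4n-4$ for $\Sp_{2n}$, realized by $\Sp_2\times\Sp_{2n-2}$) also agrees with the paper's Lemma~\ref{rG.lem}, and your list of small highest weight orbits is correct.

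However, there is a genuine gap in the central step: ``having thereby constrained $R$, one matches $H = R\ltimes U'$ against the short list of highest weight stabilizers.'' This inference is not justified by what precedes it. Bounding $\codim_G R$ from below only tells you that (i) closed non-trivial orbits have dimension $\geq r_G$, and (ii) if $\codim_G H < r_G$ then $U' \neq 1$. Neither forces $H$ to be conjugate to a stabilizer of a highest weight vector. Equivalently, in terms of the paper's invariants: $r_G$ only gives the \emph{upper} bound $d_G \leq r_G$, whereas the theorem requires the \emph{lower} bound $d_G \geq 2n-2$ (resp.\ $4n-4$). These can genuinely diverge: for $\Eseven$ and $\Eeight$ one has $d_G < r_G$ (Table~\ref{tab2}). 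Concretely, your argument leaves open the possibility of a non-closed quasi-affine orbit $O$ of small dimension whose boundary $\overline{O}\setminus O$ consists of a minimal orbit together with the fixed point, in which case $O$ is not minimal yet your codimension bound on $R$ does not detect the obstruction.

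The missing ingredient is precisely what the paper obtains from Sukhanov's theorem (Lemma~\ref{sukhanov.lem}): if $G/H$ is quasi-affine, then up to conjugation $\hh \subseteq \pp_{(\lambda)}$ with the nilradical $\nn_H \subseteq \uu_\lambda$ for some dominant weight $\lambda$, giving the bound $\dim O \geq \dim\uu_\lambda + 2$ whenever $O$ is not minimal. Only this subparabolicity constraint (using observability of $H$, i.e., quasi-affineness of $G/H$) makes the case-by-case analysis tractable: one then shows, as in Sections~\ref{d-G.subsec}, that for $\SL_n$ the only relevant $\lambda$ are $\omega_1,\omega_n$, and checks the two possibilities $\nn_x = 0$ or $\ll_x\subsetneqq\ll_{(\omega_1)}$ both lead to $\dim O \geq 2n$ (and similarly for $\Sp_{2n}$). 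Your proposal would need to incorporate Sukhanov's theorem, or an equivalent structure theorem on observable subgroups, to close this gap.
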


In general, we have the following theorem about the structure of a small $G$-variety where $G$ is a semisimple algebraic group. As usual, we fix a Borel subgroup $B \subset G$ and a maximal unipotent subgroup $U \subset B$. For a simple $G$-module $V_\lambda$ of highest weight $\lambda$ we denote by $O_\lambda\subset V_\lambda$ the orbit of highest weight vectors, and by $P_\lambda$ the corresponding parabolic subgroup, i.e. the normalizer of $V_\lambda^U$.

For any minimal orbit $O$ there is a well-defined cyclic covering $O_{\lambda} \to O$ where $\lambda$ is an {\it indivisible} dominant weight, i.e. $\lambda$ is not an integral multiple of another dominant weight. This $\lambda$ is called the {\it type} of the minimal orbit $O$. 

An action of a reductive group $G$ on an affine variety $X$  is called {\it fix-pointed\/} if the closed orbits are fixed points.

\begin{thm}\label{mainthm1}
Let $X$ be an irreducible small $G$-variety. Then the following holds.
\be
\item  
The $G$-action is fix-pointed and in particular $X^G\simto X\quot G$.
\item\label{mainthm1:1} 
All minimal orbits in $X$ have the same type $\lambda$.
\item 
The quotient $X\to X\quot U^-$ restricts to an isomorphism $X^U\simto X\quot U^-$.  In particular, $X$ is normal if and only if $X^U$ is normal.
\item
There is a unique $\Kst$-action on $X$ which induces the canonical $\Kst$-action on each minimal orbit of $X$ and commutes with the $G$-action.
Its action on $X^{U}$ is fix-pointed, and $X^{U}\quot\Kst \simto X\quot G \simot X^{G}$.
\item
The morphism $G\times X^U \to X$, $(g,x)\mapsto  g x$, induces a $G$-equivariant isomorphism
\[
\Phi\colon\Olb\times^{\Kst} X^{U}\overset{\simeq}{\longrightarrow} X
\]
where $\Kst$ acts on $\Olb$ by $(t,x)\mapsto \lambda(t^{-1})\cdot x$.
\item 
We have $\mathrm{Stab}_{G}(X^{U})=P_{\lambda}$, and
the $G$-equivariant morphism 
\[
\Psi\colon G\times^{P_{\lambda}}X^{U}\rightarrow X,\quad [g,x]\mapsto g x,
\] 
is proper, surjective and birational, and induces an isomorphism between the algebras of regular functions.  
\ee
\end{thm}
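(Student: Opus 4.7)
The strategy is to prove the six assertions in the order $(1),(3),(2),(4),(5),(6)$. The isomorphism $\Phi$ in $(5)$ is the main structural result; $(1)$--$(4)$ set up the data needed to construct it, and $(6)$ follows by restricting $\Phi$ to the open orbit of $\Olb$.

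Part $(1)$ is essentially formal: a minimal orbit is by definition not closed, so every closed orbit is a fixed point, and applying the Reynolds operator shows the restriction $\K[X]^G\to \K[X^G]$ is an isomorphism (surjectivity because Reynolds averaging is the identity at fixed points, injectivity because invariants separate closed orbits). For Part $(3)$, the Bruhat openness $U^-TU\subset G$ implies $U^-\cdot X^U$ is open dense in $X$ and each $U^-$-orbit meets $X^U$ in at most one point, so $X^U\to X\quot U^-$ is a bijection on closed points; on rings, the standard duality $V_\mu^{U^-}\simeq V_\mu^U$ in each simple $G$-module upgrades this to $\K[X]^{U^-}\simto \K[X^U]$, and normality transfers both ways. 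With $X^U$ now known to be irreducible, $(2)$ follows by observing that each minimal orbit of type $\mu$ meets $X^U$ in a one-dimensional $T$-orbit $\Kst\cdot v_\mu$ of weight $\mu$; rigidity of codimension-one subtori over the irreducible open $X^U\setminus X^G$ forces a single indivisible weight $\lambda$.

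For $(4)$, combining $(2)$ with the surjection $\K[X]\onto \K[\Olb]=\bigoplus_{n\geq 0}V_{n\lambda}^\vee$ on every minimal orbit closure shows that only types $V_{n\lambda}^\vee$ occur in the $G$-isotypic decomposition of $\K[X]$; the induced $\NN$-grading defines the required $\Kst$-action, which is unique because it is determined on the dense union of minimal orbits, and is automatically fix-pointed on $X^U$ with $X^U\quot \Kst = X^G$. For $(5)$, consider the $G$-equivariant multiplication $\tilde\Phi\colon G\times X^U\to X$. The stabilizer $P_\lambda'=\mathrm{Stab}_G(v_\lambda)\subset P_\lambda$ fixes the whole line $\Kst v_\lambda$ of each orbit, hence fixes $X^U\cap\overline{Gx}=\Kst v$ pointwise for every $x$, hence acts trivially on the whole of $X^U$. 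Using the $\Kst$-twist from $(4)$ this descends to $\Phi\colon\Olb\times^{\Kst}X^U\to X$, with the apex $0\in\Olb$ sent to $X^G\subset X$ via $X^U\to X^U\quot\Kst=X^G$. The map $\Phi$ is $G$-equivariant and bijective on closed points, and a direct comparison of $G$-isotypic components on coordinate rings (using $(4)$ on the target and the graded structure $\K[\Olb]=\bigoplus_n V_{n\lambda}^\vee$ on the source) verifies the isomorphism on rings.

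Part $(6)$ is now immediate: $\mathrm{Stab}_G(X^U)\supset P_\lambda$ because $P_\lambda$ normalizes each line $\K v$, and equality follows by reducing through $\Phi$ to the tautology $\mathrm{Stab}_G(\Olb^U)=\mathrm{Stab}_G(\overline{\Kst v_\lambda})=P_\lambda$. Then $G\times^{P_\lambda}X^U=O_\lambda\times^{\Kst}X^U$ identifies with the complement of the zero section in $\Olb\times^{\Kst}X^U$, and $\Psi$ is this inclusion followed by $\Phi$, extended properly via the projectivity of $G/P_\lambda$; properness, surjectivity, and birationality off $X^G$ are then clear, and the isomorphism on global regular functions follows from $\Phi$ being an isomorphism together with $\Psi$ being proper. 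The most delicate step is Part $(2)$: the smallness hypothesis (no intermediate orbits) is essentially used to rule out different types on different irreducible pieces of the non-fixed locus. The related technical point in $(5)$, that $P_\lambda'$ acts trivially on the whole of $X^U$ rather than just orbit-by-orbit, is the main place where orbit-local data must be glued into a global statement.
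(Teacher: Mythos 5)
Your overall skeleton is correct, but the proposal silently skips the central structural fact on which parts (3), (4), (5) all rest, namely that $\OOO(X)$ is a \emph{graded $G$-algebra}: the $G$-isotypic decomposition $\OOO(X)=\bigoplus_{n\geq 0}\OOO(X)_{n\lambda^\vee}$ is an algebra grading, i.e.\ $\OOO(X)_{m\lambda^\vee}\cdot\OOO(X)_{n\lambda^\vee}\subseteq\OOO(X)_{(m+n)\lambda^\vee}$. This is Proposition~\ref{main.lem}(\ref{main.lem:2}) in the paper, it is proved \emph{after} (2) (same type) by restricting to orbit closures $\overline O$ (whose coordinate rings are manifestly graded) and using that a product of restrictions which is consistent on a dense union of orbits must already respect the grading on $\OOO(X)$. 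Your ordering $(1),(3),(2),\ldots$ is therefore the wrong way round: (3) needs the graded-algebra structure, which needs (2). Moreover (2) does not need irreducibility of $X^U$; the paper's argument (project $X\hookrightarrow W$ onto a simple factor $V_\lambda$, compare types over the open set where the projection is nonzero, and use irreducibility of $X$) is shorter than your ``rigidity of codimension-one subtori'' route and avoids the circularity.

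Concretely, here is where your arguments for (3), (4), (5) have real gaps. For (3), a bijection on closed points between affine varieties is \emph{not} an isomorphism (think of the normalization of a cuspidal curve), and ``the standard duality $V_\mu^{U^-}\simeq V_\mu^U$ upgrades this'' does not show that the composite ring map $\OOO(X)^{U^-}\hookrightarrow\OOO(X)\twoheadrightarrow\OOO(X^U)$ is a bijection. The paper's proof (Lemma~\ref{RU.lem} and Example~\ref{XU.exa}) shows that the kernel of the universal $U$-projection $\pi_U\colon\OOO(X)\to\OOO(X)_U$ is actually an \emph{ideal} (hence equal to $I(X^U)$), and that argument uses the algebra grading in an essential way; without it, the kernel need only have $I(X^U)$ as its radical. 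For (4), the $\NN$-grading of $\OOO(X)$ by isotypic type defines a $\Kst$-action on $X$ \emph{only if} it is an algebra grading --- a $G$-module grading alone gives a linear $\Kst$-representation on $\OOO(X)$ by algebra automorphisms only once compatibility with multiplication is known. For (5), ``a direct comparison of $G$-isotypic components on coordinate rings'' is precisely where the work lies: the identification of $\Hom_G(V_{n\lambda^\vee},\OOO(X))$ with the weight-$n$ piece of $\OOO(X^U)$ compatibly with the algebra structure is the content of Theorem~\ref{structure-Galgebra.thm} and Proposition~\ref{geometric-iso.prop}, and the paper spends a section building that machinery. Finally, in (6), your ``isomorphism on global functions follows from $\Phi$ being an isomorphism together with $\Psi$ being proper'' needs care if $X$ is not normal; the paper instead identifies $G\times^{P_\lambda}X^U\simeq O_\lambda\times^{\Kst}X^U$ and uses $\OOO(O_\lambda)=\OOO(\Olamb)$ directly. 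In short: state and prove the graded $G$-algebra property of $\OOO(X)$ as a separate lemma right after (2), and then (3)--(5) can be derived from it along the lines you indicate, but not before.
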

\noindent
The proofs are given in Proposition~\ref{main.lem} for the statements (1)--(3) and in Proposition~\ref{small-var.thm} for the statements (4)--(6). We define the {\it canonical $\Kst$-action} on a minimal orbit in Section \ref{canonical.subsec}.

\ps
As a consequence, we obtain the following one-to-one correspondence between irreducible small $G$-varieties of a given type and certain irreducible fix-pointed affine $\K^*$-varieties. The $\K^*$-action on a variety $Y$ is called {\em positively fix-pointed} if for  every $y \in Y$ the limit $\lim_{t\to 0}t y$ exists and is therefore a fixed point.

\begin{cor}\label{equivalence.cor}
For any indivisible highest weight  $\lambda\in \Lambda_{G}$, the functor $F \colon X \mapsto X^{U}$ defines an equivalence of categories
\[
\left\{\begin{array}{l}
\text{\it irreducible small $G$-varieties $X$} \\ \text{\it of type $\lambda$}
\end{array}\right\}
\overset{F}{\longrightarrow}
\left\{
\begin{array}{l}
\text{\it irreducible positively fix-pointed} \\ \text{\it affine $\Kst$-varieties $Y$}
\end{array}\right\}.
\]
The inverse of $F$ is given by $Y \mapsto \Olb \times^{\Kst} Y$ where the $\Kst$-action on $\Olb\times Y$ is defined as $t(v,y)\mapsto(\lambda(t^{-1}) v, t y)$. 
\end{cor}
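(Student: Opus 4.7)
The plan is to construct an explicit quasi-inverse $H\colon Y\mapsto\Olb\times^{\Kst}Y$ and verify that the pair $(F,H)$ defines the claimed equivalence. That $F$ is well-defined on objects follows directly from Theorem~\ref{mainthm1}: parts (3) and (4) give that $X^{U}$ is an irreducible affine $\Kst$-variety with fix-pointed $\Kst$-action, while positive fix-pointedness is inherited from the canonical action on the minimal orbits, which satisfies $\lim_{t\to 0}t\cdot x=0$ in the ambient $\Olb$. For morphisms, a $G$-equivariant $f\colon X\to X'$ is automatically $\Kst$-equivariant: it carries each minimal orbit $O\subset X$ either to a minimal orbit of $X'$ (of the same type $\lambda$ by Theorem~\ref{mainthm1}(\ref{mainthm1:1})) or to a $G$-fixed point, and in either case the restriction to $O$ is forced to intertwine the canonical $\Kst$-actions; since $X$ is covered by such orbits, $f$ is $\Kst$-equivariant everywhere, so $F(f):=f|_{X^{U}}$ is a well-defined morphism.

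To show $H$ is well-defined, I would observe that the $\Kst$-action $t\cdot(v,y)=(\lambda(t^{-1})v,ty)$ on $\Olb\times Y$ has opposite "signs" on the two factors: on $\Olb$ the action is negatively fix-pointed, on $Y$ positively so. This ensures that the ring of $\Kst$-invariants is finitely generated and that $H(Y):=\Olb\times^{\Kst}Y$ is an irreducible affine variety carrying a $G$-action through the first factor. An orbit analysis identifies the $G$-orbits: the fixed points $[0,y]$ satisfy $[0,y]=[0,ty]$ and thus form a copy of $Y\quot\Kst\simeq Y^{\Kst}$; the remaining orbits are $G\cdot[v,y]\simeq O_{\lambda}$ for $v\in O_{\lambda}$, whose closures add only the fixed point $[0,\lim_{s\to 0}sy]$ (existing by positive fix-pointedness of $Y$). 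Hence $H(Y)$ is small of type $\lambda$. On morphisms, a $\Kst$-equivariant $\phi\colon Y\to Y'$ gives a $(G\times\Kst)$-equivariant morphism $\id\times\phi\colon\Olb\times Y\to\Olb\times Y'$, which descends to $H(\phi)$.

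The natural isomorphism $H\circ F\simto\id$ is precisely the map $\Phi$ of Theorem~\ref{mainthm1}(5), which is manifestly natural in $X$. For the other direction $F\circ H\simto\id$, I would compute $H(Y)^{U}$: since $U$ acts only on the first factor and commutes with $\Kst$, one has $H(Y)^{U}=(\Olb^{U}\times Y)/\Kst=(\K v_{\lambda}\times Y)/\Kst$, because $\Olb^{U}$ is the line of highest weight vectors together with the origin. On the open locus $c\neq 0$, indivisibility of $\lambda$ makes the residual $\Kst$-action free, with section $y\mapsto[v_{\lambda},y]$ giving an isomorphism onto $Y$; this extends through $c=0$ thanks to positive fix-pointedness of $Y$, and the map $(cv_{\lambda},y)\mapsto c\cdot y$ (with $c\cdot y$ interpreted via the $\Kst$-action on $Y$, and extended by $\lim_{t\to 0}ty$ at $c=0$) descends to the asserted isomorphism. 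Naturality in $Y$ is immediate.

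The delicate step is this last identification $H(Y)^{U}\simeq Y$: one must check that the $\Kst$-quotient of $\K v_{\lambda}\times Y$ behaves well at the boundary $c=0$, where the action on $\Olb$ degenerates. The two essential inputs are the indivisibility of $\lambda$, which provides freeness of the $\Kst$-action on the generic locus, and positive fix-pointedness of $Y$, which makes the gluing with the fixed-point stratum continuous. All other verifications (finite generation of invariants, irreducibility, functoriality on morphisms) are routine once the structural isomorphisms are in place.
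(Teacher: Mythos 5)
Your proposal follows the same route as the paper, which proves the corollary by combining Proposition~\ref{small-var.thm}(\ref{G-iso}) (giving $H\circ F\simeq\id$ via the isomorphism $\Phi\colon\Olb\times^{\Kst}X^U\simto X$) with Lemma~\ref{construction.prop} (giving $F\circ H\simeq\id$ via $\Olb^U\times^{\Kst}Y\simeq\K\times^{\Kst}Y\simto Y$, handled by Remark~\ref{K*K.rem}); you have essentially unpacked those two references, and the identification $H(Y)^U=(\Olb^U\times Y)\quot\Kst$ together with the boundary gluing at $c=0$ are exactly the points those results take care of. One small imprecision: the nontrivial orbits of $H(Y)$ need not all be isomorphic to $O_\lambda$ --- if $y\in Y$ has $\Kst$-stabilizer $\mu_k$ then $G\cdot[v,y]\simeq O_{k\lambda}$ (this actually occurs, as in the paper's example preceding Theorem~\ref{mainthm2b}); what holds, and what the argument needs, is only that every such orbit is minimal \emph{of type $\lambda$}, as Lemma~\ref{construction.prop} states.
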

Our Theorem~\ref{mainthm.cor} above  is a special case of the following description of smooth small $G$-varieties.

\renewcommand{\theenumi}{\roman{enumi}}
\begin{thm}[see Theorem~\ref{mainthm2b}]\label{mainthm2}
Let $X$ be an irreducible small $G$-variety of type $\lambda$, and consider the following statements.
\be
\item\label{vectorbundle}
The quotient $\pi\colon X \to X\quot G$ is a $G$-vector bundle with fiber $V_{\lambda}$.
\item\label{linebundle}
$\Kst$ acts faithfully on $X^{U}$, the quotient $X^{U} \to X^{U}\quot\Kst$ is a line bundle, and $V_{\lambda}=\overline{O_{\lambda}}$.
\item\label{principal}
The quotient $X^{U}\setminus X^{G}\to X^{U}\quot\Kst$ is a principal $\Kst$-bundle, and $V_{\lambda}=\overline{O_{\lambda}}$.
\item\label{disjoint}
The closures of the minimal orbits of $X$ are smooth and pairwise disjoint.
\item\label{smooth}
The quotient morphism $\pi\colon X \to X\quot G$ is smooth.
\ee
Then the assertions {\rm(\ref{vectorbundle2})} and {\rm(\ref{linebundle2})} are equivalent and imply {\rm(\ref{principal2})--(\ref{smooth2})}. If $X$ (or $X^U$) is normal, all assertions are equivalent.

Furthermore, $X$ is smooth if and only if $X\quot G$ is smooth and $\pi\colon X \to X\quot G$ is a $G$-vector bundle.
\end{thm}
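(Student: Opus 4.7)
The plan is to reduce every statement to a property of the positively fix-pointed $\Kst$-variety $X^U$ via the structural isomorphism
\[
\Phi\colon \Olb\times^{\Kst} X^{U}\simto X
\]
from Theorem~\ref{mainthm1}(v). Conditions (ii) and (iii) are phrased directly in terms of $X^U$, while conditions (i), (iv), (v) translate through $\Phi$ into assertions about whether the $\Kst$-orbit structure on $X^U$ assembles into a line bundle.

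For (i) $\Rightarrow$ (ii): taking $U$-fixed points of a $G$-vector bundle $\pi\colon X\to X\quot G$ with fiber $V_\lambda$, the fiber of $\pi|_{X^U}$ becomes the one-dimensional highest weight line $V_\lambda^U$, so $X^U\to X\quot G = X^U\quot\Kst$ is a line bundle on which $\Kst$ acts by scalar multiplication, hence faithfully. The smallness of $X$ forces $V_\lambda = \Olb$, since every fiber of $\pi$ contains only minimal and fixed orbits. For (ii) $\Rightarrow$ (i), substitute $V_\lambda = \Olb$ in $\Phi$ to obtain $X\simeq V_\lambda\times^{\Kst} X^U$; a direct transition-function computation over an open trivializing cover of the line bundle $L=X^U\to X^U\quot\Kst$ identifies this twisted product with the $G$-vector bundle $L^{\vee}\otimes V_\lambda$ over $X\quot G$ with fiber $V_\lambda$. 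The implications (i) $\Rightarrow$ (iii), (iv), (v) then follow at once: deleting the zero section $X^G$ from $L$ gives the principal $\Kst$-bundle of (iii); the fibers of $\pi$ are the closures of minimal orbits, pairwise disjoint and smooth since $\Olb=V_\lambda$; and a vector-bundle projection is always smooth.

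For the converses under the normality hypothesis, I use that $X^U$ is a normal positively fix-pointed $\Kst$-variety. In (iii) $\Rightarrow$ (ii), the principal $\Kst$-bundle on $X^U\setminus X^G$ extends uniquely across the fixed locus in the normal variety $X^U$ to a line bundle with zero section $X^G$. For (iv) $\Rightarrow$ (i), smoothness of the closures of the minimal orbits forces $V_\lambda=\Olb$, and the disjointness combined with a Białynicki-Birula argument for the $\Kst$-action on the normal variety $X^U$ produces the required line bundle structure, giving (ii). For (v) $\Rightarrow$ (i), I apply Luna's slice theorem at a fixed point $x_0\in X^G = X\quot G$: the slice is a small $G$-module whose minimal orbits all have the same type $\lambda$ by Theorem~\ref{mainthm1}(\ref{mainthm1:1}), and smoothness of $\pi$ forces the slice to be $V_\lambda$, so that $G$-equivariant gluing yields the $G$-vector bundle globally.

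For the final assertion, the backward direction is immediate since the total space of a $G$-vector bundle over a smooth base is smooth. For the forward direction, $X$ smooth gives $X$ normal (so the equivalence of (i)--(v) is available), the quotient $X\quot G = X^G$ is smooth as the fixed locus of a reductive action on a smooth variety, and $X^U$ is smooth by Iversen--Fogarty for the fixed locus of the unipotent $U$. The positively fix-pointed $\Kst$-action on the smooth variety $X^U$ with smooth fixed locus $X^G$ is then a line bundle over $X^G$ by Białynicki-Birula, establishing (ii) and hence (i). The hardest steps I expect are (a) the precise descent in (ii) $\Rightarrow$ (i), matching the $\Kst$-weight $\lambda$ on $V_\lambda$ with the transition cocycle of the line bundle over $X\quot G$, and (b) the implication (v) $\Rightarrow$ (i), where smallness together with the constant-type statement of Theorem~\ref{mainthm1}(\ref{mainthm1:1}) must be used to rule out the slice representation being any $G$-module other than $V_\lambda$.
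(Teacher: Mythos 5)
Your strategy is the same as the paper's — reduce everything through $\Phi\colon\Olb\times^{\Kst}X^U\simto X$, with (i) $\Leftrightarrow$ (ii) done by restricting to $U$-fixed points and untwisting the associated bundle — and that core is sound. Two of your reverse implications, however, have genuine gaps.

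Your (v) $\Rightarrow$ (i) invokes Luna's slice theorem at a fixed point $x_0$, but the slice theorem needs $X$ to be smooth at $x_0$, and (v) only grants smoothness of the \emph{morphism} $\pi$; when $X\quot G$ is singular at $\pi(x_0)$ the total space $X$ is also singular there. The paper instead proves (v) $\Rightarrow$ (iv): a smooth morphism has smooth reduced fibers, each fiber is a small $G$-variety with a single fixed point, and such a smooth fiber must equal $V_\lambda$ by Remark~\ref{smooth-small-G-var.rem}. The Luna-type argument is applied to the \emph{fiber}, which is smooth, not to $X$. Similarly, your (iv) $\Rightarrow$ (ii) via Bia\l{}ynicki-Birula on the normal but possibly singular $X^U$ is not available (BB needs smoothness, and even on a smooth variety one would still have to argue the plus-cells are one-dimensional); the paper's route (iv) $\Rightarrow$ (iii) is more elementary — freeness of the $\Kst$-action on $X^U\setminus X^G$ gives a principal $\Kst$-bundle, and only then does normality enter in (iii) $\Rightarrow$ (ii).

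The most serious error is in the final assertion: you deduce smoothness of $X^U$ from ``Iversen--Fogarty for the fixed locus of the unipotent $U$.'' No such theorem exists. Fogarty/Iversen smoothness of fixed loci holds for \emph{linearly reductive} groups, and a nontrivial unipotent group in characteristic zero is never linearly reductive. Indeed the fixed locus of a $\K^{+}$-action on a smooth affine variety can be singular: the locally nilpotent derivation $(y^2-z^3)\partial_x$ on $\K^3$ has fixed locus the singular surface $\{y^2=z^3\}$. In the present situation $X^U$ does turn out to be smooth, but only as a \emph{consequence} of the $G$-vector-bundle structure one is trying to establish, so the argument is circular. The paper instead appeals directly to the Bass--Haboush theorem \cite[(10.3)~Theorem]{BaHa1985Linearizing-certai}, which says that a fix-pointed action of a reductive group on a smooth affine variety makes $\pi\colon X\to X\quot G$ a $G$-vector bundle; this closes the forward direction in one step.
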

\renewcommand{\theenumi}{\arabic{enumi}}

In order to see that small-dimensional $G$-varieties are small (see Theorem~\ref{mainthm.lem}) we have to compute the minimal dimension $d_G$ of a non-minimal quasi-affine $G$-orbit. In fact, if the dimension of the affine $G$-variety $X$ is less than $d_G$, then every orbit in $X$ is either minimal or a fixed point, hence $X$ is small.

We define the following invariants for a semisimple group $G$.
\begin{align*}
m_{G} &:=\min\{\dim O \mid O \text{ a minimal $G$-orbit}\},\\
d_{G} &:=\min\{\dim O \mid O \text{ a non-minimal quasi-affine nontrivial $G$-orbit}\},\\
r_{G} &:= \min\{\codim H\mid H \subsetneqq G \text{ reductive subgroup}\}.
\end{align*}

The following theorem lists $m_G$, $d_G$ and $r_G$ for the simply connected simple groups, and also gives the closure $\overline{O}$ of the minimal orbits realizing $m_G$ and the reductive subgroup $H$ of $G$ realizing $r_G$. In the last column the null cone $\NNN_V$ appears only if $\NNN_V \subsetneqq V$.

\begin{thm}\label{mainthm3}
Let $G$ be a simply connected simple group. Then the invariants $m_G,r_G,d_G$ are given by the following table. In particular, $d_G=r_G$ except for $\Eseven$ and $\Eeight$.
\ps
\begin{center}
\begin{tabular}{ c | c | c | c | c | c | c } 
$G$ &  $\dim G$ & $m_{G}$ &$d_{G}$ &$r_{G}$ & $H$  & $\overline{O}$  \\ \hline\hline
$\Aone$ & $3$ & $2$ & $2$ & $2$ & $ \Tone $ &  $\K^{2}$  \\ \hline
$\Atwo$ & $8$ & $3$ &$4$ &  $4$ &$\Aone\times\Tone$ & $\K^{3}, (\K^{3})^{\vee}$  \\ \hline
$\A_3$ & $15$ & $4$ &$5$ & $5$ & $\Btwo$ &  $\K^{4}, (\K^{4})^{\vee}$  \\ \hline
$\An,n>3$ & $n(n+2)$ & $n+1$ &$2n$ & $2n$ &$\A_{n-1}\times\Tone$ & $\K^{n+1}, (\K^{n+1})^{\vee}$  \\ \hline
$\Btwo$ & $10$ & $4$ &$4$ & $4$ &$\Aone\times\Aone$ & $ \NNN_{V_{\omega_1}}, V_{\omega_2}=\K^4$ \\ \hline
$\Bn, n > 2 $ & $n(2n+1)$ & $2n$ &$2n$ & $2n$ & $\Dn$ & $\NNN_{V_{\omega_1}}$  \\ \hline
$\Cn, n\geq3$ & $n(2n+1)$ & $2n$ &$4n-4$ & $4n-4$ & $\C_{n-1}\times \Aone$ & $\K^{2n}$  \\\hline
$\Dfour$ & $14$ & $7$ &$7$ & $7$& $\B_{6}$ & $\NNN_{V_{\omega_1}},\NNN_{V_{\omega_3}},\NNN_{V_{\omega_4}}$  \\\hline
$\Dn,n\geq5$ & $n(2n-1)$ & $2n-1$ &$2n-1$ & $2n-1$& $\B_{n-1}$ & $\NNN_{V_{\omega_1}}$  \\\hline
$\Esix$ & $78$ & $17$ & $26$ & $26$ & $\Ffour$ & $\subsetneqq \NNN_{V_{\omega_{i}}}, i=1,6$\\ \hline
$\Eseven$ & $133$ & $28$ &$ 45 $ & $54$ & $\Esix\times \Tone$ & $\subsetneqq \NNN_{V_{\omega_{7}}}$ \\ \hline
$\Eeight$ & $248$ & $58$ &$ 86 $ & $112$ & $\Eseven\times\Aone$ & $\subsetneqq \NNN_{\Lie \Eeight}$  \\ \hline
$\Ffour$ & $52$ & $16$ &$16$ & $16$ & $\B_{4}$ &  $\subsetneqq \NNN_{V_{\omega_{i}}}, i=1,4$  \\ \hline
$\Gtwo$ & $14$ & $6$ &$6$ & $6$ & $\Atwo$ & $\NNN_{V_{\omega_1}}, \subsetneqq\NNN_{\Lie\Gtwo}$ \\ \hline
\end{tabular}
\par\medskip
\captionof{table}{\label{tab2}The invariants $m_{G}$, $r_{G}$, $d_{G}$ for the simple groups, the orbit closures realizing $m_G$ and the reductive subgroups $H\subsetneqq G$ realizing $r_G$.}
\end{center}
\end{thm}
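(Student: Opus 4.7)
The proof is a case-by-case tabulation across the Killing--Cartan list, with the three invariants $m_G$, $r_G$, $d_G$ handled independently. The values of $m_G$ and $r_G$ are essentially consequences of standard structure theory; the real content is the determination of $d_G$ and the appearance of $\Eseven$ and $\Eeight$ as the only cases with $d_G < r_G$.

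For $m_G$, I would use that for an indivisible dominant weight $\lambda=\sum n_{i}\omega_{i}$ the stabilizer of a highest weight vector $v_{\lambda}\in V_{\lambda}$ has codimension one in $P_{\lambda}$ (the character $\lambda$ is nontrivial on the connected centre of the Levi of $P_{\lambda}$ because $\lambda$ is indivisible), whence $\dim O_{\lambda}=\dim G/P_{\lambda}+1$. Minimising over indivisible $\lambda$ reduces to minimising $\dim G/P_{i}$ over maximal parabolics $P_{i}=P_{\omega_{i}}$, and the resulting numbers are read off from the standard list of flag variety dimensions; this also pins down the orbit closures $\Ob$ in the last column. For $r_G$ I would invoke the Dynkin/Borel--de Siebenthal classification of maximal closed reductive subgroups of simple groups: in the classical cases the maximum-dimensional proper reductive subgroup is the obvious stabilizer of a vector (respectively subspace) in the defining representation up to a torus factor ($\SL_{n}\times \Tone\subset\SL_{n+1}$, $\C_{n-1}\times \Aone\subset\Cn$, $\B_{n-1}\subset\Dn$, and so on); in the exceptional cases one reads off the maximal reductive subgroup of largest dimension from Dynkin's tables ($\Ffour\subset\Esix$, $\Esix\times \Tone\subset\Eseven$, $\Eseven\times\Aone\subset\Eeight$, $\B_{4}\subset\Ffour$, $\Atwo\subset\Gtwo$). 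The witnesses $H$ in column six of the table then realize $r_G$ exactly.

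The heart of the argument is computing $d_G$. One side is easy: any closed reductive subgroup $H\subsetneqq G$ of minimal codimension gives an affine, hence quasi-affine, non-minimal orbit $G/H$, so $d_G\leq r_G$. For the reverse inequality one must analyse non-minimal quasi-affine orbits $Gx\cong G/H$ with $H$ non-reductive observable: such an orbit necessarily lives inside some fundamental representation $V_{\omega_{i}}$ (or its nullcone $\NNN_{V_{\omega_{i}}}$), and its stabilizer $H$ is strictly intermediate between a highest-weight stabilizer and a parabolic. My plan is to go through the (short) list of fundamental representations $V_{\omega_{i}}$ whose smallest non-trivial orbit has dimension $< r_G$, enumerate all higher strata of the orbit stratification using Vinberg--Elashvili-type data and the nilpotent orbit classification, and read off the non-minimal quasi-affine orbit dimensions. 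For the non-exceptional entries of the table, every such candidate orbit turns out to have dimension $\geq r_G$, so $d_G=r_G$. For $\Eseven$ the minuscule $56$-dimensional representation $V_{\omega_{7}}$ has a well-known stratification $\{0\}\subset O_{\omega_{7}}\subset\cdots\subset V_{\omega_{7}}$ whose next-to-smallest stratum has dimension $45$, is quasi-affine but non-affine, and realizes $d_{\Eseven}=45<54=r_{\Eseven}$; for $\Eeight$ the analogous phenomenon in the adjoint representation (or in the action on the $248$-dimensional Lie algebra restricted away from the minimal nilpotent orbit) yields $d_{\Eeight}=86<112=r_{\Eeight}$.

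The main obstacle is precisely the lower bound $d_G\geq r_G$ for the non-exceptional rows: one must rule out all small non-minimal quasi-affine orbits with non-reductive stabilizer. In practice this means verifying, for each fundamental $V_{\omega_{i}}$ whose total dimension does not already exceed $r_G$ by a comfortable margin, that no intermediate orbit closure exists below the codimension threshold. For the classical types this is a finite manipulation with stabilizers of flags; for $\Esix$, $\Ffour$ and $\Gtwo$ it requires inspecting the handful of small fundamental modules whose orbit structure is classical (Vinberg). Once those verifications are in place, the final column of the table is also established, since the minimal orbit closures $\Ob$ of type $\omega_{i}$ (and, in the exceptional rows, the proper subvarieties of the nullcones noted as $\subsetneqq\NNN_{V_{\omega_{i}}}$) are exactly the witnesses for the minima realised by $m_G$.
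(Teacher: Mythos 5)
Your plan for $m_G$ and $r_G$ agrees with what the paper does: $m_G$ comes from minimizing $\dim G/P_{\omega_i}+1$ over maximal parabolics (Lemma~\ref{min-parabolics.lem}), and $r_G$ is read off from Dynkin's classification of maximal reductive subgroups (Lemma~\ref{rG.lem}).

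The computation of $d_G$, however, has a genuine gap. You assert that a non-minimal quasi-affine orbit with non-reductive observable stabilizer ``necessarily lives inside some fundamental representation $V_{\omega_i}$''; this is false. An observable subgroup $H$ means only that $G/H$ is quasi-affine, i.e.\ that it embeds in \emph{some} $G$-module, not in a fundamental one, and in general its closure need not be contained in any $V_{\omega_i}$ or its nullcone. Consequently, the plan of enumerating orbit strata inside fundamental modules cannot establish the lower bound $d_G\geq$ (stated value). The paper instead invokes Sukhanov's theorem on observable subgroups (translated as Lemma~\ref{sukhanov.lem}): for a quasi-affine orbit $O=Gx$ there is a dominant $\lambda$ with $\gg_x\subseteq \pp_{(\lambda)}$ and $\uu_x\subseteq\uu_\lambda$, whence $\dim O\geq \dim\uu_\lambda+2$ whenever $O$ is non-minimal. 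Coupling this with the Levi-theoretic bound (if $\ll_x\subsetneqq\ll_{(\lambda)}$, use $r_{L_{(\lambda)}}$; if $\ll_x=\ll_{(\lambda)}$, use $\dim V(\alpha_i)$) is what makes the case analysis finite and clean. Your proposal is missing precisely this reduction, which is the crux of the argument.

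There is also a problem with your proposed witnesses in types $\Eseven$ and $\Eeight$. For $\Eeight$ you appeal to a stratum of the adjoint representation ``restricted away from the minimal nilpotent orbit'' of dimension $86$, but the nilpotent orbits of $\Eeight$ jump from dimension $58$ (minimal) to $92$, and the affine (semisimple) orbits all have dimension $\geq r_{\Eeight}=112$; no adjoint orbit has dimension $86$. The paper realizes $d_{\Eeight}=86$ by exhibiting a subgroup $H$ with $\Lie H=\Esix\oplus\uu_{\omega_7+\omega_8}$ inside a parabolic, of codimension $86$ and with trivial character group, so $G/H$ is quasi-affine by Remark~\ref{quasiaffine_orbits.rem}. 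The $\Eseven$ witness in the paper is built analogously from $\so_{11}\oplus\uu_{\omega_1}\subset\pp_{(\omega_1)}$, not as a stratum of the $56$-dimensional minuscule representation.
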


The third and last columns of Table~\ref{tab2} will be provided by Lemma~\ref{min-parabolics.lem}, the fourth column by Proposition~\ref{dG.prop} and the fifth and sixth columns by Lemma~\ref{rG.lem}.

Note also that Theorem~\ref{mainthm.lem} is a consequence of Theorem~\ref{mainthm.cor} and Theorem~\ref{mainthm3}, because  $X$ is a small $G$-variety in case $\dim X<d_G$.
\par\bigskip
{\small
\noindent
{\bf Acknowledgments:} We thank \name{Oksana Yakimova} for her help with the computation of the invariant $d_G$. We would also like to thank \name{Michel Brion} for interesting and helpful discussions.}

\par\bigskip
%%%%%%%%%%%%%%%%%%%%%%%%%%%%%%%%%
\section{Minimal \texorpdfstring{$G$}{G}-orbits}\label{sec:minimal_orbits}
%%%%%%%%%%%%%%%%%%%%%%%%%%%%%%%%%

In this paragraph we introduce and study  {\it minimal orbits} of a semisimple group $G$.  
We will use the standard notation below and refer to the literature for details (see for instance \cite{Bo1991Linear-algebraic-g,FuHa1991Representation-the,Hu1978Introduction-to-Li,Hu1975Linear-algebraic-g, Ja2003Representations-of,  Kr1984Geometrische-Metho,Pr2007Lie-groups}). 

Let $G$ be a semisimple group. We fix a Borel subgroup $B \subset G$ and a maximal torus $T \subset B$, and denote by $U := B_{u}$ the unipotent radical of $B$. 

\ps
%%%%%%%%%%%%%%%%%%%%%%%%%%%%%%%%%
\subsection{Highest weight orbits}\label{min-orbits.subsec}
Let $\Lambda_{G} \subset X(T):=\Hom(T,\Kst)$ be the monoid of {\it dominant weights} of $G$. A simple $G$-module $V$ is determined by its highest weight $\lambda\in \Lambda_{G}$ which is the weight of the one-dimensional subspace $V^{U}$, and we write $V=V_{\lambda}$. 
The dual module of a $G$-module $W$ will be denoted by $W^{\vee}$, and for the highest weight of the dual module $V_{\lambda}^{\vee}$ we write  $\lambda^{\vee}$. 

\begin{rem}\label{minimal-lambda.rem} 
Define $\Lambda := \bigoplus_{i=1}^{r}\NN\omega_{i}\subseteq \Lambda\otimes_\ZZ \QQ$ where $\omega_{1},\ldots,\omega_{r}$ are the {\it fundamental weights}. We have $\Lambda_{G}\subseteq \Lambda$ with equality if and only if $G$ is simply connected.
In general, let $\gamma\colon\tilde G \to G$ be the universal covering, i.e. $\tilde G$ is simply connected and $\gamma$ is a homomorphism with a finite kernel $F$, then $F$ is contained in the center $\Cent \tilde G$ of $\tilde G$ which acts canonically on $\Lambda$, and we have $\Lambda_{G}  = X(T) \cap \Lambda = \Lambda^{F}$.
\end{rem}

For an affine $G$-variety $X$, we denote by $\pi\colon  X \to X\quot G$ the {\it algebraic quotient}, i.e. the morphism defined by the inclusion $\OOO(X)^{G} \into \OOO(X)$. If $X=V$ is a $G$-module, then the closed subset
$$
\NNN_{V}:=\pi^{-1}(\pi(0)) = \{ v\in V \mid \overline{G v}\ni 0\} \subseteq V
$$
is called the {\it null cone\/} or {\it null fiber\/} of $V$. It is a {\it closed cone in $V$}, i.e. it is closed and contains with any $v$ the line $\K v \subset V$.
\par\medskip
Let  $V=V_{\lambda}$ be a simple $G$-module of highest weight $\lambda \in \Lambda_{G}$. Then $\dim V^U = 1$, and we define the {\it highest weight orbit} to be $O_{\lambda}:=G v \subset V$ where $v\in V^U\setminus\{0\}$ is an arbitrary highest weight vector of $V$.
It is a {\it cone}, i.e.  stable under scalar multiplication. These orbits and their closure have first been studied in \cite{ViPo1972A-certain-class-of}. 

For a subset $S$ of a $G$-variety $X$, the {\it normalizer} of $S$ is defined in the usual way:   $\Norm_G(S):=\{g\in G\mid g S=S\}$. The group of $G$-equivariant automorphisms of $X$ will be denoted by $\Aut_G(X)$.

\begin{lem}\label{hw-orbits.lem}
Let $V=V_\lambda$ be a simple $G$-module of highest weight $\lambda$, and let $v\in V^U$ be a highest weight vector. Then the following holds.
\begin{enumerate}
\item\label{normal}
We have $\Olamb=G V^{U} = O_{\lambda}\cup \{0\}$, and $\Olamb$ is a normal variety.
\item\label{hw-grading}
We have $\OOO(\Olam) = \OOO(\Olamb) \simeq \bigoplus_{k\geq 0} {V_{k\lambda}}^{\!\!\vee} = \bigoplus_{k\geq 0} V_{k\lambda^{\!\vee}}$. In particular, $\Olam$ is not affine.
\item\label{U-fixed}
We have $O_{\lambda}^{U}= \Kst v$, and so $G_v = \Cent_{G}(O_{\lambda}^{U})$. Moreover, $V^U = \K v = V^{G_v} = V^{G_v^\circ}$.
\item\label{parabolic}
The group $P_{\lambda}: = \Norm_G(O_{\lambda}^{U})=\Norm_G(\K v)\subset G$ is a proper parabolic subgroup. We have $P_v = \Norm_{G}G_{v}
=\Norm_G(G_v^\circ)$,  and $\dim\Olam=\codim P_\lambda+1$.
\item\label{scalarmultiplication}
The scalar multiplication on $V$ induces an isomorphism
$\Kst \simto \Aut_{G}(\Olamb)=\Aut_{G}(O_{\lambda})$.
\item\label{minimal}
If $w \in \NNN_{V}$ and $w\neq 0$, then $\overline{G w} \supset \Olam$.
\item\label{sing} 
The closure $\Olamb$ is nonsingular if and only if $\Olamb=V_{\lambda}$.
\end{enumerate}
\end{lem}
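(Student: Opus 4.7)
The backbone of the proof is the parabolic $P_\lambda = \mathrm{Stab}_G(\K v)$: since $v$ is a $B$-eigenvector, $B \subseteq P_\lambda$, so $P_\lambda$ is parabolic and proper (as $V_\lambda$ is simple and nontrivial). The $P_\lambda$-action on $\K v$ is by the character $\lambda$, with kernel $G_v$, whence $P_\lambda/G_v \cong \Kst$. This immediately yields the dimension formula $\dim O_\lambda = \codim P_\lambda + 1$ in (4), the cone property $\Kst v \subseteq O_\lambda$, and---combined with $V^U = \K v$ being one-dimensional (by simplicity of $V_\lambda$)---the identifications $O_\lambda^U = \Kst v$ and $V^{G_v} = V^{G_v^\circ} = \K v$ (using $U \subseteq G_v^\circ$). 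Parts (3) and (5) follow immediately: any $\phi \in \Aut_G(\Olamb)$ preserves the dense orbit $O_\lambda$ and its $U$-fixed locus $\Kst v$, so $\phi(v) = tv$ for a unique $t \in \Kst$, and $G$-equivariance then forces $\phi$ to be scalar multiplication by $t$.

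For (1), the image of $O_\lambda$ in $\PP(V)$ is the closed orbit $G \cdot [v] \cong G/P_\lambda$, so $O_\lambda \cup \{0\}$ is the preimage in $V$ of that closed set together with the origin, hence closed; thus $\Olamb = O_\lambda \cup \{0\} = G \cdot V^U$. The remaining assertions of (4) reduce to $P_v := \Norm_G(G_v) = \Norm_G(G_v^\circ) = P_\lambda$: if $g \in P_\lambda$ then $g G_v g^{-1} = G_{gv} = G_v$ since $gv \in \Kst v$; conversely $g \in \Norm_G(G_v)$ forces $gv \in V^{G_v} = \K v$ and hence $g \in P_\lambda$; the same argument with $G_v^\circ$ in place of $G_v$ handles the $\Norm_G(G_v^\circ)$ identity. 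For (2) and the normality in (1), I would introduce the $G$-equivariant resolution $\psi \colon \mathcal{L} := G \times^{P_\lambda} \K v \to \Olamb$, $[g,w] \mapsto gw$, which is the total space of a line bundle over $G/P_\lambda$. It is proper, an isomorphism off the zero section, and contracts that zero section to $0$; since $\mathcal{L}$ is smooth, this exhibits $\psi$ as a resolution of singularities, and $\Olamb$ is in particular normal. Global sections then compute via Borel--Weil: $\OOO(\Olamb) = \psi_* \OOO_{\mathcal{L}} = \bigoplus_{k \geq 0} H^0(G/P_\lambda, \mathcal{L}^{\otimes (-k)}) \cong \bigoplus_{k \geq 0} V_{k\lambda}^\vee = \bigoplus_{k \geq 0} V_{k\lambda^\vee}$, giving (2). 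Finally, $O_\lambda$ is not affine because $\Olamb \setminus O_\lambda = \{0\}$ has codimension $\geq 2$ in the normal variety $\Olamb$ (as $\dim O_\lambda \geq 2$ for any nontrivial $\lambda$), so Hartogs gives $\OOO(O_\lambda) = \OOO(\Olamb)$ while $O_\lambda \neq \Olamb$.

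For (6), pass to $\PP(V)$: $\overline{G \cdot [w]}$ contains a closed $G$-orbit, and since $V_\lambda$ is simple this is uniquely $G \cdot [v]$, so $[v] \in \overline{G \cdot [w]}$. A standard Hilbert--Mumford argument using a $1$-parameter subgroup $\mu \colon \Kst \to G$ with $\mu(t) w \to 0$ then upgrades this to $cv \in \overline{Gw}$ for some $c \in \Kst$, whence $O_\lambda = G(cv) = c \cdot O_\lambda \subseteq \overline{Gw}$ by the cone property and $G$-stability. For (7), $\Olamb \subseteq V$ is $G$-stable, so $T_0 \Olamb$ is a nonzero $G$-submodule of $V = T_0 V$; simplicity of $V_\lambda$ forces $T_0 \Olamb = V$, and smoothness of $\Olamb$ at $0$ then yields $\dim \Olamb = \dim V$, hence $\Olamb = V$ (the converse is trivial). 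The main obstacle across the proof is the projective-to-affine lift in (6) — this is essentially the characterizing minimality property of $O_\lambda$ from Vinberg--Popov, and requires careful use of the Hilbert--Mumford $1$-PS to rule out the cases where the scaling factors run to $0$ or $\infty$.
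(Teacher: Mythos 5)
Parts (3), (4), (5) and (7) are correct and follow the same line as the paper. Parts (1), (2) and (6) each have a genuine problem.

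For the normality assertion in (1), you write that the proper birational morphism $\psi\colon \mathcal{L}=G\times^{P_\lambda}\K v\to\Olamb$ ``exhibits $\psi$ as a resolution of singularities, and $\Olamb$ is in particular normal.'' This implication is false: having a smooth proper birational cover does not make the target normal. The normalization $\mathbb{A}^1\to\{y^2=x^3\}$, $t\mapsto(t^2,t^3)$, is proper, birational, bijective (so fibers are connected) and has a smooth source, yet the cuspidal cubic is not normal. What is actually needed is the identity $\psi_*\OOO_{\mathcal L}=\OOO_{\Olamb}$, i.e. that the image of $S^k(V_\lambda^\vee)$ under restriction to $O_\lambda$ is all of $V_{k\lambda}^\vee$ for every $k$; equivalently, the surjectivity of the Cartan multiplication $V_\lambda^\vee\otimes V_{k\lambda}^\vee\to V_{(k+1)\lambda}^\vee$. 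That is a genuine, classical input (projective normality of $G/P_\lambda$ in its minimal embedding, due to Kostant), and without it the argument is circular. The same issue infects (2): the Borel--Weil computation gives $\OOO(\mathcal L)=\bigoplus_k V_{k\lambda}^\vee$, but the equality $\OOO(\Olamb)=\OOO(\mathcal L)$ is precisely what is missing. The paper avoids all of this by simply citing Vinberg--Popov for (1) and (2), which is an honest shortcut.

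For (6), your passage to $\PP(V)$ correctly gives $[v]\in\overline{G[w]}$ (there is a unique closed $G$-orbit in $\PP(V_\lambda)$, found via a Borel fixed point). But the lift back to the affine statement $cv\in\overline{Gw}$ is exactly where the difficulty sits, as you acknowledge: what you get for free is that some nonzero $cv$ lies in $\overline{\K^*Gw}$, not in $\overline{Gw}$, and a generic Hilbert--Mumford argument does not by itself rule out the scalars escaping to $0$ or $\infty$. The paper's proof is both shorter and gap-free: set $Y=\overline{Gw}$; since $Y$ is irreducible affine, $Y^U$ has no isolated points, and $0\in Y^U$, so $Y^U\supsetneq\{0\}$; hence $Y$ contains a nonzero highest weight vector and therefore $\Olam\subseteq Y$. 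You should replace the projective detour by this direct argument (or carefully supply the missing control on scalings). The remaining parts of your proof are essentially the paper's, with the pleasant addition that you derive the non-affineness of $O_\lambda$ from Hartogs rather than reading it off the grading.
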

\begin{proof}
(\ref{normal})--(\ref{hw-grading})  
These two statements can be found in \cite[Theorem~1 and 2]{ViPo1972A-certain-class-of}. 
\ps
(\ref{U-fixed}) 
We have $\Olam^U\subset V^U=\K^* v\cup\{0\}$, hence $\Olam^U\subset\K^*v$. They are equal because $\Olam$ is a cone. Since $G_v = G_w$ for all $w \in \Kst v$ we see that $G_v = \Cent_{G}(O_{\lambda}^{U})$ and $V^{G_v} \supseteq \K v$. Now the second claim follows, because $U \subseteq G_v^\circ \subseteq G_v$, and so $V^{G_v} \subseteq V^{G_v^\circ} \subseteq V^U = \K v$.
\ps
(\ref{parabolic}) 
$G$ acts on the projective space $\PP(V)$, and the projection $p\colon V\setminus\{0\} \to \PP(V)$ is $G$-equivariant and sends closed cones to closed subsets. In particular, $p(\Olam)=G\,p(v)$ is closed, and so $P_{\lambda} := G_{p(v)}=\Norm_G(\K v)= \Norm_G(O_\lambda^U)\subset G$ is a parabolic subgroup normalizing $G_v$. If $g \in G$ normalizes $G_v^\circ$, then $G_{g v}^\circ=G_v^\circ$ and so $g v \in \Kst v=O_\lambda^U$ by (\ref{U-fixed}). Hence, $\Norm_G(G_v) \subseteq \Norm_G(G_v^\circ) \subseteq \Norm(O_\lambda^U) = P_\lambda \subseteq \Norm_G(G_v)$.
\ps
(\ref{scalarmultiplication}) 
By (\ref{normal}), we have $\Aut_{G}(\Olamb)=\Aut_{G}(O_{\lambda})$. Since $\Olamb$ is a cone, we have an inclusion $\K^*\hookrightarrow \Aut_{G}(\Olamb)$. Any  $\alpha\in \Aut_{G}(\Olamb)$ is $U$-equivariant and hence preserves ${\Olamb}^U = V^{U}$ as well as $\{0\}\in V^{U}$, and the claim follows.
\ps
(\ref{minimal}) 
Let $Y:=\overline{G v} \subset \NNN_{V}$ which implies that $0\in Y$. Since $Y$ is irreducible, the fixed point set $Y^{U}$ does not contain isolated points 
(see e.g. \cite[III.5, Theorem~5.8.8]{Kr2016Algebraic-Transfor}), 
and so  $Y^U \neq \{0\}$. Hence $Y$ contains a highest weight vector, and so $Y \supset \Olam$.
\ps
(\ref{sing}) 
The tangent space $T_0\Olb$ is a nontrivial submodule of $V_\lambda$, hence equal to $V_\lambda$. If $\Olb$ is smooth, then $\dim \Olb = \dim T_0\Olb = \dim V_\lambda$ and so $\Olb = V_\lambda$. The other implication is clear.
\end{proof}

For any $k\geq 1$ the $k$th symmetric power $S^{k}(V_{\lambda})$ contains $V_{k\lambda}$ with multiplicity 1. It is the $G$-submodule generated by $v_{0}^{k}\in S^{k}(V_{\lambda})$ where $v_{0}\in V_{\lambda}$ is a highest weight vector. Let $p\colon S^{k}(V_{\lambda}) \to V_{k\lambda}$ be the linear projection. 
Then the map $v \mapsto p(v^{k})$ is a homogeneous $G$-equivariant morphism $\phi_{k}\colon V_{\lambda} \to V_{k\lambda}$ of degree $k$, classically called a {\it covariant}.

\begin{lem}\label{quotient-of-Omin.lem}
Let $V=V_\lambda$ be a simple $G$-module of highest weight $\lambda$ and $v\in V^U$ a highest weight vector. For $k\geq1$ 
define $\mu_{k}:=\{\zeta\in\Kst \mid \zeta^{k}=1\} \subset \Kst$.

The covariant $\phi_{k}\colon V_{\lambda}\to V_{k\lambda}$ is a finite morphism of degree $k$ and induces a bijective morphism $\bar\phi_{k}\colon V_{\lambda}/\mu_{k}\to \phi_{k}(V_{\lambda})$, where $\mu_{k}$ acts by scalar multiplication on $V_{\lambda}$. 

In particular, the induced map $\phi_{k}\colon \Olam\to O_{k\lambda}$ is a finite $G$-equivariant cyclic covering of degree $k$, and $\phi_{k}\colon\Olamb\to \overline{O_{k\lambda}}$ is the quotient by the action of $\mu_{k}$. 
\end{lem}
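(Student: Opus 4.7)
The argument I would give proceeds in four stages, the fourth being the main technical obstacle.

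In the first stage, I would show $\phi_{k}^{-1}(0)=\{0\}$ and deduce finiteness. The set $C:=\phi_{k}^{-1}(0)$ is a closed $G$-stable cone in $V_{\lambda}$. By Borel's fixed-point theorem applied to $\PP(C)$, any nonzero such cone must contain a $B$-stable line, which necessarily lies in $V_{\lambda}^{U}=\K v$; hence $v\in C$ whenever $C\neq\{0\}$. But $\phi_{k}(v)=p(v^{k})$ is a highest weight vector of the copy $V_{k\lambda}\subseteq S^{k}V_{\lambda}$ generated by $v^{k}$, so it is nonzero, forcing $C=\{0\}$. Since $\phi_{k}$ is homogeneous of polynomial degree $k$ (hence $\Kst$-equivariant) and $\phi_{k}^{-1}(0)=\{0\}$, standard graded Nakayama yields that $\phi_{k}$ is finite. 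The identity $\phi_{k}(\zeta w)=\zeta^{k}\phi_{k}(w)=\phi_{k}(w)$ for $\zeta\in\mu_{k}$ produces the induced morphism $\bar\phi_{k}\colon V_{\lambda}/\mu_{k}\to\phi_{k}(V_{\lambda})$.

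Next I would compute the fibre over $u:=\phi_{k}(v)\in V_{k\lambda}^{U}$. This set is $U$-stable (by $G$-equivariance) and finite; since $U$ is connected, it acts trivially on any finite $U$-set, so $\phi_{k}^{-1}(u)\subseteq V_{\lambda}^{U}=\K v$. The restriction $\phi_{k}|_{\K v}$ is $tv\mapsto t^{k}u$, giving $\phi_{k}^{-1}(u)=\mu_{k}\cdot v$ of cardinality exactly $k$; in particular $\deg\phi_{k}\geq k$.

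The hard part is the final stage: showing $\phi_{k}$ is \'etale on $V_{\lambda}\setminus\{0\}$. The differential at $w$ is $d\phi_{k}|_{w}(z)=k\cdot p(w^{k-1}\cdot z)$, and its non-injectivity locus is a closed $G$-stable cone in $V_{\lambda}$, so by the Borel argument of the first stage it suffices to verify injectivity at $w=v$---that is, of the linear map $z\mapsto p(v^{k-1}\cdot z)\colon V_{\lambda}\to V_{k\lambda}$. This is the restriction to $v^{k-1}\otimes V_{\lambda}$ of the Cartan surjection $V_{(k-1)\lambda}\otimes V_{\lambda}\twoheadrightarrow V_{k\lambda}$, and establishing its injectivity is the main obstacle. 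My approach is a $T$-weight analysis: the map is $T$-equivariant up to the uniform shift by $(k-1)\lambda$, and the key identity $p(v^{k-1}\cdot Xv)=\tfrac{1}{k}Xu$ for $X\in\mathfrak{n}^{-}$---obtained from Leibniz on $S^{k}V_{\lambda}$ combined with the $G$-equivariance of $p$---propagates the known nonvanishing at the top weight $v\mapsto u$ down the weight lattice through successive lowering operators. Once \'etaleness off $0$ is in hand, each nonzero fibre of $\phi_{k}$ has exactly $\deg\phi_{k}$ elements; combined with the previous fibre computation this gives $\deg\phi_{k}=k$, and as $\mu_{k}$ acts freely on $V_{\lambda}\setminus\{0\}$ every nonzero fibre is a single $\mu_{k}$-orbit, proving $\bar\phi_{k}$ is bijective. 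The ``in particular'' statement about $\Olam\to O_{k\lambda}$ then follows by $G$-equivariant restriction to the orbit closure $\Olamb$ together with the fibre computation over $u\in O_{k\lambda}$.
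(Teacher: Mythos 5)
Your stages 1 and 2 are sound, and filling in the paper's laconic assertion ``the fibers of $\phi_k$ are the $\mu_k$-orbits'' is a reasonable undertaking---the published proof essentially just states this after noting $\phi_k^{-1}(0)=\{0\}$. Two remarks on stage~3. The injectivity of $z\mapsto p(v^{k-1}z)$ has a one-line proof that avoids the lowering-operator machinery entirely: since $U$ fixes $v$ and $p$ is $G$-equivariant, $p(v^{k-1}(gz))=p\bigl((gv)^{k-1}(gz)\bigr)=g\cdot p(v^{k-1}z)$ for $g\in U$, so the map is $U$-equivariant; its kernel is thus a $U$-stable subspace of $V_\lambda$, which if nonzero would contain a $U$-fixed vector and hence $v$, contradicting $p(v^k)\neq 0$. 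The weight-lattice induction you sketch does not close---Leibniz gives $p(v^{k-1}Xz')=X\cdot p(v^{k-1}z')-(k-1)\,p\bigl(v^{k-2}(Xv)z'\bigr)$, introducing a second covariant---and pointwise nonvanishing on weight vectors would in any case be insufficient when a weight space of $V_\lambda$ has dimension $>1$.

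The genuine gap is the final inference ``once \'etaleness off $0$ is in hand, each nonzero fibre of $\phi_k$ has exactly $\deg\phi_k$ elements.'' The morphism $\phi_k$ cannot be \'etale onto its (in general singular) image, because \'etale maps preserve regularity; what the differential computation gives is that $\phi_k$ is \emph{unramified} off $0$. For a finite unramified morphism, the reduced and scheme-theoretic fibres agree, and upper semicontinuity of the fibre rank then yields $|\phi_k^{-1}(y)|\geq\deg\phi_k$---the inequality goes the wrong way for your purposes. (The normalization of a nodal plane cubic is finite, unramified and of degree~$1$, yet the node has a two-point fibre.) You therefore do obtain $\deg\phi_k=k$ from $k\geq\deg\phi_k\geq k$, but not that every fibre is a single $\mu_k$-orbit; that additionally requires $\phi_k(V_\lambda)$ to be geometrically unibranch, which has not been established. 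One way to close the gap: the induced map $\psi\colon\PP(V_\lambda)\to\PP(V_{k\lambda})$ is finite and unramified, so the diagonal $\Delta$ is open and closed in $\PP(V_\lambda)\times_{\PP(V_{k\lambda})}\PP(V_\lambda)$; the complementary closed $G$-stable subset of $\PP(V_\lambda)\times\PP(V_\lambda)$ would, if nonempty, contain the unique $B$-fixed point $([v],[v])$ by Borel's fixed-point theorem, contradicting $([v],[v])\in\Delta$. Hence $\psi$ is injective and the nonzero fibres of $\phi_k$ are exactly the $\mu_k$-orbits.
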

\begin{proof}
Since $\phi_{k}^{-1}(0) = \{0\}$, the homogeneous morphism $\phi_{k}$ is finite, the image $\phi_{k}(V_{\lambda})$ is closed, and the fibers of $\phi_{k}$ are the $\mu_{k}$-orbits. This yields the first statement. The last statement follows from the fact that $\overline{O_{k\lambda}}$ is normal, by Lemma~\ref{hw-orbits.lem}(\ref{normal}).
\end{proof}

\begin{rem}\label{covering-Omin.rem}
The following remarks are direct consequences of the lemma above.
\be
\item For $k>1$  we have $\phi_{k}(V_{\lambda})\subsetneqq V_{k\lambda}$, because the quotient $V_{\lambda}/\mu_{k}$ is always singular in the origin. In particular, $\dim V_{k\lambda}>\dim V_{k}$.
\item
The image under $\phi_{k}$ of any nontrivial orbit $O \subset V_{\lambda}$ is an orbit $\phi_{k}(O) \subset V_{k\lambda}$, and the induced map $\phi_{k}\colon O \to \phi_{k}(O)$ is a cyclic covering of degree $k$. 
\item\label{covering-Omin.rem:4}
For $k>1$ we have $\dim V_{k\lambda}>\dim V_{\lambda}\geq \dim\overline{\Olam}=\dim \overline{O_{k\lambda}}$, hence
$\overline{O_{k\lambda}}$ is  singular in the origin, by Lemma~\ref{hw-orbits.lem}(\ref{sing}).
\ee
\end{rem}

The following lemma states that orbits of the form $\Olam$ are  minimal among $G$-orbits.

\begin{lem}\label{Omin.lem}
Let $W$ be a $G$-module and $w\in W$ a nonzero element. If $p\colon W \onto V$ is the projection onto a simple factor $V\simeq V_{\lambda}$ of $W$ such that $p(w)\neq 0$, then $\dim G w \geq \dim O_{\lambda}$.
\end{lem}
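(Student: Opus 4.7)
The natural move is first to reduce to the case $W=V=V_{\lambda}$. Since $p$ is $G$-equivariant, $p(Gw)=G\cdot p(w)$, and the restriction $Gw\twoheadrightarrow G\cdot p(w)$ is surjective, so $\dim Gw\geq \dim G\cdot p(w)$. Setting $v:=p(w)\in V\setminus\{0\}$, it suffices to prove $\dim Gv\geq \dim O_{\lambda}$ for every nonzero $v\in V_{\lambda}$.

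Next I would pass to projective space. The projection $Gv\to G\cdot[v]\subset \PP(V)$ is $G$-equivariant and surjective, so $\dim Gv\geq \dim G\cdot[v]$. The closure $\overline{G\cdot[v]}\subset\PP(V)$ is a projective, hence complete, $G$-variety, so by Borel's fixed point theorem the Borel subgroup $B$ has a fixed point in $\overline{G\cdot[v]}$. Any $B$-fixed point of $\PP(V)$ is the class of a $B$-stable line in $V$, and since $V^{U}=\K v_{0}$ is one-dimensional (with $v_{0}$ a highest weight vector), the only such line is $\K v_{0}$. Therefore $[v_{0}]\in \overline{G\cdot[v]}$, and hence $G\cdot[v_{0}]\subseteq \overline{G\cdot[v]}$.

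To conclude I would split into two cases. If $G\cdot[v]=G\cdot[v_{0}]$, then $v\in\K^{*}\cdot Gv_{0}=O_{\lambda}$ because $O_{\lambda}$ is a cone (Lemma~\ref{hw-orbits.lem}), so $Gv=O_{\lambda}$ and we are done with equality. Otherwise $G\cdot[v_{0}]\subseteq \overline{G\cdot[v]}\setminus G\cdot[v]$, and the usual orbit-closure dimension inequality gives $\dim G\cdot[v] > \dim G\cdot[v_{0}]$. Combined with $\dim O_{\lambda}=\dim G\cdot[v_{0}]+1$ (which holds because, assuming $\lambda\neq 0$, the character $\lambda\colon T\to\K^{*}$ is surjective, so the scaling $\K^{*}$ on $v_{0}$ is realized inside $T\subset G$; the case $\lambda=0$ is trivial), we obtain $\dim Gv\geq \dim G\cdot[v]\geq \dim G\cdot[v_{0}]+1=\dim O_{\lambda}$.

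I do not expect a serious obstacle here: the essential step is the projectivization, which converts the problem into one where Borel's fixed point theorem applies, and the rest is dimension bookkeeping. The only point that needs a brief justification is the identification of the unique $B$-fixed point of $\PP(V_{\lambda})$ with $[v_{0}]$, which follows immediately from $\dim V_{\lambda}^{U}=1$.
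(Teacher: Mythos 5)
Your proof is correct, and it takes a genuinely different route from the paper's. After the same initial reduction to $W=V=V_\lambda$, you pass to $\PP(V)$ and invoke Borel's fixed point theorem: the closure $\overline{G\cdot[v]}$ is complete, so it contains a $B$-fixed point, which must be $[v_0]$ since $\dim V^U=1$; if $G\cdot[v]=G\cdot[v_0]$ one gets $Gv=O_\lambda$ outright (using that $O_\lambda$ is a cone), and otherwise $\dim G\cdot[v]\geq\dim G\cdot[v_0]+1=\dim O_\lambda$ since $P_\lambda/G_{v_0}\simeq\Kst$. The paper instead works entirely in $V$ via the \emph{associated cone} $\CCC Y$ of $Y:=\overline{Gv}$ (citing Borho--Kraft): since $Y$ is irreducible, $G$-stable and sits in a single fiber of $\pi\colon V\to V\quot G$, its associated cone is a $G$-stable closed subset of the nullcone $\NNN_V$ of the same dimension as $Y$, and by Lemma~\ref{hw-orbits.lem}(\ref{minimal}) any such subset must contain $O_\lambda$, giving $\dim Gv=\dim\CCC Y\geq\dim O_\lambda$. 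Your argument is more elementary and self-contained (only Borel's fixed point theorem and the $\dim V^U=1$ observation), whereas the paper's fits into the Borho--Kraft degeneration formalism, which is a standard tool for transporting orbit-closure statements to the nullcone and is consistent with the machinery used elsewhere in the paper (e.g.\ the proof of part~(\ref{minimal}) of Lemma~\ref{hw-orbits.lem} itself, which, incidentally, hides an argument quite close in spirit to yours: $U$ has no isolated fixed points on an irreducible affine $U$-variety).
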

\begin{proof}
If $v:=p(w)\neq 0$, then $\dim G w \geq \dim G v > 0$. Hence we can assume that $W = V$ is a simple $G$-module and $p$ the identity map.

Given a closed subset $Y \subset V$ of a vector space, one defines the {\it associated cone} $\CCC Y \subset V$ to be the zero set of the functions $\gr f$, $f \in I(Y)\subset \OOO(V)$, where $\gr f$ denotes the homogeneous term of $f$ of maximal degree. If $Y$ is irreducible, $G$-stable and belongs to a fiber $\pi^{-1}(z)$ of the quotient morphism $\pi\colon V \to V \quot G$, then $\CCC Y \subseteq \NNN_{V}$, and $\CCC Y$ is $G$-stable and equidimensional of dimension $\dim Y$, see \cite[\S 3]{BoKr1979Uber-Bahnen-und-de}. Lemma~\ref{hw-orbits.lem}(\ref{minimal}) now implies that the highest weight orbit $O \subset V$ belongs to $\CCC Y$, and the claim follows.
\end{proof}

\begin{exa}
The simple $\SLtwo$-modules are given by the binary forms $V_{m}:=\K[x,y]_{m}$, $m \in \NN$. The form $y^{m}\in V_{m}$ is a highest weight vector whose stabilizer is 
$$
U_{m}:=  \left\{\left[\begin{smallmatrix} \zeta & s \\ & \zeta^{-1}\end{smallmatrix}\right]\mid \zeta^{m}=1, s\in \K\right\},
$$ 
hence $O_m\simeq \SLtwo/U_{m}$. If $m=2k$ is even, then $x^{k}y^{k}\in V_{m}$ is fixed by the diagonal torus $T\subset \SLtwo$, and the orbit $O = \SLtwo x^{k}y^{k}$ is closed and isomorphic to $\SLtwo/T$ for odd $k$ and to $\SLtwo/N$ for even $k$ where $N\subset \SLtwo$ is the normalizer of $T$. It is easy to see that in both cases the associated cone $\CCC O$ is equal to $\overline{O_m}$.
\end{exa}

\ps
%%%%%%%%%%%%%%%%%%%%%%%%%%%%
\subsection{Stabilizer of a highest weight vector and coverings}\label{ass-parabolic.subsec}
Let $O_{\lambda}= G v \subset V_{\lambda}$ be a highest weight orbit where $v \in V_{\lambda}^{U}$. 
We have seen in Lemma~\ref{hw-orbits.lem}(\ref{parabolic}) that
$$
P_{\lambda}: = \Norm_G(O_{\lambda}^{U})=\Norm_G(\K v) = \Norm_{G}G_{v}\subset G
$$
is a parabolic subgroup.
 It follows that the weight $\lambda$ extends to a character of $P_\lambda$ defining the action of $P_\lambda$ on $\K v$: 
$$
p v' = \lambda(p)\cdot v' \text{ for } v' \in \K v\text{ and }  p\in P_{\lambda}.
$$ 
Note that $G_{v}= \ker \lambda$, and so $P_{\lambda}/G_{v} \simto \Kst$.

A dominant weight $\lambda \in \Lambda_{G}$ is called {\it indivisible\/} if $\lambda$ is not an integral multiple of some $\lambda' \in \Lambda_{G}$,  $\lambda'\neq\lambda$. For an affine algebraic group $H$, we denote by $H^{\circ}$ its connected component. 

\begin{lem}\label{ray.lem}\strut
\be
\item
Let $\lambda\in \Lambda_{G}$ be a dominant weight of $G$. If $\lambda_{0} \in \QQ\lambda \cap \Lambda_{G}$ is an indivisible element, then $\QQ\lambda \cap \Lambda_{G} = \NN\lambda_{0}$. 
\item
Let $v\in V_{\lambda}$ and $v_0\in V_{\lambda_0}$ be highest weight vectors, and let $k\geq1$ be the integer such that $\lambda=k\lambda_0$. Then: 
\be
\item\label{ray.lem:1} 
$P_{\lambda}=P_{\lambda_{0}}$,
\item\label{ray.lem:2} 
 $G_{v}^{\circ}=G_{v_0}$, and $G_{v}/G_{v}^{\circ}$ is finite and cyclic of order $k$,
\item\label{ray.lem:3}  
$G_{v}$ is connected if and only if $\lambda$ is indivisible.
\item\label{ray.lem:4}
If $\overline{\Olam}$ is smooth, then $\lambda$ is indivisible.
\ee
\item\label{finite-covering}
If $O$ is an orbit and $\phi\colon O \to \Olam$ a finite $G$-equivariant covering, then $O \simeq O_\mu$ where $\lambda = \ell\mu$ for an integer $\ell\geq 1$, and $\phi$ is cyclic of degree $\ell$.
\ee
\end{lem}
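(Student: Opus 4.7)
For part (1), my plan is to exploit that the dominant monoid $\Lambda_G$ sits inside the finitely generated free abelian group $X(T)$, hence lies discretely in the real vector space $X(T) \otimes \RR$. Consequently $M := \QQ_{\geq 0}\lambda \cap \Lambda_G$ is a discrete submonoid of the ray $\QQ_{\geq 0}\lambda$, and any such submonoid has the form $\NN s$ for a unique generator $s > 0$. Since $\lambda_0 \in M$ and $\lambda_0$ is indivisible, the relation $\lambda_0 = n s$ forces $n = 1$, so $s = \lambda_0$. Negative multiples of $\lambda$ are not dominant, giving $\QQ\lambda \cap \Lambda_G = \NN \lambda_0$. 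Part (2)(i) then follows from the standard fact that $P_\lambda$ is determined by the set of simple roots $\{\alpha_i \mid \langle \lambda, \alpha_i^\vee \rangle = 0\}$, which is invariant under multiplication of $\lambda$ by a positive scalar.

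For (2)(ii) and (iii), I would exploit that both $\lambda$ and $\lambda_0$ extend to characters of the connected parabolic $P := P_\lambda = P_{\lambda_0}$ (these being the characters describing the $P$-actions on $\K v$ and $\K v_0$), with $\lambda = \lambda_0^k$ as characters of $P$. Then $G_v = \ker\lambda$, $G_{v_0} = \ker\lambda_0$, and $G_{v_0} \subseteq G_v$. Surjectivity of $\lambda_0 \colon P \to \Kst$ (any nontrivial algebraic character of a connected group is surjective onto $\Kst$) shows that $g \mapsto \lambda_0(g)$ induces an isomorphism $G_v/G_{v_0} \simto \mu_k$. To see that $G_{v_0}$ is connected, I would invoke the general fact that the kernel of a surjective algebraic character $\chi\colon H \to \Kst$ of a connected algebraic group $H$ is connected if and only if $\chi$ is indivisible in $X(H)$ (if $\ker\chi/(\ker\chi)^\circ \simeq \mu_m$ with $m \geq 2$, then $H/(\ker\chi)^\circ$ is a connected one-dimensional algebraic group containing the finite subgroup $\mu_m$, hence equals $\Kst$, and $\chi$ factors through the $m$-th power map). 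To verify that $\lambda_0$ is indivisible in $X(P)$: any factorization $\lambda_0 = m\chi$ with $\chi \in X(P)$ and $m \geq 2$ restricts to $m(\chi|_T) = \lambda_0$ in $X(T)$, with $\chi|_T$ a positive rational multiple of $\lambda_0$, hence dominant, so $\chi|_T \in \Lambda_G$, contradicting part (1). Statement (iii) is then immediate: $G_v$ is connected if and only if $\mu_k$ is trivial, if and only if $\lambda = \lambda_0$.

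For (iv), if $\Olamb$ is smooth then $\Olamb = V_\lambda$ by Lemma~\ref{hw-orbits.lem}(\ref{sing}); but if $\lambda = k\lambda_0$ with $k \geq 2$ then Remark~\ref{covering-Omin.rem}(\ref{covering-Omin.rem:4}) yields $\dim V_\lambda > \dim V_{\lambda_0} \geq \dim\Olamb$, a contradiction, forcing $k = 1$. For (3), given a finite $G$-equivariant covering $\phi\colon O \to \Olam$ of degree $\ell$, pick $x \in O$ with $\phi(x) = v$; then $G_x \subseteq G_v$ has index $\ell$ and contains $G_v^\circ = G_{v_0}$, so $G_x/G_{v_0}$ is the unique index-$\ell$ subgroup of the cyclic group $G_v/G_{v_0} \simeq \mu_k$. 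This forces $\ell \mid k$ and $G_x = \lambda_0^{-1}(\mu_{k/\ell})$. Setting $\mu := (k/\ell)\lambda_0$ and applying part (2) to $\mu$ gives $G_{v_\mu} = \lambda_0^{-1}(\mu_{k/\ell}) = G_x$, so $O \simeq G/G_x = G/G_{v_\mu} \simeq O_\mu$ with $\lambda = \ell\mu$, and $\phi$ is realized by the $\ell$-cyclic covariant of Lemma~\ref{quotient-of-Omin.lem}.

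The hard part will be the connectedness assertion in (ii): it requires both the abstract structure theorem for characters of connected algebraic groups (kernel connected iff character indivisible) and the step of transferring indivisibility of $\lambda_0$ in the dominant monoid $\Lambda_G$ to indivisibility in the full character lattice $X(P_\lambda)$, which is precisely where part (1) feeds into the proof of part (2).
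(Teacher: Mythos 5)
Your proposal is largely correct and tracks the paper's argument closely for parts (2) and (3), but part (1) contains a genuine gap.

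\textbf{Part (1).} You claim that $M := \QQ_{\geq 0}\lambda \cap \Lambda_G$ is a discrete submonoid of the ray and that ``any such submonoid has the form $\NN s$.'' That general statement is false: the submonoid $\{0,2,3,4,\dots\}$ of $\NN \subset \QQ_{\geq 0}$ is discrete but not of the form $\NN s$. Discreteness alone does not give a generator. What makes the conclusion true here is the additional structure: since $\Lambda_G = \Lambda \cap X(T)$ with $\Lambda = \bigoplus_i \NN\omega_i$ a free monoid, one has $M = \QQ_{\geq 0}\lambda \cap (L_\Lambda \cap X(T))$ where $L_\Lambda := \bigoplus_i\ZZ\omega_i$ — i.e.\ $M$ is the intersection of a ray with a \emph{lattice}, and that is what forces $M = \NN s$. (Equivalently, as the paper argues, first intersect with the free monoid $\Lambda$ to get $\NN\lambda_1$, then with the lattice $X(T)$ to get $\NN\lambda_0$.) The key saturation property is being used implicitly without justification in your plan; without it the indivisibility hypothesis on $\lambda_0$ cannot do its job.

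\textbf{Parts (2) and (3).} These are correct. Your route to (2)(i) — using the root-system characterization $P_\lambda = P(\{\alpha_i : \langle\lambda,\alpha_i^\vee\rangle = 0\})$, which depends only on the support of $\lambda$ — is different from the paper's, which pulls back $\K v$ along the covariant $\phi_k$ and applies Lemma~\ref{hw-orbits.lem}(\ref{parabolic}); both work, and yours is arguably more direct, though it relies on material the paper establishes later (Lemma~\ref{P-lambda.lem}). For (2)(ii)--(iii), you go beyond the paper in a useful way: the paper asserts ``$G_{v_0} = \ker(\lambda_0)$ is connected, because $\lambda_0$ is indivisible'' without elaboration, whereas you correctly flag that one must transfer indivisibility of $\lambda_0$ from the monoid $\Lambda_G$ to the full character group $X(P_{\lambda_0})$, and supply both the transfer argument (via restriction to $T$ and dominance) and the general principle (kernel of a surjective character of a connected group is connected iff the character is indivisible, using that one-dimensional connected groups in characteristic zero are $\K^+$ or $\Kst$). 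Part (3) follows the same cyclic-subgroup strategy as the paper; your identification of $G_x$ as $\lambda_0^{-1}(\mu_{k/\ell})$ and the matching to $G_{v_\mu}$ with $\mu = (k/\ell)\lambda_0$ is clean and correct.
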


\begin{proof}
(1) We have $\Lambda_{G}=\Lambda\cap X(T)$ where $\Lambda := \bigoplus_{i=1}^{r}\NN\omega_{i}$, see Remark~\ref{minimal-lambda.rem}. Then $\QQ\lambda\cap \Lambda=\NN\lambda_1$ for some $\lambda_1\in\Lambda$, and so
\[
\QQ\lambda\cap \Lambda_G=\QQ\lambda\cap(\Lambda\cap X(T))=\NN\lambda_1\cap X(T)=\NN\lambda_0
\]
for some $\lambda_0\in\Lambda_G$. In particular, $\lambda_0$ is the only indivisible element in $\QQ\lambda\cap \Lambda_G$. 
\ps
(\ref{ray.lem:1}) Consider the covariant $\varphi_k\colon V_{\lambda_0}\rightarrow V_{\lambda}$. We have 
\[\varphi_k^{-1}(\K v)=\varphi_k^{-1}(V_{\lambda}^U)=V_{\lambda_0}^U=\K v_0,\] 
so by Lemma~\ref{hw-orbits.lem}(\ref{parabolic}) we obtain that 
\[
P_{\lambda}=\Norm_{G}(\K v)=\Norm_G(\K v_0)=P_{\lambda_0}.
\]

(\ref{ray.lem:2}) We can assume that $v = \phi_k(v_0)$. Then $G_v=\ker(\lambda)\supset \ker(\lambda_0)=G_{v_0}$ and $G_v/G_{v_0}$ is cyclic of order $k$. It follows that $G_{v_0}$ contains the connected component $G_v^{\circ}$ of $G_v$, and hence $G_{v}^{\circ}=G_{v_0}^{\circ}$. 
Moreover, $G_{v_0}=\ker(\lambda_0)$ is connected, because $\lambda_0$ is indivisible. 
\ps
(\ref{ray.lem:3}) follows from (\ref{ray.lem:2}), and (\ref{ray.lem:4}) from Lemma~\ref{hw-orbits.lem}(\ref{sing}) and Remark~\ref{covering-Omin.rem}(\ref{covering-Omin.rem:4}). 
\ps
(\ref{finite-covering})
For $w \in O$ and $v=\phi(w) \in \Olam$ we get a finite covering $G/G_w=O \to O_\lambda = G/G_v$, hence $G_v^\circ \subseteq G_w \subseteq G_v$. By (\ref{ray.lem:2}) we have $G/G_v^\circ = O_{\lambda_0}$ where $\lambda = k\lambda_0$ for an integer $k\geq1$, and the composition $G/G_{\lambda_0} = O_{\lambda_0} \to G/G_w = O \to G/G_v = \Olam$ is a cyclic covering of degree $k$. Therefore, $O_{\lambda_0} \to O$ and $O \to \Olam$ are both cyclic, of degree $m$ and $\ell$ respectively, and $k = \ell m$. Hence $O \simeq O_{m\lambda_0}$ and $\ell(m\lambda_0) = \lambda$.
\end{proof}

\ps
%%%%%%%%%%%%%%%%%%%%%%%%%%%%
\subsection{Minimal orbits}
In this section we define the central notion of {\it minimal orbits} and prove some remarkable properties.

\begin{defn}\label{minimal-orbit.def}
An orbit $O$ in an affine $G$-variety $X$ isomorphic to a highest weight orbit $\Olam$ will be called {\it a minimal orbit.} This name is motivated by Lemma~\ref{Omin.lem}.
The {\it type} of a minimal orbit $O\simeq O_{\lambda}$ is defined to be the indivisible element $\lambda_{0}\in \QQ\lambda \cap \Lambda_{G}\simeq \NN\lambda_0$ from Lemma~\ref{ray.lem}. 

We denote by $\Obn$ the normalization of $\overline{O} \subset X$  and call it the {\it normal closure of $O$}. Clearly, $\Obn$ is an affine $G$-variety, and the normalization $\eta\colon \Obn \to \Ob$ is finite and $G$-equivariant.
\end{defn}
\begin{lem}\label{normal-closure.lem}
The normalization $\eta\colon \Obn \to \Ob$ is bijective. In particular, $O \subset \Obn$ in a natural way and $\Obn\setminus O$ is a fixed point, as well as $\Ob \setminus O$. Moreover, $\OOO(\Obn) = \OOO(O)$.
\end{lem}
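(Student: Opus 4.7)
The plan is to identify $\Obn$ with $\Olamb$ as $G$-varieties, from which every assertion of the lemma drops out at once: since $\Olamb = O_\lambda \cup\{0\}$ by Lemma~\ref{hw-orbits.lem}(\ref{normal}), $\Obn\setminus O$ becomes a single $G$-fixed point, the normalization map $\eta$ is then forced to be bijective (it is an isomorphism on $O$ and the remaining point must go to a $G$-fixed point of $\Ob\setminus O$, which is therefore also a single point), and $\OOO(\Obn) = \OOO(\Olamb) = \OOO(O_\lambda) = \OOO(O)$ is Lemma~\ref{hw-orbits.lem}(\ref{hw-grading}).

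To produce a comparison morphism $\Olamb\to\Ob$, I would fix a $G$-equivariant isomorphism $\phi\colon O_\lambda\simto O$ (which exists by hypothesis) and exploit the crucial identity $\OOO(O_\lambda) = \OOO(\Olamb)$ from Lemma~\ref{hw-orbits.lem}(\ref{hw-grading}): restriction followed by $\phi^{*}$ yields an injective $G$-equivariant $\K$-algebra homomorphism
\[
\OOO(\Ob)\hookrightarrow\OOO(O)\xrightarrow{\ \phi^{*}\ }\OOO(O_\lambda)=\OOO(\Olamb),
\]
hence a $G$-equivariant morphism $\bar\phi\colon\Olamb\to\Ob$ extending $\phi$ and sending $0$ to a $G$-fixed point $p\in\Ob$. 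Once I show that $\bar\phi$ is finite, everything follows: the image is closed and contains the dense orbit $O$, so $\Ob = O\cup\{p\}$; and the integrality of $\OOO(\Olamb)$ over $\OOO(\Ob)$, together with the normality of $\Olamb$ from Lemma~\ref{hw-orbits.lem}(\ref{normal}), forces $\OOO(\Olamb)=\OOO(\Obn)$ inside the common fraction field $\K(O)$ via the universal property of the integral closure.

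The hard part will be proving $\bar\phi$ finite, and my plan is to do this using the isotypic grading of $\OOO(\Olamb)$. By Lemma~\ref{hw-orbits.lem}(\ref{hw-grading}) we have $\OOO(\Olamb) = \bigoplus_{k\geq 0} V_{k\lambda^{\vee}}$ with each summand of multiplicity one, and multiplication realizes $V_{k\lambda^{\vee}}\cdot V_{\ell\lambda^{\vee}}=V_{(k+\ell)\lambda^{\vee}}$: the product is a $G$-submodule of $\OOO(\Olamb)$, and it is nonzero because $\OOO(\Olamb)$ is a domain and the product of highest weight vectors realizes the highest weight vector of the Cartan component. Since the summands $V_{m\lambda^{\vee}}$ are pairwise non-isomorphic, the $G$-stable subring $\OOO(\Ob)\subseteq\OOO(\Olamb)$ must have the form $\bigoplus_{k\in S}V_{k\lambda^{\vee}}$ for some sub-semigroup $S\subseteq\NN$ with $0\in S$, and $S\neq\{0\}$ because $\dim\Ob>0$. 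Choosing any $m\in S\setminus\{0\}$ gives $m\NN\subseteq S$, so every $f\in V_{k\lambda^{\vee}}$ satisfies $f^{m}\in V_{km\lambda^{\vee}}\subseteq\OOO(\Ob)$; in particular $f$ is a root of the monic polynomial $X^{m}-f^{m}\in\OOO(\Ob)[X]$, which exhibits $\OOO(\Olamb)$ as an integral extension of $\OOO(\Ob)$ and concludes the argument.
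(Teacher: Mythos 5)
Your proof is correct, and it does more than the paper's argument: it supplies the step that the printed proof leaves implicit. The paper asserts in one line that ``since $O\simeq O_\lambda$ and $\Olamb$ is normal, $\Obn\simeq\Olamb$,'' and then reads off all the consequences from Lemma~\ref{hw-orbits.lem}. But the cited facts do not by themselves give that $\OOO(O)\simeq\OOO(\Olamb)$ is \emph{integral} over the subring $\OOO(\Ob)$, which is exactly what is needed to identify $\OOO(\Olamb)$ with the integral closure $\OOO(\Obn)$. You prove precisely this: using that the isotypic decomposition $\OOO(\Olamb)=\bigoplus_{k\geq0}V_{k\lambda^{\!\vee}}$ is multiplicity-free and multiplicative (Cartan component), every $G$-stable subalgebra is of the form $\bigoplus_{k\in S}V_{k\lambda^{\!\vee}}$ for a submonoid $S\subseteq\NN$, and picking $m\in S\setminus\{0\}$ yields the monic relations $X^m-f^m$ which establish integrality. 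From there the finite extension of $\Olamb\to\Ob$, the bijectivity of $\eta$, and $\OOO(\Obn)=\OOO(O)$ all fall out exactly as you say. So the overall strategy (reduce everything to $\Olamb$ and the grading of Lemma~\ref{hw-orbits.lem}(\ref{hw-grading})) is the same as the paper's, but your write-up makes rigorous the finiteness claim that the paper's proof passes over; this is a genuine improvement in completeness rather than a different method.
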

\begin{proof}
Since $O \simeq O_\lambda$ for some $\lambda$ and $\Olb$ is normal (Lemma~\ref{hw-orbits.lem}(\ref{normal})) we have $\Obn \simeq \Olamb$. Therefore, we have  $O \subset \Obn$ in a natural way, and $\Obn \setminus O$ is a fixed point, again by Lemma~\ref{hw-orbits.lem}(\ref{normal}). It follows that 
the normalization $\eta\colon \Obn \to \Ob$ is bijective. This proves the first part of the lemma. The last statement follows from Lemma~\ref{hw-orbits.lem}(\ref{hw-grading}).
\end{proof}

\begin{rem}\label{min-orbits.rem}\strut
\be
\item\label{same-type}
Two minimal orbits $O_{1} \simeq O_{\lambda_{1}}$ and $O_{2}\simeq O_{\lambda_{2}}$ are of the same type if and only if $\QQ\lambda_{1} = \QQ\lambda_{2}$ (Lemma~\ref{ray.lem}). This is the case if and only if for $v_{i}\in O_{i}$ the groups $G_{v_{1}}^{\circ}$ and $G_{v_{2}}^{\circ}$ are conjugate (Lemma~\ref{ray.lem}(\ref{ray.lem:2})), and this holds if and only if the parabolic subgroups $P_{1}:=\Norm_{G}G_{v_{1}}$ and $P_{2}:=\Norm_{G}G_{v_{2}}$ are conjugate.
\item\label{covering2}
Let $O$ be a minimal orbit of type $\lambda_0$, $O\simeq O_{k\lambda_0}$ for an integer $k\geq1$. Then there is a finite cyclic $G$-equivariant covering $O_{\lambda_0} \to O$ of degree $k$ (Lemma~\ref{quotient-of-Omin.lem}). Moreover, $O_{\lambda_0}\simeq G/H$ where $H$ is connected (Lemma~\ref{ray.lem}(\ref{ray.lem:3})). In particular,
if $G$ is simply connected, then $O_{\lambda_0}$ is simply connected and $O_{\lambda_0}\to O$ is the universal covering. 
\ee
\end{rem}

In general, the closure of a minimal orbit needs not to be normal, as shown by the following example.

\begin{exa}\label{SLn.exa}
Let $V_{\eps_{1}} = \Kn$ be the standard representation of $\SL_{n}$. 
For any $k\geq 1$ the minimal orbit $O_{k\eps_{1}}\subset V_{k\eps_{1}}=S^{k}\Kn$ is  the orbit of $e_{1}^{k}$ where $e_1=(1,0,\ldots,0)$, and $O_{\eps_{1}}=\Kn\setminus\{0\} \to O_{k\eps_{1}}$ is the universal covering which is cyclic of degree $k$ and extends to a finite morphism $\Kn \to \overline{O_{k\eps_{1}}}$, $v\mapsto v^{k}$.

Now consider the $\SLn$-module  $W:=\bigoplus_{i=1}^{m} V_{k_{i}\eps_{1}}$ where  $k_{1},\ldots,k_{m}$ are coprime and all $k_{i}>1$. For $w = (e_{1}^{k_{1}},\ldots,e_{1}^{k_{m}})\in W$ we have an $\SLn$-equivariant isomorphism $O_{\eps_{1}} \simto O:=\SLn w$ which extends to a bijective morphism $\phi\colon V_{\eps_{1}} \to \overline{O}$. But $\phi$ is not an isomorphism, because $T_{0}\overline{O}$ is a submodule of $W$, hence cannot be isomorphic to $V_{\eps_{1}}$. In particular, $\overline{O}$ is not normal. The fixed point set $\overline{O}^U$ is the cuspidal curve given by the image of the bijective morphism $\K \to \K^{m}$, $c\mapsto (c^{k_{1}},\ldots,c^{k_{m}})$ which shows again that $\overline{O}$ is not normal.
\end{exa}

The following result collects some important properties of minimal orbits.

\begin{prop}\label{min-orbits.prop}
Let $X$ be an affine $G$-variety and $O \subset X$ a  $G$-orbit. 
\be
\item\label{min-orbits.prop1}
Then $O$ is minimal if and only if 
$\overline{O}\setminus O$ is a single point (which is a fixed point of $G$).
\item\label{min-orbits.prop2} 
If  $O$ is a minimal orbit and $\phi\colon O\to X$ a non-constant $G$-equivariant morphism, then $\phi(O)$ is a minimal orbit of the same type as $O$, and $\phi$ extends to a finite morphism $\bar\phi\colon \Obn\to\overline{\phi(O)}$.
\item \label{min-orbits.prop3}
Let $O$ be a minimal orbit, $Z$ a connected quasi-affine $G$-variety and $\delta\colon Z \to O$ a finite $G$-equivariant covering. Then $Z$ is a minimal orbit of the same type as $O$ and $\delta$ is a cyclic covering.
\item\label{min-orbits.prop4}
If $O_{1}$, $O_{2}$ are minimal orbits and $\eta\colon O_{1} \to O_{2}$ a finite $G$-equivariant covering, then $O_{1}$ and $O_{2}$ have the same type, and the covering is cyclic.
\item\label{min-orbits.prop5}
If $O \subset X$ is minimal, then $\overline{O} \subseteq X$ is smooth if and only if $\overline{O}$ is $G$-isomorphic to a simple $G$-module $V_{\lambda}$. In that case, $\lambda$ is indivisible. 
\ee
\end{prop}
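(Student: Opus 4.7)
The plan is to prove the five parts in the stated order, with the reverse implication of (\ref{min-orbits.prop1}) as the crux and the remaining parts derived from it together with Lemma~\ref{ray.lem}(\ref{finite-covering}). For (\ref{min-orbits.prop1}), the forward implication is immediate from Lemmas~\ref{hw-orbits.lem}(\ref{normal}) and \ref{normal-closure.lem}: if $O\simeq\Olam$, then $\Obn\simeq\Olamb$ satisfies $\Obn\setminus O=\{0\}$, and since the normalization $\eta\colon\Obn\to\Ob$ is bijective, $\Ob\setminus O$ is a single $G$-fixed point. For the reverse direction I would first reduce to the normal case: the inclusion $\OOO(\Ob)\subseteq\OOO(O)$ yields $\OOO(\Ob)^{G}\subseteq\OOO(O)^{G}=\K$, and integrality of $\OOO(\Obn)$ over $\OOO(\Ob)$ forces $\OOO(\Obn)^{G}=\K$, so $\Obn$ is a normal affine $G$-variety with a unique closed orbit. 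The preimages of $x_{0}$ under $\eta$ are finitely many $G$-fixed points, and uniqueness of the closed orbit picks out a single one $\tilde x_{0}$; restriction of $\eta$ to the open orbit $\tilde O:=\Obn\setminus\{\tilde x_{0}\}$ is a $G$-equivariant birational bijection of homogeneous spaces, hence a $G$-isomorphism $\tilde O\simeq O$.

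The main obstacle is to identify $\tilde O\simeq\Olam$ for some dominant weight $\lambda$. I would embed $\Obn\hookrightarrow V$ $G$-equivariantly into a $G$-module sending $\tilde x_{0}\mapsto 0$, so $\Obn\subseteq\NNN_{V}$. Using the $\mm_{\tilde x_{0}}$-adic filtration, the associated graded $\gr\OOO(\Obn)$ is a graded $G$-algebra, and its spectrum $C:=\mathrm{Spec}(\gr\OOO(\Obn))$ is a $G\times\Kst$-stable subcone of $T_{\tilde x_{0}}\Obn$ arising as a flat $G$-equivariant degeneration of $\Obn$. Flatness preserves dimension, so $C_{\mathrm{red}}$ contains a $G\times\Kst$-stable irreducible component of dimension $\dim\tilde O$ lying in the null cone of $T_{\tilde x_{0}}\Obn$. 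The associated-cone argument from the proof of Lemma~\ref{Omin.lem} combined with Lemma~\ref{hw-orbits.lem}(\ref{minimal}) applied to projections onto the simple summands identifies this component with a highest weight orbit closure $\Olamb$, and tracing back through the flat family produces $\Obn\simeq\Olamb$ as $G$-varieties, so $O\simeq\Olam$ is minimal.

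For (\ref{min-orbits.prop2}), the nonconstant $G$-morphism $\phi\colon O\to X$ extends to a $G$-morphism $\tilde\phi\colon\Obn\to X$ because $\OOO(\Obn)=\OOO(O)$ by Lemma~\ref{normal-closure.lem} and $X$ is affine. Writing $y_{0}:=\tilde\phi(0)$, which is $G$-fixed, the image is $\phi(O)\cup\{y_{0}\}$, and $\tilde\phi^{-1}(y_{0})=\{0\}$ since $\tilde\phi(\tilde O)=\phi(O)$ misses $y_{0}$. Upper semicontinuity of fibre dimension forces $\tilde\phi$ to be quasi-finite, so $\phi\colon O\to\phi(O)$ is a finite $G$-equivariant covering; Lemma~\ref{ray.lem}(\ref{finite-covering}) then gives $\phi(O)\simeq O_{\mu}$ with $\lambda=\ell\mu$, making $\phi(O)$ a minimal orbit of the same type as $O$. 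Composing $\tilde\phi$ with the normalization $\overline{O_{\mu}}\to\overline{\phi(O)}$ yields the asserted finite $G$-equivariant extension $\bar\phi\colon\Obn\to\overline{\phi(O)}$.

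Parts (\ref{min-orbits.prop3})--(\ref{min-orbits.prop5}) are short consequences. In (\ref{min-orbits.prop3}), finiteness of $\delta$ and connectedness of $Z$ make $Z$ a single $G$-orbit, and Lemma~\ref{ray.lem}(\ref{finite-covering}) applied to $Z\to O$ gives $Z\simeq O_{\mu}$ of the same type as $O$, with $\delta$ cyclic. Part (\ref{min-orbits.prop4}) is the special case $Z=O_{1}$. For (\ref{min-orbits.prop5}), smoothness of $\Ob$ implies normality, hence $\Ob=\Obn\simeq\Olamb$ by (\ref{min-orbits.prop1}) and Lemma~\ref{normal-closure.lem}; smoothness of $\Olamb$ then forces $\Olamb=V_{\lambda}$ by Lemma~\ref{hw-orbits.lem}(\ref{sing}) and $\lambda$ is indivisible by Lemma~\ref{ray.lem}(\ref{ray.lem:4}). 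The converse is clear since $V_{\lambda}$ is smooth.
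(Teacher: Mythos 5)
The reduction in part (\ref{min-orbits.prop1}) to the normal closure $\Obn$ with a unique $G$-fixed point is correct, and parts (\ref{min-orbits.prop3})--(\ref{min-orbits.prop5}) are fine (essentially the paper's arguments, with (\ref{min-orbits.prop4}) cleanly deduced from (\ref{min-orbits.prop3})). The two crucial parts, however, have genuine gaps.

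In part (\ref{min-orbits.prop1}) you replace the paper's argument by a degeneration to $\mathrm{Spec}(\gr_{\mm_{\tilde x_0}}\OOO(\Obn))$. Showing that this associated cone contains (or even equals) a highest weight orbit closure gives you information about dimensions or Hilbert series, but the final step ``tracing back through the flat family produces $\Obn\simeq\Olamb$'' is not justified: a variety is in general not isomorphic to a flat degeneration of itself, and nothing in the argument excludes a nontrivial deformation. The paper (Lemma~\ref{min-orbits.lem}) avoids this entirely by producing an actual finite $G$-equivariant morphism $\psi\colon\Obn\to\Olamb$ -- take a $G$-equivariant embedding of the ambient affine variety into a module, project onto a simple factor on which $O$ is nonzero, use that $O^U\neq\emptyset$ to see the image of $O$ is $O_\lambda$, and use $\psi^{-1}(0)=\{\tilde x_0\}$ for finiteness -- and then applies Lemma~\ref{ray.lem}(\ref{finite-covering}) to the finite covering $O\to O_\lambda$. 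Your sketch gestures at the projections but never produces this morphism.

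In part (\ref{min-orbits.prop2}) your quasi-finiteness argument for $\tilde\phi\colon\Obn\to X$ is fine (the locus of positive fibre dimension is a closed $G$-stable subset of $\Obn$ not containing $0$, hence empty), so $\phi\colon O\to\phi(O)$ is a finite $G$-equivariant covering. But you then invoke Lemma~\ref{ray.lem}(\ref{finite-covering}) in the wrong direction: that lemma takes a finite covering $\phi\colon O\to O_\lambda$ \emph{onto} a highest weight orbit and concludes the \emph{source} is one; here the source $O\simeq O_\lambda$ is known and the \emph{target} $\phi(O)$ is the unknown, so the lemma does not apply as stated. To close this, one should again project $X$ onto a simple summand $V_\lambda$ with $\pr_{V_\lambda}(\phi(O))\neq 0$, obtaining a finite $G$-equivariant map $\phi(O)\to O_\lambda$ and only then applying Lemma~\ref{ray.lem}(\ref{finite-covering}) with $\phi(O)$ as the source -- exactly the factorization the paper uses in Lemma~\ref{min-orbits.lem}(\ref{min-orbits.lem2})--(\ref{min-orbits.lem3}). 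Alternatively one can argue directly with stabilizers ($G_v\subseteq G_{\phi(v)}\subseteq P_\lambda$, so $G_{\phi(v)}=\ker(\lambda^d)$ for some $d$), but some such step is needed.
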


For the proof we will use the following lemma.

\begin{lem}\label{min-orbits.lem}
Let $X, Z$ be affine $G$-varieties and $O \subset Z$ a $G$-orbit. Assume that $\overline{O}\setminus O$ is a fixed point in $Z^G$, and denote by $\eta\colon Y \to \Ob$ the normalization.
\be
\item\label{min-orbits.lem1} 
The morphism $\eta$ induces an isomorphism $\eta^{-1}(O) \simto O$, $Y \setminus \eta^{-1}(O)$ is a fixed point, and $\OOO(O) \simto \OOO(\eta^{-1}(O)) = \OOO(Y)$.
\item\label{min-orbits.lem2}
Every $G$-equivariant non-constant morphism $\phi\colon O \to X$ induces a finite $G$-equivariant morphism $\tilde\phi\colon Y \to X$
$$
\begin{CD}
\eta^{-1}(O) @>{\simeq}>> O \\
@VV{\subseteq}V  @VV{\phi}V \\
Y  @>{\tilde\phi}>> X
\end{CD}
$$
and $\overline{\phi(O)}\setminus\phi(O)$ is a fixed point in $X^G$.
\item\label{min-orbits.lem3}
The orbit $O$ is a minimal orbit, as well as its image $\phi(O) \subset X$, and both have the same type.
\ee
\end{lem}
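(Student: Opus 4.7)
The plan is to prove the three parts in the order (1), then the minimality assertion of (3), and finally (2) together with the same-type claim of (3).

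\textbf{Part (1).} The normalization $\eta \colon Y \to \overline{O}$ is finite, birational, and $G$-equivariant (the $G$-action on $\overline{O}$ lifts uniquely). Since $O$ is smooth, $\eta$ restricts to an isomorphism $\eta^{-1}(O) \simto O$. The fiber $\eta^{-1}(z_0)$ is finite and $G$-invariant, so consists of $G$-fixed points (as $G$ is connected); these are precisely the closed orbits of $Y$, parametrized by the irreducible affine quotient $Y \quot G$, so $\eta^{-1}(z_0) = \{y_0\}$ is a single point. Since a semisimple group admits no nontrivial character and hence no nontrivial action on a $1$-dimensional affine variety, $\dim O \geq 2$; Hartogs' theorem on the normal affine variety $Y$ then gives $\OOO(Y) = \OOO(Y \setminus \{y_0\}) = \OOO(\eta^{-1}(O)) \simeq \OOO(O)$.

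\textbf{Minimality of $O$.} Embed $Y$ $G$-equivariantly as a closed subvariety of a $G$-module $W$ with $y_0 \mapsto 0$, and set $O' := \eta^{-1}(O)$. Then $\overline{O'} = O' \cup \{0\} \subset \NNN_W$. Pick a simple summand $V_\lambda \subset W$ on which the projection $p\colon W \to V_\lambda$ is nonzero on $O'$. For any $v \in O'$ with $p(v) \neq 0$ we have $p(v) \in \NNN_{V_\lambda} \setminus \{0\}$, so Lemma~\ref{hw-orbits.lem}(\ref{minimal}) yields $O_\lambda \subseteq \overline{Gp(v)} = p(\overline{O'}) = p(O') \cup \{0\}$; since $O_\lambda$ is nontrivial and $p(O')$ is an orbit, $O_\lambda = p(O')$. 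Moreover $(\ker p) \cap \overline{O'}$ is $G$-stable, and its intersection with the orbit $O'$ is a $G$-stable subset of $O'$, hence either all of $O'$ or empty; the former contradicts $p|_{O'}\not\equiv 0$, so $(\ker p) \cap \overline{O'} = \{0\}$. This isolated-fiber condition forces $p \colon \overline{O'} \to \overline{O_\lambda}$ to be finite, so $p|_{O'}\colon O' \to O_\lambda$ is a finite $G$-equivariant covering, and by Lemma~\ref{ray.lem}(\ref{finite-covering}) $O' \simeq O_{k\lambda_0}$ for the indivisible generator $\lambda_0$ of $\QQ\lambda \cap \Lambda_G$ and some $k \geq 1$; thus $O$ is minimal of type $\lambda_0$.

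\textbf{Part (2) and the same-type claim of (3).} The algebra map $\phi^\ast\colon \OOO(X) \to \OOO(O) \simto \OOO(Y)$ from (1) defines a $G$-equivariant morphism $\tilde\phi\colon Y \to X$ extending $\phi \circ \eta$, with $x_0 := \tilde\phi(y_0) \in X^G$. Since $O \simeq O_{k\lambda_0}$ and $\phi$ is nonconstant with image in the affine variety $X$, the stabilizer $G_x$ of $x = \phi(v)$ satisfies $G_x^\circ = \ker\lambda_0$: any strictly bigger connected subgroup of $G$ containing $\ker\lambda_0$ either contains the parabolic $P_{\lambda_0}$ (making $G/G_x^\circ$ projective, contradicting quasi-affineness of $\phi(O) \subset X$) or equals $G$ (making $\phi$ constant). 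Hence $\phi\colon O \to \phi(O)$ is a finite $G$-equivariant covering, and by Lemma~\ref{quotient-of-Omin.lem} $\phi(O) \simeq O_{\ell\lambda_0}$ is a minimal orbit of type $\lambda_0$, establishing the same-type assertion. For finiteness, $\tilde\phi$ factors as $Y = \overline{O_{k\lambda_0}} \xrightarrow{\phi_{\ell/k}} \overline{O_{\ell\lambda_0}} \xrightarrow{\iota} X$, where $\phi_{\ell/k}$ is the finite cover from Lemma~\ref{quotient-of-Omin.lem} and $\iota$ is a $G$-equivariant morphism whose restriction to $O_{\ell\lambda_0}$ is the inclusion $\phi(O) \hookrightarrow X$. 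Since $\overline{O_{\ell\lambda_0}}$ is normal and $\iota$ is birational onto $\overline{\phi(O)}$, $\iota$ is the normalization of $\overline{\phi(O)}$ and hence finite. Thus $\tilde\phi$ is finite, its image $\tilde\phi(Y) = \iota(\overline{O_{\ell\lambda_0}}) = \overline{\phi(O)}$ is closed, and $\overline{\phi(O)} \setminus \phi(O) \subseteq \iota(\{0\}) = \{x_0\}$, yielding $\overline{\phi(O)} \setminus \phi(O) = \{x_0\}$ a single fixed point.

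\textbf{Main obstacle.} The principal difficulty lies in the minimality step: extracting a highest weight structure from the abstract normal $G$-variety $Y$ with dense orbit and single fixed point. The equivariant embedding $Y \hookrightarrow W$ together with Lemma~\ref{hw-orbits.lem}(\ref{minimal}) produces a highest weight orbit $O_\lambda$ as the projection of $O'$; the key technical step is then showing the projection $p\colon\overline{O'}\to\overline{O_\lambda}$ is finite, which relies on the strict two-orbit structure of $\overline{O'}$ forcing $(\ker p)\cap\overline{O'} = \{0\}$.
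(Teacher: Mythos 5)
Your Part (1) is fine and matches the paper. The trouble begins in the minimality step. There you write $\overline{Gp(v)} = p(\overline{O'}) = p(O') \cup \{0\}$ and conclude $p(O') = O_\lambda$, but the first equality is not justified: in general one only has $\overline{Gp(v)} = \overline{p(O')} \supseteq p(\overline{O'})$, with equality precisely when $p(\overline{O'})$ is closed, i.e.\ when $p|_{\overline{O'}}$ is finite --- and that is exactly what you are still trying to establish. The paper avoids this circularity by a different mechanism: since $\overline{O'}$ is irreducible affine and a unipotent group has no isolated fixed points on such a variety, $O'$ itself contains a $U$-fixed point, so $p(O')$ contains a nonzero $U$-fixed vector of $V_\lambda$, which is a highest weight vector; hence $p(O') = O_\lambda$ directly. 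Relatedly, the ``isolated-fiber condition forces $p\colon\overline{O'}\to\Olamb$ to be finite'' is not a valid general criterion: for a linear map $p\colon W\to V_\lambda$ restricted to a closed subvariety $Z\subset W$ with $Z\cap\ker p = \{0\}$, finiteness of $p|_Z$ follows when $Z$ is a cone, but your embedding of $Y$ into $W$ is not arranged to be conical. (Finiteness does hold here, but the honest argument uses the two-orbit structure of $Y$: upper semicontinuity of fiber dimension shows $p|_Y$ is quasi-finite, and then Zariski's main theorem together with the fact that the resulting finite normalization of $\Olamb$ has a unique closed orbit, necessarily the fixed point already lying in $Y$, forces $Y$ to be the whole normalization.)

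The second substantial gap is the assertion that every connected subgroup of $G$ strictly containing $\ker\lambda_0$ either contains $P_{\lambda_0}$ or equals $G$. This is plausible but is a nontrivial structural claim about subgroups sandwiched between $\ker\lambda_0$ and $G$; you would need to show, for instance, that adjoining any nonzero element of $\uu_{\lambda_0}^-$ to $\Lie\ker\lambda_0$ generates all of $\gg$, which requires an argument about the $\ll_{(\lambda_0)}$-module structure of $\uu_{\lambda_0}^-$ that you do not give. The paper sidesteps this entirely: once $\psi = \pr_V\circ\tilde\phi\colon Y\to\Olamb$ is known to be finite, the factorization $Y\to\overline{\phi(O)}\to\Olamb$ automatically has both arrows finite ($\OOO(\overline{\phi(O)})$ is an $\OOO(\Olamb)$-submodule of the finite module $\OOO(Y)$), so $\phi(O)\to O_\lambda$ is a finite $G$-equivariant covering and Lemma~\ref{ray.lem}(\ref{finite-covering}) gives both that $\phi(O)$ is a minimal orbit and that it has the same type, with no need to analyze stabilizer subgroups.
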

\begin{proof}
(\ref{min-orbits.lem1}) Let $\overline{O}= O\cup\{x\}$ for some fixed point $x\in Z$. 
If $\eta\colon Y \to \overline{O}$ is the normalization, then $\eta^{-1}(O) \to O$ is an isomorphism, because $O$ is normal. Since $\eta^{-1}(x)$ is finite and $G$-stable it must be a single fixed point $y \in Y$. Moreover, $Y\setminus\eta^{-1}(O)=\{y\}$ has codimension $\geq2$ in $Y$, because a semisimple group does not have 1-dimensional quasi-affine orbits. (In fact,  if there is such a one-dimensional $G$-orbit, then there exists a subgroup $H$ of $G$ isomorphic either to 
 $\SL_2$ or to $\PSL_2$ that acts faithfully on this orbit. But this is not possible, because all  closed subgroups of $H$ of codimension $1$ are conjugate to a Borel subgroup $B$ and $H/B$ is compact.) It follows that $\OOO(Y)=\OOO(O)$.
\ps
(\ref{min-orbits.lem2})-(\ref{min-orbits.lem3}) Since $\OOO(O) \simto \OOO(Y)$ by (\ref{min-orbits.lem1}) and $X$ is affine, the $G$-equivariant morphism $\phi\colon O \to X$ induces a $G$-equivariant morphism $\tilde\phi\colon Y \to X$. There is a closed $G$-equivariant embedding of $X$ into a $G$-module $W$, $X \into W$, and a linear projection $\pr_{V_\lambda}\colon W \to V_{\lambda}$ onto a simple $G$-module $V_{\lambda}$ such that $\phi(O)$ is not in the kernel of $\pr_{V_\lambda}$. 

Set $\psi:=\pr_{V_\lambda}\circ \bar\phi \colon Y \to V_{\lambda}$. Since a unipotent group $U$ does not have isolated fixed points on an irreducible affine $U$-variety (see e.g. \cite[Theorem 5.8.8]{Kr2016Algebraic-Transfor}), we get $O^{U}\neq \emptyset$, and so $\psi(O)^{U}\neq \emptyset$.
This implies that $\psi(O) = O_{\lambda}$ and $\psi(Y) = \Olamb$. 
We have $\psi^{-1}(0) = \{y\}$, and so $\psi$ is finite and surjective. In particular, $O$ is a minimal orbit of the same type as $\Olam$, by Lemma~\ref{ray.lem}(\ref{finite-covering}).
From the factorization
$$
\begin{CD}
\psi \colon Y @>{\tilde\phi}>> \overline{\phi(O)} @>{\pr_V}>> \Olamb
\end{CD}
$$
we see that both maps are  finite, and so $\phi(O)$ is a minimal orbit as well, of the same type as $O_{\lambda}$, again by Lemma~\ref{ray.lem}(\ref{finite-covering}).
\end{proof}

\begin{proof}[Proof of Proposition~\ref{min-orbits.prop}]
(\ref{min-orbits.prop1})  
One implication follows from Lemma~\ref{normal-closure.lem}, the other is Lemma~\ref{min-orbits.lem}(\ref{min-orbits.lem3}).
\ps
(\ref{min-orbits.prop2}) 
This follows from (\ref{min-orbits.prop1}) and Lemma~\ref{min-orbits.lem}(\ref{min-orbits.lem2}).
\ps
(\ref{min-orbits.prop3}) 
We can assume that $O = O_{\lambda} \subset V_{\lambda}$. Let $v_{0} \in V_{\lambda}^{U}$ be a highest weight vector. Since $Z$ is connected, it is a $G$-orbit. Therefore, $Z\simeq G/H'$, where $G_{v_{0}}^{\circ}\subseteq H' \subseteq G_{v_{0}}$, because $\delta$ is finite. 
The group $G_{v_0}/H'\simeq (G_{v_0}/G_{v_0}^{\circ})/(H'/G_{v_0}^{\circ})$ is cyclic because $G_{v_0}/G_{v_0}^{\circ}$ is cyclic by Lemma~\ref{ray.lem}(\ref{ray.lem:2}). It follows that $\delta$ is a cyclic cover of degree $k:=|G_{v_0}/H'|$.  Now Lemma~\ref{quotient-of-Omin.lem} implies that $Z\simeq O_{\frac{\lambda}{k}}$.
\ps
(\ref{min-orbits.prop4})
We can assume that $O_{i} = G v_{i}$ and that $\eta(v_{1}) = v_{2}$. Then $G_{v_{2}}^{\circ}\subseteq G_{v_{1}} \subseteq G_{v_{2}}$ and so $G_{v_{1}}^{\circ}=G_{v_{2}}^{\circ}$. Thus, $O_{1}$ and $O_{2}$ have the same type by Remark~\ref{min-orbits.rem}(\ref{same-type}). Moreover, $\eta^{-1}(v_2)=G_{v_2}v_1\simeq G_{v_2}/G_{v_1}$ is cyclic by Lemma~\ref{ray.lem}(\ref{ray.lem:2}), hence the covering $\eta$ is cyclic.
\ps
(\ref{min-orbits.prop5}) Any ($G$-equivariant) isomorphism $O \simto O_\lambda$ extends to a ($G$-equivariant) isomorphism $\Obn \simto \Olamb$, because $\Olamb$ is normal.
If $\overline{O}$ is smooth, then $\Obn$ and hence $\overline{\Olam}$ are smooth, and so $\overline{\Olam}=V_{\lambda}$ by Lemma~\ref{hw-orbits.lem}(\ref{sing}). In particular, $\lambda$ is indivisible by Lemma~\ref{ray.lem}(\ref{ray.lem:4}). The other implication is obvious. 
\end{proof}

\ps
%%%%%%%%%%%%%%%%%%%%%%%%%%%%%%%%%%%%
\subsection{The canonical \texorpdfstring{$\Kst$}{K*}-action on minimal orbits}\label{canonical.subsec}
In this section we show that there exists a unique $\Kst$-action on every minimal orbit $O$ with the following properties.
\be
\item[(a)] 
Every $G$-equivariant morphism $\eta\colon O \to O'$ between minimal orbits is also $\Kst$-equivariant.
\item[(b)]
If $O \subset X$ is a minimal orbit in an affine $G$-variety $X$, then the $\Kst$-action on $O$ extends to the closure $\overline{O}$.
\item[(c)]
If $O \subset X$ is as in (b), then the limit $\lim_{t\to 0} t y$ exists for all $y \in O$ and is equal to the unique fixed point $x_0\in \overline{O}$.
\item[(d)] If $O = \Olam$ where $\lambda$ is indivisible, then the canonical action is the scalar multiplication.
\ee
Let $O\simeq O_{\lambda}$ be a minimal orbit of type $\lambda_{0}$, i.e. $\lambda_{0}$ is indivisible and  $\lambda = \ell\lambda_{0}$ for some $\ell \in\NN$, see Definition~\ref{minimal-orbit.def}. 
Since $\Aut_{G}(O) \simeq \Kst$ by Lemma~\ref{hw-orbits.lem}(\ref{scalarmultiplication}), there are two faithful $\Kst$-actions on $O$ commuting with the $G$-action. Both extend to the normal closure $\Obn$, and for one of them we have that $\lim_{t\to 0}t y$ exists for all $y \in O$ and is equal to the unique fixed point in $\Obn$. This action corresponds to the scalar multiplication in case $O = \Olam \subset V_{\lambda}$.
We call it {\it the action by scalar multiplication} and denote it by $(t,y)\mapsto t\cdot y$. 
\begin{lem}\label{G-equivariant-maps.lem}
Let $O,O'$ be minimal orbits, and let $\eta\colon O \to O'$ be a $G$-equivariant morphism.
\be
\item 
$O$ and $O'$ are of the same type, and $\eta$ extends to a finite $G$-equivariant morphism 
$\tilde\eta\colon \Obn \to \overline{O'}^{\,n}$.
\item
The $G$-equivariant morphisms $\eta$ and $\tilde\eta$ are unique, up to scalar multiplication.
\item\label{covering}
If $O\simeq O_\lambda$ and $O'\simeq O_{\lambda'}$, then $\lambda' = k\lambda$ for an integer $k\geq 1$, and $\eta\colon O \to O'$ is a cyclic covering of degree $k$.
\item\label{formula}
For the scalar multiplication we have $\eta(t\cdot y) = t^k \cdot \eta (y)$ for all $y \in O$.
\item\label{scalar-mult}
The action by scalar multiplication on $\Olam$ corresponds to the representation of $\Kst$ on $\OOO(\Olam)$ which has weight $-n$ on the isotypic component $\OOO(\Olam)_{n \lambda^{\!\vee}}$:  
$$
t f = t^{-n}\cdot f \text{ for } t \in \Kst, \ f \in \OOO(\Olam)_{n\lambda^{\!\vee}}.
$$
\ee
\end{lem}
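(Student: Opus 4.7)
The plan is to deduce parts (i)--(iv) from the structural theory of minimal orbits and covariants already in place, and to handle part (v) by realizing each isotypic summand of $\OOO(\Olam)$ as a subspace of a specific homogeneous piece of $\OOO(V_\lambda)$. For (i), I would first observe that $\eta$ is automatically non-constant, because $O'$ is a nontrivial $G$-orbit and has no $G$-fixed points; hence $\eta(O)=O'$ as $G$-orbits, and Proposition~\ref{min-orbits.prop}(\ref{min-orbits.prop2}) simultaneously gives the equality of types and a finite extension $\Obn\to\overline{O'}$, which lifts uniquely through the normalization $\overline{O'}^{\,n}\to\overline{O'}$ by normality of $\Obn$. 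For (ii), fix highest weight vectors $v\in O^U$ and $v'\in(O')^U$; since $O^U=\Kst v$ by Lemma~\ref{hw-orbits.lem}(\ref{U-fixed}), two $G$-equivariant maps $\eta_1,\eta_2\colon O\to O'$ each send $v$ into $\Kst v'$, so $\eta_2(v)=t\cdot\eta_1(v)$ for a unique $t\in\Kst$, and $G$-equivariance together with commutativity of the scalar $\Kst$-action on $O'$ with the $G$-action propagates this to all of $O=Gv$. The extension to $\tilde\eta$ is again forced by normality.

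For (iii), write $O\simeq O_\lambda$ and $O'\simeq O_{\lambda'}$, so (i) supplies a finite $G$-equivariant covering $O_\lambda\to O_{\lambda'}$, which is cyclic by Proposition~\ref{min-orbits.prop}(\ref{min-orbits.prop4}). Writing $\lambda=\ell\lambda_0$ and $\lambda'=\ell'\lambda_0$ for the common indivisible type $\lambda_0$ (Remark~\ref{min-orbits.rem}(\ref{same-type})), the stabilizers of highest weight vectors are kernels of the characters $\lambda,\lambda'$ of $P_{\lambda_0}$, and modulo the connected normal subgroup $G_{v_0}$ they become the cyclic subgroups $\mu_\ell\subseteq\mu_{\ell'}$ of $P_{\lambda_0}/G_{v_0}\simeq\Kst$; the required inclusion forces $\ell\mid\ell'$, and setting $k:=\ell'/\ell$ yields $\lambda'=k\lambda$ with covering degree $|\mu_{\ell'}/\mu_\ell|=k$. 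For (iv), the covariant $\phi_k\colon V_\lambda\to V_{k\lambda}$ of Lemma~\ref{quotient-of-Omin.lem} is itself a finite $G$-equivariant cyclic covering $\Olam\to O_{k\lambda}$ of degree $k$, so by (ii) the extension $\tilde\eta$ equals $c\cdot\phi_k$ for some $c\in\Kst$ under the identifications $\Obn\simeq\overline{\Olam}$ and $\overline{O'}^{\,n}\simeq\overline{O_{k\lambda}}$ (which send the scalar $\Kst$-action to scalar multiplication on the ambient $V_\lambda,V_{k\lambda}$). Since $\phi_k$ is homogeneous of degree $k$, we obtain $\eta(t\cdot y)=c\,\phi_k(ty)=ct^k\phi_k(y)=t^k\cdot\eta(y)$.

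The hard part, as I see it, is (v). Starting from the decomposition $\OOO(\Olam)=\bigoplus_{n\geq 0}V_{n\lambda^{\!\vee}}$ of Lemma~\ref{hw-orbits.lem}(\ref{hw-grading}), my plan is to show that for each $n$ the $V_{n\lambda^{\!\vee}}$-summand is realized inside the degree-$n$ piece $S^n(V_\lambda^{\!\vee})$ of $\OOO(V_\lambda)$ via restriction. The covariant $\phi_n\colon V_\lambda\to V_{n\lambda}$ pulls linear functionals back to degree-$n$ polynomials, yielding a $G$-equivariant embedding $V_{n\lambda}^{\!\vee}\hookrightarrow S^n(V_\lambda^{\!\vee})$; post-composing with the restriction $\OOO(V_\lambda)\to\OOO(\Olam)$ gives a $G$-equivariant map $V_{n\lambda^{\!\vee}}\to\OOO(\Olam)$. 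I would then show this map is nonzero: if its image vanished, the embedded copy of $V_{n\lambda}^{\!\vee}$ would annihilate $\phi_n(\Olam)=O_{n\lambda}$, which linearly spans the irreducible $V_{n\lambda}$, forcing $V_{n\lambda}^{\!\vee}=0$, a contradiction. By the multiplicity-one character of the decomposition, the nonzero image is precisely the $V_{n\lambda^{\!\vee}}$-summand of $\OOO(\Olam)$. Since scalar multiplication on $V_\lambda$ acts by weight $-1$ on $V_\lambda^{\!\vee}$ and hence by weight $-n$ on $S^n(V_\lambda^{\!\vee})$, the identity $tf=t^{-n}f$ on $V_{n\lambda^{\!\vee}}\subset\OOO(\Olam)$ follows. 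The key obstacle is precisely this realization step---ensuring that the $V_{n\lambda^{\!\vee}}$-isotypic component sits in degree $n$ and nowhere else---which the covariant construction together with the nonvanishing argument secures.
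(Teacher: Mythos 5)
Your proof is correct and follows essentially the same route as the paper's: parts (i)--(iv) lean on Proposition~\ref{min-orbits.prop}, the uniqueness up to scalar coming from $\Aut_G(O)\simeq\Kst$, and the covariant $\phi_k$, while part (v) reduces the $\Kst$-weight computation to the degree-$n$ grading of $\OOO(V_\lambda)$ via the covariant embedding $V_{n\lambda}^{\vee}\hookrightarrow S^n(V_\lambda^{\vee})$. The only difference is one of emphasis: the paper states (v) as an immediate consequence of (iii)--(iv), pulling the weight $-1$ on linear functionals back through $\phi_n$, whereas you unpack the same covariant computation inside $\OOO(V_\lambda)$ and explicitly verify nonvanishing of the restriction via the spanning of $V_{n\lambda}$ by $O_{n\lambda}$ — a slightly more detailed but equivalent argument.
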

\begin{proof}
(1) This follows from Proposition~\ref{min-orbits.prop}(\ref{min-orbits.prop2}) and the fact that $\OOO(\Obn) = \OOO(O)$.
\ps
(2) This is clear since $\Aut_G(O) = \Aut_G(\Obn) \simeq \Kst$  which is generated by the scalar multiplication, see Lemma~\ref{hw-orbits.lem}(\ref{scalarmultiplication}).
\ps
(3) This follows from Proposition~\ref{min-orbits.prop}(\ref{min-orbits.prop4}) since $\eta$ is a finite morphism by (1).
\ps
(4) The formula obviously holds for the morphism $O_{\lambda} \to O_{k\lambda}$ induced by $V_{\lambda} \to V_{k\lambda}$, $v \mapsto v^k$ (see Lemma~\ref{quotient-of-Omin.lem}). It remains to see that every $G$-equivariant isomorphism $O \simto O'$ of minimal orbits commutes with the scalar multiplication. This follows from (1) and the fact that the scalar multiplication is the unique faithful $\Kst$-action on $O$ commuting with the $G$-action such that the limits $\lim_{t\to 0} t\cdot y$ exist in $\Obn$.
\ps
(5) This is clear from (\ref{covering})\&(\ref{formula}): the scalar multiplication on $V_\lambda$ induces the multiplication by $t^{-n}$ on the homogeneous component of $\OOO(V_\lambda)$ of degree $n$.
\end{proof}
Using this result we can now define the {\it canonical $\Kst$-action} on minimal orbits.
\begin{defn}\label{canonical-Kst.def}
Let $O \simeq O_{\lambda}$ be a minimal orbit of type $\lambda_{0}$,  where $\lambda = \ell\lambda_{0}$. The {\it canonical $\Kst$-action on $O$\/} is defined by
$$
(t,y)\mapsto t^{\ell}\cdot y \text{ for }t\in \Kst \text{ and }y \in O.
$$
It follows that this $\Kst$-action extends to $\Obn$ such that the limits $\lim_{t\to 0} t^\ell\cdot y$ exist in $\Obn$. If $\lambda$ is indivisible, 
then the canonical action on $\Olam$ coincides with the scalar multiplication, but it is not faithful if $\lambda$ is not indivisible.
\end{defn}

\begin{prop}\label{canonical-Kst.prop}
Let $O\simeq O_{\lambda}$ be a minimal orbit of type $\lambda_0$ where $\lambda=\ell\lambda_0$.
\be
\item\label{canonical-Kst.prop:1} 
The canonical $\Kst$-action on $O$ corresponds to the representation on $\OOO(O)$ which has weight $-n$ on the isotypic component $\OOO(O)_{n\lambda_0^{\!\vee}}$. In particular, it commutes with the $G$-action.
\item\label{canonical-Kst.prop:2} 
If $\eta\colon O \to O'$ is a $G$-equivariant morphism of minimal orbits, then $\eta$ is equivariant with respect to the canonical $\Kst$-action.
\ee
Assume that $O$ is embedded in an affine $G$-variety $X$ and that $\Ob = O\cup\{x_{0}\}\subseteq X$.
\be\addtocounter{enumi}{2}
\item\label{canonical-Kst.prop:3} 
The canonical $\Kst$-action on $O$ extends to $\Ob$.
\item\label{semigroup}
For any $x \in O$ the limit $\lim_{t\to 0}t^{\ell}\cdot x$ exists in $\Ob$ and is equal to $x_{0}$. In particular, the canonical $\Kst$-action on $\Ob$ extends to an action of the multiplicative semigroup $(\K,\cdot)$. 
\item\label{norm}
We have $\Norm_{G}(O^{U})=\Norm_{G}(\Ob^{U}) = P_{\lambda}$, and the action of $P_{\lambda}$ on $\Ob^{U}$ is given by  $p x = \lambda(p)\cdot x = \lambda_0(p)^\ell \cdot x$, i.e. it factors through the canonical $\Kst$-action.
\ee
\end{prop}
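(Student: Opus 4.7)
The plan is to derive all five parts from Lemma~\ref{G-equivariant-maps.lem} by translating ``scalar multiplication'' into ``canonical $\Kst$-action'' via $t\mapsto t^{\ell}$. I expect the genuinely nontrivial step to be (3), the extension of the action from the normalization $\Obn$ down to $\Ob\subset X$; this forces one to use the multiplicity-free decomposition of $\OOO(O)$.

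For (1), I combine Lemma~\ref{G-equivariant-maps.lem}(\ref{scalar-mult}) with $\lambda=\ell\lambda_{0}$: scalar multiplication has weight $-k$ on $V_{k\lambda^{\vee}}\subset\OOO(O)$, so $t^{\ell}$ has weight $-k\ell$; writing $k\lambda^{\vee} = (k\ell)\lambda_{0}^{\vee}$ and setting $n:=k\ell$ yields weight $-n$ on $\OOO(O)_{n\lambda_{0}^{\vee}}$, which is scalar on each $G$-isotypic component and hence commutes with $G$. For (2), Lemma~\ref{G-equivariant-maps.lem}(\ref{formula}) gives $\eta(s\cdot y) = s^{k}\cdot\eta(y)$ with $\lambda'=k\lambda$; substituting $s=t^{\ell}$ and using $\ell'=k\ell$ (which follows from Lemma~\ref{G-equivariant-maps.lem}(1) together with the definition of type) yields $\eta(t^{\ell}\cdot y) = t^{\ell'}\cdot\eta(y)$, which is exactly equivariance for the canonical actions on $O$ and $O'$.

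For (3), the key fact is that $\OOO(O)=\bigoplus_{k\geq 0}V_{k\lambda^{\vee}}$ (Lemma~\ref{hw-orbits.lem}(\ref{hw-grading})) is a multiplicity-free direct sum of pairwise non-isomorphic simple $G$-modules; every $G$-submodule is therefore a direct sum of full isotypic components. The inclusion $\OOO(\Ob)\subset\OOO(\Obn)=\OOO(O)$ provided by Lemma~\ref{normal-closure.lem} is $G$-stable, hence is a sum of such summands $V_{k\lambda^{\vee}}$, and by (1) the canonical $\Kst$-action acts by a scalar on each; so $\OOO(\Ob)$ is $\Kst$-stable and the canonical action descends to $\Ob$ and restricts to the canonical action on $O$.

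For (4), the weights of the canonical action on $\OOO(\Ob)$ are non-positive by (1), so $\lim_{t\to 0}t^{\ell}\cdot x$ exists in $\Ob$ for every $x\in O$ and is a $\Kst$-fixed point. Under the isomorphism $O\simeq\Olam$ the canonical action becomes scalar multiplication raised to the $\ell$-th power, which admits no fixed points on $\Olam\subset V_{\lambda}\setminus\{0\}$; therefore the limit lies in $\Ob\setminus O=\{x_{0}\}$, and setting $0\cdot x:=x_{0}$ yields the asserted semigroup action of $(\K,\cdot)$. For (5), the $G$-equivariant isomorphism $O\simeq\Olam$ carries $O^{U}$ to $\Olam^{U}=\Kst v_{\lambda}$, so $\Norm_{G}(O^{U})=\Norm_{G}(\K v_{\lambda})=P_{\lambda}$ by Lemma~\ref{hw-orbits.lem}(\ref{parabolic}); adjoining the $G$-fixed point $x_{0}$ gives the same equality for $\Ob^{U}$. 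Since the $P_{\lambda}$-action on $\K v_{\lambda}$ is by the character $\lambda$ (as recalled in Section~\ref{ass-parabolic.subsec}), transporting through the iso yields $px=\lambda(p)\cdot x$ with $\cdot$ the scalar multiplication; the identity $\lambda(p)=\lambda_{0}(p)^{\ell}$ together with Definition~\ref{canonical-Kst.def} shows that this coincides with the canonical action of $\lambda_{0}(p)\in\Kst$ on $x$, as required.
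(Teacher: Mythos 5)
Your proof is correct and follows essentially the same route as the paper's: (1) and (2) rest on Lemma~\ref{G-equivariant-maps.lem}, (3) on the multiplicity-free decomposition of $\OOO(O)$ into the simple modules $V_{k\lambda^\vee}$, and (5) on the $P_\lambda$-action by the character $\lambda$ from Lemma~\ref{hw-orbits.lem}. The one place you diverge slightly is (4), where you argue directly on $\Ob$ from the non-positivity of the $\Kst$-weights, whereas the paper verifies the statement on $\Obn$ (where it is the familiar fact about scalar multiplication on $\Olamb$) and transports it to $\Ob$ via the bijective normalization; your version is a bit more self-contained but amounts to the same thing.
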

\begin{proof}
(\ref{canonical-Kst.prop:1}) The first claim follows from 
Lemma~\ref{G-equivariant-maps.lem}(\ref{scalar-mult}) and obviously implies the second.
\ps
(\ref{canonical-Kst.prop:2}) 
This is an immediate consequence of  Lemma~\ref{G-equivariant-maps.lem}, statements  (\ref{covering}) and (\ref{formula}).
\ps
(\ref{canonical-Kst.prop:3})
Since $\OOO(O) = \OOO(\Obn)$ the claim holds if the closure $\Ob$ is normal. By (1) the canonical $\Kst$-action on $\Obn$ corresponds to the grading of the coordinate ring $\OOO(\Obn) \simeq \bigoplus_{k\geq 0} V_{k\lambda^{\!\vee}}$. 
In the general case, $\OOO(\Ob)$ is a $G$-stable subalgebra of $\OOO(\Obn)$. Since the homogeneous components $V_{k\lambda^{\!\vee}}$ are simple and pairwise non-isomorphic $G$-modules we see that $\OOO(\Ob)$ is a graded subalgebra, hence stable under the canonical $\Kst$-action.
\ps
(\ref{semigroup}) 
This obviously holds for the scalar multiplication on $O_\lambda \subset V_\lambda$, hence in the case where  $\Ob$ is normal. By (\ref{canonical-Kst.prop:3}) it is true in general.
\ps
(\ref{norm})  We have $\Norm_{G}(O_\lambda^{\,U}) = P_{\lambda}$ and $p x = \lambda(p)\cdot x$ for $p\in P_{\lambda}$, $x \in O_{\lambda}^{\,U}$ (cf. Lemma~\ref{hw-orbits.lem}(\ref{parabolic})).
This shows that the action of $P_{\lambda}$ on $O^{U}$ is given by the canonical $\Kst$-action.
It follows from (\ref{canonical-Kst.prop:3}) that it extends to $\Ob$, hence $\Norm_{G}(\Ob^{U}) = \Norm_{G}(O^{U})$.
\end{proof}

\par\bigskip
%%%%%%%%%%%%%%%%%%%%%%%%%%%%%
\section{Graded \texorpdfstring{$G$}{G}-algebras}
%%%%%%%%%%%%%%%%%%%%%%%%%%%%%
Let $G$ be a semisimple group. An affine $G$-varieties whose nontrivial $G$-orbits are  minimal orbits is called a {\it small $G$-variety}. We will show that the coordinate ring of a small $G$-variety is a {\it graded $G$-algebra}, a structure  that we introduce and discuss in this paragraph. 

As in the previous section, we fix a Borel subgroup $B \subset G$, a maximal torus $T \subset B$ and denote by $U := B_{u}$ the unipotent radical of $B$ which is a maximal unipotent subgroup of $G$.

\ps
%%%%%%%%%%%%%%%%%%%%%%%%%%%%%%%%%%%%
\subsection{\texorpdfstring{$G$}{G}-algebras and graded \texorpdfstring{$G$}{G}-algebras}
\begin{defn}
A finitely generated commutative $\K$-algebra $R$ with a unit $1=1_{R}$, equipped with a locally finite and rational action of $G$ by $\K$-algebra automorphisms is called a {\it $G$-algebra}. 

If $\lambda_{0}\in\Lambda_{G}$ is an indivisible dominant weight we say that the $G$-algebra $R$ is {\it of type $\lambda_{0}$} if the highest weight of any simple $G$-submodule of $R$ is a multiple of $\lambda_{0}$.

For any $G$-algebra $R$ we have the isotypic decomposition $R = \bigoplus_{\lambda\in\Lambda_{G}} R_{\lambda}$. If this is a grading, i.e. if $R_{\lambda}\cdot R_{\mu} \subseteq R_{\lambda+\mu}$ for all $\lambda,\mu\in\Lambda_{G}$, then $R$ is called a  {\it graded $G$-algebra}.
\end{defn}

\begin{exa}\label{Omin.exa}
Let $V$ be a simple $G$-module of highest weight $\lambda$ and $\Olam \subset V$ the highest weight orbit. Assume that $O_{\lambda}$ is of type $\lambda_{0}$, i.e. $\lambda_{0}$ is indivisible and $\lambda = k\lambda_{0}$ for a positive integer $k$. Then
$$
\OOO(\Olam) = \OOO(\overline{\Olam}) = \bigoplus_{j\geq 0}{V_{j\lambda}}^{\!\!\vee}
= \bigoplus_{j\geq 0}{V_{j k\lambda_{0}}}^{\!\!\vee}
$$ 
by Lemma~\ref{hw-orbits.lem}(\ref{hw-grading}), and so it is a graded $G$-algebra of type $\lambda_{0}^{\vee}$. Note that, by Definition~\ref{canonical-Kst.def}, this grading is induced by the canonical $\Kst$-action $(t,v)\mapsto t^{k} \cdot v$ on $O_{\lambda}$.
\end{exa}

\begin{defn}\label{W_U.def}
Let $H$ be a group, and let $W$ be an $H$-module. Define
$$
W_H:= W /  \langle h w - w \mid h \in H, w \in W\rangle, 
$$
and denote by $\pi_{H}\colon W \to W_{H}$ the projection. Then $\pi_H$ has the {\it universal property\/} that every $H$-equivariant linear map $\phi\colon W \to V$ where $V$ carries the trivial action of $H$ factors uniquely through $\pi_H$. We call $\pi_{H}\colon W \to W_{H}$ the {\it universal $H$-projection} or simply the {\it $H$-projection}.
\newline
If another group $N$ acts linearly on $W$ commuting with $H$, then $N$ acts linearly on $W_H$ and $\pi_H$ is $N$-equivariant.
Note that if $W$ is finite dimensional, then $\pi_H$ is the dual map to the inclusion $(W^\vee)^H \into W^\vee$.
\end{defn}
\begin{exa}\label{V_U-for-simple-V.exa}
Let $V$ be a simple $G$-module of highest weight $\lambda$ and consider the universal $U$-projection $\pi_U \colon V \to V_U$ with respect to the action of the maximal unipotent group $U \subset G$. Since $T$ normalizes $U$ we see that $\pi_U$ is $T$-equivariant and that the kernel is the direct sum of all weight spaces of weight different from the lowest weight $-\lambda^\vee$.
If $U^{-} \subset G$ denotes the maximal unipotent subgroup opposite to $U$, then $V^{U^{-}}$ is the lowest weight space and thus the composition $V^{U^{-}} \into V \onto V_{U}$ is a $T$-equivariant isomorphism.
\end{exa}
\begin{lem}\label{RU.lem}
Let $R$ be a graded $G$-algebra. Then the kernel of the universal $U$-projection $\pi_U\colon R \to R_{U}$ is a graded ideal, and 
the composition $R^{U^-}\hookrightarrow R\stackrel{\pi_U}\to R_U$ is a $T$-equivariant  isomorphism of $\K$-algebras.
\end{lem}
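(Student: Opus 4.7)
The plan is to analyze $\pi_U$ componentwise via the isotypic decomposition $R=\bigoplus_{\lambda}R_{\lambda}$, using Example~\ref{V_U-for-simple-V.exa} to pin down the kernel in each piece by $T$-weights, and then to verify the ideal property by a root-order argument.

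First I would apply Example~\ref{V_U-for-simple-V.exa} to each isotypic component. Writing $R_\lambda\cong V_\lambda\otimes M_\lambda$ with $M_\lambda=\Hom_G(V_\lambda,R)$ a trivial $G$-module, one reads off that $K_\lambda:=\ker(\pi_U|_{R_\lambda})$ is the direct sum of all $T$-weight spaces of $R_\lambda$ of weight different from the lowest weight $-\lambda^{\vee}$, while $R_\lambda^{U^-}$ is precisely the $-\lambda^{\vee}$-weight space of $R_\lambda$. Because each $R_\lambda$ is $G$-stable (hence $U$-stable), the defining subspace $\langle uw-w\mid u\in U,\,w\in R\rangle$ respects the isotypic decomposition, giving
\[
\ker\pi_U\;=\;\bigoplus_{\lambda}K_\lambda,\qquad R\;=\;R^{U^-}\oplus\ker\pi_U,
\]
as $T$-modules. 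In particular, $\ker\pi_U$ is already graded, and the composition $R^{U^-}\hookrightarrow R\xrightarrow{\pi_U}R_U$ is a $T$-equivariant linear isomorphism.

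The heart of the argument is to show that $\ker\pi_U$ is closed under multiplication by $R$. Here I would invoke the standard fact that every $T$-weight $\nu$ of $V_\lambda$ satisfies $\nu\ge -\lambda^{\vee}$ in the root partial order ``$\alpha\ge\beta$ iff $\alpha-\beta$ is a non-negative integral combination of simple positive roots'', with equality precisely when $\nu=-\lambda^{\vee}$. Taking weight vectors $f\in K_\lambda$ of $T$-weight $\nu_1$ and $g\in R_\mu$ of $T$-weight $\nu_2$, we have $\nu_1>-\lambda^{\vee}$ strictly and $\nu_2\ge-\mu^{\vee}$, so the additivity $(\lambda+\mu)^{\vee}=\lambda^{\vee}+\mu^{\vee}$ (equivalently, $-w_0$ is linear) yields
\[
\nu_1+\nu_2\;>\;-(\lambda+\mu)^{\vee}.
\]
Since $R$ is a graded $G$-algebra, $fg\in R_{\lambda+\mu}$; combined with the strict weight inequality this forces $fg\in K_{\lambda+\mu}$. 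As $K_\lambda$ and $R_\mu$ are each spanned by $T$-weight vectors, bilinearity gives $\ker\pi_U\cdot R\subseteq\ker\pi_U$, so $\ker\pi_U$ is a graded ideal.

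Finally, $R_U=R/\ker\pi_U$ inherits a quotient $\K$-algebra structure, and since $U^-$ acts on $R$ by $\K$-algebra automorphisms, $R^{U^-}$ is a $T$-stable subalgebra. Thus the composition $R^{U^-}\hookrightarrow R\twoheadrightarrow R_U$ is a $T$-equivariant algebra homomorphism, and the first paragraph showed it is a linear isomorphism, so it is an isomorphism of $\K$-algebras. The main obstacle is the weight-order step in the middle paragraph: one needs both $R_\lambda R_\mu\subseteq R_{\lambda+\mu}$ (the grading hypothesis) and $-(\lambda+\mu)^{\vee}=-\lambda^{\vee}-\mu^{\vee}$ to convert a strict inequality at weight $\nu_1$ into a strict inequality at $\nu_1+\nu_2$; once these are in hand the multiplicative closure of $\ker\pi_U$ is automatic.
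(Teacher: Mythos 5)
Your proposal is correct and follows essentially the same approach as the paper's proof: decompose isotypically, identify $\ker\pi_U$ with the sum of the non-lowest-weight spaces $R_\lambda'$, and use the grading hypothesis together with additivity of lowest weights ($-w_0$ being linear) to get $R_\lambda' \cdot R_\mu \subseteq R_{\lambda+\mu}'$. The paper states this last step more tersely ("because the lowest weight of $R_{\lambda+\mu}$ is equal to the sum of the lowest weights of $R_\lambda$ and $R_\mu$"), and you have merely made explicit the dominance-order comparison that it tacitly invokes; that unpacking is accurate.
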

\begin{proof} For the isotypic component $R_{\lambda}$ of $R$ denote by $R_{\lambda}' \subset R_{\lambda}$ the direct sum of all weight spaces of weight different from the lowest weight.  Then $R_{\lambda}=(R_{\lambda})^{U^{-}} \oplus R_{\lambda}'$. Since $R_{\lambda}\cdot R_{\mu} \subseteq R_{\lambda+\mu}$ we get $R_{\lambda}\cdot R_{\mu}' \subset R_{\lambda+\mu}'$, because the lowest weight of $R_{\lambda+\mu}$ is equal to the sum of the lowest weights of $R_{\lambda}$ and $R_{\mu}$. It follow that $\bigoplus_{\mu}R_{\mu}' = \ker \pi_U \subset R$ is an ideal, and that the induced linear isomorphism $R^{U^{-}} \simto R_{U}$ is an isomorphism of $\K$-algebras.
\end{proof}

\begin{exa}\label{XU.exa}
Let $X$ be an affine $G$-variety and assume that $\OOO(X)$ is a graded $G$-algebra. 
Then $\OOO(X^{U})=\OOO(X)_{U}$ and quotient map $X\to X\quot U$ induces an isomorphism $X^U\simto X\quot U^-$.
\newline
In fact, we have $\OOO(X^{U}) = \OOO(X)/\sqrt{I}$ where $I$ is the ideal generated by the linear span $\langle g f - f\mid g\in U, f\in \OOO(X)\rangle = \ker(\OOO(X)\to\OOO(X)_{U})$. Now Lemma~\ref{RU.lem} implies that this kernel is an ideal, hence $\langle g f - f\mid g\in U, f\in \OOO(X)\rangle = I$, and since $\OOO(X)/I \simeq\OOO(X)^{U^{-}} \subseteq \OOO(X)$ we finally get $I = \sqrt{I}$. 
\newline
It follows that the restriction map $\rho\colon\OOO(X) \to \OOO(X^U)$ can be identified with the universal $U$-projection $\pi\colon\OOO(X) \to \OOO(X)_U$, and thus, by Lemma~\ref{RU.lem} above, 
the composition $\OOO(X)^{U^-}\hookrightarrow\OOO(X)\stackrel{\rho}\to\OOO(X^U)$ is an isomorphism. In particular, the quotient $X\to X\quot U^-$ induces an isomorphism $X^U\simto X\quot U^-$.
\end{exa}

\begin{lem}\label{RU-quotients.lem}
Let $\phi\colon R\to S$ be a $G$-equivariant linear map between $G$-modules. If the induced linear map $\phi^{U}\colon R^{U} \to S^{U}$ or $\phi_{U}\colon R_{U}\to S_{U}$ is injective, then $\phi$ is injective. In particular, 
\be
\item\label{RU-quotients.lem:1} if $\phi_U$ or $\phi^U$ is an isomorphism, then so is $\phi$;
\item\label{RU-quotients.lem:2}  if $\psi\colon R\to S$ is another $G$-equivariant linear map such that $\phi_U=\psi_U$ or $\phi^U=\psi^U$, then $\phi=\psi$.
\ee
\end{lem}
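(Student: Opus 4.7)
\textbf{Proof plan for Lemma~\ref{RU-quotients.lem}.}

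The plan is to reduce the statement to an assertion about the multiplicity spaces of each isotypic component via Schur's lemma, and then observe that passing to $U$-invariants or $U$-coinvariants identifies each such component with its multiplicity space.

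First, I would decompose $R$ and $S$ into their isotypic components: by complete reducibility (and rationality/local finiteness of the $G$-action), $R = \bigoplus_{\lambda\in\Lambda_G} R_\lambda$ and $S = \bigoplus_\lambda S_\lambda$, and the $G$-equivariance of $\phi$ forces $\phi = \bigoplus_\lambda \phi_\lambda$ with $\phi_\lambda\colon R_\lambda \to S_\lambda$. Since both functors $M\mapsto M^U$ and $M\mapsto M_U$ respect direct sums and the isotypic decomposition, $\phi^U = \bigoplus_\lambda \phi_\lambda^U$ and $\phi_U = \bigoplus_\lambda (\phi_\lambda)_U$. Thus injectivity (resp.\ surjectivity) of $\phi$, $\phi^U$, $\phi_U$ can be checked one isotypic component at a time.

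Next, fix $\lambda$ and write $R_\lambda \simeq V_\lambda \otimes M$ and $S_\lambda \simeq V_\lambda \otimes N$ as $G$-modules, where $M := R_\lambda^U$ and $N := S_\lambda^U$ are multiplicity spaces carrying the trivial $G$-action. By Schur's lemma there is a canonical isomorphism
\[
\Hom_G(V_\lambda\otimes M,\, V_\lambda\otimes N)\simeq \Hom_{\K}(M,N),
\]
so $\phi_\lambda = \id_{V_\lambda}\otimes f$ for a unique $\K$-linear $f\colon M\to N$. Since $\dim V_\lambda^U = 1$ we get $\phi_\lambda^U = (v\mapsto v)\otimes f$ (after fixing a highest weight vector), and similarly, using $(V_\lambda)_U \simeq V_\lambda^{U^-}$ which is one-dimensional (see Example~\ref{V_U-for-simple-V.exa}), we get $(\phi_\lambda)_U = \pi\otimes f$. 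In all three cases—$\phi_\lambda$, $\phi_\lambda^U$, $(\phi_\lambda)_U$—injectivity (and surjectivity) is equivalent to the corresponding property of $f$. Consequently, injectivity of $\phi^U$ or of $\phi_U$ forces each $f_\lambda$ to be injective, whence each $\phi_\lambda$ is injective and so $\phi$ is injective.

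Statement (\ref{RU-quotients.lem:1}) follows by applying the same argument to surjectivity as well. For (\ref{RU-quotients.lem:2}), the difference $\phi-\psi$ is again $G$-equivariant, and the hypothesis gives $(\phi-\psi)^U = 0$ or $(\phi-\psi)_U = 0$; by the Schur decomposition above this forces the associated $f$-map on every isotypic component to vanish, hence $\phi-\psi = 0$. I do not foresee a real obstacle: the only subtle point is ensuring that the identifications $R_\lambda\simeq V_\lambda\otimes R_\lambda^U$ are canonical enough that the maps $\phi_\lambda^U$ and $(\phi_\lambda)_U$ really do correspond to $f$, but this is immediate from Schur's lemma together with the one-dimensionality of $V_\lambda^U$ and $(V_\lambda)_U$.
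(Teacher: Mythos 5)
Your proof is correct, but it takes a genuinely different route from the paper's. The paper argues by contrapositive in a few lines: if $\ker\phi \neq 0$, pick a nontrivial simple $G$-submodule $V \subset \ker\phi$; then $V^U \neq 0$ sits inside $\ker\phi^U$ (so $\phi^U$ is not injective), and — since $V$ is a $G$-direct summand of $R$ — $V_U$ injects into $R_U$ with image in $\ker\phi_U$ (so $\phi_U$ is not injective). That is the whole argument for the main claim, with (1) and (2) then following formally. You instead carry out the full isotypic decomposition, apply Schur's lemma to write $\phi_\lambda = \id_{V_\lambda}\otimes f_\lambda$ on multiplicity spaces, and observe that passing to $U$-invariants or $U$-coinvariants (each one-dimensional on $V_\lambda$) recovers exactly $f_\lambda$. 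Your approach is more systematic and in fact yields slightly more: it shows that $\phi$, $\phi^U$, and $\phi_U$ have \emph{identical} rank and corank on each isotypic piece, not merely that injectivity transfers — which makes parts (1) and (2) completely immediate. The trade-off is that it invokes heavier structural machinery (Schur isomorphisms and canonical identifications of multiplicity spaces with $R_\lambda^U$) where a short direct argument suffices. One small imprecision: $R_\lambda^U$ is not $G$-stable and so does not literally ``carry the trivial $G$-action''; what you mean is the multiplicity space $\Hom_G(V_\lambda,R)$, which is canonically identified with $R_\lambda^U$ after a choice of highest weight vector. This does not affect the validity of the argument.
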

\begin{proof}
Let $V \subset \ker\phi$ be a nontrivial simple $G$-submodule. Then $V^U$ and $V_U$ are both nontrivial. The claims follow if we show that $V^U \subset \ker\phi^U$ and that $V_U \into \ker \phi_U$ is injective. The first statement is clear. For the second we remark that $V$ is a direct summand of $R$, $R = V \oplus R'$ as a $G$-module, hence $R_U = V_U \oplus R'_U$.
\end{proof}

Now consider the action of $G\times G$ on $G$ by left- and right-multiplication, i.e. 
$$
(g,h)\cdot x:=g x h^{-1}.
$$ 
With respect to this action one has the following well-known isotypic decomposition:
$$
\OOO(G) \simeq \bigoplus_{\lambda\in\Lambda_{G}} V_{\lambda}\otimes{V_{\lambda}}^{\vee}.
$$
This means that the only simple $G\times G$-modules occurring in $\OOO(G)$ are of the form $V \otimes V^{\vee}$, and they occur with multiplicity 1. The embedding $V \otimes V^{\vee} \into \OOO(G)$ is obtained as follows. The $G$-module structure on $V$ corresponds to a  representation $\rho_{V}\colon G \to \GL(V) \subset \End(V) \simeq V^{\vee}\otimes V$, and the comorphism $\rho_{V}^{*}$ induces a $G\times G$-equivariant embedding $V \otimes V^{\vee} \simto \End(V)^\vee \into \OOO(G)$. (The first map is defined by $(v\otimes\sigma)(\alpha) = \sigma(\alpha(v))$ for $v \in V$, $\sigma\in V^\vee$ and $\alpha \in \End(V)$.)

The action of $U\subset G$ on $G$ by right-multiplication induces 
a $G$-equivariant  isomorphism $\OOO(G/U)\simeq \OOO(G)^U$ with respect to the left-multiplication of $G$ on $G/U$ and on $G$, and we obtain the following 
isomorphisms of $G$-modules  
\begin{equation*}\tag{$\ast$}\label{G/U.grading.eq}
\OOO(G/U) \simeq \OOO(G)^{U} \simeq \bigoplus_{\lambda\in\Lambda_{G}} V_{\lambda}\otimes({V_{\lambda}}^{\vee})^{U}\simeq 
\bigoplus_{\lambda\in\Lambda_{G}} V_{\lambda},
\end{equation*}
giving the isotypic decomposition of $\OOO(G/U) = \OOO(G)^U$. Thus $\OOO(G/U)$ contains every simple $G$-module with multiplicity 1.

Since the torus $T$ normalizes $U$ there is also an action of $T$ on $\OOO(G)^U$ induced by the action of $G$ by right-multiplication, and this $T$-action  commutes with the $G$-action.
Thus we have a $G \times T$-action on $\OOO(G/U)=\OOO(G)^U$.

\begin{rem}\label{weight_T.rem}\strut
\be
\item\label{weight_T.rem:1}
The isomorphism \eqref{G/U.grading.eq} above is $G \times T$-equivariant where $T$ acts on $\OOO(G/U)_\lambda\simeq V_\lambda$ by scalar multiplication with the character $\lambda^\vee$. Thus the $T$-action on $\OOO(G/U)$ corresponds to the grading given by the isotypic decomposition. In particular, $\OOO(G/U)$ is a graded $G$-algebra.
\item\label{weight_T.rem:3} 
The universal  $U$-projection $\pi_U\colon\OOO(G/U)\to \OOO(G/U)_U$ is equivariant with respect to the $T\times T$-action. On the one-dimensional subspace $(\OOO(G/U)_\lambda)_U \subset \OOO(G/U)_U$ the action of $(s,t)\in T\times T$ is given by multiplication with $\lambda^\vee(s)^{-1}\lambda^\vee(t)$.
\ee
\end{rem}
Let $\eps\colon \OOO(G/U) \to \K$ denote the evaluation map $f \mapsto f(e U)$. This is the comorphism of the inclusion $\iota\colon\{e U\} \into G/U$. 

\begin{lem}\label{universal-property.lem}
The induced linear map $\eps_\lambda\colon \OOO(G/U)_\lambda \to \K$ is the universal $U$-projection $\pi_U\colon \OOO(G/U)_\lambda \to (\OOO(G/U)_\lambda)_U$,
and it induces an isomorphism $\bar\eps_\lambda\colon\OOO(G/U)_\lambda^{U^-} \simto \K$.
\end{lem}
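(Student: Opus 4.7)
My plan is to make everything explicit via the matrix coefficient description of $\OOO(G/U)$. Fix a highest weight vector $\sigma_{0}\in (V_{\lambda}^{\vee})^{U}$. The $G\times G$-isomorphism $V_{\lambda}\otimes V_{\lambda}^{\vee}\simto \OOO(G)_{\lambda}$ sends $v\otimes\sigma$ to the matrix coefficient $g\mapsto \sigma(g^{-1}v)$; taking $U$-invariants under the right action selects the factor $\K\sigma_{0}$, identifying $\OOO(G/U)_{\lambda}$ with $V_{\lambda}$ via $v\mapsto f_{v}$, where $f_{v}(g)=\sigma_{0}(g^{-1}v)$. Under this identification the evaluation reads
\[
\eps_{\lambda}(f_{v})=f_{v}(eU)=\sigma_{0}(v),
\]
so $\eps_{\lambda}$ is literally the highest weight functional $\sigma_{0}\in V_{\lambda}^{\vee}$.

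The next step is to check $U$-invariance of $\eps_{\lambda}$ directly from the definition of the left $G$-action on $\OOO(G/U)$. For $u\in U$ and $f\in\OOO(G/U)_{\lambda}$,
\[
\eps_{\lambda}(u\cdot f)=(u\cdot f)(eU)=f(u^{-1}U)=f(eU)=\eps_{\lambda}(f),
\]
because $u^{-1}U=eU$ in $G/U$. By the universal property of $\pi_{U}$ (Definition~\ref{W_U.def}), $\eps_{\lambda}$ factors uniquely as $\bar\eps_{\lambda}\circ \pi_{U}$ for some linear map $\bar\eps_{\lambda}\colon (\OOO(G/U)_{\lambda})_{U}\to\K$.

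Finally I would verify that $\eps_{\lambda}$ is nonzero on the lowest weight space $\OOO(G/U)_{\lambda}^{U^{-}}$: under the identification above, this subspace corresponds to $V_{\lambda}^{U^{-}}$, the one-dimensional weight space of weight $-\lambda^{\vee}$, and a highest weight vector of $V_{\lambda}^{\vee}$ pairs non-trivially with it (it is the unique weight in $V_{\lambda}$ dual to the $T$-weight of $\sigma_{0}$). Hence the restriction $\bar\eps_{\lambda}\colon \OOO(G/U)_{\lambda}^{U^{-}}\to\K$ is a nonzero linear map between one-dimensional spaces, proving the second assertion. Combined with Example~\ref{V_U-for-simple-V.exa}, which states that the composition $V_{\lambda}^{U^{-}}\hookrightarrow V_{\lambda}\twoheadrightarrow (V_{\lambda})_{U}$ is an isomorphism, this forces $\bar\eps_{\lambda}\colon (\OOO(G/U)_{\lambda})_{U}\to\K$ to be an isomorphism as well, giving the first assertion. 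There is no serious obstacle; the whole content amounts to recognizing that evaluation at $eU$ is, after translation through matrix coefficients, exactly the canonical highest weight covector, which by its very nature descends to the one-dimensional quotient $(V_{\lambda})_{U}$ and identifies with $V_{\lambda}^{U^{-}}$.
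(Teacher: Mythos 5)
Your proof is correct and follows essentially the same route as the paper: identify $\eps_\lambda$ with pairing against a highest weight vector $\sigma_0\in V_\lambda^\vee$ via matrix coefficients, observe that $\sigma_0$ is nonzero on the one-dimensional lowest weight space $V_\lambda^{U^-}$, and invoke Example~\ref{V_U-for-simple-V.exa}. Your convention $f_v(g)=\sigma_0(g^{-1}v)$ is the one that makes the $G\times G$-equivariance with the first $G$ on $V_\lambda$ and the second on $V_\lambda^\vee$ transparent, whereas the paper's formula $\sigma(gv)$ swaps the factors; this is a cosmetic difference that does not affect the evaluation at $e$, and your extra direct check of $U$-invariance is a harmless addition.
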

\begin{proof}
We first consider the evaluation map $\tilde\eps\colon \OOO(G) \to \K$, $f\mapsto f(e)$, which is the comorphism of the inclusion $\tilde\iota\colon\{e\}\into G$. We claim that on the isotypic components $V_\lambda \otimes V_\lambda^\vee$ of $\OOO(G)$ the map $\tilde\eps$ is given by the formula $\tilde\eps(v\otimes\sigma) = \sigma(v)$. 
In fact, let $\rho_\lambda\colon G \to \GL(V_\lambda) \subset \End(V_\lambda)$ denote the representation on $V_\lambda$. Then the composition $\rho_\lambda\circ\tilde\iota$ sends $e$ to $\id_{V_\lambda}$, hence the comorphism 
$\End(V_\lambda)^\vee \to \K$ is given by $\ell \mapsto \ell(\id_{V_\lambda})$. We have mentioned above that the isomorphism $V \otimes V^\vee \simto \End(V)^\vee$ is defined by $(v\otimes\sigma)(\alpha) := \sigma(\alpha(v))$. This implies that $\tilde\eps\colon V_\lambda\otimes V_\lambda^\vee \simto \End(V)^\vee \to \K$ is given by $v\otimes\sigma\mapsto \sigma(v)$ as claimed.

For the restriction $\eps$ of $\tilde\eps$ to $\OOO(G/U)=\OOO(G)^U$ we thus find for $v \in V_\lambda \simeq \OOO(G/U)_\lambda$ that $\eps(v) = \sigma_0(v)$ where $\sigma_0$ is a highest weight vector in $V_\lambda^\vee$. As a consequence, $\eps(v) \neq 0$ if $v$ has weight $-\lambda^\vee$, i.e. if $v \in \OOO(G/H)^{U^-}$. Now the claims follow from Example~\ref{V_U-for-simple-V.exa}.
\end{proof}
One can use the isomorphisms $\bar\eps_\lambda$ to define elements $f_\lambda:=\bar\eps_\lambda^{\,-1}(1) \in \OOO(G/U)^{U^-}$ with the following properties:  $f_\lambda\cdot f_\mu = f_{\lambda+\mu}$ and $f_0 = 1$. This means that they form a multiplicative submonoid of $\OOO(G/U)^{U^-}$ isomorphic to $\Lambda_G$. In fact, there is a canonical isomorphism $\K[\Lambda_G] \simto \OOO(G/U)^{U^-}$, $x_\lambda \mapsto f_\lambda$.

\ps
%%%%%%%%%%%%%%%%%%%%%%%%%%%%%%%%%%%%
\subsection{The structure of a graded \texorpdfstring{$G$}{G}-algebra}
It is a basic fact from highest weight theory that the structure of a $G$-module $M$ is completely determined by the $T$-module structure of $M^U$. In this section we show that the structure of a graded $G$-algebra $R$ is completely determined by the structure of $R_U$ or of $R^{U^-}$ as a $T$-algebra.
\begin{thm}\label{structure-Galgebra.thm}
Let $R$ be a $G$-module. Then there are two canonical $G$-equivariant isomorphisms
\[
\Psi\colon(\OOO(G/U)\otimes R_U)^T \simto R \quad \text{and}\quad \Psi'\colon(\OOO(G/U)\otimes R^{U^-})^T \simto R
\]
where the $T$-action on $\OOO(G/U)$ is by right-multiplication and on $R_U, R^{U^-}$ induced by the $G$-action on $R$. If $R$ is a graded $G$-algebra, then $\Psi$ and $\Psi'$ are isomorphisms of $\K$-algebras.
\end{thm}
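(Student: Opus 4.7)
The plan is to exhibit the inverse $\iota := \Psi^{-1}\colon R \to (\OOO(G/U)\otimes R_U)^T$ directly as the natural $G$-equivariant map sending $r$ to the regular function $gU \mapsto \pi_U(g^{-1}r)$; bijectivity will then follow from comparing the $G$-isotypic decompositions on both sides. The case of $\Psi'$ reduces to $\Psi$ via the $T$-equivariant (and in the graded case $T$-algebra-equivariant) isomorphism $R^{U^-} \simto R_U$ obtained from Example~\ref{V_U-for-simple-V.exa} applied on each isotypic summand.

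First, the formula for $\iota$ is well-defined on cosets since $\pi_U(u^{-1}g^{-1}r) = \pi_U(g^{-1}r)$ for $u\in U$ (the action of $U$ on $R_U$ is trivial), and $G$-equivariance $h\cdot\iota(r) = \iota(hr)$ for the left-translation action on $\OOO(G/U)$ is immediate. I next verify that $\iota(r)$ is $T$-invariant for the combined $T$-action (right-translation on $\OOO(G/U)$ and induced action on $R_U$). Using that $\pi_U$ is $T$-equivariant, as $T$ normalizes $U$, a short computation gives
\[
(s\cdot\iota(r))(gU) = s \cdot \pi_U((gs)^{-1}r) = s \cdot s^{-1}\pi_U(g^{-1}r) = \iota(r)(gU).
\]

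To see $\iota$ is an isomorphism I will decompose both sides by $G$-isotype. By Remark~\ref{weight_T.rem}, $\OOO(G/U)_\lambda\simeq V_\lambda$ carries the right-translation $T$-action by the character $\lambda^\vee$; by Example~\ref{V_U-for-simple-V.exa} applied to each simple summand, $(R_\mu)_U$ is a $T$-module of weight $-\mu^\vee$. Since $\lambda \mapsto \lambda^\vee$ is injective on $\Lambda_G$, the only $T$-invariants in $\bigoplus_{\lambda,\mu}\OOO(G/U)_\lambda\otimes (R_\mu)_U$ are the diagonal ones $\lambda = \mu$, whence
\[
(\OOO(G/U)\otimes R_U)^T = \bigoplus_\lambda \OOO(G/U)_\lambda \otimes (R_\lambda)_U,
\]
which has the same $G$-isotypic structure as $R = \bigoplus_\lambda R_\lambda$, because $R_\lambda \simeq V_\lambda \otimes (R_\lambda)_U$ as $G$-modules. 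The $G$-equivariant map $\iota$ restricts to each $\lambda$-isotypic component, and by Schur's lemma together with dimension counting it suffices to check $\iota(v)\neq 0$ for some nonzero highest weight vector $v\in V^U$ in each simple submodule $V\simeq V_\lambda \subset R$. Taking a representative $w_0$ of the longest Weyl element, $w_0^{-1}v$ is a nonzero lowest-weight vector of $V$, so $\iota(v)(w_0 U) = \pi_U(w_0^{-1}v)\neq 0$ in $V_U$, as desired.

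Finally, when $R$ is a graded $G$-algebra, $\pi_U$ is a $\K$-algebra homomorphism by Lemma~\ref{RU.lem}, so
\[
\iota(rr')(gU) = \pi_U\bigl((g^{-1}r)(g^{-1}r')\bigr) = \pi_U(g^{-1}r)\pi_U(g^{-1}r') = (\iota(r)\iota(r'))(gU),
\]
hence $\iota$, and thus $\Psi=\iota^{-1}$, is multiplicative. For $\Psi'$, the composition $R^{U^-}\hookrightarrow R \xrightarrow{\pi_U} R_U$ is always a $T$-equivariant linear isomorphism (apply Example~\ref{V_U-for-simple-V.exa} on each isotypic summand), and is a $T$-algebra isomorphism when $R$ is a graded $G$-algebra by Lemma~\ref{RU.lem}; substituting $R^{U^-}$ for $R_U$ in $\Psi$ yields $\Psi'$. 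The main bookkeeping obstacle is tracking the $T$-weights on $\OOO(G/U)_\lambda$ and $(R_\mu)_U$ carefully enough to see that the $T$-invariants single out precisely the diagonal isotypic pairing $\lambda = \mu$; once this is in place, the rest of the argument is formal.
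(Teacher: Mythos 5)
Your proof takes a genuinely different route from the paper's. The paper passes through the intermediate $T$-module $A_R := \bigoplus_\lambda \Hom_G(\OOO(G/U)_\lambda,R)$, first showing $(\OOO(G/U)\otimes A_R)^T \simeq R$ formally from the multiplicity-one decomposition of $\OOO(G/U)$ (Lemma~\ref{canonial-iso.lem}) and then separately identifying $A_R$ with $R_U$ and with $R^{U^-}$ via the universal property of $\eps_\lambda$ (Proposition~\ref{AR-RU-iso.prop}); multiplicativity is checked by a diagram chase through $\gamma := \alpha * \beta$. You instead write down the inverse $\iota = \Psi^{-1}$ in closed form, $\iota(r)(gU) := \pi_U(g^{-1}r)$, and verify everything directly: well-definedness and $G$-equivariance are immediate, $T$-invariance follows from $T$-equivariance of $\pi_U$, and multiplicativity is a one-line consequence of $\pi_U$ being an algebra map (Lemma~\ref{RU.lem}). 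This is shorter and more transparent, and it makes visible that $\Psi^{-1}$ is essentially the matrix-coefficient / Frobenius-reciprocity map. The two constructions agree: both satisfy the commuting square $\pi_U = (\eps\otimes\id)\circ\iota$ (Remark~\ref{explicit-Psi.rem}), which pins the map down by Lemma~\ref{RU-quotients.lem}.

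There is one small gap. After establishing injectivity of $\iota$ on each isotypic piece by evaluating at $w_0U$, you invoke ``Schur's lemma together with dimension counting'' to conclude bijectivity. This is fine when $\dim (R_\lambda)_U < \infty$, but the theorem is stated for an arbitrary $G$-module $R$, where an isotypic component can have infinite multiplicity, and then an injective $G$-equivariant endomorphism of $\bigoplus^{\infty} V_\lambda$ need not be surjective. The cleanest fix stays within your own framework: you have already computed $(\eps\otimes\id)\circ\iota = \pi_U$, and by Lemma~\ref{universal-property.lem} the map $\eps\otimes\id$ is itself the universal $U$-projection of $(\OOO(G/U)\otimes R_U)^T$; hence, after the canonical identification of the two coinvariant spaces, $\iota_U = \id_{R_U}$, and Lemma~\ref{RU-quotients.lem}(\ref{RU-quotients.lem:1}) gives that $\iota$ is an isomorphism with no finiteness hypothesis. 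With this replacement for the dimension count, the argument is complete.
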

For the proof we introduce an intermediate $T$-module $A_R$. 
If $R$ is a $G$-module, then, for every simple $G$-module $V$ of highest weight $\lambda$, there is a canonical $G$-equivariant  isomorphism 
\[
V \otimes \Hom_G(V, R) \simto R_\lambda, \text{ \ given by \ } v\otimes \alpha \mapsto \alpha(v).
\]
In particular, we have isomorphisms $\OOO(G/U)_\lambda\otimes \Hom_G(\OOO(G/U)_\lambda,R) \simto R_\lambda$ for any dominant weight $\lambda$. Recall that we have a $T$-action on $\OOO(G/U)$ by scalar-multiplication with the character $\lambda^\vee$ on $\OOO(G/U)_\lambda$, see Remark~\ref{weight_T.rem}(\ref{weight_T.rem:1}).

\begin{lem}\label{canonial-iso.lem}
There is a canonical $G$-equivariant isomorphism 
\[
(\OOO(G/U)\otimes \bigoplus_{\lambda\in\Lambda_G} \Hom_G(\OOO(G/U)_\lambda,R))^T \simto R.
\]
\end{lem}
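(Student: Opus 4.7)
The plan is to reduce to the isotypic component formula $V \otimes \Hom_G(V, R) \simto R_\lambda$ for $V$ simple of highest weight $\lambda$ by extracting the $T$-invariants explicitly. The whole content of the statement is that the $T$-invariants cut out the diagonal $\mu = \lambda$ piece of the double sum $\bigoplus_{\mu,\lambda} \OOO(G/U)_\mu \otimes \Hom_G(\OOO(G/U)_\lambda, R)$.

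First I would record the $T$-action. By Remark~\ref{weight_T.rem}(\ref{weight_T.rem:1}), the right-multiplication $T$-action on $\OOO(G/U)_\lambda$ is scalar multiplication by the character $\lambda^\vee$. Since $R$ carries no $T$-action, the induced $T$-action on $\Hom_G(\OOO(G/U)_\lambda, R)$ is scalar multiplication by $-\lambda^\vee$. Therefore $T$ acts on $\OOO(G/U)_\mu \otimes \Hom_G(\OOO(G/U)_\lambda, R)$ by the single scalar character $\mu^\vee - \lambda^\vee$, which is trivial if and only if $\mu = \lambda$. Hence
\[
\Bigl(\OOO(G/U)\otimes \bigoplus_{\lambda\in\Lambda_G}\Hom_G(\OOO(G/U)_\lambda,R)\Bigr)^{T} = \bigoplus_{\lambda\in \Lambda_G} \OOO(G/U)_\lambda \otimes \Hom_G(\OOO(G/U)_\lambda, R).
\]

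Second, I would define the candidate isomorphism to $R$ summand-by-summand as the canonical evaluation map
\[
\OOO(G/U)_\lambda \otimes \Hom_G(\OOO(G/U)_\lambda, R) \longrightarrow R, \qquad f\otimes \alpha \mapsto \alpha(f).
\]
This is manifestly $G$-equivariant (since $G$ acts only on $\OOO(G/U)_\lambda$ and $\alpha$ is $G$-linear), and it lands in the $\lambda$-isotypic component $R_\lambda$. Because $\OOO(G/U)_\lambda$ is a fixed simple $G$-module of highest weight $\lambda$ (by the isotypic decomposition \eqref{G/U.grading.eq} of $\OOO(G/U)$), the general isomorphism $V \otimes \Hom_G(V, R) \simto R_\lambda$ specializes to an isomorphism onto $R_\lambda$. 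Summing over $\lambda$ yields the isomorphism onto $R = \bigoplus_\lambda R_\lambda$.

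The map is canonical because it only uses the evaluation pairing, and it is $G$-equivariant by construction. No real obstacle arises: the argument is bookkeeping of weights and an invocation of the tautological description of isotypic components, so the only point to be careful about is that the right-multiplication $T$-action really is scalar by $\lambda^\vee$ on the whole of $\OOO(G/U)_\lambda$ (not merely on its highest weight line), which is precisely what Remark~\ref{weight_T.rem}(\ref{weight_T.rem:1}) provides.
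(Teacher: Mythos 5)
Your argument is correct and coincides with the paper's own proof: both first note that $T$ acts by the scalar character $\mu^\vee-\lambda^\vee$ on $\OOO(G/U)_\mu\otimes\Hom_G(\OOO(G/U)_\lambda,R)$ so that only the diagonal summands $\mu=\lambda$ survive taking $T$-invariants, and then invoke the tautological evaluation isomorphism $V\otimes\Hom_G(V,R)\simto R_\lambda$ on each summand. The extra care you take in justifying that the right-multiplication $T$-action is genuinely scalar on all of $\OOO(G/U)_\lambda$ (via Remark~\ref{weight_T.rem}) is exactly the point the paper relies on as well.
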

\begin{proof}
The action of $T$ on $\OOO(G/U)_\mu\otimes\Hom_G(\OOO(G/U)_\lambda,R)$ is by scalar multiplication with the character $\mu^\vee - \lambda^\vee$, hence $(\OOO(G/U)_\mu\otimes\Hom_G(\OOO(G/U)_\lambda,R))^T = 0$ unless $\mu = \lambda$. For $\mu=\lambda$ the torus $T$ acts trivially and so $(\OOO(G/U)_\lambda\otimes\Hom_G(\OOO(G/U)_\lambda,R))^T\simto R_\lambda$ as we have seen above. Thus the left hand side is $\bigoplus_{\lambda\in\Lambda_G} \OOO(G/U)_\lambda\otimes\Hom_G(\OOO(G/U)_\lambda,R)^T$ which is canonically isomorphic to 
$\bigoplus_{\lambda\in\Lambda_G} R_\lambda = R$.
\end{proof}
Recall that we have natural $T$-actions on $R_U$ and $R^{U^-}$ and a $T$-equivariant isomorphism $R^{U^-} \simto R_U$ (Lemma~\ref{RU.lem}).

\begin{prop}\label{AR-RU-iso.prop}
Define the $T$-module $A_R:=\bigoplus_{\lambda\in\Lambda_G} \Hom_G(\OOO(G/U)_\lambda,R)$ where $T$ acts by right-multiplication on $\OOO(G/U)$. Then there are canonical $T$-equivariant isomorphisms
\[
\phi\colon A_R \simto R_U \quad \text{and} \quad \psi\colon A_R \simto R^{U^-}.
\]
\end{prop}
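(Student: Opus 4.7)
The proof is essentially formal once one combines the canonical $G$-equivariant isomorphism
\[
\mu_\lambda\colon \OOO(G/U)_\lambda \otimes \Hom_G(\OOO(G/U)_\lambda, R) \simto R_\lambda, \qquad f\otimes \alpha \mapsto \alpha(f),
\]
recalled just before the statement, with the canonical generator $f_\lambda \in \OOO(G/U)_\lambda^{U^-}$ produced by Lemma~\ref{universal-property.lem}. The plan is to apply the functors $(-)^{U^-}$ and $(-)_U$ to $\mu_\lambda$ separately, and then trivialize the resulting one-dimensional first factor using $f_\lambda$ (resp.\ $\pi_U(f_\lambda)$).

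Since $G$ (and in particular $U^-$ and $U$) acts trivially on the space $\Hom_G(\OOO(G/U)_\lambda, R)$ by $G$-equivariance, the functors $(-)^{U^-}$ and $(-)_U$ commute with tensoring over this factor, so $\mu_\lambda$ restricts to isomorphisms
\[
\OOO(G/U)_\lambda^{U^-}\otimes \Hom_G(\OOO(G/U)_\lambda,R)\simto R_\lambda^{U^-} \quad\text{and}\quad (\OOO(G/U)_\lambda)_U\otimes \Hom_G(\OOO(G/U)_\lambda,R)\simto (R_\lambda)_U.
\]
By Lemma~\ref{universal-property.lem}, both left-hand lines are one-dimensional with canonical generators $f_\lambda$ and $\pi_U(f_\lambda)$, fixed by the normalization $\eps_\lambda(f_\lambda)=1$. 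Using these to identify the lines with $\K$ and summing over $\lambda\in \Lambda_G$ produces the desired maps
\[
\psi\colon A_R\to R^{U^-},\ \alpha\mapsto\alpha(f_\lambda), \qquad \phi\colon A_R\to R_U,\ \alpha\mapsto\pi_U(\alpha(f_\lambda)),
\]
for $\alpha\in\Hom_G(\OOO(G/U)_\lambda,R)$; they are bijections by construction.

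It remains to verify $T$-equivariance, which is bookkeeping. By Remark~\ref{weight_T.rem}(\ref{weight_T.rem:1}), the right $T$-action on $\OOO(G/U)_\lambda$ is by the character $\lambda^\vee$, so the induced $T$-action on $\Hom_G(\OOO(G/U)_\lambda, R)$ is by $-\lambda^\vee$: indeed $(t\cdot \alpha)(f_\lambda)= \alpha(t^{-1}\cdot f_\lambda)=\lambda^\vee(t)^{-1}\alpha(f_\lambda)$. On the other hand $R_\lambda^{U^-}$ is the lowest weight subspace of the isotypic component $R_\lambda$, on which $T\subset G$ acts by the character $-\lambda^\vee$; the weights match, so $\psi$ is $T$-equivariant, and since $\pi_U\colon R^{U^-}\simto R_U$ is $T$-equivariant (Lemma~\ref{RU.lem}), so is $\phi=\pi_U\circ\psi$. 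The only (mild) obstacle is keeping the two $T$-actions straight -- the one on $A_R$ coming from right-multiplication on $\OOO(G/U)$ versus the one on $R^{U^-}, R_U$ coming from $T\subset G$ -- and verifying that both give the character $-\lambda^\vee$ on the $\lambda$-summand.
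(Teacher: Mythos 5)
Your proposal is correct and takes essentially the same approach as the paper: both constructions come down to the isomorphism $\mu_\lambda$, the normalized generator $f_\lambda$ from Lemma~\ref{universal-property.lem}, and the resulting formulas $\psi_\lambda(\alpha)=\alpha(f_\lambda)$ and $\phi_\lambda=\pi_U\circ\psi_\lambda$. The paper produces $\phi_\lambda$ by factoring $\pi_\lambda\circ\alpha$ through $\eps_\lambda$ via the universal property and then obtains $\psi_\lambda$ from it, while you go the other way, applying $(-)^{U^-}$ and $(-)_U$ to $\mu_\lambda$ and trivializing the one-dimensional factors; these are two phrasings of the same argument.
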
  
\begin{proof}
(1) We first show that for every dominant weight $\lambda$ there is a canonical isomorphism $\phi_\lambda\colon \Hom_G(\OOO(G/U)_\lambda,R) \simto (R_\lambda)_U$. For
$\alpha \in \Hom_G(\OOO(G/U)_\lambda,R)$ consider the composition $\pi\circ\alpha\colon \OOO(G/U)_\lambda \to R_\lambda\to (R_\lambda)_U$. From the universal property of $\eps_\lambda\colon \OOO(G/U)_\lambda \to \K$ (Lemma~\ref{universal-property.lem}) we obtain a unique factorization
\[
\begin{CD}
\OOO(G/U)_\lambda @>{\alpha}>>  R_\lambda \\
@VV{\eps_\lambda}V @VV{\pi_\lambda}V \\
\K @>{\bar\alpha}>> (R_\lambda)_U
\end{CD}
\]
It is easy to see that the map $\phi_\lambda\colon\Hom_G(\OOO(G/U)_\lambda,R) \to (R_\lambda)_U$ defined by $\alpha\mapsto\bar\alpha(1)$ has the required properties.
\ps
(2) Next we show that for every dominant weight $\lambda$ there is a canonical isomorphism $\psi_\lambda\colon\Hom_G(\OOO(G/U)_\lambda,R) \simto (R_\lambda)^{U^-}$. Here we us the elements $f_\lambda :=\bar\eps_\lambda^{\,-1}(1)$ defined after Lemma~\ref{universal-property.lem}, and set $\psi_\lambda(\alpha):=\alpha(f_\lambda)$. Now the claim follows from (1), because $\eps_\lambda(f_\lambda) = 1$ and so 
$\pi_\lambda(\alpha(f_\lambda)) = \tilde\alpha(1)$, i.e. $\bar\pi_\lambda\circ\psi_\lambda = \phi_\lambda$ where $\bar\pi_\lambda\colon R_\lambda^{U^-} \simto (R_\lambda)_U$ is the $T$-equivariant isomorphism induced by $\pi_\lambda$  (see Lemma~\ref{universal-property.lem}).
\end{proof} 
 \begin{proof}[Proof of Theorem~\ref{structure-Galgebra.thm}]
From Lemma~\ref{canonial-iso.lem} above we obtain a canonical  isomorphism $(\OOO(G/U)\otimes A_R)^T \simto R$ of $G$-modules. Now the first part of the theorem follows from Proposition~\ref{AR-RU-iso.prop}. 

For the last claim, we have to work out the multiplication $*$ on $A=A_R$ given by the isomorphism $\psi\colon A_R \simto R^{U^-}$. If $\alpha\in A_{\mu}$ and $\beta\in A_\lambda$, then $\alpha*\beta \in A_{\mu+\lambda}$ is uniquely defined by $(\alpha*\beta)(f_{\mu+\lambda}) = \alpha(f_\mu)\cdot \beta(f_\lambda) \in R_\mu\cdot R_\lambda \subset R_{\mu+\lambda}$. The claim follows if we show that 
\[\tag{$\dagger$}\label{mult.equ}
(\alpha*\beta)(p\cdot q) = \alpha(p)\cdot \beta(q) \text{ for } p\in\OOO(G/U)_\mu \text{ and } q \in \OOO(G/U)_\lambda.
\]
Since $\OOO(G/U)_{\mu} \otimes \OOO(G/U)_{\lambda} \overset{\alpha\otimes \beta}{\longrightarrow} 
R_{\mu}\otimes R_{\lambda} \overset{\text{mult}}{\longrightarrow} R_{\mu+\lambda}$
is a $G$-equivariant linear map it factors uniquely through the multiplication map $\OOO(G/U)_{\mu} \otimes \OOO(G/U)_{\lambda} \to \OOO(G/U)_{\mu+\lambda}$:
\[
\begin{CD}
\OOO(G/U)_{\mu} \otimes \OOO(G/U)_{\lambda} @>{\alpha\otimes \beta}>> R_{\mu}\otimes R_{\lambda}\\
@VV{\text{mult}}V  @VV{\text{mult}}V\\
\OOO(G/U)_{\mu+\lambda}@>{\gamma}>> R_{\mu+\lambda}
\end{CD}
\]
By construction, $\gamma$ is $G$-equivariant and has the property that $\gamma(p\cdot q) = \alpha(p)\cdot\beta(q)$ for $p\in\OOO(G/U)_\mu,  q \in \OOO(G/U)_\lambda$. In particular, $\gamma(f_{\mu+\lambda}) = \gamma(f_\mu\cdot f_\lambda) = \alpha(f_\mu)\cdot\beta(f_\lambda) = (\alpha*\beta)(f_{\mu+\lambda})$, hence $\gamma = \alpha*\beta$ by uniqueness, and so \eqref{mult.equ} follows.
\end{proof} 

\begin{rem}\label{explicit-Psi.rem}
We will later need an explicit description of the isomorphism $\Psi$ from Theorem~\ref{structure-Galgebra.thm}. Let $f \in \OOO(G/U)_\lambda, h\in (R_{\lambda})_U$.  Proposition~\ref{AR-RU-iso.prop} shows that there is a unique $G$-equivariant homomorphism $\alpha\colon\OOO(G/U)_\lambda \to R_\lambda$ such that $\pi_\lambda(\alpha(f)) = h$, and then $\Psi(f\otimes h) = \alpha(f)$ by Lemma~\ref{canonial-iso.lem}:
\[
\begin{CD}
\OOO(G/U)_\lambda @>{\alpha}>>  R_\lambda \\
@VV{\eps_\lambda}V @VV{\pi_\lambda}V \\
\K @>{\bar\alpha}>> (R_\lambda)_U
\end{CD}
\]
Since $\eps_\lambda(f_\lambda) = 1$ we get $\bar\alpha(1) = h$ and so $\pi_\lambda(\Psi(f\otimes h)) = \pi_\lambda(\alpha(f)) = \bar\alpha(\eps_\lambda(f))= \eps_\lambda(f) h$. This shows that the diagram
\[
\begin{CD}
(\OOO(G/U)\otimes R_U)^T @>{\Psi}>> R \\
@VV{\eps\otimes \id}V  @VV{\pi_U}V \\
R_U @= R_U
\end{CD}
\]
commutes.
\end{rem}

\ps
%%%%%%%%%%%%%%%%%%%%%%%%%%%%%%%%%%%
\subsection{Deformation of \texorpdfstring{$G$}{G}-algebras}
In this subsection we give an application of the methods developed above. The results are interesting in their own, but they will not be used in the remaining part of the paper.%
\footnote{The result is cited in \cite{Br1981Sur-la-theorie-des} as an unpublished result due to the first author and is partially reproved there.}

Let $R$ be a $G$-algebra with isotypic decomposition $R = \bigoplus_{\lambda\in\Lambda_{G}} R_{\lambda}$. We define a {\it graded $G$-algebra 
$\gr R$} in the following way. As a $G$-module, we set $\gr R := \bigoplus_{\lambda\in\Lambda_{G}}R_{\lambda}$, and the multiplication is defined by the symmetric bilinear map
$$
\begin{CD}
R_{\lambda}\times R_{\mu}  @>\text{mult}>> R @>{\pr}>> R_{\lambda+\mu}.
\end{CD}
$$
It is not difficult to see that this multiplication is associative, hence defines a $\K$-algebra structure on $\gr R$ such that $\gr R$ becomes a graded $G$-algebra.
We now generalize Theorem~\ref{structure-Galgebra.thm} to non-graded $G$-algebras.

\begin{prop}\label{grR-iso.prop}
For any $G$-algebra $R$ there is a canonical $G$-equivariant isomorphism of $\K$-algebras
$$
(\OOO(G/U)\otimes R^{U^{-}})^{T} \simto \gr R
$$
\end{prop}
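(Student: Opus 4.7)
The plan is to reduce the statement to Theorem~\ref{structure-Galgebra.thm} applied to the graded $G$-algebra $\gr R$. As $G$-modules we have $R = \gr R$, so in particular $R^{U^-} = (\gr R)^{U^-}$ as $T$-modules. Thus Theorem~\ref{structure-Galgebra.thm}, applied to $\gr R$, already provides a canonical $G$-equivariant isomorphism of $\K$-algebras $\Psi'\colon(\OOO(G/U)\otimes (\gr R)^{U^-})^T \simto \gr R$, and all that remains is to show that the $T$-algebra structure on $(\gr R)^{U^-}$ inherited from $\gr R$ coincides with that on $R^{U^-}$ inherited from $R$.

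The key observation — and this is the one substantive step — is that $R^{U^-}$, viewed as a subalgebra of $R$, is automatically graded by $\Lambda_G$ via $R^{U^-} = \bigoplus_{\lambda \in \Lambda_G} R_\lambda^{U^-}$, even though $R$ itself need not be. To see this, take $f \in R_\mu^{U^-}$ and $g \in R_\lambda^{U^-}$, and write the product in $R$ as $f \cdot g = \sum_\nu h_\nu$ with $h_\nu \in R_\nu$. Since $U^-$ acts by algebra automorphisms, $f \cdot g \in R^{U^-}$, and each projection $h_\nu$ lies in $R_\nu^{U^-}$ since the projection to an isotypic component is $G$-equivariant. Because $T$ normalizes $U^-$, the multiplication in $R$ is $T$-equivariant on $R^{U^-}$; thus $f \cdot g$ has $T$-weight $-\mu^\vee - \lambda^\vee = -(\mu+\lambda)^\vee$. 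But $R_\nu^{U^-}$ is the $(-\nu^\vee)$-weight space of $T$ in $R_\nu$, and distinct dominant weights $\nu$ yield distinct characters $-\nu^\vee$, so $h_\nu = 0$ unless $\nu = \mu + \lambda$. Hence $f \cdot g \in R_{\mu+\lambda}^{U^-}$.

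Because the multiplication in $\gr R$ on $R_\mu \times R_\lambda$ is by definition the composition of the $R$-multiplication with the projection onto $R_{\mu+\lambda}$, and because this projection acts as the identity on $R_{\mu+\lambda}^{U^-}$ by the computation above, the two $T$-algebra structures on $R^{U^-} = (\gr R)^{U^-}$ agree. Substituting into the isomorphism $\Psi'$ for $\gr R$ then yields the canonical $G$-equivariant $\K$-algebra isomorphism $(\OOO(G/U) \otimes R^{U^-})^T \simto \gr R$.

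The only real obstacle is establishing that $R^{U^-}$ inherits a grading from $R$; once that is in place, the result is a direct consequence of Theorem~\ref{structure-Galgebra.thm}. The rest — $G$-equivariance and canonicity of the map — is inherited from the construction of $\Psi'$.
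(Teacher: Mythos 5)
Your proof is correct and follows essentially the same route as the paper: both identify $R^{U^-} = (\gr R)^{U^-}$ as $T$-algebras and then invoke Theorem~\ref{structure-Galgebra.thm} applied to $\gr R$. The one difference is that the paper merely asserts $R_\mu^{U^-} \cdot R_\lambda^{U^-} \subseteq R_{\mu+\lambda}^{U^-}$ while you supply the (correct) weight-space argument for it.
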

\begin{proof}
\ps\noindent
The definition of the multiplication on $\gr R$ implies that the subalgebra $(\gr R)^{U^-}\subset \gr R$ is equal to the subalgebra  $R^{U^-} \subset R$ since one has $R_\mu^{U^-} \cdot R_\lambda^{U^-} \subseteq R_{\mu+\lambda}^{U^-}$.
Applying Theorem~\ref{structure-Galgebra.thm} to the graded $G$-algebra $\gr R$, we get
$$
(\OOO(G/U)\otimes R^{U^{-}})^{T} = (\OOO(G/U)\otimes (\gr R)^{U^{-}})^{T}  \simto \gr R,
$$
hence the claim.
\end{proof}

Let $\Kt:=\Ktt$ be the polynomial ring in $r:=\rk G$ variables. Denote by  $\mm_0 := (t_1,\ldots,t_r) \subset \Kt$ the homogeneous maximal ideal and by $\Kt_\mathbf{t}$ the localization $\K[t_1,\ldots,t_r,t_1^{-1},\ldots,t_r^{-1}]$.
The following {\it Deformation Lemma\/} shows that there exists {\it a flat deformation of $\gr R$ whose general fiber is $R$}. 

\begin{lem}\label{deformation.lem}
Let $R$ be a $G$-algebra. There exists a $\Kt$-algebra $\tilde R$ with the following properties:
\be
\item 
$\tilde R$ is a free $\Kt$-module and, in particular, flat over $\Kt$;
\item\label{deformation.lem:3} 
$\tilde R/\mm_0\tilde R \simeq \gr R$;
\item\label{deformation.lem:2} 
$\tilde R_\mathbf{t}=\Kt_\mathbf{t}\otimes_{\Kt} \tilde{R} \simeq \Kt_{\mathbf{t}}\otimes_{\K} R$.
\ee
\end{lem}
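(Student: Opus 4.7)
The natural candidate is a multi-parameter Rees-type construction. I identify each variable $t_i$ with the simple root $\alpha_i$ of $G$, so that for any $\gamma = \sum k_i \alpha_i \in Q^+$ the monomial $\mathbf{t}^\gamma := \prod_i t_i^{k_i}$ is a well-defined element of $\Kt$. Set $\tilde R := R \otimes_\K \Kt$ as a free $\Kt$-module and define a $\Kt$-bilinear deformed product $*$ by
\[
a * b := \sum_\nu \mathbf{t}^{(\lambda+\mu)-\nu}\, c_\nu(a,b), \qquad a \in R_\lambda,\ b \in R_\mu,
\]
where $ab = \sum_{\nu \leq \lambda+\mu} c_\nu(a,b)$ with $c_\nu(a,b) \in R_\nu$ is the isotypic decomposition of the original product in $R$. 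The exponents lie in $\NN^r$ precisely because $\nu \leq \lambda+\mu$ in the dominance order.

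Associativity of $*$ follows from that of $\cdot$: expanding both $(a*b)*c$ and $a*(b*c)$ for $a \in R_\lambda$, $b \in R_\mu$, $c \in R_\sigma$ yields the common expression $\sum_\tau \mathbf{t}^{(\lambda+\mu+\sigma)-\tau}\, c_\tau(abc)$, since the $\mathbf{t}$-exponents add up correctly across the two brackets. Hence $\tilde R$ is an associative commutative $\Kt$-algebra, free by construction as a $\Kt$-module; this gives (1). Modulo $\mathfrak{m}_0 = (t_1,\ldots,t_r)$ only terms with $\nu = \lambda+\mu$ survive in $a * b$, which is precisely the graded multiplication of $\gr R$ as defined before the lemma; this gives (2).

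For (3) the plan is to untwist the deformation by a generic torus element. Over the Kummer extension $K' := \Kt_\mathbf{t}[\mathbf{u}]/(u_i^N - t_i)_i$, with $N = |\pi_1(G)| = |X(T)/Q|$, one can choose an element $u \in T(K')$ with $\alpha_i(u) = t_i$ for all $i$. The scaling map $\psi(r) := \lambda(u)\,r$ on the isotypic component $R_\lambda$ then defines a $K'$-linear isomorphism intertwining $*$ with $\cdot$: by choice of $u$, $(\lambda+\mu-\nu)(u) = \mathbf{t}^{\lambda+\mu-\nu}$, so
\[
\psi(a * b) = \sum_\nu \mathbf{t}^{\lambda+\mu-\nu}\,\nu(u)\, c_\nu(a,b) = (\lambda+\mu)(u) \sum_\nu c_\nu(a,b) = \psi(a) \cdot \psi(b).
\]

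The main obstacle is descending this isomorphism to $\Kt_\mathbf{t}$ when $\pi_1(G)$ is nontrivial, since $\psi$ itself is not $\Gal(K'/\Kt_\mathbf{t})$-equivariant (the scalars $\lambda(u)$ involve fractional powers of the $t_i$ whenever $\lambda \notin Q$). I would carry out the descent via Hilbert's Theorem~90: the cocycle measuring the failure of Galois-equivariance takes values in the split torus $T$ acting on $R \otimes K'$ through the $G$-action, and $H^1(\Gal(K'/\Kt_\mathbf{t}), T) = 0$ because $T$ is split. Correcting $\psi$ by a suitable $T$-valued coboundary then produces a $\Kt_\mathbf{t}$-algebra isomorphism $\tilde R_\mathbf{t} \simto R \otimes_\K \Kt_\mathbf{t}$, completing (3). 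The interplay between the non-adjoint case (fractional twist) and the descent is the only delicate point; everything else is a direct verification.
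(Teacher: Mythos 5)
Your construction takes a genuinely different route to the same Rees-type degeneration. The paper identifies $t_i$ with the fundamental weight $\omega_i$, sets $t^\lambda:=\prod_i t_i^{m_i}$ for $\lambda=\sum_i m_i\omega_i$, and realizes $\tilde R$ as the subalgebra $\bigoplus_\lambda t^\lambda R_{\leq\lambda}\subset\Kt\otimes R$ with the \emph{ordinary} product, using the coordinate-wise order in the $\omega$-basis. You instead identify $t_i$ with the simple root $\alpha_i$ and deform the multiplication on $R\otimes\Kt$ directly, with twist factor $\mathbf{t}^{(\lambda+\mu)-\nu}$; this is a monomial because $\nu\leq\lambda+\mu$ in the \emph{dominance} order, which is exactly what Cartan's theorem on $V_\lambda\otimes V_\mu$ gives. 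Your ordering is in fact the more robust choice: the paper's asserted inclusion $R_{\leq\lambda}\cdot R_{\leq\mu}\subseteq R_{\leq\lambda+\mu}$ in the coordinate-wise $\omega$-order does \emph{not} follow from $G$-equivariance alone (for $\SL_3$ and $R=\OOO(V_{\omega_1}\oplus V_{\omega_1})$, the skew products of the two sets of coordinates span a copy of $V_{\omega_1}$ inside $R_{\omega_2}\cdot R_{\omega_2}$, and $\omega_1\not\leq2\omega_2$ coordinate-wise). Your verification of associativity, commutativity, freeness, and of parts (1) and (2) is sound.

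The paper's normalization pays off in (3): there $t^{\pm\lambda}$ are honest Laurent monomials, so $\tilde R_\mathbf{t}=\bigoplus_{\rho\in\ZZ\Lambda_G}t^\rho R=\Kt_\mathbf{t}\otimes R$ is immediate. In your coordinates dominant weights need not lie in the root lattice, which is why you reach for a Kummer cover; but the descent you sketch is heavier than needed and not quite right as stated. Hilbert~90 is a statement about field extensions, while $K'/\Kt_\mathbf{t}$ is an extension of Laurent-polynomial rings; moreover the cocycle $\sigma\mapsto\sigma(u)u^{-1}$ does not land in all of $T$ but in the finite center of $G$, because $\alpha_i(\sigma(u)u^{-1})=\sigma(t_i)/t_i=1$. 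What actually saves the argument is not a vanishing theorem but the tautology that $\sigma\mapsto\sigma(u)u^{-1}$ is a coboundary of $u\in T(K')$. Better still, carry out the correction explicitly: composing $\psi$ with the genuine $G$-action of $u^{-1}$ gives a map $\psi'$ which multiplies the $\mu$-weight space of the isotypic component $R_\lambda$ by $(\lambda-\mu)(u)=\mathbf{t}^{\lambda-\mu}$, and this lies in $\Kt$ because $\lambda-\mu$ is a nonnegative integer combination of simple roots for every weight $\mu$ of $V_\lambda$. Thus $\psi'$ is already an isomorphism over $\Kt_\mathbf{t}$; writing it down directly lets you drop the Kummer extension and the descent entirely.
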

\begin{proof}
For $\lambda = \sum_{i=1}^{r} m_{i}\omega_{i} \in \Lambda_{G}$ we put $t^{\lambda}:=t_{1}^{m_{1}}\cdots t_{r}^{m_{r}}$, so that $\Kt = \bigoplus_{\lambda} \K t^{\lambda}$. On $\Lambda_{G}$ we have a partial ordering: 
$$
\lambda\leq\mu=\sum_{i}n_{i}\omega_{i} \iff m_{i}\leq n_{i} \text{ for all } i.
$$
Define $R_{\leq\lambda}:=\bigoplus_{\mu\leq\lambda}R_{\mu}$ and $R_{<\lambda}:=\bigoplus_{\mu < \lambda}R_{\mu}$. 
Since $R$ is a $G$-algebra we obtain $R_{\leq\lambda}\cdot R_{\leq\mu} \subseteq R_{\leq\lambda+\mu}$. Therefore, the subspace
$$
\tilde R := \textstyle{\bigoplus_{\lambda\in\Lambda_{G}}\,} t^{\lambda}R_{\leq\lambda}\ \subset \Kt\otimes R = \bigoplus_{\lambda\in\Lambda_{G}}t^{\lambda}R
$$
is a subalgebra of $\Kt\otimes R$ which is a free $\Kt$-module with basis $\{t^{\lambda}r_{\lambda,j}\}_{\lambda,j}$ where $\{r_{\lambda,j}\}_j$ is a basis of $R_{\lambda}$, proving (1).

We have $R_{\leq \lambda} = R_{\lambda}\oplus R_{<\lambda}$ which implies that $\mm_0\tilde R = \bigoplus_{\lambda}t^{\lambda}R_{<\lambda}$, and hence 
$$
\tilde R/\mm_0\tilde R \simeq \textstyle{\bigoplus_{\lambda}\,}t^{\lambda}R_{\lambda} \simeq \gr R
$$
which gives (2).
Setting $\ZZ\Lambda_{G}:=\sum_{i}\ZZ\omega_{i}$ we finally get
$$
\tilde R_\mathbf{t} = \textstyle{\bigoplus_{\rho\in\ZZ\Lambda_{G}}} t^{\rho}R = \Kt_{\mathbf{t}}\otimes_{\K} R,
$$
proving (3).
\end{proof}

\begin{rem}
Let $X$ be variety. For simplicity we assume that $X$ is affine. Then {\it a flat family $(A_x)_{x \in X}$ of finitely generated $\K$-algebras} is a finitely generated and flat $\OOO(X)$-algebra $A$ such that $A_x:=A/\mm_x A$ where $\mm_x$ is the maximal ideal of $x \in X$.

The above lemma tells us that for a given $G$-algebra $R$ there is a flat family $(R_x)_{x \in \K^r}$ of finitely generated $G$-algebras  such that $R_0 \simeq \gr R$ and $R_x \simeq R$ for all $x$ from the dense open set $\K^r\setminus \bigcup_i\K e_i$, where $e_1,\dots,e_r$ is the standard basis of $\K^r$.

We say that a property $\PPP$ for finitely generated $\K$-algebras is {\it open} if for any flat family $A = (A_x)_{x \in X}$  of finitely generated $\K$-algebras the subset $\{x \in X\mid A_x  \text{ has property }\PPP\}$ is open in $X$.
\end{rem}

Proposition~\ref{grR-iso.prop} together with the Deformation Lemma~\ref{deformation.lem} allows to show that certain properties of the $U$-invariants $R^{U}$ also hold for $R$. 

\begin{exa}\label{normality.exa}
The following result is due to \name{Vust} \cite[\S1, Th\'eor\`eme 1]{Vu1976Sur-la-theorie-des}): {\it If $R$ is a finitely generated $G$-algebra such that $R^{U}$ is normal, then  $R$ is normal}. In fact, since $R^{U^{-}} \simeq R^{U}$ and $\OOO(G/U)$ are both normal, we see that $(\OOO(G/U)\otimes R^{U^{-}})^{T}$ is normal, hence $\gr R$ is normal, by  Proposition~\ref{grR-iso.prop}. Normality is an open property, i.e. in a flat family $(A_x)_{x \in X}$ of finitely generated $\K$-algebras the set $\{x \in X \mid A_x \text{ is normal}\}$ is open, see \cite[Corollaire~12.1.7(v)]{Gr1966Elements-de-geomet}. Since $\gr R \simeq R_0$ is normal, the Deformation Lemma implies that $R_x$ is normal for all $x$ in an open neighborhood $W$ of $0 \in \K^r$. Since $W$ meets $\K^r\setminus \bigcup_i\K e_i$ it follows that $R$ is normal.
\end{exa}
The argument from this example can be formalized in the following way. 

\begin{prop}\label{property-P.prop}
Let $\PPP$ be a property for finitely generated $\K$-algebras which satisfies the following conditions.
\be
\item[(i)]
$\PPP$ is open;
\item[(ii)]
$\OOO(G/U)$ has property $\PPP$;
\item[(iii)]
If $R$ and  $S$ have property $\PPP$, then so does $R\otimes S$;
\item[(iv)]
If $R$ is a $T$-algebra with property $\PPP$, then $R^{T}$ has property $\PPP$.
\ee
Then a finitely generated $G$-algebra $R$ has property $\PPP$ if $R^{U}$ has property $\PPP$.
\end{prop}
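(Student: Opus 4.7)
The plan is to follow exactly the pattern of Example~\ref{normality.exa}, abstracted so that the word ``normal'' is replaced by the generic property $\PPP$. The chain of reasoning is: $R^U$ has $\PPP$ $\Rightarrow$ $R^{U^-}$ has $\PPP$ $\Rightarrow$ $\OOO(G/U)\otimes R^{U^-}$ has $\PPP$ $\Rightarrow$ $(\OOO(G/U)\otimes R^{U^-})^T$ has $\PPP$ $\Rightarrow$ $\gr R$ has $\PPP$ $\Rightarrow$ $R$ has $\PPP$. Each arrow uses exactly one of the four hypotheses, together with Proposition~\ref{grR-iso.prop} and the Deformation Lemma~\ref{deformation.lem}.

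First I would reduce from $U$ to $U^-$. Choosing a representative $n_0 \in \Norm_G(T)$ of the longest Weyl group element $w_0$, one has $n_0 U n_0^{-1}=U^-$, so the $\K$-algebra automorphism of $R$ induced by $n_0$ restricts to a $\K$-algebra isomorphism $R^U \simto R^{U^-}$. Hence $R^{U^-}$ has property $\PPP$. Applying hypothesis (iii) with $\OOO(G/U)$ (which has $\PPP$ by (ii)), the tensor product $\OOO(G/U)\otimes R^{U^-}$ has property $\PPP$. This tensor product carries a natural $T$-action (right-multiplication on the first factor and the induced $T$-action on the second), so hypothesis (iv) gives that $(\OOO(G/U)\otimes R^{U^-})^T$ has property $\PPP$. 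By Proposition~\ref{grR-iso.prop}, this algebra is canonically $G$-equivariantly isomorphic to $\gr R$; thus $\gr R$ has property $\PPP$.

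The last step invokes the Deformation Lemma~\ref{deformation.lem}: there is a flat $\Kt$-algebra $\tilde R$ with central fiber $\tilde R/\mm_0\tilde R \simeq \gr R$ and generic fibers isomorphic to $R$, more precisely $R_x \simeq R$ for every $x$ in the dense open set $\K^r\setminus\bigcup_{i=1}^r\K e_i$. Since $\PPP$ is open by (i), the locus $\{x\in\K^r \mid R_x \text{ has } \PPP\}$ is open in $\K^r$, and it contains $0$ because $\gr R$ has $\PPP$. Any open neighborhood of $0$ meets the dense open set above, so some $R_x \simeq R$ has property $\PPP$, which yields the claim.

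The argument is essentially a bookkeeping exercise once the four axioms are in place; the only slightly delicate point is the identification of $R^U$ with $R^{U^-}$ as $\K$-algebras (not merely as $T$-modules), but this is handled cleanly by the Weyl element conjugation. Everything else is a direct assembly of Proposition~\ref{grR-iso.prop} and Lemma~\ref{deformation.lem}.
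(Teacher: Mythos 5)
Your proof is correct and follows exactly the same route as the paper's proof, which is itself just the argument of Example~\ref{normality.exa} run with the abstract property $\PPP$ in place of normality. The one place you add genuine detail — justifying $R^U\simeq R^{U^-}$ as a $\K$-algebra isomorphism via conjugation by a representative of the longest Weyl element — is a correct elaboration of a step the paper leaves implicit.
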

\begin{proof}
If $R^U$ has property $\PPP$, then so does $R^{U^-}$. Hence, assumptions (ii)-(vi) imply that $(\OOO(G/U)\otimes R^{U^-})^T$ has property $\PPP$. In particular, $\gr R$ has property $\PPP$ by Proposition~\ref{grR-iso.prop}. Now (i) implies that $R$ has property $\PPP$ as well.
\end{proof}

Another very interesting property satisfying the assumption of the proposition above  is that of {\it rational singularities}, see \cite{Bo1987Singularites-ratio}.

\par\bigskip
%%%%%%%%%%%%%%%%%%%%%%%%%%%%%%%%
\section{Small \texorpdfstring{$G$}{G}-varieties}
%%%%%%%%%%%%%%%%%%%%%%%%%%%%%%%%
Recall that a affine $G$-variety is {\it small} if every nontrivial orbit is a minimal orbit. We will show that the coordinate ring of a small $G$-variety is a graded $G$-algebra and then use the results of the previous section to obtain important properties of small $G$-varieties and a classification.

\begin{rem}\label{fixpointed.rem}
The $G$-action on a small $G$-variety $X$ is {\it fix-pointed} which means that the closed orbits are fixed points. This has some interesting consequences. For example, it is not difficult to see that for a fix-pointed action the algebraic quotient $\pi\colon X \to X\quot G$ induces an isomorphism $X^G \simto X\quot G$, cf. \cite[\S10, p. 475]{BaHa1987Some-equivariant-K}.
\end{rem}

\ps
%%%%%%%%%%%%%%%%%%%%%%%%%%%%%
\subsection{A geometric formulation}
%\subsection{Small $G$-varieties}
We  first translate Theorem~\ref{structure-Galgebra.thm}  into the geometric setting. 
By a result of \name{Hadziev} (\cite{Ha1967Certain-questions-}, cf. \cite[3.2 Lemma]{Kr1984Geometrische-Metho}) the $U$-invariants $\OOO(G)^{U}$ are finitely generated, hence define an affine $G$-variety $G\quot U$ with a $G$-equivariant quotient map $\eta\colon G \to G\quot U$. Since $\OOO(G/U) = \OOO(G)^U = \OOO(G\quot U)$ the canonical $G$-equivariant map $G/U \to G\quot U$, $g U\mapsto \eta(g)$, is birational, hence an open immersion: $G/U = G\eta(e) \subset G\quot U$.  Moreover,  the $T$-action on $G/U$ by right-multiplication extends to a $T$-action on $G\quot U$ commuting with the $G$-action.

For an affine $G$-variety $X$ we have a canonical $G$-equivariant morphism 
\[
G/U \times X^{U} \to X,\quad (g U,x) \mapsto g x,
\] 
and a $T$-action on $G/U \times X^{U}$ given by $(t,(g U,x))\mapsto (g t^{-1} U, t x)$. They respectively extend to a morphism $\phi\colon G\quot U\times X^{U} \to X$ and a $T$-action on $G\quot U\times X^{U}$. It follows that $\phi$ is constant on the $T$-orbits, and thus induces a $G$-equivariant morphism
$$
\Phi \colon G\quot U \times^{T} X^{U} :=(G\quot U \times X^{U})\quot T \to X.
$$
\begin{prop}\label{geometric-iso.prop}
Let $X$ be an affine $G$-variety and assume that $\OOO(X)$ is a graded $G$-algebra. Then the canonical morphism
$$
\Phi \colon G\quot U \times^{T} X^{U} \to X
$$
is a $G$-equivariant  isomorphism. Its comorphism is the  inverse of the isomorphism $\Psi$ from Theorem~\ref{structure-Galgebra.thm}. Moreover, the quotient $X \to X\quot U^{-}$ induces an isomorphism $X^{U}\simto X\quot U^{-}$.
\end{prop}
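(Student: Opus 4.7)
The plan is to produce the inverse of $\Phi$ at the level of coordinate rings by identifying the comorphism $\Phi^{*}$ with $\Psi^{-1}$, where $\Psi$ is the isomorphism of Theorem~\ref{structure-Galgebra.thm}. First, because $G\quot U$ and $X^U$ are affine, so is their product, and the good quotient by the commuting $T$-action has coordinate ring $(\OOO(G\quot U)\otimes\OOO(X^U))^T=(\OOO(G/U)\otimes\OOO(X^U))^T$, since $\OOO(G\quot U)=\OOO(G)^U=\OOO(G/U)$. Next, Example~\ref{XU.exa} tells us that, for a graded $G$-algebra $R:=\OOO(X)$, the restriction map $\rho\colon R\to\OOO(X^U)$ coincides with the universal $U$-projection $\pi_U\colon R\to R_U$; in particular $\rho$ is surjective and factors through $R^{U^-}$, giving the isomorphism $X^U\simto X\quot U^-$. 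This already settles the last assertion of the proposition and reduces the first to proving that $\Phi^{*}\colon R\to(\OOO(G/U)\otimes R_U)^T$ is an isomorphism inverse to $\Psi$.

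To verify $\Psi\circ\Phi^{*}=\id_R$, observe that both sides are $G$-equivariant $\K$-algebra homomorphisms, so by Lemma~\ref{RU-quotients.lem}(\ref{RU-quotients.lem:2}) it suffices to show they coincide after applying $\pi_U$. By Remark~\ref{explicit-Psi.rem}, $\pi_U\circ\Psi=\eps\otimes\id$, where $\eps\colon\OOO(G/U)\to\K$ is the evaluation at $eU$. On the other hand, the definition of $\Phi$ gives $\Phi^{*}(f)(gU,x)=f(gx)$ for $f\in R$, and evaluating at $(eU,x)$ with $x\in X^U$ yields $f(x)=\pi_U(f)(x)$ under the identification $\OOO(X^U)=R_U$; that is, $(\eps\otimes\id)\circ\Phi^{*}=\pi_U$. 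Combining these, $\pi_U\circ\Psi\circ\Phi^{*}=(\eps\otimes\id)\circ\Phi^{*}=\pi_U=\pi_U\circ\id_R$, so Lemma~\ref{RU-quotients.lem}(\ref{RU-quotients.lem:2}) forces $\Psi\circ\Phi^{*}=\id_R$. Hence $\Phi^{*}=\Psi^{-1}$ is an isomorphism of algebras, and therefore $\Phi$ is a $G$-equivariant isomorphism of affine varieties.

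The main obstacle is making the identifications clean and unambiguous: that the coordinate ring of the geometric quotient $G\quot U\times^T X^U$ really is the ring of $T$-invariants in the tensor product (which is standard for the reductive $T$-action on an affine variety), and that the algebraic map $\eps\otimes\id$ corresponds geometrically to restriction along the section $\{eU\}\times X^U\hookrightarrow G\quot U\times X^U$, so that the diagrammatic property of $\Psi$ in Remark~\ref{explicit-Psi.rem} translates into an easily checkable identity for $\Phi^{*}$. Once these identifications are in place, the remaining verification is a single diagram chase using the explicit form of $\Psi$.
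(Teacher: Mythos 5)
Your proof is correct and works within the same circle of ideas as the paper --- comparison after applying the universal $U$-projection, using Example~\ref{XU.exa}, Remark~\ref{explicit-Psi.rem}, and Lemma~\ref{RU-quotients.lem}. The organization is slightly more economical than the paper's: the paper first verifies that $\eps\otimes\id$ is itself the universal $U$-projection on $(\OOO(G/U)\otimes\OOO(X^U))^T$ (via the weight-space decomposition and Lemma~\ref{universal-property.lem}), concludes from Lemma~\ref{RU-quotients.lem}(\ref{RU-quotients.lem:1}) that $\Phi^*$ is an isomorphism, and then separately checks $\Phi^*=\Psi^{-1}$; you instead compose with the already-established isomorphism $\Psi$ and apply Lemma~\ref{RU-quotients.lem}(\ref{RU-quotients.lem:2}) once to $\Psi\circ\Phi^*$ versus $\id_R$, obtaining both conclusions in a single step and sidestepping that extra verification. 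One small point worth making explicit for a careful write-up: the identity $\pi_U\circ(\Psi\circ\Phi^*)=\pi_U$ gives $(\Psi\circ\Phi^*)_U=\id_{R_U}$ because $\pi_U$ is surjective and $(\Psi\circ\Phi^*)_U$ is characterized by $(\Psi\circ\Phi^*)_U\circ\pi_U=\pi_U\circ(\Psi\circ\Phi^*)$; only then does Lemma~\ref{RU-quotients.lem}(\ref{RU-quotients.lem:2}) apply.
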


\begin{proof}
By definition, the comorphism 
$\Phi^{*}\colon \OOO(X) \longrightarrow (\OOO(G/U)\otimes \OOO(X^{U}))^{T}$ 
is given as follows: if $\Phi^{*}(f) = \sum_{j} f_{j}\otimes h_{j}$, then $\Phi^{*}(f)(g U,x)=f(g x) = \sum_{j}f_{j}(g U) h_{j}(x)$. 
Consider the evaluation map $\varepsilon\colon\OOO(G/U)\to\K$, $f\mapsto f(e U)$. Then $f(x)=\sum_{j}\eps(f_{j})h_{j}(x)$ for all $x\in X^U$, which shows that the diagram
$$
\begin{CD}
\OOO(X) @>{\Phi^{*}}>>  (\OOO(G/U)\otimes \OOO(X^{U}))^{T}\\
@VV{\rho}V   @VV{\eps\otimes\id}V \\
\OOO(X^{U}) @=    \OOO(X^{U})
\end{CD}
$$
commutes, where $\rho$ is the restriction map, i.e. $\rho(f) = \sum_{j}\eps(f_{j})h_{j} = (\eps\otimes\id)(\Phi^{*}(f))$.
Since $\OOO(X)$ is a graded $G$-algebra it follows from Example~\ref{XU.exa} that the restriction map $\rho$ is equal to the universal $U$-projection $\pi_{U}\colon \OOO(X) \to \OOO(X)_{U}$. If we show  that $\eps\otimes\id$ is also equal to the $U$-projection ${\pi}_U\colon  (\OOO(G/U)\otimes \OOO(X^{U}))^{T}\to \left((\OOO(G/U)\otimes \OOO(X^{U}))^{T}\right)_U$, then $\Phi^*$ is an isomorphism by Lemma~\ref{RU-quotients.lem}.
We have 
$$
(\OOO(G/U)\otimes \OOO(X^{U}))^{T} = \textstyle \bigoplus_{\lambda\in\Lambda_G}\OOO(G/U)_\lambda \otimes \OOO(X^U)_{-\lambda}
$$
where $ \OOO(X^U)_{\mu}$ is the $T$-weight space of $\OOO(X^U)$ of weight $\mu$. Since the evaluation map $\eps_\lambda\colon \OOO(G/U)_\lambda \to \K$, $f\mapsto f(e U)$,  is the universal $U$-projection (Lemma~\ref{universal-property.lem}), we see that the linear map 
$\OOO(G/U)_\lambda \otimes \OOO(X^U)_{-\lambda} \to \OOO(X^U)_{-\lambda}$, $\sum_j f_j\otimes h_j \mapsto \sum_j\eps(f_j)h_j$, is the $U$-projection as well, and the claim follows.
\ps
It remains to see that $\Phi^*$ is equal to the inverse of $\Psi$ from Theorem~\ref{structure-Galgebra.thm}. Using again Lemma~\ref{RU-quotients.lem} it suffices to show that the diagram
$$
\begin{CD}
(\OOO(G/U)\otimes \OOO(X^{U}))^{T}@>{\Psi}>>  \OOO(X)\\
@VV{\eps\otimes\id}V  @VV{\rho}V   \\
\OOO(X^{U}) @=    \OOO(X^{U})
\end{CD}
$$
commutes. This is stated in Remark~\ref{explicit-Psi.rem}. 
The last claim is proved in Example~\ref{XU.exa}.
\end{proof}
proved
\ps
%%%%%%%%%%%%%%%%%%%%%%%%%%%%
\subsection{The structure of small \texorpdfstring{$G$}{G}-varieties}

\begin{prop}\label{main.lem}
Let $X$ be an irreducible small $G$-variety. Then the following holds.
\be
\item\label{main.lem:1} The $G$-action is fix-pointed, and all minimal orbits in $X$ have the same type $\lambda$.
\item\label{main.lem:2} $\OOO(X)$ is a graded $G$-algebra of type $\lambda^{\!\vee}$.
\item\label{main.lem:3} The quotient $X\to X\quot U^-$ restricts to an isomorphism $X^U\simto X\quot U^-$.  In particular, $X$ is normal if and only if $X^U$ is normal.
\ee 
\end{prop}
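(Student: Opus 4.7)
The plan is to prove the three statements in sequence, with most of the substance going into showing that all minimal orbits share a single type; the grading and the description of $X^U$ will then follow cleanly. Fix-pointedness in~(1) is immediate: by Proposition~\ref{min-orbits.prop}(\ref{min-orbits.prop1}) the closure of any minimal orbit strictly contains a fixed point, so no minimal orbit is closed, and the only closed orbits in $X$ are fixed points.

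The heart of~(1) is the ``same type'' claim, and my plan is to exploit the irreducibility of $X$ via a generic-point argument. For each indivisible dominant weight $\lambda$, let $X_\lambda$ denote the closure in $X$ of the union of all minimal orbits of type $\lambda$; by smallness, $X=X^G\cup\bigcup_\lambda X_\lambda$. Since $X$ is irreducible its generic point lies in one of these closed pieces, whose closure is then all of $X$, so either $X^G=X$ (trivial, with no minimal orbits) or $X_\lambda=X$ for a unique $\lambda$. Given this $\lambda$, I kill the isotypic components of ``wrong'' weight: for any $\mu\notin\NN\lambda^\vee$, Lemma~\ref{hw-orbits.lem}(\ref{hw-grading}) shows $\OOO(O)_\mu=0$ for every minimal orbit $O$ of type $\lambda$, while $G$-equivariance of evaluation forces $\OOO(X)_\mu$ to vanish on $X^G$ when $\mu\neq 0$; density of the type-$\lambda$ orbits then propagates this to $\OOO(X)_\mu=0$ on all of $X$. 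If $O'$ were a minimal orbit of some other type $\lambda'$, the surjection $\OOO(X)\twoheadrightarrow\OOO(\overline{O'})$ together with this vanishing would force all isotypic components of the infinite-dimensional $\OOO(\overline{O'})$ to lie on $\NN\lambda^\vee\cap\NN\lambda^{\prime\vee}$, which is impossible unless $\lambda=\lambda'$.

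With~(1) in hand, (2) reduces to checking the multiplicativity $\OOO(X)_{k_1\lambda^\vee}\cdot\OOO(X)_{k_2\lambda^\vee}\subseteq\OOO(X)_{(k_1+k_2)\lambda^\vee}$. Given $f_i\in\OOO(X)_{k_i\lambda^\vee}$, I decompose $f_1f_2=\sum_j h_j$ into isotypic components and observe, using the graded structure of $\OOO(O)$ from Example~\ref{Omin.exa}, that $(f_1f_2)|_O$ lies in $\OOO(O)_{(k_1+k_2)\lambda^\vee}$ for every minimal orbit $O$, forcing $h_j|_O=0$ for $j\neq k_1+k_2$. Combined with vanishing at fixed points (for $h_j$ with $j>0$ by $G$-equivariant evaluation, and for $h_0$ because $f_1f_2$ itself vanishes there when $k_1,k_2>0$), each such $h_j$ must vanish on all of $X$. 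Statement~(3) is then a direct application of Example~\ref{XU.exa}, and the normality equivalence combines Vust's theorem (Example~\ref{normality.exa}) with the routine check that $\OOO(X)^{U^-}$ is integrally closed in its fraction field whenever $\OOO(X)$ is.

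The main obstacle is the ``same type'' portion of~(1); everything else is essentially routine given the preliminary material on minimal orbits and on graded $G$-algebras. The crux is to exploit the irreducibility of $X$ to obtain density of a single type's minimal orbits, which then allows vanishing statements for isotypic components of $\OOO(X)$ to be propagated from individual orbits to the whole of $X$.
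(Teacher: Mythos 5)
Your arguments for parts (2) and (3) are essentially the paper's, phrased with minor variations (you propagate vanishing of the ``wrong'' isotypic components $h_j$ through all orbits and the fixed-point set rather than picking a single orbit where nothing vanishes, and you correctly identify Example~\ref{XU.exa} plus Vust's theorem as the content of (3)). The problem is the ``same type'' step in~(1).

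You write ``$X=X^G\cup\bigcup_\lambda X_\lambda$. Since $X$ is irreducible its generic point lies in one of these closed pieces, whose closure is then all of $X$.'' This is circular. The generic point $\eta$ of an irreducible variety lies in a closed subset $C$ if and only if $C=X$: that is exactly the conclusion you are trying to reach, not a consequence of irreducibility. What irreducibility does give you is that a \emph{finite} union of proper closed subsets cannot be $X$. But there are infinitely many indivisible dominant weights, and the family $\{X_\lambda\}$ that actually occurs could a priori be infinite --- for a fixed parabolic $P$ there are infinitely many indivisible $\lambda$ with $P_\lambda=P$, and these give pairwise non-conjugate stabilizers $G_v^\circ=\ker\lambda$. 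Finiteness of the set of types occurring in an affine $G$-variety is in fact true, but it rests on Luna's (or Richardson's) theorem on finiteness of isotropy types; you neither cite it nor prove it, and it is a substantially deeper input than the problem calls for.

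The paper sidesteps this entirely. Embed $X$ into a $G$-module $W$; for a given minimal orbit $O$, choose a linear projection $p\colon W\to V_\lambda$ onto a simple factor with $p|_O$ nonconstant. Then $p(O)$ is a nonconstant $G$-equivariant image of $O$ in $V_\lambda$, hence by Proposition~\ref{min-orbits.prop}(\ref{min-orbits.prop2}) it is the minimal orbit $O_\lambda$ and $O$ has the same type as $O_\lambda$. The \emph{same} projection $p$ works for every orbit in the nonempty $G$-stable open set $X':=X\setminus\ker p$, so all minimal orbits in $X'$ share the type of $\lambda$. If two orbits had different types, the corresponding open sets would have to meet (irreducibility), forcing an orbit with two types. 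This localizes the type on a dense open set without any finiteness hypothesis; your fallback argument via the surjection $\OOO(X)\twoheadrightarrow\OOO(\overline{O'})$ is then no longer needed. I would replace the generic-point step with this projection argument; the rest of your write-up then goes through.
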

\noindent
We call such a variety $X$  {\em a small $G$-variety of type $\lambda$.}
\begin{proof}
(\ref{main.lem:1}) By hypothesis, any non-trivial orbit $O \subset X$ is minimal, so $\overline{O}=O\cup\{x_{0}\}$ where $x_{0}\in X^{G}$  by Proposition~\ref{min-orbits.prop}(\ref{min-orbits.prop1}). In particular, the $G$-action is fix-pointed.

We can assume that $X$ is a closed $G$-stable subvariety of a $G$-module $W$. Let $O \subset X$ be a non-trivial orbit. There is a linear projection $p\colon W \to V$ onto a simple $G$-module $V$ of highest weight $\lambda$ such that $O \nsubseteq \ker p$. Proposition~\ref{min-orbits.prop}(\ref{min-orbits.prop5}) implies that $p(O) = O_{\lambda}$ and that $O$ is of the same type as $O_{\lambda}$. The same is true for all orbits $O'$ from the open subset $X':= X \setminus \ker p$ of $X$. Since $X$ is irreducible all minimal orbits are of type $\lambda$.
\ps
(\ref{main.lem:2}) Let $\lambda$ be the type of the minimal orbits of $X$, and let $O \simeq O_{\mu}$ be an orbit in $X$, $\mu=\ell\lambda$. As explained in Example~\ref{Omin.exa}, we have $\OOO(O) = \OOO(\overline{O})=\bigoplus_{k\geq0}V_{k\mu^{\vee}}$. If $V \subset \OOO(X)$ is a non-trivial simple submodule and $0\neq f \in V$, then there is a nontrivial orbit $O$ such that $f|_{O}\neq 0$. This implies that the $G$-equivariant restriction $\rho\colon \OOO(X) \to \OOO(\overline{O})$ induces an isomorphism $V \simto \rho(V)$ of simple modules. It follows that $V$ has highest weight $k\lambda^{\!\vee}$ for some $k\geq1$. 

It remains to see that $\OOO(X)$ is a graded $G$-algebra. Let  $V_{1}\subset \OOO(X)_{k_{1}\lambda^{\!\vee}}, V_{2}\subset \OOO(X)_{k_{2}\lambda^{\!\vee}}$ be simple submodules, and assume that $V_{1}\cdot V_{2} \nsubseteq \OOO_{(k_{1}+k_{2})\lambda^{\!\vee}}$. Then we can find $f_{i}\in V_{i}$ such that the product $f_{1}\cdot f_{2} \in \OOO(X)$ has the form $f_{1}\cdot f_{2} = \sum_{j}h_{j}$ where $h_{j}\in \OOO(X)_{\ell_{j}\lambda^{\!\vee}}\setminus\{0\}$ with distinct $\ell_{j}$ and at least one $\ell_{j}\neq k_{1}+ k_{2}$.  There exists a non-trivial orbit $O$ such that all functions $f_{1},f_{2},h_{j}$ do not vanish on $O$. It follows that the restrictions $\bar f_{1}, \bar f_{2}, \bar h_{j} \in \OOO(\overline{O})$ are nonzero, that $\bar f_{1}\cdot \bar f_{2}= \sum_{j}\bar h_{j}$, $\bar f_{i}\in\OOO(\overline{O})_{k_{i}\lambda^{\!\vee}}$ and $\bar h_{j}\in \OOO(\overline{O})_{\ell_{j}\lambda^{\!\vee}}$.  Since $\OOO(\overline{O})$ is a graded $G$-algebra (Example~\ref{Omin.exa}) we have $\bar f_{1} \cdot \bar f_{2} \in \OOO(\overline{O})_{(k_{1}+k_{2})\lambda^{\!\vee}}$ which contradicts our assumptions on the $\ell_j$.
\ps
(\ref{main.lem:3}) This follows from (\ref{main.lem:2}), Proposition~\ref{geometric-iso.prop} and the fact that $X$ is normal if and only if $X\quot U^-$ is normal (\cite[\S1, Th\'eor\`eme 1]{Vu1976Sur-la-theorie-des}, cf. Example~\ref{normality.exa}).
\end{proof}

\begin{prop}\label{small-var.thm}
Let $X$ be an irreducible small $G$-variety of type $\lambda$.
\be
\item\label{small-var.thm:2} 
There is a unique $\Kst$-action on $X$ which induces the canonical $\Kst$-action on each minimal orbit and commutes with the $G$-action.
The action on $X^{U}$ is fix-pointed, and $X^{U}\quot\Kst \simto X\quot G \simot X^{G}$.
\item\label{G-iso}
The morphism $G\times X^U \to X$, $(g,x)\mapsto g x$, induces a $G$-equivariant isomorphism
\[
\Phi\colon\Olamb\times^{\Kst} X^{U}\simto X
\]
where $\Kst$ acts on $\Olamb$ by the inverse of the scalar multiplication: $(t,x)\mapsto t^{-1}\cdot x$.
\item\label{small-var.thm:4}  
We have $\mathrm{Stab}_{G}(X^{U})=P_{\lambda}$, and
the $G$-equivariant morphism 
\[
\Theta\colon G\times^{P_{\lambda}}X^{U}\rightarrow X,\quad [g,x]\mapsto g x,
\] 
is proper, surjective and birational and induces an isomorphism between the algebras of regular functions.  
\ee
\end{prop}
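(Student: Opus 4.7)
The plan is to prove the three parts in sequence, using the graded $G$-algebra structure of $\OOO(X)$ from Proposition~\ref{main.lem} together with the geometric isomorphism of Proposition~\ref{geometric-iso.prop}.

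For part (1), I will use the decomposition $\OOO(X) = \bigoplus_{k\geq 0}\OOO(X)_{k\lambda^{\!\vee}}$ (Proposition~\ref{main.lem}) to define a $\Kst$-action on $\OOO(X)$ by letting $\Kst$ act with weight $-k$ on $\OOO(X)_{k\lambda^{\!\vee}}$. Since each isotypic component is $G$-stable and scalars commute with $G$, this defines a $\Kst$-action on $X$ commuting with $G$. Restricting to any minimal orbit $O\simeq O_{k_0\lambda}$ matches the canonical action by Proposition~\ref{canonical-Kst.prop}(\ref{canonical-Kst.prop:1}). Uniqueness follows from Schur's lemma: a $\Kst$-action commuting with $G$ acts by a character on each isotypic component, and these characters are pinned down by matching the canonical action on any embedded minimal orbit. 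Since $\OOO(X^U) = \OOO(X)^{U^-}$ inherits a $\Kst$-action with all weights $\leq 0$, the action is positively fix-pointed; taking $\Kst$-invariants leaves the weight-zero part $\OOO(X)_0^{U^-} = \OOO(X)^G$, yielding $X^U\quot\Kst \simto X\quot G$; and $X\quot G\simot X^G$ follows from fix-pointedness of the $G$-action (Remark~\ref{fixpointed.rem}).

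For part (2), I will reduce the isomorphism $(G\quot U)\times^T X^U \simto X$ of Proposition~\ref{geometric-iso.prop} to an isomorphism involving $\Olamb$ and $\Kst$. Let $T_0 := \ker(\lambda\colon T \to \Kst)$. By part (1), the $T$-weights on $\OOO(X^U)$ all lie in $\ZZ_{\leq 0}\lambda$, so $T_0$ acts trivially on $X^U$. Using Remark~\ref{weight_T.rem}(\ref{weight_T.rem:1}) and Example~\ref{Omin.exa}, the $T_0$-invariants in $\OOO(G\quot U) = \bigoplus_\mu V_\mu$ are precisely $\bigoplus_{k\geq 0}V_{k\lambda^{\!\vee}} = \OOO(\Olamb)$, so $(G\quot U)\quot T_0 \simeq \Olamb$ with residual $T/T_0\simeq\Kst$-action (via $\lambda$) of weight $k$ on the $k$th graded piece. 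Computing this weight against the definitions identifies the residual $\Kst$-action with $(t,v)\mapsto t^{-1}\cdot v$ on $\Olamb$. Taking the $T$-quotient in two stages then yields $(G\quot U)\times^T X^U = \Olamb\times^{\Kst} X^U$, proving (2).

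For part (3), I first establish $\mathrm{Stab}_G(X^U)=P_\lambda$: since $X$ is small, $X^U = X^G\sqcup\bigsqcup_O O^U$ over the minimal orbits, each piece $P_\lambda$-stable by Proposition~\ref{canonical-Kst.prop}(\ref{norm}); conversely a $g$ stabilizing $X^U$ sends any line $\Kst x = O^U$ to itself, so $g\in\Norm_G(\Kst x)=P_\lambda$ by Lemma~\ref{hw-orbits.lem}(\ref{parabolic}). Fixing a highest weight vector $v_0\in V_\lambda^U$, I define $\sigma\colon G\times^{P_\lambda} X^U \to \Olamb\times^{\Kst} X^U$ by $[g,x]\mapsto [gv_0, x]$; this is well-defined because $P_\lambda/G_{v_0}\simeq\Kst$ acts on $v_0$ by $\lambda$, matching the $\Kst$-action of (2). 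Then $\Theta = \Phi\circ\sigma$. Properness of $\Theta$ comes from the closed embedding $G\times^{P_\lambda} X^U \into G/P_\lambda\times X$, $[g,x]\mapsto (gP_\lambda, gx)$, and the projectivity of $G/P_\lambda$. Surjectivity follows from $\Phi$ being an isomorphism, and birationality from $\sigma$ restricting to an isomorphism over $X\setminus X^G$.

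The main obstacle will be the last claim that $\Theta^*$ is an isomorphism of coordinate rings, since $\Theta$ is not a global isomorphism (it collapses the base $G/P_\lambda$ over the fixed-point locus $X^G$). I will compute $\OOO(G\times^{P_\lambda} X^U) = (\OOO(G)\otimes \OOO(X^U))^{P_\lambda}$ using the Peter-Weyl decomposition $\OOO(G)=\bigoplus_\mu V_\mu\otimes V_\mu^\vee$, and the fact that $P_\lambda$ acts on $\OOO(X^U)$ through the character $\lambda$ (from part (1)). The key computation is that $(V_\mu^\vee \otimes \OOO(X^U))^{P_\lambda}$ vanishes unless $\mu = k\lambda^{\!\vee}$ for some $k\geq 0$: a $P_\lambda$-eigenvector in $V_\mu^\vee$ must be $U$-fixed (hence lie on the highest weight line of weight $\mu^\vee$), and its $T$-character must extend to a character of $P_\lambda$, forcing $\mu^\vee\in\NN\lambda$. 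The surviving terms assemble to $\bigoplus_{k\geq 0} V_{k\lambda^{\!\vee}}\otimes \OOO(X^U)_{-k}$, canonically identified with $\OOO(X)$, and unwinding shows the identification is $\Theta^*$.
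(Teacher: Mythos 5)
Your proposal is essentially correct and, for parts (1), (2), and most of (3), follows the same route as the paper: define the $\Kst$-action via the grading of Proposition~\ref{main.lem}(\ref{main.lem:2}); identify $\Olamb$ with $(G\quot U)\quot D$ for $D:=\ker(\lambda|_T)$ and take the $T$-quotient of Proposition~\ref{geometric-iso.prop} in two stages; and obtain $\Theta$, its properness, surjectivity and birationality from the closed embedding $G\times^{P_\lambda}X^U\into G/P_\lambda\times X$. (Incidentally, you explicitly establish $\mathrm{Stab}_G(X^U)=P_\lambda$, whereas the paper's proof only uses, and does not explicitly verify, the inclusion $P_\lambda\subseteq\mathrm{Stab}_G(X^U)$.) The one genuine divergence is the proof that $\Theta^*$ is an isomorphism of coordinate rings. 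You propose redoing a Peter--Weyl computation of $(\OOO(G)\otimes\OOO(X^U))^{P_\lambda}$, essentially a $P_\lambda$-analogue of the proof of Proposition~\ref{geometric-iso.prop}. The paper avoids any new computation: with $K_\lambda:=\ker(\lambda\colon P_\lambda\to\Kst)$ it observes $G\times^{P_\lambda}X^U\simeq G/K_\lambda\times^{\Kst}X^U\simeq O_\lambda\times^{\Kst}X^U$, and then $\OOO(O_\lambda)=\OOO(\Olamb)$ (Lemma~\ref{hw-orbits.lem}(\ref{hw-grading})) reduces the claim directly to part (2). The paper's route is shorter and re-uses what is already proved; yours works but must be fleshed out carefully --- in particular, ruling out $\mu\neq k\lambda^{\!\vee}$ requires matching $P_\lambda$-characters on \emph{both} tensor factors, since dominance of $\mu^\vee$ and extendability to $P_\lambda$ alone only force $\mu^\vee$ into the cone of characters of $P_\lambda$, not into $\NN\lambda$.

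One small slip in part (1): the assertion that any $\Kst$-action commuting with $G$ ``acts by a character on each isotypic component'' does not follow from Schur's lemma when an isotypic component $\OOO(X)_{k\lambda^{\!\vee}}$ has multiplicity greater than one --- a priori $\Kst$ just acts on the multiplicity space $\Hom_G(V_{k\lambda^{\!\vee}},\OOO(X))$. Uniqueness is more cleanly obtained, as the paper does, from the observation that $X$ is the union of the closures of its minimal orbits, on each of which the action is already forced to be the canonical one; the ``single character per component'' then comes out as a consequence, not a premise.
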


\begin{proof}
(\ref{small-var.thm:2})
By  Proposition~\ref{main.lem}(\ref{main.lem:2}) $\OOO(X)$ is a graded $G$-algebra of type $\lambda^{\vee}$. If we define the $\Kst$-action on $\OOO(X)$ such that the isotypic component of type $n \lambda^{\vee}$ has weight $n$, then this action is fix-pointed and restricts to the the canonical $\Kst$-action on the closure of each minimal orbit  (Proposition~\ref{canonical-Kst.prop}(\ref{canonical-Kst.prop:3})). Since $X$ is the union of the closures of the minimal orbits, this $\K^*$-action is unique. By Proposition~\ref{canonical-Kst.prop}(\ref{canonical-Kst.prop:1}) the $\K^*$-action and the $G$-action commute on the closure of every minimal orbit, hence they commute on $X$. We have $X^G= (X^U)^{\K^*}$ and the $\Kst$-action on $X^U$ is fix-pointed, since this holds for the closure of a minimal orbit.
This implies that $X^G = (X^U)^{\Kst} \simto X^U\quot \Kst$, and  $X^G \simto X\quot G$ since the $G$-action is fix-pointed
which yields the remaining claims.
\ps
(\ref{G-iso}) 
Choose $x_{0} \in O_{\lambda}^{U}$ and consider the $G$-equivariant morphism $\eta\colon G\quot U \to \Olamb$ induced by $g U\mapsto g x_0$. Define $D:=\ker \lambda \subset T$. We claim that $\eta$ is the algebraic quotient under the action of $D$. In fact, the action of $t\in T$ on $\OOO(G/U)_\mu$ is by scalar multiplication with $\mu^\vee(t)$ (Remark~\ref{weight_T.rem}(\ref{weight_T.rem:1})). Hence, the action of $D$  is trivial if and only if $\mu$ is a multiple of $\lambda^{\!\vee}$. This implies that
$$
\OOO(G/U)^{D} = \textstyle\bigoplus_{\mu\in\Lambda_G} \OOO(G/U)_\mu^D = \bigoplus_{k\geq 0} V_{k\lambda^\vee} \simeq \OOO(\Olamb),
$$
see Lemma~\ref{hw-orbits.lem}(\ref{hw-grading}). In particular, the $T$-action on $\Olamb$ factors through $\lambda\colon T \to \Kst$ and the induced $\Kst$-action is the canonical $\Kst$-action. Since $D$ acts trivially on $X^U$ we get $(G\quot U \times X^U)\quot D = \Olamb\times X^U$, hence
$$
G\quot U \times^T X^U = (G\quot U \times^D X^U)\quot T = (\Olamb\times X^U)\quot T.
$$
By construction, the $T$-action on $\Olamb\times X^U$ is given by $t(v,x)= (\lambda(t)^{-1}\cdot v, t x)$, i.e. by the inverse of the canonical $\Kst$-action on $\Olamb$ and the given action on $X^U$. Hence $ (\Olamb\times X^U)\quot T = \Olamb \times^{\Kst} X^U$, and the claim follows from Proposition \ref{geometric-iso.prop}.
\ps
(\ref{small-var.thm:4})
Consider the action of $P_{\lambda}$ on $G \times X^{U}$ given by $p(g,x) = (g p^{-1},p x)$. Then the action map $G \times X^{U}\to X$, $(g,x)\mapsto g x$, factors through the quotient 
\begin{equation*}
G\times^{P_{\lambda}}X^{U}:=(G\times X^{U})\quot P_{\lambda}.
\end{equation*}
For $\Theta$ we have the following factorization:
$$
\begin{CD}
\Theta\colon\ G\times^{P_{\lambda}} X^{U} @>{\subseteq}>> G \times^{P_{\lambda}} X 
@>{[g,x]\mapsto (g P_{\lambda},g x)}>\simeq> G/P_{\lambda} \times X @>{\pr_{X}}>> X
\end{CD}
$$
where the first map is a closed immersion and the second an isomorphism.
Since $G/P_{\lambda}$ is complete it follows that $\Theta$ is proper. Moreover, $\Theta$ is surjective, because every $G$-orbit meets $X^{U}$. We claim that $\Theta$ induces a bijection $G\times^{P_{\lambda}}(X^{U}\setminus X^{G}) \to X\setminus X^{G}$ which implies that $\Theta$ is birational. 
Indeed, if $x\in X^{U}\setminus X^{G}$, then $x \in O^{U}$ for a minimal orbit $O \subset X$. If $g x = g' x'$ for some $x'\in X^{U}$, $g'\in G$, then $x'\in O^{U}$, hence $x' = q x$ for some $q \in P_{\lambda}$, because the action of $P_{\lambda}$ on $O^{U}$ is transitive. It follows that $g^{-1}g' q\in G_{x}\subset P_{\lambda}$, hence $p:=g^{-1} g'\in P_{\lambda}$. Thus $[g',x']=[g p,x']=[g,p x'] = [g,x]$. 

It remains to see that the comorphism of $\Theta$ is an isomorphism on the global functions.
Let $K_{\lambda}$ be the kernel of the character $\lambda\colon P_{\lambda}\to\K^*$. Then $G/K_\lambda \simeq O_\lambda$, and the action of $P_{\lambda}$ on $G$ by right-multiplication induces an action of $\K^* = P_{\lambda}/K_{\lambda}$ on $G/K_{\lambda}$ by right-multiplication corresponding to the canonical action on $O_\lambda$. This gives the $G$-equivariant isomorphisms
\[
G\times^{P_{\lambda}}X^{U} \simto G/G_{\lambda}\times^{\Kst} X^{U} \simto O_\lambda \times^{\Kst} X^U,
\]
and the claim follows from (\ref{G-iso}).
\end{proof}

\begin{exa}
Let $X:=\overline{O_{\mu}} \subset V_{\mu}$ be the closure of the minimal orbit in $V_{\mu}$, and let $\mu=\ell\lambda$ where $\lambda$ is indivisible. Then $X^{U}=\K$, and from Proposition~\ref{small-var.thm}(\ref{G-iso}) we get an isomorphism
$$
\overline{O_{\lambda}} \times^{\Kst} \K \simeq X = \overline{O_{\mu}},
$$ 
where $\K^*$ acts on $\overline{O_{\lambda}}$ by the inverse of the canonical action, $(t,x)\mapsto\lambda(t^{-1})\cdot x$, and by the canonical action on $\K = \overline{O_\mu}$ which is the scalar multiplication with $\mu(t)$.
\end{exa}

The second statement  of Proposition~\ref{small-var.thm} says that a small $G$-variety $X$ can be {\it reconstructed} from the $\Kst$-variety $X^U$. In order to give a more precise statement we introduce the following notion. 
A $\Kst$-action on an affine variety $Y$ is called  {\it positively fix-pointed} if for  every $y \in Y$ the limit $\lim_{t\to 0}t y$ exists and is therefore a fixed point. 

For a fix-pointed $\Kst$-action on an irreducible affine variety $Y$ either the action is positively fix-pointed or the inverse action $(t,y)\mapsto t^{-1}y$ is positively fix-pointed. In fact, for any $y \in Y$ either $\lim_{t\to 0}t y$ or $\lim_{t\to \infty}t y$ exists. Embedding $Y$ equivariantly into a $\Kst$-module one sees that the subsets $Y_+:=\{y\in Y\mid \lim_{t\to 0}t y \text{ exists}\}$ and $Y_-:=\{y\in Y\mid \lim_{t\to \infty}t y \text{ exists}\}$ are closed. As $Y$ is irreducible, this yields the claim.
(The claim does not hold for connected $\Kst$-varieties, as the example of the union of the coordinate lines in the two-dimensional representation $t(x,y):=(t x,t^{-1} y)$ shows.)

\begin{rem}\label{K*K.rem}
A positively fix-pointed $\Kst$-action on $Y$ extends to an action of the multiplicative semigroup $(\K,\cdot)$, and the morphism $\K \times Y \to Y$, $(s,y)\mapsto s y$, induces an isomorphism $\K \times^{\Kst} Y \simto Y$. This follows from the commutative diagram
$$
\begin{CD}
Y @>{y\mapsto(1,y)}>> \K \times Y @>{(s,y)\mapsto s y}>> Y \\
@VV{\id_Y}V @VV{\pi}V @VV{\id_Y}V\\
Y @>>> \K\times^{\Kst} Y @>>> Y
\end{CD}
$$
where the compositions of the horizontal maps are  the identity.
\end{rem}
\begin{lem}\label{construction.prop} 
Let $Y$ be a positively fix-pointed affine $\Kst$-variety and let $\lambda\in\Lambda_G$ be indivisible. Consider the $\Kst$-action on $\Olamb\times Y$ given by $t(v,y):=(\lambda(t)^{-1}\cdot v, t y)$. 
Then 
$$
X:=\Olamb\times^{\Kst}Y=(\Olamb \times Y)\quot\Kst
$$
is a small $G$-variety of type $\lambda$ where the action of $G$ is induced by the action on $\Olamb$. Moreover, there is canonical $\Kst$-equivariant isomorphism $X^{U}\simto Y$.
\end{lem}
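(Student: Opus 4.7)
The plan is to first identify $\OOO(X)$ as a graded $G$-algebra, from which both the smallness of $X$ and the identification of $X^U$ follow. Since $\Kst$ is reductive and $\Olamb\times Y$ is affine, the categorical quotient $X=(\Olamb\times Y)\quot\Kst$ exists as an affine variety with $\OOO(X)=(\OOO(\Olamb)\otimes\OOO(Y))^{\Kst}$, and the $G$-action on $\Olamb$ (which commutes with any $\Kst$-action arising from scalar multiplication, cf.~Proposition~\ref{canonical-Kst.prop}(\ref{canonical-Kst.prop:1})) descends to a $G$-action on $X$. Using $\OOO(\Olamb)=\bigoplus_{n\geq 0}V_{n\lambda^\vee}$ from Lemma~\ref{hw-orbits.lem}(\ref{hw-grading}), the prescribed $\Kst$-action $t\cdot v=\lambda(t)^{-1}\cdot v$ — which for indivisible $\lambda$ is the inverse of scalar multiplication — assigns weight $+n$ to $V_{n\lambda^\vee}$, while the positively fix-pointed $\Kst$-action on $Y$ forces $\OOO(Y)=\bigoplus_{n\geq 0}\OOO(Y)_{-n}$ with $\OOO(Y)_{-n}$ of weight $-n$. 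Taking diagonal $\Kst$-invariants yields
\[
\OOO(X)=\bigoplus_{n\geq 0}V_{n\lambda^\vee}\otimes\OOO(Y)_{-n},
\]
and since $V_{n\lambda^\vee}\cdot V_{m\lambda^\vee}\subseteq V_{(n+m)\lambda^\vee}$ in the graded algebra $\OOO(\Olamb)$, this makes $\OOO(X)$ a graded $G$-algebra of type $\lambda^\vee$.

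For $X^U$: by Example~\ref{XU.exa} applied to this graded $G$-algebra, $\OOO(X^U)\simeq\OOO(X)^{U^-}=\bigoplus_{n\geq 0}V_{n\lambda^\vee}^{U^-}\otimes\OOO(Y)_{-n}$. Each $V_{n\lambda^\vee}^{U^-}$ is one-dimensional; fixing $e_1\in V_{\lambda^\vee}^{U^-}\setminus\{0\}$, the powers $e_n:=e_1^n$ are nonzero (as $\OOO(\Olamb)$ is a domain) and lie in $V_{n\lambda^\vee}^{U^-}$ by weight considerations, giving a basis with $e_ne_m=e_{n+m}$. Hence $e_n\otimes g\mapsto g$ defines a $\K$-algebra isomorphism $\OOO(X^U)\simto\OOO(Y)$. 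For $\Kst$-equivariance, Proposition~\ref{small-var.thm}(\ref{small-var.thm:2}) endows $X$ with its canonical $\Kst$-action restricting to the canonical action on each minimal orbit; by Proposition~\ref{canonical-Kst.prop}(\ref{canonical-Kst.prop:1}) this has weight $-n$ on functions of type $n\lambda^\vee$, matching the weight $-n$ of $\OOO(Y)_{-n}$ under the given action on $Y$.

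It remains to verify that every nontrivial $G$-orbit in $X$ is minimal of type $\lambda$. Consider $[v,y]\in X$ with $v\ne 0$; using $G$-transitivity on $O_\lambda$, reduce to $v=v_0\in O_\lambda^U$. The stabilizer condition $g\cdot[v_0,y]=[v_0,y]$ translates to the existence of $t\in\Kst$ with $gv_0=\lambda(t)^{-1}v_0$ and $ty=y$, forcing $g\in P_\lambda$ with $\lambda(g)=\lambda(t)^{-1}$. When $y\notin Y^{\Kst}$ this yields $t=1$ and $g\in G_{v_0}$, so the orbit is $G/G_{v_0}\simeq O_\lambda$, a minimal orbit of type $\lambda$ by Lemma~\ref{ray.lem}(\ref{ray.lem:3}) (since $\lambda$ is indivisible). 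When $y\in Y^{\Kst}$, the $\Kst$-orbit closure of $(v_0,y)$ in $\Olamb\times Y$ is the line $\K v_0\times\{y\}$, which contains the $\Kst$-fixed point $(0,y)$; as the categorical quotient identifies points whose orbit closures intersect, $[v_0,y]=[0,y]$, a $G$-fixed point. Points with $v=0$ are obviously $G$-fixed, so $X$ is small of type $\lambda$.

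The main obstacle is bookkeeping the three distinct $\Kst$-actions in play — the external one being quotiented, the intrinsic canonical one on the small $G$-variety $X$ from Proposition~\ref{small-var.thm}(\ref{small-var.thm:2}), and the given one on $Y$ — and ensuring the signs line up so that the isomorphism $X^U\simeq Y$ intertwines the intrinsic action on the source with the given action on the target. The delicate geometric observation is that points $[v,y]$ with $v\ne 0$ and $y\in Y^{\Kst}$ collapse to $G$-fixed points in the quotient, rather than producing an exotic non-minimal orbit; this is precisely what guarantees that every nontrivial orbit of $X$ is of the form $O_\lambda$.
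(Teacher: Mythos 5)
Your approach is genuinely different from the paper's: the paper establishes smallness in one line by noting that each nontrivial $G$-orbit in $X$ is the image of $O_\lambda\times\{y\}$ under the projection, hence a minimal orbit of type $\lambda$ by Proposition~\ref{min-orbits.prop}(\ref{min-orbits.prop2}), and identifies $X^U=\Olamb^U\times^{\Kst}Y=\K\times^{\Kst}Y\simto Y$ directly via Remark~\ref{K*K.rem}. You instead compute $\OOO(X)$ as a graded $G$-algebra and carry out a hands-on orbit--stabilizer analysis. This is more laborious but gives additional information (the explicit weight decomposition of $\OOO(X)$).

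However, there is a genuine error in the orbit--stabilizer step. From $t y=y$ with $y\notin Y^{\Kst}$ you conclude $t=1$, but the $\Kst$-isotropy of a non-fixed point is a finite cyclic group $\mu_k$ that need not be trivial. The correct conclusion is that $G_{[v_0,y]}=\{g\in P_\lambda : \lambda(g)\in\mu_k\}$, so the orbit is $O_{k\lambda}$ (the $\mu_k$-quotient of $O_\lambda$), not $O_\lambda$. For example, with $Y=\K$ and $t\cdot s=t^2 s$, every nontrivial orbit in $\Olamb\times^{\Kst}Y$ is $O_{2\lambda}$. Since $\lambda$ is indivisible, $O_{k\lambda}$ is still a minimal orbit \emph{of type $\lambda$}, so the lemma's conclusion survives, but your explicit orbit description is wrong as stated, and the error would propagate if one tried to read off more from it (e.g. the isomorphism class of $X$). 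There is also a logical ordering glitch: in the second paragraph you invoke Proposition~\ref{small-var.thm}(\ref{small-var.thm:2}) to get the canonical $\Kst$-action on $X$, but that proposition requires $X$ to be small, which you only establish in the third paragraph. This is fixable by reordering (or by arguing directly from the weight decomposition of $\OOO(X)$ you already computed), but as written the argument is circular.
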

\begin{proof}
By definition, $X$ is an affine $G$-variety.
For $x = [v,y]\in \Olamb\times^{\Kst}Y$, $v\neq 0$, the $G$-orbit $G x \subset X$ is the image of $O_{\lambda} \times \{y\}$ in $X$, hence a minimal orbit of type $\lambda$ (Proposition~\ref{min-orbits.prop}(\ref{min-orbits.prop5})). As a consequence, $X$ is a small $G$-variety of type $\lambda$. Furthermore, since the canonical $\K^*$-action on $\Olbn$ commutes with the $G$-action, we have
$$
X^{U}=(\Olamb\times^{\Kst}Y)^{U} = \Olamb^{U}\times^{\Kst}Y \simeq \K\times^{\Kst} Y \simto Y,
$$
where the last morphism is given by $[t,y]\mapsto t y$ which is an isomorphism, as explained in Remark~\ref{K*K.rem} above.
\end{proof}

\begin{proof}[Proof of Corollary~\ref{equivalence.cor}] This corollary follows from Proposition~\ref{small-var.thm}(\ref{G-iso}) and Lemma~\ref{construction.prop}.
\end{proof}

\ps
%%%%%%%%%%%%%%%%%%%%%%%%%%%%%%%%%%
\subsection{Smoothness of small \texorpdfstring{$G$}{G}-varieties}\label{smoothness.subsec}
Before describing the smoothness properties of small varieties, let us look at some examples. As before, $G$ is always a semisimple algebraic group.

\begin{rem}\label{small-G-module.rem}
Let $W$ be a $G$-module whose non-trivial orbits are all minimal. We claim that $W$ is a simple $G$-module and contains a single non-trivial orbit which is minimal. In particular, the highest weight of $W$ is indivisible.

Indeed, all minimal orbits in $W$ have the same type by Lemma~\ref{main.lem}(\ref{main.lem:1}) and therefore the same dimension $d>1$ by Remark~\ref{min-orbits.rem}(\ref{covering2}), and every minimal orbit meets $W^{U}$ in a punctured line, by Lemma~\ref{hw-orbits.lem}(\ref{U-fixed}). This implies that $\dim W = \dim W^{U}-1 + d$. Let $W = \bigoplus_{i=1}^{m}V_{i}$ be the decomposition into simple $G$-modules. 
Every factor contains a dense minimal orbit, all non-trivial orbits are minimal and hence of the same type by Lemma~\ref{main.lem}(\ref{main.lem:1}). By Lemma~\ref{hw-orbits.lem}(\ref{minimal}), a simple $G$-module contains at most one minimal orbit, hence
$\dim W = m d$. Since $\dim W^{U}= m$ we find $m d = m-1+d$, and so $m=1$.
\end{rem}

\begin{rem}\label{smooth-small-G-var.rem}
If a small $G$-variety $X$ is smooth and contains exactly one fixed point, then $X$ is a simple $G$-module $V_\lambda$ containing a dense minimal orbit, and $\lambda$ is indivisible. Indeed, smoothness and having exactly one fixed point imply by {\sc Luna}'s Slice theorem \cite[\S III.1 Corollaire 2]{Lu1973Slices-etales} that $X$ is a $G$-module, and the rest follows from the remark above. 
\end{rem}

\begin{exa}
Let $\Kn$ be the standard representation of $\SL_{n}$, and set  $W := (\Kn)^{\oplus m}$. Define
$Y:=\K e_1\oplus\K e_1 \oplus\cdots\oplus\K e_1 \subset W$ where $e_1=(1,0,\dots,0)$, and set $X:=\SL_{n} Y \subseteq W$. Since 
$Y$ is $B$-stable and closed it follows that $X$ is a closed and $\SL_{n}$-stable subvariety of $W$ with the following properties (cf. Example~\ref{SLn.exa}).
\be
\item 
$X$ contains a single closed $\SL_{n}$-orbit, namely the fixed point $\{0\}$.
\item 
Every nontrivial orbit $O \subset X$ is minimal of type $\eps_1$, and $\overline{O} \simeq \Kn$ as an $\SL_n$-variety. 
In particular, $X$ is a small $\SL_n$-variety.
\item 
Since $X^U=Y$ is normal (even smooth),
$X$ is also normal, by Lemma~\ref{main.lem}(\ref{main.lem:3}).
\ee
However, by Remark~\ref{smooth-small-G-var.rem} and (2), $X$ is not smooth if $m>1$.
\end{exa}

\begin{exa} Let $W:=\K^{3}$ be the $\Kst$-module with weights $(2,1,0)$, i.e. $t(x,y,z) := (t^{2}\cdot x, t \cdot y,z)$. 
The homogeneous function $f:=x z-y^{2}$ defines a normal $\Kst$-stable closed subvariety $Y=\VVV(f) \subset \K^{3}$ with an isolated singularity at $0$. The invariant $z$ defines the quotient $\pi=z\colon Y \to \K=Y\quot\Kst$.
The (reduced) fibers of $\pi$ are isomorphic to $\K$, but $Y$ is not a line bundle, because the zero fiber is not reduced. 
The action of $\Kst$ is given by $(t,s)\mapsto t\cdot s$ on the fibers over $\K\setminus\{0\}$ and by $(t,s)\mapsto t^{2}\cdot s$ on the zero fiber. In fact, the zero fiber contains the point $(1,0,0)$ which is fixed by $\{\pm 1\}$, but not by $\Kst$.

By Lemma~\ref{construction.prop}, $X:=\overline{O_{\eps_1}} \times^{\Kst} Y$ is a small $G$-variety and $X^U\simeq Y$, hence $X$ is normal (Lemma~\ref{main.lem}(\ref{main.lem:3})). Moreover, $X\quot G\simeq Y\quot\K^*=\K$ by Proposition~\ref{small-var.thm}(\ref{small-var.thm:2}).
All fibers of the quotient map $\pi\colon X \to X\quot G=\K$ different from the zero fiber are isomorphic to $\K^3=\overline{O_{\eps_{1}}}$, but $\pi^{-1}(0) \simeq \overline{O_{2\eps_{1}}}$.
\end{exa}
Concerning the smoothness of small $G$-varieties we have the following rather strong result, cf. Theorem~\ref{mainthm2}.

\begin{thm}\label{mainthm2b}
Let $X$ be an irreducible small $G$-variety of type $\lambda$, and consider the following statements.\renewcommand{\theenumi}{\roman{enumi}}
\be
\item\label{vectorbundle2}
The quotient $\pi\colon X \to X\quot G$ is a $G$-vector bundle with fiber $V_{\lambda}$.
\item\label{linebundle2}
$\Kst$ acts faithfully on $X^{U}$, the quotient $X^{U} \to X^{U}\quot\Kst$ is a line bundle, and $V_{\lambda}=\overline{O_{\lambda}}$.
\item\label{principal2}
The quotient $X^{U}\setminus X^{G}\to X^{U}\quot\Kst$ is a principal $\Kst$-bundle, and $V_{\lambda}=\overline{O_{\lambda}}$.
\item\label{disjoint2}
The closures of the minimal orbits of $X$ are smooth and pairwise disjoint.
\item\label{smooth2}
The quotient morphism $\pi\colon X \to X\quot G$ is smooth.
\ee
Then the assertions {\rm(\ref{vectorbundle2})} and {\rm(\ref{linebundle2})} are equivalent and imply {\rm(\ref{principal2})--(\ref{smooth2})}. If $X$ (or $X^U$) is normal, all assertions are equivalent.

Furthermore, $X$ is smooth if and only if $X\quot G$ is smooth and $\pi\colon X \to X\quot G$ is a $G$-vector bundle.
\end{thm}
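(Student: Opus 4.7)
The ``if'' direction is immediate: if $\pi\colon X\to X\quot G$ is a $G$-vector bundle over the smooth base $X\quot G$ with fiber $V_\lambda$, then $X$ is Zariski-locally on the base a product of a smooth affine open subset with $V_\lambda$, hence smooth.

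For the ``only if'' direction, assume $X$ is smooth. Then $X$ is normal, and the first part of the theorem makes the equivalence of (i)--(v) available; it will be enough to verify (ii). The plan is to apply Luna's \'etale slice theorem at a fixed point $x_0 \in X^G = X\quot G$ in order to pin down $T_{x_0}X$ as a $G$-module and to describe the local structure of both $X$ and $X^U$ near $x_0$.

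Smoothness of $X$ together with the reductivity of $G$ gives the smoothness of $X^G$ at $x_0$ and a $G$-stable decomposition $T_{x_0}X = V \oplus (T_{x_0}X)^G$ with $V^G = 0$. Luna produces a $G$-equivariant \'etale map from a $G$-neighborhood of $x_0 \in X$ onto a $G$-neighborhood of $0 \in T_{x_0}X$, so the smallness of $X$ transfers and every non-trivial $G$-orbit in $V$ is minimal. Remark~\ref{small-G-module.rem} then forces $V \simeq V_\mu$ for a unique indivisible $\mu$ with $\overline{O_\mu} = V_\mu$, and applying Proposition~\ref{min-orbits.prop}(\ref{min-orbits.prop4}) to the induced \'etale cover between a minimal orbit of $X$ near $x_0$ and its image in $V$ forces $\mu = \lambda$. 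This yields
\[
T_{x_0}X \;\simeq\; V_\lambda \oplus T_{x_0}X^G \qquad\text{and}\qquad V_\lambda = \overline{O_\lambda}.
\]
The same \'etale slice identifies $X^U$ near $x_0$ with $(T_{x_0}X)^U = V_\lambda^U \oplus T_{x_0}X^G \simeq \K \oplus T_{x_0}X^G$, on which the canonical $\Kst$-action has a non-trivial weight on the first factor and is trivial on the second. Hence $\Kst$ acts faithfully on $X^U$, and $X^U \to X^G = X^U\quot\Kst$ is \'etale-locally a trivial line bundle, so a line bundle by descent (equivalently, by a Bialynicki-Birula argument applied to the smooth $\Kst$-variety $X^U$ with a single positive tangent weight at each fixed point). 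This is precisely condition (ii), and invoking the equivalence (ii)$\Leftrightarrow$(i) (valid because $X$ is normal) concludes that $X\quot G = X^G$ is smooth and that $\pi$ is a $G$-vector bundle with fiber $V_\lambda$.

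The main obstacle is the slice-theoretic identification $T_{x_0}X \simeq V_\lambda \oplus T_{x_0}X^G$: it couples Luna's theorem with the classification of Remark~\ref{small-G-module.rem} and the type-matching of Proposition~\ref{min-orbits.prop}(\ref{min-orbits.prop4}). Once this is in hand, verifying condition (ii) and appealing to the already-proved equivalences (i)--(v) is routine.
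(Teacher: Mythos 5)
Your proposal addresses only the final ``Furthermore'' statement; the equivalences (\ref{vectorbundle2})--(\ref{smooth2}) are assumed from the first part of the theorem and not re-proved. For the final statement your route is genuinely different from the paper's: the paper dispatches it in one line by citing the Bass--Haboush linearization theorem for fix-pointed actions of reductive groups on smooth affine varieties, whereas you verify condition~(\ref{linebundle2}) via Luna's slice theorem together with the classification of small $G$-modules (Remark~\ref{small-G-module.rem}) and the type-matching in Proposition~\ref{min-orbits.prop}, and then invoke the already-proved equivalence (\ref{linebundle2})\,$\Leftrightarrow$\,(\ref{vectorbundle2}). This has the merit of avoiding the Bass--Haboush black box and making visible the role of $X^U$; the tangent-space identification $T_{x_0}X \simeq V_\lambda\oplus T_{x_0}X^G$ with $V_\lambda=\overline{O_\lambda}$ is correct as you set it up (though Proposition~\ref{min-orbits.prop}(\ref{min-orbits.prop3}) is the cleaner citation for passing minimality from the orbit in $X$ to the covering orbit in the slice, with (\ref{min-orbits.prop4}) then giving the type match).

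There is, however, a real gap at the step you present as routine: the claim that ``the same \'etale slice identifies $X^U$ near $x_0$ with $(T_{x_0}X)^U$.'' Luna's morphism $\psi\colon U_1\to U_2$ (with $U_1\subset T_{x_0}X$, $U_2\subset X$ saturated) is $G$-equivariant and \'etale, but $U$ is unipotent, not reductive, so restricting an \'etale $G$-map to $U$-fixed loci does not automatically give an \'etale map, nor does it automatically give smoothness of $X^U$ (which your Bia{\l}ynicki-Birula alternative also presupposes). What rescues the argument is the \emph{strongly} \'etale property of the slice, namely the Cartesian square $U_1 \cong U_2 \times_{U_2\quot G} (U_1\quot G)$. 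Taking $U$-fixed points and using that the action is fix-pointed (so $U_i\quot G = U_i^G$) yields $U_1^U \cong U_2^U \times_{U_2^G} U_1^G$, exhibiting $U_1^U\to U_2^U$ as the base change of the \'etale map $U_1^G\to U_2^G$; hence it is \'etale. Since $U_1^U \cong \K\times W$ with projection to $U_1^G=W$, this shows $X^U\to X^G$ is \'etale-locally a trivial line bundle, and Zariski-local triviality then follows by Hilbert~90. Without appealing to the strongly \'etale structure (or giving an independent proof that $X^U$ is smooth), the passage from the tangent-space computation to condition~(\ref{linebundle2}) is incomplete.
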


\begin{proof}
(i) $\Rightarrow$ (ii): 
If $X \to X\quot G$ is a $G$-vector bundle with fiber $V_\lambda$, then the induced morphism $X^U \to X\quot G = X^U \quot \Kst$ is a subbundle with fiber $V_\lambda^U \simeq \K$, hence a line bundle.
\ps
(ii) $\Rightarrow$ (i):
Since $\Olb = V_\lambda$ we have a canonical isomorphism $V_\lambda\times^{\Kst} X^U \simto X$ where $\Kst$ acts by the inverse of the scalar multiplication on $V_\lambda$, see Proposition~\ref{small-var.thm}(\ref{G-iso})).
If $X^U \to X^U\quot \Kst$ is a line bundle, then it looks locally like $\K \times W \overset{\pr_W}{\longrightarrow} W$, and $\Kst$ acts by scalar multiplication on $\K$.
Hence $V_\lambda \times^{\Kst} X^U$ looks locally like 
$$
V_\lambda\times^{\Kst} (\K \times W) = (V_\lambda\times^{\Kst} \K) \times W \simeq V_\lambda \times W
$$
where we use the canonical isomorphism $V_\lambda\times^\Kst \K \simto V_\lambda$, $[v,s] \mapsto s\cdot v$, see Remark~\ref{K*K.rem}. This shows that $V_\lambda\times^{\Kst} X^U \simto X$ is a $G$-vector bundle over $X^U\quot\Kst = X \quot G$.
\ps
(i) $\Rightarrow$ (v): This is obvious.
\ps
(v) $\Rightarrow$ (iv): The (reduced) fibers of $\pi\colon X \to X\quot G$ are small $G$-varieties with a unique fixed point.  If such a fiber $F$ is smooth, then $F \simeq V_\lambda$ and $V_\lambda = \Olb$ by Remark~\ref{smooth-small-G-var.rem}.
\ps
(iv) $\Rightarrow$ (iii):
If the closure of a minimal orbit $O$ is smooth, then $O\simeq O_\lambda$ and $\Olb = V_\lambda$, again by Remark~\ref{smooth-small-G-var.rem}. It follows that the action of $\Kst$ on $X^U \setminus X^G$ is free and so $P:=X^U\setminus X^G \to X^U\quot \Kst$ is a principal $\Kst$-bundle. 
\ps
(iii) $\Rightarrow$ (ii) if $X^U$ is normal:
If $P:=X^U \setminus X^G \to X^U\quot \Kst$ is a principal $\Kst$-bundle and $L:=\K\times^\Kst P \to X^U\quot \Kst$ the associated line bundle, then there is a canonical morphism (see Remark~\ref{K*K.rem})
$$
\sigma\colon L=\K \times^\Kst (X^U\setminus X^G) \longrightarrow \K \times^\Kst X^U \simeq X^U.
$$
By construction, $\sigma$ is bijective, hence an isomorphism, because  $X^U$ is normal.
\ps
It remains to prove the last statement where one implication is clear. Assume that $X$ is smooth. Since the $G$-action is fix-pointed, it follows from \cite[(10.3)~Theorem]{BaHa1985Linearizing-certai} that $\pi\colon X \to X \quot G$ is a $G$-vector bundle.
\end{proof}

\par\bigskip
%%%%%%%%%%%%%%%%%%%%%%%%%%%%%%%%
\section{Computations}
In this paragraph we calculate the invariants $m_G$, $d_G$ and $r_G$ which are defined for any simple algebraic group $G$ in the following way:
\begin{align*}
m_{G} &:=\min\{\dim O \mid O \text{ a minimal orbit}\},\\
d_{G} &:=\min\{\dim O \mid O \text{ a non-minimal quasi-affine non-trivial orbit}\},\\
r_{G} &:=\min\{\codim H \mid H\subsetneqq G \text{ reductive}\}.
\end{align*}

For any orbit $O$ in an affine $G$-variety $X$ we have $\dim O\geq\dim\Olam$ (Lemma~\ref{Omin.lem}). An orbit $O\simeq G/H$ with $H$ reductive is affine and thus cannot be minimal (Lemma~\ref{hw-orbits.lem}(\ref{hw-grading})). 

If $O \subset X$ is an orbit of dimension $m_G$, then it is either minimal or closed. In fact, if $O$ is not closed, then $\overline{O} \setminus O$ must be a fixed point since it cannot contain an orbit of positive dimension. This implies, by Proposition~\ref{min-orbits.prop}(\ref{min-orbits.prop1}), that $O$ is minimal. This shows that if $d_G=m_G$, then  $d_G=r_G$.
Hence we get 
\begin{equation}
\label{ineq.eqn}
r_G\geq d_G\geq m_G, \text{\ \ and \ } d_G > m_G \text{ in case } r_G > m_G.
\end{equation}
If $d_{G}>m_{G}$, then an irreducible $G$-variety $X$ of dimension $< d_{G}$ is small and  we can apply our results about small $G$-varieties.

For simplicity, we assume from now on that $G$ is {\it simply connected}.
\ps
%%%%%%%%%%%%%%%%%%%%%%%%
\subsection{Notation} Let $G$ be a simple group.
As before, we fix a Borel subgroup $B\subset G$, a maximal torus $T\subset B$, and denote by $W:= \Norm_G(T)/T$ the Weyl group.
The monoid of dominant weights $\Lambda_G\subset X(T):=\Hom(T,\K^*)$ is freely generated by the fundamental weights $\omega_1,\dots,\omega_r$, i.e. $\Lambda_G=\bigoplus_{i=1}^r \NN\omega_i$ (see Section~\ref{min-orbits.subsec}). 
We denote by $\Phi=\Phi_G\subset X(T)$ the root system of $G$, by $\Phi^+=\Phi_G^+\subset \Phi$ the set of positive roots corresponding to $B$ and by $\Delta=\Delta_G\subset\Phi^+$ the set of simple roots. 
Furthermore, $\gg:=\Lie G$, $\bb:=\Lie B$ and $\hh:=\Lie T$ are the Lie algebras of $G$, $B$ and $T$, respectively,  $\gg_{\alpha}\subset\gg$ is the root subspace of $\alpha\in\Phi$ and $G_{\alpha}\subset G$ the corresponding root subgroup of $G$, isomorphic to $\K^+$.

The nodes of the Dynkin diagram of $G$ are the simple roots $\Delta_G$. We will use the  \name{Bourbaki}-labeling of the nodes:
\begin{center}
$\An$: \DynkinAn \qquad $\Bn$: \DynkinBn \\

$\Cn$: \DynkinCn \qquad $\Dn$: \DynkinDn\\

$\Esix$: \DynkinEsix \quad $\Eseven$: \DynkinEseven \quad $\Eeight$: \DynkinEeight \\  

\hskip.5cm

$\Ffour$: \DynkinF \qquad $\Gtwo$: \DynkinG
\end{center}
We also have a  canonical bijection between the simple roots $\Delta_{G}=\{\alpha_1,\ldots,\alpha_r\}$ and the fundamental weights $\{\omega_1,\ldots,\omega_r\}$ induced by the Weyl group invariant scalar product  $(\cdot,\cdot)$ on $X(T)_\RR:=X(T)\otimes_{\ZZ}\RR$:
\[
\frac{2(\omega_i,\alpha_j)}{(\alpha_j,\alpha_j)}=\delta_{i j}.
\]
For any root $\alpha$ we denote by $\sigma_\alpha$ the corresponding reflection of $X(T)_\RR$:
\[
\sigma_\alpha(\beta) := \beta - \frac{2(\beta,\alpha)}{(\alpha,\alpha)}\alpha
\]
\ps
%%%%%%%%%%%%%%%%%%%%%%%%%%%%%%%%%%%
\subsection{Parabolic subgroups}\label{parabolic.subsec}
We now recall some classical facts about parabolic subgroups of $G$, cf. \cite[\S29-30]{Hu1975Linear-algebraic-g}. 
\ps

If $R\subset\Delta$ is a set of simple roots and $I:=\Delta\setminus R$ the complement we define $P(R):=B W_I B \subseteq G$ where $W_I\subseteq W$ is the subgroup generated by the reflections $\sigma_i$ corresponding to the elements of $I$. 
Any parabolic subgroup of $G$ containing $B$ is of the form $P(R)$, and we have $R\subseteq S$ if and only if $P(S)\subseteq P(R)$, with $R=S$ being equivalent to $P(R)=P(S)$. 
In particular, $P(\emptyset)=G$ and $P(\Delta)=B$, and the $P(\alpha_i):=P(\{\alpha_i\})$ are the maximal parabolic subgroups of $G$ containing $B$.

Consider the {\it Levi decomposition\/} $P(R)=L(R)\ltimes U(R)$, where $U(R)$ is the unipotent radical of $P(R)$ and $L(R)$ the Levi part of $P(R)$ containing $T$, i.e.
$L(G) = \Cent_G(Z)$ where $Z := \bigcap_{\alpha\in I}\ker\alpha \subseteq T$. In particular,  $L(R)$ is reductive, and so its derived subgroup $(L(R),L(R))$ is semisimple. 
The connected center $\Cent(L(R))^{\circ}$ of $L(R)$ is equal to $Z$, and hence 
\[
\dim \Cent(L(R))=\dim Z=\dim T-|\Delta\setminus I|=| I |.
\]
 It follows that
\begin{equation}
\label{L.eqn}
\dim(L(R),L(R))=\dim L(R)-\dim \Cent(L(R))=\dim L(R)-|I|.
\end{equation}
On the level of Lie algebras, we have 
$$
\textstyle
\pp(R):=\Lie P(R) =\hh\oplus\bigoplus_{\alpha\in\Theta}\gg_{\alpha} \text{ \ where \ } \Theta:=\Phi^+\cup(\Phi^-\cap\sum_{\alpha\in I}\ZZ\alpha).
$$
If $\Phi_I \subseteq \Phi$ is the subsystem generated by $I$ we get
\begin{gather*}
\pp(R)=\ll(R)\oplus\uu(R), \quad \ll(R):=\Lie L(R)=\hh\oplus\bigoplus_{\alpha\in\Phi_I} \gg_{\alpha},\\ 
\uu(R):=\Lie U(R)=\bigoplus_{\alpha\in\Phi^+\setminus\Phi_I}\gg_{\alpha}
\end{gather*}
Moreover, if $R\subseteq S$, we have $\ll(S)\subseteq\ll(R)$ and $\uu(S)\subseteq\uu(R)$. 
\ps
Setting $\uu^-:=\bigoplus_{\gg_\alpha \subset \uu}\gg_{-\alpha}$, we get $\dim\uu^-=\dim\uu$ and $\gg=\uu^-\oplus\pp=\uu^-\oplus\ll\oplus \uu$, hence
\begin{equation}
\label{G-P.eqn}
\dim\gg-\dim\ll=2\dim\uu =2\dim(\gg/\pp).
\end{equation}
Furthermore, (\ref{L.eqn}) and (\ref{G-P.eqn}) yield
\begin{equation}
\label{U.eqn}
\dim\uu=\frac{1}{2}(\dim \gg-\dim[\ll,\ll]-|I|).
\end{equation}
\begin{rem}\label{dynkin.rem}
The following facts will be important in our calculations of the invariants $m_G$ and $d_G$.
From the Dynkin diagram of $G$ we can read off the semisimple type of $L(R)$ by simply removing the nodes corresponding to the roots in $R$. Moreover,  we have $\gg_{\alpha} \subset \uu(R)$ for any $\alpha \in R$, and one can determine the irreducible representation $V(\alpha)\subseteq \uu(R)$ of $L(R)$ generated by $\gg_{\alpha}$, because $\gg_{\alpha}\subset V(\alpha)$ is the lowest weight space. 

It is easy to see that the Lie subalgebra generated by $V(\alpha)$ consists of all root spaces $\gg_{\beta}$ where $\beta$ is a positive root containing $\alpha$. In the special case $R = \{\alpha_i\}$ this implies that $\uu(\alpha_i)$ is equal to the Lie subalgebra generated by $V(\alpha_i)$.
\end{rem}

\ps
%%%%%%%%%%%%%%%%%%%%%%%%%%%
\subsection{The parabolic \texorpdfstring{$P_\lambda$}{P-lambda}}\label{P-lambda.subsec}
Recall that for a simple $G$-module  with highest weight $\lambda$ the subgroup 
\begin{equation*}
\label{parabolic.eqn}
P_\lambda:=\Norm_G(V_{\lambda}^U)=\Norm_G(\Olam^U)
\end{equation*}
is a parabolic subgroup of $G$, and $\lambda$ induces a character $\lambda\colon P_\lambda \to \Kst$. For $v \in V_\lambda^U$, $v\neq 0$, we have
$$
\Olam = G v \text{ \ and \ } G_v = \ker(\lambda\colon P_\lambda \to \Kst).
$$
In particular, $\dim\Olam=\codim_G P_{\lambda}+1$  (Lemma~\ref{hw-orbits.lem}). 
As above there is a well-defined Levi decomposition $P_\lambda = L_\lambda \ltimes U_\lambda$ where $T \subseteq L_\lambda$, which carries over to the Lie algebra:
$$
\pp_\lambda:=\Lie P_\lambda = \ll_\lambda \oplus \uu_\lambda, \quad \ll_\lambda:=\Lie L_\lambda, \  \uu_\lambda:=\Lie U_\lambda.
$$
Since $P_\lambda$ contains $B$ it is of the form $P(R)$ where the subset  $R\subseteq \Delta_G$ has the following description.

\begin{lem}\label{P-lambda.lem}
\strut
\be
\item
If $\lambda = \sum_{i=1}^r m_i\omega_i$, then
$P_{\lambda}= P(R)$ where $R :=\{\alpha_i\in\Delta_G\mid m_i=0\}$.
\item
We have $P_{\lambda}=P_{\lambda'}$ if the same $\omega_i$ appear in $\lambda$ and $\lambda'$. More generally, if every $\omega_i$ appearing in $\lambda'$ also appears in $\lambda$, then $P_\lambda \subseteq P_{\lambda'}$, $L_\lambda \subseteq L_{\lambda'}$ and $U_\lambda \subseteq U_{\lambda'}$. 
\item
$P_{k\omega_i}=P(\alpha_i)$ for all $k>0$, and these are the maximal parabolic subgroups of $G$ containing $B$.
\ee
\end{lem}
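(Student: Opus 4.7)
The plan is to reduce (1) to determining, for each simple root $\alpha_i$, whether the negative root subgroup $G_{-\alpha_i}$ lies in $P_\lambda = \Norm_G(\K v)$, where $v \in V_\lambda^U\setminus\{0\}$ is a highest weight vector. Since $P_\lambda \supseteq B$, the parabolic $P_\lambda$ is generated by $B$ together with those $G_{-\alpha_i}$ it contains, and matching this list against the description of $P(R)$ in Section~\ref{parabolic.subsec} (whose Levi has Dynkin diagram $\Delta\setminus R$, and which therefore contains $G_{-\alpha_j}$ precisely for $\alpha_j\in\Delta\setminus R$) will pin down the subset $R$.

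The key step is an $\sL_2$ computation. Fix the standard triple $(e_i,h_i,f_i)\subset\gg$ attached to $\alpha_i$, so that $f_i$ spans $\gg_{-\alpha_i}$. The subgroup $G_{-\alpha_i}$ normalizes $\K v$ if and only if $f_i\cdot v \in \K v$. Since $f_i\cdot v$ is a weight vector of weight $\lambda-\alpha_i\neq\lambda$, this in turn forces $f_i\cdot v = 0$. Applying the representation theory of $\sL_2$ to the cyclic submodule generated by $v$, which is irreducible of dimension $m_i+1$ with $m_i = \langle\lambda,\alpha_i^\vee\rangle$, one obtains $f_i\cdot v=0$ if and only if $m_i = 0$. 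Combined with the parabolic dictionary above, this identifies the set $R$ for which $P_\lambda=P(R)$ and proves (1).

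Parts (2) and (3) then follow formally. The subset $R$ produced by (1) depends only on the support $\{i : m_i\neq 0\}$ of $\lambda$ in the basis $\{\omega_1,\ldots,\omega_r\}$, so two weights having the same support yield the same $R$ and hence the same $P_\lambda$. A containment of supports translates, via the inclusion-reversing correspondence $R\leftrightarrow P(R)$ recalled in Section~\ref{parabolic.subsec}, into the asserted containment of parabolics. The corresponding statements for Levi and unipotent parts are then read off from the explicit formulas for $\ll(R)$ and $\uu(R)$ in that same subsection. Part (3) is the special case $\lambda=k\omega_i$, where the support is a single index and the resulting $R$ is a singleton, giving the maximal parabolic $P(\alpha_i)$.

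The only real obstacle is a bookkeeping one: the convention for $P(R)$ adopted in Section~\ref{parabolic.subsec} (in which $R$ encodes simple roots \emph{removed} when forming the Levi) has to be matched carefully with the $\sL_2$-condition $m_i=0$, so as not to invert the role of $R$ and $\Delta\setminus R$. Once that is fixed, the whole lemma is essentially a restatement of the classical correspondence between parabolic subgroups of $G$ containing $B$ and subsets of the simple roots.
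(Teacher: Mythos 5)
Your proof is correct and takes a genuinely different route from the paper. The paper appeals to Lemma~\ref{hw-orbits.lem}(\ref{parabolic}), which identifies $P_\lambda = \Norm_G(G_v^\circ)$, and computes that normalizer directly from the structure of $G_v = \ker(\lambda\colon P_\lambda\to\Kst)$; you instead work from the defining equality $P_\lambda = \Norm_G(\K v)$, use that a parabolic containing $B$ is determined by which negative simple root subgroups $G_{-\alpha_i}$ it contains, and settle each containment by the $\sL_2$-computation $f_i\cdot v = 0 \Leftrightarrow m_i = \langle\lambda,\alpha_i^\vee\rangle = 0$. Your route is more elementary and self-contained, and parts (2) and (3) drop out of the order-reversing dictionary exactly as you describe.

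The ``bookkeeping'' point you flag is worth carrying through explicitly rather than glossing over, because it is where the content of the lemma sits. Your $\sL_2$-argument places $G_{-\alpha_i}$ in $P_\lambda$ precisely when $m_i = 0$, so the Levi $L_\lambda$ is built on $I = \{\alpha_i : m_i = 0\}$ and the set of deleted nodes is $R = \Delta\setminus I = \{\alpha_i : m_i \neq 0\}$. This is the \emph{opposite} of what the printed statement and proof in the paper write ($R = \{\alpha_i : m_i = 0\}$); the printed version is internally inconsistent with part (3), which needs $P_{k\omega_i}=P(\{\alpha_i\})$ to be a \emph{maximal} parabolic, and fails already for $G=\SL_2$, $\lambda=\omega_1$, where $P_\lambda$ must be $B$, not $G$. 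So, done carefully, your matching step not only completes the proof but also corrects a sign error in the lemma as stated; write out $R = \{\alpha_i : m_i\neq 0\}$ and propagate that through (2) (where the containments on the Levi and unipotent parts should be re-derived from the formulas in Section~\ref{parabolic.subsec} rather than taken on faith from the lemma's phrasing).
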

\begin{proof}
(1)
Set $I:=\{\alpha_i\in \Delta_G\mid m_i\neq 0\}$ and $R:=\Delta_G\setminus I$.  
Since $G_v^\circ = \ker\lambda^\circ = \bigcap_{\alpha\in I}\ker\alpha$ we get from Lemma~\ref{hw-orbits.lem}(\ref{parabolic}) that 
$P_\lambda = \Norm_G(G_v^\circ) = \Norm_G(\bigcap_{\alpha\in I}\ker\alpha) = P(R)$.
\ps
(2) follows from (1) and (3) from (2).
\end{proof}

\ps
%%%%%%%%%%%%%%%%%%%%
\subsection{The invariant \texorpdfstring{$m_G$}{mG}}\label{m-G.subsec}
It follows from (\ref{parabolic.eqn}), that 
\[m_{G} = \min_{\lambda\in\Lambda_{G}}\dim\Olam = \min_{\lambda\in\Lambda_{G}}\codim_{G}P_{\lambda}+1.\]
So, it suffices to calculate
\begin{equation}
p_{G}:=\min\{\dim G/P \mid P \subsetneqq G \text{ a parabolic subgroup}\} = m_G-1.
\end{equation}
For this it is clearly sufficient to consider the maximal parabolic subgroups $P_{\omega_i}=P(\alpha_i)$. 

\begin{lem}\label{min-parabolics.lem}
The following table lists the invariants $m_G$ and $p_G$ for the simple groups $G$, the corresponding maximal parabolic subgroups $P_\omega$ as well as the dimensions of the fundamental representations $V_\omega$. The last column gives some indication about $\overline{O_\omega}$ where the null cone $\NNN_V$ appears only in case $\NNN_V \subsetneqq V$.
\ps
\begin{center}
\begin{tabular}{ c | c | c | c | c | c} 
$G$ & $m_G$ & $p_G$ &  \text{maximal}\ $P_{\omega}$ & $\dim V_{\omega}$ & $\overline{O_\omega}$  \\ 
\hline \hline
$\An,n\geq1$ & $n+1$ & $n$ & $P_{\omega_1},\ P_{\omega_n}$   & $n+1,n+1$ & $\K^{n+1},(\K^{n+1})^{\vee}$ \\
\hline 
$\Btwo$ & $4$ & $3$ & $P_{\omega_1},P_{\omega_2}$ & $5,4$ & $\NNN_{V_{\omega_1}},V_{\omega_2}$  \\
\hline
$\Bn,n\geq3$ & $2n$ & $2n-1$ & $P_{\omega_1}$ & $2n+1$ & $\NNN_{V_{\omega_1}}$ \\
\hline
$\Cn,n\geq3$ & $2n$ &  $2n-1$ & $P_{\omega_1}$ &  $2n$ & $V_{\omega_1}$ \\
\hline
$\Dfour$ & $7$ & $6$ & $P_{\omega_1},P_{\omega_3},P_{\omega_4}$ & $8,8,8$ & $\NNN_{V_{\omega_1}},\NNN_{V_{\omega_3}},\NNN_{V_{\omega_4}}$\\
\hline
$\Dn,n\geq5$ & $2n-1$ & $2n-2$ & $P_{\omega_1}$ & $2n$ & $\NNN_{V_{\omega_1}}$\\
\hline
$\Esix$ & $17$ & $16$ & $P_{\omega_1},P_{\omega_6}$ &  $27,27$ & $\subsetneqq\NNN_{V_{\omega_i}}$, $i=1,6$\\
\hline
$\Eseven$ & $28$ & $27$ & $P_{\omega_7}$ & $56$ & $\subsetneqq\NNN_{V_{\omega_7}}$\\
\hline
$\Eeight$ & $58$ & $57$ & $P_{\omega_8}$ & $248$ & $\subsetneqq \NNN_{\Lie\Eeight}$\\
\hline
$\Ffour$ & $16$ & $15$ & $P_{\omega_1},P_{\omega_4}$ & $52,26$ & $\subsetneqq \NNN_{V_{\omega_i}}$, $i=1,4$\\
\hline
$\Gtwo$ & $6$ & $5$ & $P_{\omega_1},P_{\omega_2}$ & $7,14$ & $\NNN_{V_{\omega_1}}$, $\subsetneqq\NNN_{\Lie\Gtwo}$
\end{tabular}
\ps
\captionof{table}{\label{tab3}Minimal orbits for the simple groups}
\end{center}
\end{lem}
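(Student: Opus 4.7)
The plan is to carry out a case-by-case analysis over the simple groups. By Lemma~\ref{P-lambda.lem}(3) every proper parabolic $P_\lambda \subsetneq G$ contains some maximal parabolic $P(\alpha_i) = P_{\omega_i}$, so $\dim G/P_\lambda \geq \dim G/P(\alpha_i)$, and therefore $p_G = \min_i \codim_G P(\alpha_i)$, with $m_G = p_G + 1$. To compute each codimension I use the formula from Section~\ref{parabolic.subsec} in the specialised form
\[
\codim_G P(\alpha_i) = \dim \uu(\alpha_i) = \tfrac{1}{2}\bigl(\dim\gg - \dim[\ll(\alpha_i),\ll(\alpha_i)] - 1\bigr),
\]
where the semisimple type of the Levi $[L(\alpha_i), L(\alpha_i)]$ is obtained from the Dynkin diagram of $G$ by deleting the $i$-th node (Remark~\ref{dynkin.rem}).

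For the classical groups $\An, \Bn, \Cn, \Dn$, deleting a node yields a direct sum of smaller classical algebras whose dimensions are immediate, and the minimum of $\codim_G P(\alpha_i)$ is attained at the ``ends'' of the diagram: $\alpha_1$ and $\alpha_n$ for $\An$; the vector-end node for $\Bn, \Dn$; the symplectic-end node for $\Cn$; and all three outer nodes for $\Dfour$. For the exceptional groups the same calculation is performed at each of the at most eight simple roots and the minimum identified by direct comparison (in $\Gtwo$ both nodes yield $\dim G/P = 5$; in $\Ffour$ both end nodes; in $\Esix$ the two minuscule nodes $\alpha_1, \alpha_6$; in $\Eseven$ the minuscule node $\alpha_7$; in $\Eeight$ the adjoint node $\alpha_8$). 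The values of $\dim V_{\omega_i}$ in the fifth column are obtained from Weyl's dimension formula or standard tables.

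The final column follows from Lemma~\ref{hw-orbits.lem}. By part~(\ref{sing}), $\overline{O_{\omega_i}} = V_{\omega_i}$ exactly when $\dim V_{\omega_i} = m_G$; comparing columns three and five, this holds precisely for the standard representations of $\An$ and $\Cn$ and for the spin representation $V_{\omega_2}$ of $\Btwo$, justifying the entries $\K^{n+1}$, $\K^{2n}$ and $V_{\omega_2}$. In every other case $\overline{O_{\omega_i}} \subsetneq V_{\omega_i}$, and part~(\ref{minimal}) gives $\overline{O_{\omega_i}} \subseteq \NNN_{V_{\omega_i}}$; whether equality holds is decided by comparing $m_G$ with $\dim \NNN_{V_{\omega_i}}$. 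For the vector representations of $\Bn, \Dn$ (and $\Gtwo \subset \SO_7$) the only invariant is a quadratic form, so $\NNN_{V_{\omega_i}}$ is a quadric cone of dimension $\dim V_{\omega_i} - 1 = m_G$, forcing $\overline{O_{\omega_i}} = \NNN_{V_{\omega_i}}$; in the remaining cases one checks (using dimensions of null cones in the invariant theory of the exceptional groups, e.g. $\dim \NNN_{\Lie G} = \dim G - \rk G$ for adjoint representations) that $\dim \NNN_{V_{\omega_i}} > m_G$, giving the strict inclusion indicated by $\subsetneq \NNN_{V_{\omega_i}}$.

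The main obstacle is bookkeeping: the classical cases are a mechanical application of the dimension formula, but the exceptional cases require invoking specific data (Weyl dimensions of the $27, 56, 248, 26, 52, 7, 14$-dimensional fundamental representations, and dimensions of their null cones) which must be assembled consistently before the minimum, and the strict-inclusion assertions, can be verified across the last three columns.
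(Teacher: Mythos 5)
Your proposal is correct and takes essentially the same route as the paper: reduce to maximal parabolics $P(\alpha_i)$, compute $\dim\uu_{\omega_i}=\tfrac{1}{2}(\dim\gg-\dim\dd_i-1)$ by deleting node $i$ from the Dynkin diagram and minimising over $i$, and settle the last column by comparing $m_G$ with $\dim V_\omega$ and (for the quadric and adjoint cases) with $\dim\NNN_{V_\omega}$. Two cosmetic slips: ``contains some maximal parabolic'' should read ``is contained in some maximal parabolic'', and the criterion $\overline{O_{\omega_i}}=V_{\omega_i}\iff\dim V_{\omega_i}=m_G$ is a direct dimension count for closed irreducible subvarieties and does not require Lemma~\ref{hw-orbits.lem}(\ref{sing}); likewise, deducing $\overline{O_\omega}=\NNN_{V_\omega}$ from dimension equality tacitly uses that the quadric cone is irreducible (the paper instead invokes transitivity on nonzero isotropic vectors).
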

\begin{proof}
By (\ref{G-P.eqn}), we have to find the maximal dimensional Levi subgroups $L_{\omega_i}$. For this it suffices to compute the maximum of $\dim (L_{\omega_i},L_{\omega_i})$. The Dynkin diagram of $\dd_i:=[\ll_{\omega_i},\ll_{\omega_i}]$ is obtained by removing the $i$-th node from the Dynkin diagram of $G$.  Now a short calculation in each case will give the possible  $\omega_i$  from which we will obtain columns 2--5 of Table~\ref{tab3}.
For the last column, we use that 
\[
O_{\omega_i}\subseteq\NNN_{V_{\omega_i}}\quad\text{where}\quad \dim O_{\omega_i}=\codim P_{\omega_i}+1=\frac{1}{2}(\dim G-\dim\dd_i-1)+1,
\]
see Lemma~\ref{hw-orbits.lem}(\ref{normal}) and (\ref{parabolic}) and Section~\ref{parabolic.subsec} above.

We now apply the above strategy to each simple group $G$. In each case, $\dim\dd_i$ turns out to be quadratic in $i$ and achieves its minimum on the interval $[1,n]$. Hence, if $\dd_i$ is of maximal dimension, then $i$ is either $1$ or $n$. 
\ps
{\bf(row $\An$)} 
For $i=1,\dots,n$, we obtain $\dd_i=\sL_i\oplus\sL_{n-i+1}$. It is of maximal dimension for $i=1,n$. Furthermore, $V_{\omega_1}=\K^{n+1}$ and $V_{\omega_n}= (\K^{n+1})^\vee$ are the the standard representation of $\SL_{n+1}$ and its dual which yields $\codim P_{\omega_1}=\codim P_{\omega_2}=n$ and $\overline{O_{\omega_i}}=V_{\omega_i}$. 
\ps
{\bf(rows $\Btwo=\C_2$ and $\Bn$)}
For $i=1,\dots,n$, we obtain $\dd_i=\sL_i\oplus \sO_{2(n-i)+1}$. It is of maximal dimension for $i=1$ if $n\geq3$ and for $i=1,2$ if $n=2$.  
Furthermore, $V_{\omega_1}=\K^{2n+1}$ is the standard representation of $\SO_{2n+1}$, and the quotient $V_{\omega_1}\quot\SO_{2n+1}\simeq\K$ is given by the invariant quadratic form. In particular, $\dim\NNN_{V_{\omega_1}} = 2n$, and $\SO_{2n+1}$ acts transitively on the isotropic vectors $\NNN_{V_{\omega_1}}\setminus\{0\}$, hence $\overline{O_{\omega_1}} = \NNN_{V_{\omega_1}}$. This gives the row  $\Bn$, $n\geq3$, and half of the row $\Btwo$. 

If $n=2$, then $V_{\omega_2}$ is the standard representation $\K^4$ of $\Sp_4$, hence $O_{\omega_2} = \K^4 \setminus\{0\}$, giving the other part of the row $\Btwo$.
\ps
{\bf(row $\Cn$)} Here we get $\dd_i=\sL_i\oplus \sP_{2(n-i)}$ which is of maximal dimension for $i=1$.
Furthermore, $V_{\omega_1}=\K^{2n}$ is the standard representation of $\Sp_{2n}$, and $\overline{O_{\omega_1}}=V_{\omega_1}$, hence $m_{\Sp_{2n}} = 2n$.
\ps
{\bf(rows $\Dfour$ and $\Dn$)} 
For $i=1,\dots,n-3$, we get $\dd_i=\sL_i\oplus \sO_{2(n-i)}$. Moreover, $\dd_{n-2}=\sL_{n-2}\oplus\sL_2\oplus\sL_2 $ and $\dd_{n-1} = \dd_{n}=\sL_n$. They are maximal dimensional for $i=1$ if $n\geq5$ and for  $i=1,3$ and $4$ if $n=4$.
Furthermore, $V_{\omega_1}=\K^{2n}$ is the standard representation of $\SO_{2n}$, and we get the claim for $\Dn$, $n\geq5$ and for $V_{\omega_1}$ in case $n=4$. 
In this case, $V_{\omega_3}$ and $V_{\omega_4}$ are conjugate to the standard representation $V_{\omega_1}=\K^8$ by an outer automorphism of $\Dfour$. For the  standard representation $V$ we have $V\quot G = \K$, given by the invariant quadratic form, and the nullcone consists of two orbits, $\{0\}$ and the minimal orbit of nonzero isotropic vectors.
\ps
{\bf(row $\Esix$)}
Here we find $\dd_1=\dd_6=\sO_{10}$, $\dd_2=\sL_6$, $\dd_3=\dd_5=\sL_2\oplus\sL_5$, $\dd_4=\sL_3\oplus\sL_2\oplus\sL_3$. 
The maximal dimension is reached for $i=1,6$, and we get $p_{\Esix}=16$. The representations $V_{\omega_1}$ and $V_{\omega_6}$ of dimension $27$ are dual to each other. The quotient $V_{\omega_1}\quot \Esix=\K$ is given by the cubic invariant of $V_{\omega_1}$ (see \cite[Table~5b]{Sc1978Representations-of}), and so $\dim\NNN_{V_{\omega_1}}=26$. It follows that  $\overline{O_{\omega_i}}\subsetneqq\NNN_{V_{\omega_i}}$, $i=1,6$.
\ps
{\bf(row $\Eseven$)} We have $\dd_1=\sO_{12}$, $\dd_2=\sL_7$, $\dd_3=\sL_2\oplus\sL_6$, $\dd_4=\sL_3\oplus\sL_2\oplus\sL_4$, $\dd_5=\sL_5\oplus\sL_3$, $\dd_6=\sO_{10}\oplus\sL_2$, $\dd_7=\Esix$.
The maximal dimension is reached for $i=7$, and we get $p_{\Eseven}=27$. 
We have $\dim V_{\omega_7}=56$ and $\dim V_{\omega_7}\quot \Eseven=1$ (see for instance \cite[Table 5a]{Sc1978Representations-of}), hence 
$\NNN_{V_{\omega_7}}\subset V_{\omega_7}$ has codimension 1 and so $\overline{O_{\omega_7}}\subsetneqq\NNN_{V_{\omega_7}}$. 
\ps
{\bf(row $\Eeight$)} Here we obtain $\dd_1=\sO_{14}$, $\dd_2=\sL_8$, $\dd_3=\sL_2\oplus\sL_7$, $\dd_4=\sL_3\oplus\sL_2\oplus\sL_5$, $\dd_5=\sL_5\oplus\sL_4$, $\dd_6=\sO_{10}\oplus\sL_3$, $\dd_7=\Esix\oplus\sL_2$, $\dd_8=\Eseven$. 
The maximal dimension is reached for $i=8$, and we get $p_{\Eeight}=57$. Moreover, $V_{\omega_8}$ is the adjoint representation of dimension $248$, $\dim\NNN_{V_{\omega_8}}=\dim\Eeight-\rk\Eeight=240$ (\cite[Example 2.1]{KrWa2006On-the-nullcone-of}), and thus $\overline{O_{\omega_8}}\subsetneqq \NNN_{V_{\omega_8}}$. 
\ps
{\bf(row $\Ffour$)} We have $\dd_1=\sP_6$, $\dd_2=\dd_3=\sL_2\oplus\sL_3$, $\dd_4=\sO_7$, and the maximal dimension is reached for $i=1,4$. This yields $p_{\Ffour}=15$. Moreover, $V_{\omega_1}$ is the adjoint representation of dimension $52$, thus $\dim\NNN_{V_{\omega_1}}=\dim\Ffour-\rk\Ffour=48$, and so $\overline{O_{\omega_1}}\subsetneqq\NNN_{V_{\omega_1}}$. The other representation $V_{\omega_4}$ has dimension $26$, is cofree and $\dim V_{\omega_4}\quot G = 2$, cf. \cite[Table 5a]{Sc1978Representations-of}. Hence $\dim \NNN_{\omega_4} = 24$ 
and thus $\overline{O_{\omega_4}}\subsetneqq \NNN_{V_{\omega_4}}$.

{\bf(row $\Gtwo$)} We have $\dd_1=\dd_2=\sL_2$ and hence $p_{\Gtwo}=5$ and $\dim O_{\omega_i}=6$. Furthermore $\dim V_{\omega_1}=7$, $\dim V_{\omega_2}=14$ and $\Gtwo$ preserves a quadratic form on $V_{\omega_1}$ (see \cite[\S22.3]{FuHa1991Representation-the}) which implies that $\overline{O_{\omega_1}}=\NNN_{V_{\omega_1}}$. Moreover, $V_{\omega_2}$ is the adjoint representation, $\dim\NNN_{V_{\omega_2}}=\dim\Gtwo-\rk\Gtwo=12$  and hence $\overline{O_{\omega_2}}\subsetneqq\NNN_{V_{\omega_2}}$. 
\end{proof}

\begin{rem}\label{smooth.rem}
The lemma above has the following consequence. Let $G$ be a simple group. {\it If $\Olb$ is smooth, then we are in one of the following cases:
\be
\item 
$G = \SL_n$ and $\lambda = \omega_1$ or $\lambda=\omega_n$, i.e. $\Olb$ is the standard representation or its dual.
\item
$G = \Sp_{2n}$ and $\lambda = \omega_1$, i.e. $\Olb$ is the standard representation.
\ee}
In fact, if $\Olb$ is smooth, then $\Olb = V_\lambda$ by Lemma~\ref{hw-orbits.lem}(\ref{sing}), and the claim follows from the last column of Table~\ref{tab3} in Lemma~\ref{min-parabolics.lem}.
\end{rem}

Now we can prove the first theorem from the introduction.

\begin{proof}[Proof of Theorem~\ref{mainthm.cor}]
Theorem~\ref{mainthm2} implies that $X\to X\quot G$ is a $G$-vector bundle with fiber $V_{\lambda}$, where $\lambda$ is the type of $X$, and the minimal orbits are smooth. This means that $\Olb = V_{\lambda}$, by Lemma~\ref{hw-orbits.lem}(\ref{sing}),  and the claim follows from Remark~\ref{smooth.rem} above.
\end{proof}

\ps
%%%%%%%%%%%%%%%%%%%%%%%%%%%%%%%%%%%%
\subsection{The invariant \texorpdfstring{$r_G$}{rG}}\label{r-G.subsec}
In this section, we compute the invariant 
\[
r_{G} =\min\{\codim_G H \mid H\subsetneqq G \text{ reductive}\}
\] 
which is the minimal dimension of a nontrivial affine $G$-orbit. These orbits are never minimal orbits, by Lemma~\ref{hw-orbits.lem}(\ref{hw-grading}).

\begin{lem}\label{rG.lem} The following table lists the types of the proper reductive subgroups $H$ of the simple groups $G$ of maximal dimension, their codimension $r_G=\codim_G H$ and the invariant $m_G$ from Lemma~\ref{min-parabolics.lem}. (In the table  $\Tone$ denotes the $1$-dimensional torus.)  
\ps
\begin{center}
\begin{tabular}{ c || c | c | c | c | c | c | c | c | c | c } 
$G$ &  $\mathsf{A}_{3}$ & $\An, n\neq 3$ &$\Bn$ &$\Cn, n\geq 3$ & $\Dn, n\geq 4$ & $\Esix $ & $\Eseven$ & $\Eeight$ & $\Ffour$ & $\Gtwo$ \\ 
\hline\hline
$H$ &  $\Btwo$ & $ \A_{n-1}\times \Tone$ & $\Dn$ & $ \mathsf{C}_{n-1}\times \Aone$ & 
$ \mathsf{B}_{n-1}$ & $\Ffour$ & $\Esix\times \Tone$ & $\Eseven\times\Aone$ & $\mathsf{B}_{4}$ & $\mathsf{A}_{2}$ \\ \hline
$r_G$ & $5$ & $2n$ & $2n$ & $4(n-1)$ &  $ 2n-1$ & $26$ & $54$ & $112$ & $16$ & $6$ \\ \hline
$m_G$ & $4$ & $n+1$ & $2n$ & $2n$ &  $ 2n-1$ & $17$ & $28$ & $58$ & $16$ & $6$ 
\end{tabular}
\ps
\captionof{table}{Maximal reductive subgroups of simple groups}\label{tab4}
\end{center}
\end{lem}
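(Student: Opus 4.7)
Let $H \subsetneq G$ be a proper reductive subgroup, which I want to maximize in dimension. I split the analysis according to the structure of $H$. If the connected center $Z(H)^{\circ}$ is a non-trivial torus, then $H \subseteq \Cent_G(Z(H)^{\circ})$, and the latter is a Levi subgroup of a parabolic of $G$, proper since $Z(G)$ is finite. Hence $\dim H$ is bounded by the maximal dimension of a Levi subgroup of a maximal parabolic $P_{\omega_i}$, and using $\dim L_{\omega_i} = \dim \dd_i + 1$ together with the tabulation of $\dd_i = [\ll_{\omega_i},\ll_{\omega_i}]$ in the proof of Lemma~\ref{min-parabolics.lem}, I extract a candidate Levi for each simple $G$.

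If $H$ is semisimple I further distinguish by rank. For $\rk H = \rk G$, the Borel--de Siebenthal algorithm applies: up to conjugacy such subgroups are generated by the root subgroups corresponding to a sub-diagram of the extended Dynkin diagram of $G$ obtained by deleting a single node. Enumerating these sub-diagrams for each $G$ and computing their dimensions yields a second list of candidates. For $\rk H < \rk G$ I appeal to Dynkin's classification of maximal semisimple subalgebras of simple Lie algebras; a direct dimension count shows that these non-rank-preserving candidates are dominated by the others, with the two notable exceptions $\B_2\hookrightarrow \A_3$ (via the exceptional isomorphism $\A_3 \simeq \D_3$) and $\B_{n-1}\hookrightarrow \D_n$ for $n\geq 4$ (via $\SO_{2n-1}\hookrightarrow \SO_{2n}$).

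Comparing the candidates for each $G$ then yields the table. For most groups the winner is a Levi or a Borel--de Siebenthal subgroup as listed; for example, $\Esix\times\Tone$ is the Levi of $P_{\omega_7}$ in $\Eseven$, giving codimension $54$. For $\Cn$ with $n\geq 3$, removing an interior node of the extended Dynkin diagram of $\Cn$ produces the maximal rank subgroup $\C_{n-1}\times\Aone$ of codimension $4n-4$, which beats the best Levi $\C_{n-1}\times\Tone$ of codimension $4n-2$. For $\Eeight$, removing the node $\alpha_8$ from the extended diagram yields $\Eseven\times\Aone$ of codimension $112$, beating the Levi $\Eseven\times\Tone$ of codimension $114$. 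For $\Bn$ the winner $\D_n$ (codimension $2n$) arises from removing the short end-node $\alpha_n$ of the extended diagram. Finally, for $\D_n$ (with $n\geq 4$) and for $\A_3$, the non-rank-preserving subgroups $\B_{n-1}$ and $\B_2$ win, with codimensions $2n-1$ and $5$ respectively.

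The main obstacle is the exhaustive case analysis: one must correctly enumerate all Borel--de Siebenthal subdiagrams for each $G$, and invoke Dynkin's tables to rule out larger-dimensional non-maximal-rank semisimple subgroups. Once this enumeration is carried out, the comparison of dimensions reduces to routine arithmetic using the standard formulas for $\dim G$ and $\dim H$.
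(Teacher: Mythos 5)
Your overall strategy is sound and close in spirit to the paper's: pass from an arbitrary proper reductive $H\subsetneqq G$ to a maximal reductive subalgebra, split into the case of a nontrivial central torus (bounded by a Levi of a maximal parabolic), the maximal-rank semisimple case (Borel--de Siebenthal), and the lower-rank semisimple case (Dynkin's classification), then compare dimensions. The paper essentially does the same via Gorbatsevich--Onishchik--Vinberg's reformulation of Dynkin.

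However, there is a genuine gap in your lower-rank step. You claim that the non-rank-preserving semisimple candidates are dominated by the rank-preserving ones ``with the two notable exceptions $\B_2\hookrightarrow\A_3$ and $\B_{n-1}\hookrightarrow\D_n$.'' This is false: $\Ffour\subset\Esix$ is a maximal semisimple subalgebra of rank $4<6=\rk\Esix$, and it has codimension $78-52=26$, which \emph{beats} every Levi subgroup of $\Esix$ (the best being $\D_5\times\Tone$ of codimension $32$) and every maximal-rank semisimple subgroup ($\A_5\times\Aone$ of codimension $40$, $\A_2^{\times 3}$ of codimension $54$). Indeed the table you are trying to prove lists $\Ffour$ as the winner for $\Esix$ with $r_{\Esix}=26$, so your proof as written produces an incorrect entry in that column. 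More broadly, the dismissive phrase ``a direct dimension count shows that these non-rank-preserving candidates are dominated'' hides exactly the delicate verification: for the classical groups one must in fact bound the dimension of every simple $H$ with an irreducible $n$-dimensional orthogonal, symplectic, or linear representation against $\frac{n^2-3n}{2}+1$, and for the exceptional groups one must examine Dynkin's list case by case (the paper devotes its sub-steps (c) and (d1)--(d3) to precisely this). You should either carry out that enumeration or explicitly cite the relevant tables (e.g.\ Gorbatsevich--Onishchik--Vinberg, Theorems 3.1--3.4), and in any case you must add $\Ffour\subset\Esix$ to your list of exceptional non-rank-preserving winners.
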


\begin{proof}
The classification of maximal subalgebras $\hh$ of a simple Lie algebra $\gg$ is due to \name{Dynkin}, see  \cite{Dy1952Maximal-subgroups-,Dy1952Semisimple-subalge}. His results are reformulated in  \cite[chap.6, \S 1 and \S 3]{GoOnVi1994Structure-of-Lie-g}. 

(a) If $\hh$ is maximal reductive of maximal rank  $\ell:=\rk \gg$, then the classification is given in \cite[Corollary to Theorem~1.2, p.186]{GoOnVi1994Structure-of-Lie-g} (the results are listed in Tables 5 and 6, pp.~234--235). From these tables one gets the following candidates for reductive subalgebras of minimal codimension.\footnote{One has to be careful since the tables contain several errors.}
\begin{center}
{\footnotesize
\begin{tabular}{ c || c | c | c | c | c | c | c | c | c | c } 
$\gg$ &  $\An, n\geq 1$ &$\Bn, n\geq 2$ &$\Cn, n\geq 3$ & $\Dn, n\geq 4$ & $\Esix $ & $\Eseven$ & $\Eeight$ & $\Ffour$ & $\Gtwo$ \\ 
\hline\hline
$\hh$ & $ \A_{n-1}\times \Tone$ & $\Dn$ & $ \mathsf{C}_{n-1}\times \Aone$ & 
$ \mathsf{D}_{n-1}\times\Tone$ & $\A_5\times\Aone$ & $\Esix\times \Tone$ & $\Eseven\times\Aone$ & $\mathsf{B}_{4}$ & $\mathsf{A}_{2}$ \\ \hline
$\codim$ & $2n$ & $2n$ & $4(n-1)$ &  $ 4(n-1)$ & $40$ & $54$ & $112$ & $16$ & $6$ 
\end{tabular}
}
\end{center}

(b) If $\hh \subset \gg$ is a maximal subalgebra, then it is either semisimple or parabolic, \cite[Theorem~1.8]{GoOnVi1994Structure-of-Lie-g}. Since the Levi parts of the parabolic subalgebras have maximal rank the second case does not produce any new candidate. It is therefore sufficient to look at the maximal semisimple subalgebras. 

For the exceptional groups $G$ the classification is given in \cite[Theorem~3.4]{GoOnVi1994Structure-of-Lie-g}, and one finds one new case, namely $\Ffour\subset \Esix$ which has codimension 26. Thus the claim is proved for the exceptional groups. 
\ps
(c) From now on $G$ is a classical group and we can use \cite[Theorems~3.1--3.3]{GoOnVi1994Structure-of-Lie-g}. From the first two theorems one finds the new candidates $\B_{n-1} \subset \Dn$ of codimension $2n-1$, including $\Btwo \subset \Athree$ of codimension $5$. This gives the following table.

\begin{center}
{
\begin{tabular}{ c || c | c | c | c  } 
$G$ &  $\SL_4 $ & $\SL_n, n\neq 4$ & $\SO_n, n\geq 4$ & $\Sp_{n}, n=2m\geq 4$ \\ 
\hline\hline
$H$ & $\Sp_4$ & $\GL_{n-1}$ & $\SO_{n-1}$ & $\Sp_{n-2}\times\SLtwo$\\
\hline
$\dim H$ & $10$ & $(n-1)^2$ & $\frac{n^2-3n}{2}+1$ & $\frac{n^2-3n}{2}+4$ \\
\hline
$c_G:=\codim_G H$ & $5$ & $2n-2$ & $n-1$ & $2n-4$\\
\end{tabular}
}
\end{center}

Our claim is that $c_G = r_G$, i.e. that we have found the minimal codimensions of reductive subgroups of the classical groups. In order to prove this we have to show that \cite[Theorems~3.3]{GoOnVi1994Structure-of-Lie-g} does not give any reductive subgroup of smaller codimension:
\ps
{\it If $H \subsetneqq G$ is an irreducible simple subgroup of a classical group $G = \SL_n, \SO_n, \Sp_{n}$, then $\codim_G H\geq c_G$} (irreducible means that the representation of $H \into \GL_n$ is irreducible).
\ps
Now the table above implies the following. Assume $n\geq 4$. If $H \subsetneqq G \subset \GL_n$ is an irreducible subgroup of a classical group $G = \SL_n, \SO_n, \Sp_n$ and $\dim H < d(n):=\frac{n^2-3n}{2}+1$, then $\codim_G H > c_G$, and so $H$ can be omitted.

The following table contains the minimal dimensions of irreducible representations of the simply connected exceptional groups. They have been calculated using \cite[Exercise~24.9]{FuHa1991Representation-the} which says that one has only to consider the fundamental representations.
\begin{center}
{
\begin{tabular}{ c || c | c | c | c | c } 
$H$ &  $\Esix $ & $\Eseven$ & $\Eeight$ & $\Ffour$ & $\Gtwo$ \\ 
\hline\hline
$\dim H$ & $78$ & $133$ & $248$ & $52$ & $14$ \\
\hline
$\lambda$ &  $\omega_1,\omega_6$ & $\omega_7$ & $\omega_8$ & $\omega_4$ & $\omega_1$ \\ 
\hline
$n=\dim V_\lambda$ & $27$ & $56$ & $248$ & $26$ & $7$ 
\end{tabular}
}
\end{center}
In all cases we have $\dim H < d(n) = \frac{n^2-3n}{2}+1$, so that $\codim_G H > c_G$   for an exceptional group $H$.
\ps
(d) It remains to consider the simple subgroups $H \subsetneqq G$  of classical type where $G = \SL_n, \SO_n, \Sp_n$.
\ps
(d$_1$) The irreducible representations $H \to \SL_n$  of minimal dimension of a group $H$ of classical type are given by the following table. It is obtained by using again the fact that one has only to consider the fundamental representations, see \cite[Exercise~24.9]{FuHa1991Representation-the}.
\begin{center}
{
\begin{tabular}{ c || c | c | c | c | c | c } 
$H$ &  $\A_\ell$ & $\Btwo$ & $\B_\ell, \ell\geq 3$ & $\C_\ell, \ell\geq 3$ & $\Dfour$ & $\D_\ell, \ell\geq 5$ \\ 
\hline\hline
$\dim H$ & $\ell(\ell+2)$ & $10$ & $\ell(2\ell+1)$ & $\ell(2\ell+1)$ & $28$  & $\ell(2\ell-1)$ \\
\hline
$\lambda$ & $\omega_1,\omega_\ell$ & $\omega_2$ & $\omega_1$ & $\omega_1$ & 
$\omega_1,\omega_3,\omega_4$ & $\omega_1$  \\ 
\hline
$n=\dim V_\lambda$ & $\ell+1$ & $4$ & $2\ell+1$ & $2\ell$ & $8$ & $2\ell$\\ 
\end{tabular}
}
\end{center}
They correspond to the standard representations $\SL_n\subset \GL_n$, $\SO_n\subset\GL_n$ and $\Sp_n\subset \GL_n$, except for $\Btwo = \Ctwo$ where it is $\Sp_4 \subset \GL_4$. If $H$ is not of type $\A$ we have  $\codim_{\SL_n} H > c_{\SL_n} = 2n-2$ except for type $\Btwo$ where $\codim_{\SL_4} \Sp_4 = 5 = c_{\SL_4}$. Moreover, if $\SL_k \to \SL_n$ is not an isomorphism, then $k<n$ and $\codim_{\SL_n} \SL_k > c_{\SL_n}$. 
\ps
(d$_2$) Next we consider irreducible orthogonal representations $\rho\colon H \to \SO_n$ for $H$ of classical type where $n\geq 5$. 
If $H$ is a candidate not already in (a), then $\rk H<\rk\SO_n$, and one calculates straight forwardly that $\codim_{\SO_n}H>c_{\SO_n} = n-1$.
\ps
(d$_3$) Finally, we consider irreducible symplectic representations $\rho\colon H \to \Sp_{2m}$ for $H$ of classical type where $m\geq 2$. 
As above, if $H$ is a candidate not already in (a), then $\rk H<\rk\Sp_{2m}=m$. Again, an easy calculation shows that  $\codim_{\Sp_{2m}}H>c_{\Sp_{2m}} = 4m-4$.
\end{proof}

\ps
%%%%%%%%%%%%%%%%%%%%%%%%%%%%%%%%%%%%
\subsection{The invariant \texorpdfstring{$d_G$}{d G}}\label{d-G.subsec}
In this section, we compute the invariant 
\[
d_G=\min\{\dim O \mid \text{$O$ non-minimal quasi-affine non-trivial orbit}\}.
\]
Formula \eqref{ineq.eqn} shows that $r_G\geq d_G \geq m_G$ and that $d_G>m_G$ in case $r_G>m_G$. Comparing the values of $r_G$ and $m_G$ in Table~\ref{tab4} of Lemma~\ref{rG.lem} we get the following result.

\begin{lem}\label{rG=mG.lem}
Let $G$ be simple and simply connected. If $r_G = d_G =m_G$, then we are in one of the following cases.
\be
\item $G$ is of type $\Aone$ and $d_G = 2$;
\item $G$ is of type $\Bn$ and $d_G= 2n$;
\item $G$ is of type $\Dn$, $n\geq 4$, and $d_G = 2n-1$;
\item $G$ is of type $\Ffour$ and $d_G = 16$;
\item $G$ is of type $\Gtwo$ and $d_G = 6$.
\ee
In all other cases we have $r_G\geq d_G>m_G$.
\end{lem}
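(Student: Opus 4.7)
The plan is to exploit the chain of inequalities \eqref{ineq.eqn}, namely $r_G\geq d_G\geq m_G$, together with the already-established implication $d_G>m_G$ whenever $r_G>m_G$. Combining these, the triple equality $r_G=d_G=m_G$ holds if and only if $r_G=m_G$, because the sandwich forces $d_G=m_G$ in that case, while any strict inequality $r_G>m_G$ forces $d_G>m_G$. Thus the lemma reduces to identifying the simply connected simple groups $G$ for which $r_G=m_G$, and to recording the common value.

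To do this, I would go through the comparison row by row in Table~\ref{tab4} of Lemma~\ref{rG.lem} (which already tabulates both $r_G$ and $m_G$):
\begin{itemize}
\item Type $\mathsf{A}_n$: $r_G=2n$ and $m_G=n+1$ (with the adjustment $r_G=5$, $m_G=4$ in the case $n=3$), so $r_G=m_G$ precisely when $n=1$, yielding $d_G=2$.
\item Type $\Bn$: $r_G=m_G=2n$ for all $n\geq 2$, giving $d_G=2n$.
\item Type $\Cn$, $n\geq 3$: $r_G=4(n-1)>2n=m_G$, so $r_G>m_G$ holds strictly.
\item Type $\Dn$, $n\geq 4$: $r_G=m_G=2n-1$, giving $d_G=2n-1$.
\item Types $\Esix,\Eseven,\Eeight$: in each case $r_G$ is strictly larger than $m_G$ ($26>17$, $54>28$, $112>58$).
\item Type $\Ffour$: $r_G=m_G=16$, giving $d_G=16$.
\item Type $\Gtwo$: $r_G=m_G=6$, giving $d_G=6$.
\end{itemize}
Collecting the cases where $r_G=m_G$ yields precisely the list (1)--(5) in the statement, with the claimed values of $d_G$. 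In every other case one has $r_G>m_G$, and hence $r_G\geq d_G>m_G$ by \eqref{ineq.eqn}.

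There is no real obstacle here: the heavy lifting has already been carried out in Lemmas~\ref{min-parabolics.lem} and~\ref{rG.lem}, which compute $m_G$ and $r_G$ respectively. The present lemma is a bookkeeping consequence of those two tables combined with the elementary implication following \eqref{ineq.eqn}. The only mild care required is in the small-rank coincidences (e.g. distinguishing $\mathsf{A}_3$ from the generic $\An$ formula and checking that $\Cn$ with $n=3$ indeed gives strict inequality), but both are immediate from the tabulated values.
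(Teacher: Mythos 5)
Your proof is correct and follows the same route as the paper: the paper's proof is precisely to read $r_G$ and $m_G$ off Table~\ref{tab4} and apply the inequalities in \eqref{ineq.eqn}, which you spell out in more detail than the paper does.
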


\begin{prop}\label{dG.prop}
The following table lists the invariants $r_G$, $d_G$ and $m_G$ for the simply connected simple algebraic groups $G$.
\begin{center}
\begin{tabular}{ c || c | c | c | c | c | c | c | c | c | c | c | c } 
$G$ &  $\Aone$ & $\Atwo$ & $\mathsf{A}_{3}$ & $\An, n\geq 4$ &$\Bn$ &$\Cn, n\geq 3$ & $\Dn, n\geq 4$ & 
$\Esix $ & $\Eseven$ & $\Eeight$ & $\Ffour$ & $\Gtwo$ \\ 
\hline\hline
$r_G$ & $2$ & $4$ & $5$ & $2n$ & $2n$ & $4(n-1)$ &  $ 2n-1$ & $26$ & $54$ & $112$ & $16$ & $6$ \\ \hline
$d_G$ & $2$ & $4$ & $5$ & $2n$ & $2n$ & $4(n-1)$ &  $ 2n-1$ & $26$ & $45$ & $86$ & $16$ & $6$ \\ \hline
$m_G$ & $2$ & $3$ & $4$ & $n+1$ & $2n$ & $2n$ &  $ 2n-1$ & $17$ & $28$ & $58$ & $16$ & $6$ 
\end{tabular}
\pmed
\captionof{table}{\label{tab5}The invariants $r_G$, $d_G$ and $m_G$ for the simple groups}
\end{center}
\end{prop}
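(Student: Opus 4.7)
The strategy is to sandwich $d_G$ between $m_G$ (computed in Lemma~\ref{min-parabolics.lem}) and $r_G$ (computed in Lemma~\ref{rG.lem}) and then refine when necessary. The upper bound $d_G \leq r_G$ holds because any proper reductive subgroup $H\subsetneqq G$ realizing $\codim_G H = r_G$ yields a closed, hence affine, hence quasi-affine orbit $G/H$, and this orbit is never minimal since the coordinate ring of a minimal orbit is non-trivially graded (Lemma~\ref{hw-orbits.lem}(\ref{hw-grading})). The lower bound $d_G \geq m_G$ is immediate from Lemma~\ref{Omin.lem}. Lemma~\ref{rG=mG.lem} shows that for $\Aone$, $\Bn$, $\Dn$, $\Ffour$ and $\Gtwo$ one already has $r_G = m_G$, so the sandwich at once pins down $d_G$ and settles those rows of Table~\ref{tab5}.

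For $\Atwo$, $\mathsf{A}_{3}$, $\An$ ($n\geq 4$), $\Cn$ ($n\geq 3$) and $\Esix$, where the table claims $d_G = r_G > m_G$, the task is to rule out non-minimal quasi-affine orbits of dimension strictly between $m_G$ and $r_G$. The plan is as follows. An orbit $O\simeq G/H$ of dimension $<r_G$ has $H$ non-reductive, and since $G/H$ is quasi-affine $H$ is observable in $G$. By Borel--Tits, the unipotent radical of $H^{\circ}$ lies inside the unipotent radical of a parabolic $P\supset H^{\circ}$, so $H^{\circ} \subseteq P$; combined with the parabolic combinatorics recorded in Section~\ref{parabolic.subsec} this gives a finite list of candidates for $H$. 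Inspecting orbits in the relevant fundamental representations --- namely $V_{\omega_1}$ and $V_{\omega_n}$ for type $\A$, $V_{\omega_1}$ for type $\C$ and $V_{\omega_1}, V_{\omega_6}$ for $\Esix$ --- one verifies that every surviving candidate with codimension $<r_G$ is conjugate to the stabilizer of a highest weight vector and therefore yields a minimal orbit.

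The main obstacle is the remaining pair $\Eseven$ and $\Eeight$, where $d_G<r_G$ and both directions require separate work. For the upper bounds I would exhibit concrete non-minimal quasi-affine orbits of the claimed dimensions: for $\Eseven$ acting on the $56$-dimensional representation $V_{\omega_7}$, the orbit of rank $2$ in the classical stratification of $V_{\omega_7}$ has dimension $45$ and is non-minimal by Proposition~\ref{min-orbits.prop}(\ref{min-orbits.prop1}); for $\Eeight$ one produces an analogous non-minimal quasi-affine orbit of dimension $86$ from a suitable observable non-reductive subgroup of dimension $162$. For the matching lower bounds one must exclude all smaller non-minimal quasi-affine orbits. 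This is the hard part: it rests on the Dynkin--Sukhanov classification of observable subgroups of small codimension in the exceptional groups, refined by the explicit invariant-theoretic computations of \name{Yakimova} acknowledged in the introduction, and represents the bulk of the work of the proposition.
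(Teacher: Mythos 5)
Your macro-strategy is the right one: sandwich $d_G$ between $m_G$ and $r_G$, use Lemma~\ref{rG=mG.lem} to clinch the rows where $r_G = m_G$, and then work harder on $\An$ ($n\geq 2$), $\Cn$, $\Esix$, $\Eseven$, $\Eeight$. But the mechanism you propose for the remaining cases is too weak. Borel--Tits only places $H^\circ$ (indeed $H_u$) inside \emph{some} parabolic and its unipotent radical, which gives you $\dim G/H \geq \dim G/P$ for some parabolic $P$ --- but this is just $\geq m_G - 1$, far short of what you need. The crucial refinement that drives the paper's proof is Sukhanov's observability theorem (Lemma~\ref{sukhanov.lem}): for a quasi-affine orbit $O = Gx$, one can choose $\lambda$ so that $\gg_x \subseteq \pp_{(\lambda)} = \ll_{(\lambda)} \oplus \uu_\lambda$ with $\nn_x \subseteq \uu_\lambda$, where $\pp_{(\lambda)}$ is the stabilizer Lie algebra of a highest weight vector, not merely a parabolic. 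Non-minimality of $O$ then forces the inclusion to be \emph{strict}, yielding $\dim O \geq \dim\uu_\lambda + 2$. This $+2$ is what eliminates all but finitely many $\lambda$ (quickly reducing to fundamental weights), after which one splits into the two cases $\ll_x = \ll_{(\omega_i)}$ (then $\nn_x$ omits the $L$-submodule $V(\alpha_i) \subset \uu_{\omega_i}$, giving codimension $\geq \dim V(\alpha_i)$) and $\ll_x \subsetneq \ll_{(\omega_i)}$ (then one applies the bound $r_H$ for $H = L_{(\omega_i)}$ from the very table being proved). Your ``inspecting orbits in the relevant fundamental representations'' gestures at none of this and cannot, as stated, exclude intermediate-dimensional non-minimal orbits.

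The $\Eseven$ and $\Eeight$ rows need this same machinery for the lower bounds, which you explicitly defer; but the upper bounds also deserve a concrete construction. Your suggestion of the rank-$2$ orbit in $V_{\omega_7}$ for $\Eseven$ is plausible (it does have dimension $45$ and is non-minimal since its closure contains the $28$-dimensional highest weight orbit), and this is a legitimate alternative to the paper's construction, which instead takes $\hh = \so_{11}\oplus\uu_{\omega_1}\subset\Lie\Eseven$ and invokes Remark~\ref{quasiaffine_orbits.rem} (a character-free closed subgroup is a point stabilizer in some $G$-module). For $\Eeight$ you give no construction at all, only a dimension count; the paper realizes the $86$-dimensional orbit from $\pp_{(\omega_7+\omega_8)} = \Esix \oplus \uu_{\omega_7+\omega_8}$. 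In short: the sandwich is right, the $r_G = m_G$ rows are right, but the core of the proposition --- the Sukhanov estimate and the ensuing case analysis for $\An$, $\Cn$, $\Esix$, $\Eseven$, $\Eeight$ --- is not supplied, and the Borel--Tits substitute you propose is not strong enough to produce it.
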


The first and last row of Table~\ref{tab5} are the rows from Table~\ref{tab3} and Table~\ref{tab4}. 
We have seen above that for $r_G\leq m_G+1$ we have $d_G = r_G$, because $r_G>m_G$ implies that $d_G > m_G$. Thus the only cases to be considered are $\An$ for $n\geq 4$, $\Cn$ for $n\geq 3$ and $\Esix$, $\Eseven$, $\Eeight$. 

\ps
We have seen in Section~\ref{P-lambda.subsec} that 
for a dominant weight $\lambda\in\Lambda_G$ the corresponding parabolic subgroup $P_\lambda \subset G$ and its Lie algebra $\pp_\lambda$ have  well-defined Levi decompositions $P_\lambda = L_\lambda \ltimes U_\lambda$ where $T \subseteq L_\lambda$ and $\pp_\lambda:=\Lie P_\lambda = \ll_\lambda \oplus \uu_\lambda$. In addition, we define the closed subgroup $P_{(\lambda)}:=\ker(\lambda\colon P_\lambda \to \Kst)$ which has the Levi decomposition $P_{(\lambda)} = L_{(\lambda)} \ltimes U_\lambda$, $L_{(\lambda)}:=\ker(\lambda\colon L_\lambda \to \Kst)$, and its Lie algebra 
$$
\pp_{(\lambda)}:=\Lie P_{(\lambda)} = \ll_{(\lambda)} \oplus \uu_\lambda, \ \ 
\ll_{(\lambda)} := \Lie L_{(\lambda)} = \ker (d\lambda\colon \ll_\lambda \to \K).
$$
By construction, the semisimple Lie algebra $[\ll_\lambda,\ll_\lambda]$ is contained in $\ll_{(\lambda)}$, and they are equal in case $\lambda$ is a fundamental weight $\omega_i$.
Note also that $P_{(\lambda)} = G_v$ for $v \in V_\lambda^U$, $v\neq 0$, see Section~\ref{ass-parabolic.subsec}.
For an affine $G$-variety $X$ and $x \in X$ we set $\gg_x:=\Lie G_x$ and denote by $\nn_x \subseteq \gg_x$ the nilradical of $\gg_x$. 
\ps
The method for proving Proposition~\ref{dG.prop} was communicated to us by \name{Oksana Yakimova} who also worked out the result for the symplectic groups and for $\Esix$. It is based on the following lemma which is a translation of a fundamental result of \name{Sukhanov}, see \cite[Theorem~1]{Su1990Description-of-the}.

\begin{lem}\label{sukhanov.lem}
Let $O$ be a quasi-affine $G$-orbit. Then there exist $\lambda\in\Lambda_{G}$ and $x \in O$ such that $\gg_{x} \subseteq \pp_{(\lambda)}$ and $\uu_{x}\subseteq \uu_{\lambda}$. 
In particular, we get an embedding $\ll_x:=\gg_{x}/\nn_{x} \into \ll_{(\lambda)}=\pp_{(\lambda)}/\uu_{\lambda}$. 
If $O$ is not a minimal orbit, then $\dim O \geq \dim \uu_\lambda + 2$.
\end{lem}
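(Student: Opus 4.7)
The plan is to deduce the lemma from \name{Sukhanov}'s description of observable subgroups \cite[Theorem~1]{Su1990Description-of-the} and then translate its output into the parabolic language of Section~\ref{P-lambda.subsec}.

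First, since $O=Gx$ is quasi-affine, the stabilizer $H:=G_x$ is by definition observable in $G$. Sukhanov's theorem then furnishes a parabolic subgroup $Q\subseteq G$ (which, after conjugating $x$, may be assumed to contain $B$) and a dominant character $\chi\colon Q\to\Kst$ such that $H\subseteq\ker\chi$, $R_u(H)\subseteq R_u(Q)$, and the projection of $H$ onto the Levi $L$ of $Q$ has reductive image contained in $\ker(\chi|_L)$. Being a dominant character of a parabolic, $\chi$ is a non-negative integer combination of those $\omega_i$ whose simple roots are absent from the Dynkin diagram of $L$; setting $\lambda:=\chi\in\Lambda_G$ and invoking Lemma~\ref{P-lambda.lem}, I would obtain $Q=P_\lambda$ and $\ker\chi=P_{(\lambda)}$, so that $\gg_x\subseteq\pp_{(\lambda)}$ and $\uu_x=\Lie R_u(H)\subseteq\Lie R_u(P_\lambda)=\uu_\lambda$, as required.

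To extract the injection $\ll_x\hookrightarrow\ll_{(\lambda)}$, I would consider the composition $\gg_x\hookrightarrow\pp_{(\lambda)}\twoheadrightarrow\ll_{(\lambda)}$. Its kernel $\gg_x\cap\uu_\lambda$ is a nilpotent ideal of $\gg_x$ (ideal because $\uu_\lambda$ is an ideal of $\pp_{(\lambda)}\supseteq\gg_x$), hence is contained in $\nn_x$. The reverse inclusion $\nn_x\subseteq\uu_\lambda$ should follow from Sukhanov's reductivity of the image of $H$ in $L_{(\lambda)}$: since a reductive Lie algebra has no nonzero nilpotent ideal, the image of $\nn_x$ in $\ll_{(\lambda)}$ must vanish. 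This gives $\gg_x\cap\uu_\lambda=\nn_x$ and the claimed embedding.

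For the dimension estimate, I note that $\dim G/P_{(\lambda)}=\dim\uu_\lambda+1=\dim\Olam$ by Lemma~\ref{hw-orbits.lem}\eqref{parabolic}. If the equality $\gg_x=\pp_{(\lambda)}$ held, then $G_x$ would contain $P_{(\lambda)}^\circ$ as a finite-index subgroup, so the natural map $O\to G/P_{(\lambda)}=O_\lambda$ would be a finite $G$-equivariant covering, forcing $O$ to be minimal by Proposition~\ref{min-orbits.prop}\eqref{min-orbits.prop3} and contradicting the hypothesis. Hence $\gg_x\subsetneqq\pp_{(\lambda)}$, so $\dim G_x\leq\dim P_{(\lambda)}-1$ and $\dim O\geq\dim\uu_\lambda+2$. (The degenerate case $\lambda=0$, corresponding to $O$ closed and affine, has to be treated separately: there $\dim\uu_0+2=2$, and the bound $\dim O\geq 2$ follows from the absence of one-dimensional quasi-affine $G$-orbits recalled in the proof of Lemma~\ref{min-orbits.lem}.) The main obstacle I expect is faithfully matching Sukhanov's abstract output with the explicit objects $P_\lambda$ and $P_{(\lambda)}$: in particular checking that his character may be chosen dominant (hence in $\Lambda_G$) and that his ``reductive image'' in the Levi is precisely $\gg_x/\nn_x$ and not some larger subgroup.
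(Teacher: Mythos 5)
Your proof follows the same route as the paper's: invoke Sukhanov's theorem to obtain the subparabolic embedding $G_x\into Q = G_v$, translate this into the notation $\pp_{(\lambda)}$, $\uu_{\lambda}$ of Section~\ref{P-lambda.subsec}, and estimate $\dim O$ from $\codim_{\gg}\pp_{(\lambda)}=\dim\uu_\lambda+1$. You are in fact more careful than the published argument, which only asserts the strict inclusion $\gg_x\subsetneqq\pp_{(\lambda)}$ without comment; your derivation of minimality from $\gg_x=\pp_{(\lambda)}$ via a finite $G$-equivariant covering $O\to O_\lambda$ and Proposition~\ref{min-orbits.prop}(\ref{min-orbits.prop3}), together with the separate treatment of $\lambda=0$, fills this gap correctly.
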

\begin{proof} In \name{Sukhanov}'s paper a subgroup $L\subset G$ is called {\it observable} if $G/L$ is quasi-affine. Now \cite[Theorem~1]{Su1990Description-of-the} implies that such an $L$ is {\it subparabolic} which means that there is an embedding $L \into Q$ such that $L_u \into Q_u$ where $Q$ is the isotropy group of a highest weight vector. Translating this into the language of Lie algebras we get the first part of the lemma.
\ps
For the second part, we note that $\gg_x\subsetneq \pp_{(\lambda)}$, so that
\[
\dim O=\codim_{\gg}\gg_x \geq \codim_{\gg}\pp_{(\lambda)}+ 1=\dim \Olam+1=\codim_{\gg}\pp_{\lambda}+2=\dim\uu_{\lambda}+2,
\] 
and the claim follows.
\end{proof}

The strategy of the proof of Proposition~\ref{dG.prop} is the following. Let $O = G x \subset X$ be a non-minimal nontrivial orbit, and
consider an embedding $\gg_x \into \pp_{(\lambda)}$ given by the lemma above.
\ps
(1) Since $O$ is not minimal, we have $\dim O \geq \dim\uu_{\lambda}+2$. Thus one has only to consider those $\pp_{\lambda}$ 
with $\dim\uu_{\lambda} + 2 < d_{G}$. 
 For this one first calculates $\dim \uu_{\omega_i}$, $i=1,\ldots,n$, and then uses that $\dim \uu_\lambda\geq \dim\uu_{\omega_i}$ for all $i$ such that $\omega_i$ appears in $\lambda$, see Lemma~\ref{P-lambda.lem}. 

{\it It turns out that in all cases the remaining $\lambda$ are fundamental weights, and we are left to study some of the embeddings $\gg_x \into \pp_{(\omega_i)}$.}
\ps
(2) Since $O$ is not minimal, the embedding $\gg_x \into \pp_{(\omega_i)}$ is strict, hence one of the two inclusions $\nn_{x}\subseteq \uu_{\omega_i}$ or $\ll_{x} \subseteq \ll_{(\omega_i)}$ has to be strict. 
\ps
(2a) If $\ll_{x} = \ll_{(\omega_i)}$, then $\nn_{x}$ must be a strict $\ll_{(\omega_i)}$-submodule of $\uu_{\omega_i}$. As we have seen in Remark~\ref{dynkin.rem}, $\nn_x$ cannot contain the simple module $V(\alpha_i)$, hence the codimension of $\nn_x$ 
in $\uu_{\omega_i}$ is at least $\dim V(\alpha_i)$.
\ps
(2b) If $\ll_{x}\subsetneqq \ll_{(\omega_i)}$, then $L_{x}^\circ\subsetneqq L_{(\omega_i)}^\circ$ is a proper reductive subgroup of the semisimple group $L_{(\omega_i)}$ , and the  codimension  can be estimated using the values of $r_{H}$ given in our tables.

\begin{rem}\label{quasiaffine_orbits.rem}
In the cases of $\Eseven$ and $\Eeight$ we will have to construct  quasi-affine orbits of a given dimension. For this we will use the following result. 
\ps
{\it Let $H \subset G$ be a closed subgroup. If the character group $X(H)$ is trivial, then there is a $G$-module $V$ and a $v \in V$ such that $G_v = H$.} 
\newline
In fact, there is a $G$-module $V$ and a line $L=\K v \subset V$ such that $H = \Norm_G(L)$ \cite[Chapter~II, Theorem~5.1]{Bo1991Linear-algebraic-g}. Since $H$ has no characters, it acts trivially on $L$ and so $H = G_v$.
\end{rem}

\ps
%%%%%%%%%%%%%%%%%%%%%%%%%%%%%%%%%%%
\subsubsection{The type \texorpdfstring{$\A_{n}$, $n\geq 4$}{An}}

\DynkinAn

Suppose that $G = \SL_{n+1}$ and $\gg = \sL_{n+1}$ with $n\geq 4$ and let $O$ be a non-minimal and non-trivial quasi-affine orbit. We have to show that $\dim O \geq 2n$. We have seen above that it suffices to consider those embeddings $\gg_{x}\subset\pp_{(\lambda)}$ where $\dim\uu_{\lambda} < 2n-2$. We have
$$
\pp_{(\omega_i)} = \sL_i \oplus \sL_{n+1-i}  \oplus \uu_{\omega_i},
$$
and so $\dim\uu_{\omega_{i}}=i(n+1-i)$ which is greater or equal than $2n-2$ for $i\neq 1,n$. Moreover, 
we have $\pp_{\omega_1+\omega_n} = (\sL_{n-1}\oplus \K^2) \oplus \uu_{\omega_1+\omega_n}$ and hence $\pp_{(\omega_1+\omega_2)}=(\sL_{n-1}\oplus \K) \oplus \uu_{\omega_1+\omega_n}=\gl_{n-1}\oplus \uu_{\omega_1+\omega_n}$,
which implies that  $\dim \uu_{\omega_1+\omega_n}=2n-1$. Thus, by (\ref{U.eqn}), the only cases to consider are 
$\lambda=\omega_{1}$ and $\lambda=\omega_{n}$. 
\ps
If $\gg_{x}\subsetneqq \pp_{(\omega_{1})} = \sL_{n}\oplus (\K^{n})^{\perp}$, then
 we either have $\nn_{x} = (0)$ or $\ll_{x}\subsetneqq \sL_{n}$. In the first case we get $\dim O= \codim \gg_{x}= \codim \gg_{\omega_{1}}+n = 2n+1$. In the second case, $\ll$ is a reductive Lie-subalgebra of $\sL_{n}$ and thus has codimension at least $r_{\A_{n-1}} = 2(n-1)$. Hence, $\dim O= \codim \gg_{x}\geq \codim\gg_{\omega_{1}}+2(n-1)=3n-1 >2n$.
\ps
The other case $\lambda = \omega_{n}$ is similar.

\begin{rem} 
We have just shown that for $n\neq 3$ any quasi-affine $\SL_{n+1}$-orbit of dimension $<2n$ is minimal. Furthermore, we have $\dim O_{\lambda}=\dim\uu_{\lambda}+1$ by (\ref{G-P.eqn}) and (\ref{P-lambda.subsec}). 
In particular, since $\dim O_{\omega_i}= i(n+1-i)+1$ (see above), we get
\[
\dim O_{\omega_1}=\dim O_{\omega_n}=n+1,\quad \dim O_{\omega_2}=\dim O_{\omega_{n-1}}=2n-1,
\]
and all other minimal orbits have dimension $\geq2n$. 

Note that $r_{\An} = 2n$  appears as dimension of the affine orbit $ \SL_{n+1}/\GL_{n}$ as well as of the minimal orbit $O_{\omega_1+\omega_2}$ (see above).
\end{rem}

\ps
%%%%%%%%%%%%%%%%%%%%%%%%%%%%%%%%%%%
\subsubsection{The type \texorpdfstring{$\Cn$, $n\geq 3$}{Cn}}
\DynkinCn

Suppose that $G = \Sp_{2n}$ and $\gg = \sP_{2n}$ where $n\geq 3$,
and let  $O$ be a non-minimal and non-trivial quasi-affine orbit. We have to show that $\dim O \geq 4n-4$. We have seen above that it suffices to consider those embeddings $\gg_{x}\subset\gg_{\lambda}$ where $\dim\nn_{\lambda} < 4n-6$. 

For the fundamental weights we get $\pp_{(\omega_j)} = \sL_{j}\oplus\sP_{2n-2j} \oplus \uu_{\omega_j}$.
An easy calculation shows that 
$$
\dim\uu_{\omega_{j}}=2j n + \frac{j(1-3j)}{2}.
$$ 
Thus $\dim\uu_{\omega_{j}} +2 \geq 4n-4$ except for $j=1$, and in this case we have $\dim\uu_{\omega_{1}}=2n-1$ and $\codim\pp_{(\omega_1)} = 2n$.
Thus, by by (\ref{U.eqn}), it suffices to look at the embedding $\gg_{x}\subset \pp_{(\omega_{1})} = \sP_{2n-2}\oplus \uu_{\omega_{1}}$. As a representation of $\sP_{2n-2}$ we get $\uu_{\omega_{1}}=V(\alpha_1) \oplus \K$, $V(\alpha_1) \simeq \K^{2n-2}$. 

Therefore,
if $\ll_{x} = \sP_{2n-2}$, then the codimension of $\gg_{x}$ in $\gg_{\omega_{1}}$ is $\geq \dim V(\alpha_1) = 2n-2$, and so $\dim O = \codim\gg_{x} \geq \codim \pp_{(\omega_{1})}+2(n-1) = 4n-2 > 4n-4$. 

If $\ll_{x}\subsetneqq \sP_{2n-2}$, then the codimension is at least $r_{\C_{n-1}} = 4n-8$, and so $\dim O \geq 2n+4n-8 = 6n-8>4n-4$.

\begin{rem} 
We have just shown above that any quasi-affine orbit of dimension $<4n-4$ is minimal. Furthermore, we have $\dim O_{\lambda}=\dim\uu_{\lambda}+1$ by (\ref{G-P.eqn}) and (\ref{P-lambda.subsec}). 
In particular, since $\dim O_{\omega_i}= 2j n+\frac{j(1-3j)}{2}$ (see above), we get $\dim O_{\omega_1}=2n$, and all other minimal orbits have dimension $\geq 4n-4$. 

Note that $r_{\Cn}=4n-4$ appears as dimension of an affine orbit as well as of a the minimal orbit $O_{\omega_2}$.
\end{rem}

\ps
%%%%%%%%%%%%%%%%%%%%%%%%%%%%%%%%%%%
\subsubsection{The type \texorpdfstring{$\Esix$}{E6}}
\DynkinEsix

Let $G$ be simply connected of type $\Esix$ and $\gg = \Lie G$, and  
let $O$ be a non-minimal and nontrivial quasi-affine orbit. We have to show that $\dim O \geq 26$. We have seen above that it suffices to consider those embeddings $\gg_{x}\into\pp_{(\lambda)}$ where $\dim\uu_{\lambda} < 24$. 
For the fundamental weights $\lambda$ we find
\begin{align*}
\pp_{(\omega_{1})}  &=  \so_{10}\oplus \uu_{\omega_{1}}, \ \dim \uu_{\omega_{1}}=16 = \dim\uu_{\omega_{6}},\\
\pp_{(\omega_{2})} &= \sL_{6}\oplus \uu_{\omega_{2}}, \ \dim \uu_{\omega_{2}}=21,\\
\pp_{(\omega_{3})} &= (\sL_{2}\oplus\sL_{5})\oplus \uu_{\omega_{3}}, \ \dim \uu_{\omega_{3}}=25 = \dim\uu_{\omega_{5}},\\
\pp_{(\omega_{4})} &= (\sL_{3}\oplus\sL_{2}\oplus\sL_{3})\oplus \uu_{\omega_{4}}, \ \dim \uu_{\omega_{4}}=29.
\end{align*}
Since $\dim \uu_{\omega_{1}+\omega_{2}}=\dim \uu_{\omega_{2}+\omega_{6}}=\frac{1}{2}(\dim \Esix-\dim \A_{4}-2)=26$ and $\dim \uu_{\omega_{1}+\omega_{6}}=\frac{1}{2}(\dim \Esix-\dim \Dfour-2)=24$, we have only to consider the cases  $\lambda\in\{\omega_{1},\omega_{2}, \omega_{6}\}$.
\ps
(1) We have
$\pp_{(\omega_{1})} = \so_{10}\oplus\uu_{\omega_{1}}$ and $\uu_{\omega_{1}}=V(\alpha_1)$ is the irreducible representation $V_{\omega_{4}}$ of $\so_{10}$ of dimensions $16$. Since $16>r_{\Dfive}=9$ we see that the codimension of $\gg_x$ in $\pp_{(\omega_1)}$ is at least 9. 
Thus  $\dim O =\codim\gg_{x} \geq \codim \pp_{(\omega_{1})}+9 = 17+9 = 26$.
\ps
(2) We have
$\pp_{(\omega_{2})} = \sL_{6}\oplus\uu_{\omega_{2}}$, $\uu_{\omega_2} = V(\alpha_2)\oplus \K$, and $V(\alpha_2)$ is the irreducible representation $V_{\omega_3}=\bigwedge^3\K^{6}$ of $\sL_{6}$ of dimension $20$. Since $20>r_{\SL_{6}}=10$ we see that the codimension of $\gg_{x}$ in $\pp_{(\omega_{2})}$ is at least $10$. Thus $\dim O =\codim\gg_{x} \geq \codim \pp_{(\omega_{2})}+10 = 22+10 = 32$.
\ps
(3) The case $\pp_{(\omega_{6})}$ is similar to $\pp_{(\omega_{1})}$ from (1).

\begin{rem} 
We have just shown that any quasi-affine orbit of dimension $<26$ is minimal. 
Furthermore, $\dim O_{\lambda}=\dim\uu_{\lambda}+1$ by (\ref{G-P.eqn}) and (\ref{P-lambda.subsec}).
From above we get
\[
\dim O_{\omega_1}=\dim O_{\omega_6}=17,\quad \dim O_{\omega_2}=22,\quad \dim O_{\omega_1+\omega_6}=25,
\]
and all other minimal orbits are of dimension $\geq26$ by (\ref{U.eqn}).
Moreover, $r_{\Esix}=26$ appears as dimension of an affine orbit as well as of the minimal orbits $O_{\omega_3}$ and $O_{\omega_5}$. 
\end{rem}

\ps
%%%%%%%%%%%%%%%%%%%%%%%%%%%%%%%%%%%
\subsubsection{The type \texorpdfstring{$\Eseven$}{E7}}
\DynkinEseven

Let $G$ be simply connected of type $\Eseven$ and $\gg = \Lie G$, and let
$O$ be a non-minimal and nontrivial quasi-affine orbit. We have to show that $\dim O \geq 45$. We have seen above that it suffices to consider those embeddings $\gg_{x}\subset\pp_{(\lambda)}$ where $\dim\uu_{\lambda} < 43$. 

If $\lambda$ is a  fundamental weight, then we find
\begin{align*}
\pp_{(\omega_{1})}  &=  \so_{12}\oplus \uu_{\omega_{1}}, \ \dim \uu_{\omega_{1}}=33,\\
\pp_{(\omega_{2})} &= \sL_{7}\oplus \uu_{\omega_{2}}, \ \dim \uu_{\omega_{2}}=42,\\
\pp_{(\omega_{3})} &= (\sL_{2}\oplus\sL_{6})\oplus \uu_{\omega_{3}}, \ \dim \uu_{\omega_{3}}=47,\\
\pp_{(\omega_{4})} &= (\sL_{3}\oplus\sL_{2}\oplus\sL_{4})\oplus \uu_{\omega_{4}}, \ \dim \uu_{\omega_{4}}=53,\\
\pp_{(\omega_{5})} &= (\sL_{5}\oplus\sL_{3})\oplus \uu_{\omega_{5}}, \ \dim \uu_{\omega_{5}}=50,\\
\pp_{(\omega_{6})} &= (\so_{10}\oplus\sL_{2})\oplus \uu_{\omega_{6}}, \ \dim \uu_{\omega_{6}}=42,\\
\pp_{(\omega_{7})} &= \Esix\oplus \uu_{\omega_{7}}, \ \dim \uu_{\omega_{7}}=27.
\end{align*}
Since $\dim \uu_{\omega_{1}+\omega_{2}}=\frac{1}{2}(\dim \Eseven-\dim \A_{5}-2)=48$, $\dim \uu_{\omega_{1}+\omega_{6}}=\frac{1}{2}(\dim \Eseven-\dim \Dfour -\dim \Aone-2)=50$, $\dim \uu_{\omega_{1}+\omega_{7}} = \frac{1}{2}(\dim \Eseven-\dim\Dfive-2)=43$, $\dim \uu_{\omega_{2}+\omega_{6}} = \frac{1}{2}(\dim \Eseven-\dim\A_{4}-\dim\Aone-2)=52$, 
$\dim \uu_{\omega_{2}+\omega_{7}} = \frac{1}{2}(\dim \Eseven-\dim\A_{5}-2)=48$, $\dim \uu_{\omega_{6}+\omega_{7}} = \frac{1}{2}(\dim \Eseven-\dim\Dfive-2)=43$,
we have only to consider the cases  $\lambda\in\{\omega_{1},\omega_{2}, \omega_{6},\omega_{7}\}$.
\ps
(1) We have
$\pp_{(\omega_{1})} = \so_{12}\oplus\uu_{\omega_{1}}$, $\uu_{\omega_1} = V(\alpha_1)\oplus\K$, and $V(\alpha_{1})$ is the irreducible representation $V_{\omega_{5}}$ of $\so_{12}$ of dimensions $32>r_{\Dsix}=11$.
Thus the codimension of $\gg_{x}$ in $\pp_{(\omega_{1})}$ is at least $11$, and so $\dim O =\codim\gg_{x} \geq \codim \pp_{(\omega_{1})}+11 = 34+11 = 45$.
Moreover, the subalgebra $\hh:=\so_{11}\oplus\uu_{\omega_1}\subset \gg$ is the Lie algebra of a subgroup $H$ of codimension 34+11=45 which has no characters. By Remark~\ref{quasiaffine_orbits.rem} we see that $G/H$ is a quasi-affine orbit of dimension 45, and so $d_\Eseven \leq 45$.
\ps
(2) We have
$\pp_{(\omega_{2})} = \sL_{7}\oplus\uu_{\omega_{2}}$, $\uu_{\omega_2} = V(\alpha_2)\oplus \K^7$,  and $V(\alpha_{2})$ is the irreducible representation $V_{\omega_4}=\bigwedge^{4}\K^{7}$ of $\sL_{7}$ of dimensions $35>r_{\SL_{7}}=12$. Thus the codimension of $\gg_{x}$ in $\pp_{(\omega_{2})}$ is at least $12$, and so $\dim O =\codim\gg_{x} \geq \codim \pp_{(\omega_{2})}+12 = 43+12 = 55>45$.
\ps
(3) We have
$\pp_{(\omega_{6})} = (\so_{10}\oplus\sL_{2})\oplus\uu_{\omega_{6}}$, $\uu_{\omega_6} = V(\alpha_6)\oplus \K^{10}$, and $V(\alpha_{6})$ is the irreducible representation $V_{\omega_{5}}\otimes \K^{2}$ of $\so_{10}\oplus\sL_{2}$ of dimensions $2\cdot 16=32>r_{\Dfive\times\Aone}=2$. Thus the codimension of $\gg_{x}$ in $\pp_{(\omega_{6})}$ is at least $2$, and so $\dim O =\codim\gg_{x} \geq \codim \pp_{(\omega_{6})}+2 = 43+2 = 45$.
\ps
(4) We have
$\pp_{(\omega_{7})}=\Esix \oplus \uu_{\omega_{7}}$, and $V(\alpha_{7}) = \uu_{\omega_{7}}$ is the irreducible representation $V_{\omega_{6}}$ of $\Esix$ of dimension $27>r_{\Esix}=26$. Thus the codimension of $\gg_{x}$ in $\pp_{(\omega_{7})}$ is at least $26$, and so $\dim O =\codim\gg_{x} \geq \codim \pp_{(\omega_{7})}+26 = 28+26 = 54>45$.

\ps
%%%%%%%%%%%%%%%%%%%%%%%%%%%%%%%%%%%
\subsubsection{The type \texorpdfstring{$\Eeight$}{E8}}
\DynkinEeight

Let $G$ be simply connected of type $\Eeight$ and $\gg = \Lie G$, and let
$O$ be a non-minimal and nontrivial quasi-affine orbit. We have to show that $\dim O \geq 86$. We have seen above that it suffices to consider those embeddings $\gg_{x}\subset\pp_{(\lambda)}$ where $\dim\uu_{\lambda} < 84$. 

If $\lambda$ is a  fundamental weight, then we find
\begin{align*}
\pp_{(\omega_{1})}  &=  \so_{14}\oplus \uu_{\omega_{1}}, \ \dim \uu_{\omega_{1}}=78,\\
\pp_{(\omega_{2})} &= \sL_{8}\oplus \uu_{\omega_{2}}, \ \dim \uu_{\omega_{2}}=92,\\
\pp_{(\omega_{3})} &= (\sL_{2}\oplus\sL_{7})\oplus \uu_{\omega_{3}}, \ \dim \uu_{\omega_{3}}=98,\\
\pp_{(\omega_{4})} &= (\sL_{3}\oplus\sL_{2}\oplus\sL_{5})\oplus \uu_{\omega_{4}}, \ \dim \uu_{\omega_{4}}=106,\\
\pp_{(\omega_{5})} &= (\sL_{5}\oplus\sL_{4})\oplus \uu_{\omega_{5}}, \ \dim \uu_{\omega_{5}}=104,\\
\pp_{(\omega_{6})} &= (\so_{10}\oplus\sL_{3})\oplus \uu_{\omega_{6}}, \ \dim \uu_{\omega_{6}}=97,\\
\pp_{(\omega_{7})} &= (\Esix\oplus \sL_{2}) \oplus \uu_{\omega_{7}}, \ \dim \uu_{\omega_{7}}=83,\\
\pp_{(\omega_{8})} &= \Eseven \oplus \uu_{\omega_{8}}, \ \dim \uu_{\omega_{8}}=57.
\end{align*}
Since $\dim \uu_{\omega_{1}+\omega_{7}}=\frac{1}{2}(\dim \Eeight-\dim \Dfive-\dim \Aone-2)=99$, $\dim \uu_{\omega_{1}+\omega_{8}}=\frac{1}{2}(\dim \Eeight-\dim \Dsix -2)=90$, and $\dim \uu_{\omega_{7}+\omega_{8}} = \frac{1}{2}(\dim \Eeight-\dim\Esix-2)=84$,
we have only to consider the cases  $\lambda\in\{\omega_{1},\omega_{7}, \omega_{8}\}$.
\ps
(1) We have
$\pp_{(\omega_{1})} = \so_{14}\oplus\uu_{\omega_{1}}$, $\uu_{\omega_1} = V(\alpha_1)\oplus \K^{14}$, and $V(\alpha_{1})$ is the irreducible representation $V_{\omega_{7}}$ of $\so_{14}$ of dimensions $64>r_{\Dseven}=13$.
Thus the codimension of $\gg_{x}$ in $\pp_{(\omega_{1})}$ is at least $13$, and so $\dim O =\codim\gg_{x} \geq \codim \pp_{(\omega_{1})}+13 = 79+13 = 92>86$.
\ps
(2) We have
$\pp_{(\omega_{7})} = (\Esix\oplus\sL_{2})\oplus\uu_{\omega_{7}}$, and $V(\alpha_{7})\subset \uu_{\omega_{7}}$ is the irreducible representation $V_{\omega_{6}}\otimes \K^{2}$ of $\Esix\oplus\sL_{2}$ of dimension $2\times 27=54>r_{\Esix\times\Aone}=2$. Thus the codimension of $\gg_{x}$ in $\pp_{(\omega_{7})}$ is at least $2$, and so $\dim O =\codim\gg_{x} \geq \codim \pp_{(\omega_{7})}+2 = 84+2 = 86$.
\ps
(3) We have
$\pp_{(\omega_{8})}=\Eseven \oplus \uu_{\omega_{8}}$, $\uu_{\omega_8} = V(\alpha_8) \oplus \K$, and $V(\alpha_{8})$ is the irreducible representation $V_{\omega_{7}}$ of $\Eseven$ of dimension $56>r_{\Eseven}=54$. Thus the codimension of $\gg_{x}$ in $\pp_{(\omega_{8})}$ is at least $54$, and so $\dim O =\codim\gg_{x} \geq \codim \pp_{(\omega_{8})}+54 = 58+54 = 112 > 86$.
\ps
(4) The subalgebra $\pp_{(\omega_7+\omega_8)} = \Esix\oplus\uu_{\omega_7+\omega_8}$ corresponds to a closed subgroup $H\subset G$ of codimension $84+2=86$ which has no characters. Thus, by Remark~\ref{quasiaffine_orbits.rem}, $G/H$ is a quasi-affine orbit of dimension 86, and so $d_\Eeight \leq 86$.

\ps

%%%%%%%%%%%%%%%%%%%%%%%%%%%%%%%%%%%%%
\subsection{Computations with LiE}
For our explicit computations we used the computer algebra program LiE \cite{LeCoLi1992LiE:-A-package-for}.
The maximal parabolic Lie subalgebra $\pp_i=\pp_{\omega_i} \subset \gg$ and its Levi decomposition $\pp_i = \ll_i\oplus\uu_i$  have the following description in terms of root spaces.
\be
\item 
The set $\{\alpha_j\mid j\neq i\}$ is a set of simple roots for the Levi part $\ll_i$. Hence the Dynkin diagram of $\ss_i:=[\ll_i,\ll_i]$ is obtained by removing the node $i$ from the Dynkin diagram of $\gg$.
\item 
The center $\zz_i \subset \ll_i$ is one-dimensional and so 
$\dim \gg = (\dim \ss_i + 1) + 2\dim \uu_i$.
\item
The nilradical $\uu_i$ is the direct sum of those root spaces $g_{\beta}$ where $\beta$ is a positive root not containing $\alpha_i$, i.e. $\alpha_i$ does not occur in the presentation of $\beta$ as a linear combination of simple roots.
\item 
Denote by $\nn_{\ll_i}^\pm \subset \ll_i$ the sum of the positive resp. negative root spaces. Then $\gg_{\alpha_i}$ is annihilated by $\nn_{\ll_i}^-$, because $\alpha_i - \beta$ is never a root for a positive root $\beta$ of $\ll_i$, by (3).
\item
The irreducible $\ss_i$-submodule $V(\alpha_i) \subset \gg$ generated by $\gg_{\alpha_i}$ has $\gg_{\alpha_i}$ as lowest weight space, by (4). Denoting by $\lambda_i$ the weight $\alpha_i|_{\ss_i}$ of $\ss_i$, then $V(\alpha_i) \simeq V_{-\lambda_i}^\perp$. In fact, the dual module $V(\alpha_i)^\perp$ has highest weight $-\lambda_i$.
\item
The submodule $V(\alpha_i)$ consists of those root spaces $\gg_\beta$ where $\beta$ is a positive root containing $\alpha_i$ with multiplicity 1.
\item
If the center $\zz_i$ has weight $\kappa$ on $\gg_{\alpha_i}$, then $V(\alpha_i)$ is the $\kappa$-eigenspace of $\zz_i$. This follows from (6), because $\zz_i$ has weight 0 on all of $\ll_i$.
\ee
These remarks offer several possibilities to calculate the dimension of $V(\alpha_i)$ using the computer algebra program LiE \cite{LeCoLi1992LiE:-A-package-for}.

\subsubsection{Computation of  the \texorpdfstring{$\kappa$}{k}-eigenspace}
The rows of the Cartan matrix \verb+Cartan(G)+ are the coordinates of the simple roots $\alpha_j$ in the basis of the fundamental weights $\omega_k$. It follows that the $i$th column of the inverse of the Cartan matrix \verb+i_Cartan(G)+ defines the homomorphism $\K \to \hh=\K^r$ with image $\zz_i$. (In LiE \verb+i_Cartan(G)+ is the inverse of \verb+Cartan(G)+ multiplied by the determinant in order to have integral entries.) Now one uses the function \verb+spectrum(adjoint(G),t,G)+ which calculates the dimensions of the eigenspaces of the toral element \verb+t+.
\begin{exa} We compute $\dim V(\alpha_7)$ for $G = \Eeight$.
\begin{verbatim}

> Cartan(E8)
     [[ 2, 0,-1, 0, 0, 0, 0, 0]
     ,[ 0, 2, 0,-1, 0, 0, 0, 0]
     ,[-1, 0, 2,-1, 0, 0, 0, 0]
     ,[ 0,-1,-1, 2,-1, 0, 0, 0]
     ,[ 0, 0, 0,-1, 2,-1, 0, 0]
     ,[ 0, 0, 0, 0,-1, 2,-1, 0]
     ,[ 0, 0, 0, 0, 0,-1, 2,-1]
     ,[ 0, 0, 0, 0, 0, 0,-1, 2]
     ]
     
> det_Cartan(E8)
     1
     
> Cartan_type(id(8)-7,E8)   # Cartan type of the Levi L(7) of P(7) #
     E6A1T1

> i_Cartan(E8)
     [[ 4, 5, 7,10, 8, 6, 4,2]
     ,[ 5, 8,10,15,12, 9, 6,3]
     ,[ 7,10,14,20,16,12, 8,4]
     ,[10,15,20,30,24,18,12,6]
     ,[ 8,12,16,24,20,15,10,5]
     ,[ 6, 9,12,18,15,12, 8,4]
     ,[ 4, 6, 8,12,10, 8, 6,3]
     ,[ 2, 3, 4, 6, 5, 4, 3,2]
     ]
     
> (*i_Cartan(E8))[7]    # 7th column of i_Cartan(E8) #
     [4,6,8,12,10,8,6,3]

> spectrum(adjoint(E8),[4,6,8,12,10,8,6,3,29],E8)
     82X[ 0] +54X[ 1] +27X[ 2] + 2X[ 3] + 2X[26] +27X[27] +54X[28]
     
\end{verbatim}
\noindent
Thus we have $\ss_{\omega_7}=\Esix\oplus \sL_2$, and the last output
shows that $\dim \ll_{\omega_7} = 82$ and $\dim V(\alpha_7) = 54$. In addition, $\uu_{\omega_7}$ contains two other irreducible representations, of dimensions $27$ and $2$.
\end{exa}

\subsubsection{Computation of the lowest weight \texorpdfstring{$\lambda_i:=\alpha_i|_{\ss_i}$}{lambda}}
The lowest weight $\lambda_i$ is obtained from the $i$th row of the Cartan matrix \verb+Cartan(G)+ by removing the $i$th entry (which is a $2$) and and rearranging the other entries according to the numeration of the nodes of the Dynkin diagram of  $\ss_i$ obtained by removing the $i$th node from the Dynkin diagram of $G$.

\begin{exa} Lowest weight of $V(\alpha_1)$ for $G = \Eeight$. 

The first row of \verb+Cartan(E8)+ is \verb+[2,0,-1,0,0,0,0,0]+, and $\ss_{\omega_1} = \mathsf{D}_{7}$. In the Bourbaki-numeration of the nodes of $\Dn$ this gives the weight \verb+[0,0,0,0,0,0,-1,0]+ of $\mathsf{D}_{7}$ which is $-\omega_6$. Thus $V(\alpha_1)$ is $V_{\omega_6}^\perp$ which is $V_{\omega_7}$ and has dimension $64$:
\begin{verbatim}

> contragr([0,0,0,0,0,1,0],D7)
     [0,0,0,0,0,0,1]
> dim([0,0,0,0,0,0,1],D7)
     64

\end{verbatim}
The same computation for $V(\alpha_7)$ where $\ss_{\omega_7} = \Esix\oplus\sL_2$,  using the $7$th row of \verb+Cartan(E8)+ which is \verb+[0,0,0,0,0,-1,2,-1]+, gives the lowest weight \verb+[0,0,0,0,0,-1,-1]+ which is $-\omega_6-\omega_1'$. Thus $V(\alpha_7) = (V_{\omega_6} \otimes \K^2)^\perp=V_{\omega_1}\otimes\K^2$ and has dimension $27 \times 2 = 54$:
\begin{verbatim}

> contragr([0,0,0,0,0,1,1],E6A1)
     [1,0,0,0,0,0,1]
> dim([1,0,0,0,0,0,1],E6A1)
     54

\end{verbatim}
\end{exa}
\subsubsection{Computation of the decomposition of \texorpdfstring{$\gg$}{g} under \texorpdfstring{$\ll_i$}{l}} The program LiE offers a {\it branch function} which calculated the decomposition of a representation of $G$ under a closed connected subgroup $H \subset G$: \verb+branch(repr,H,res_mat,G)+. It needs the restriction matrix \verb+res_mat+ defining the map $\Lambda_G \to \Lambda_H$ between the weight lattices which is given by the inclusion $T_H \subseteq T_G$ of maximal tori.

For a maximal parabolic subgroup $P_{\omega_i}$ it is easy to compute the restriction matrix from $G$ to $L_{\omega_i}$ by using the LiE-function \verb+res_mat(fundam(id(rank G)-i))+. It follows that the decomposition of the adjoint representation of $G$ under $L_{\omega_i}$ is given by 
\begin{verbatim}

> branch(adjoint(G),Levi(i),res_mat(fundam(id(rank G)-i)),G)

\end{verbatim}
\begin{exa}
We compute the decomposition of $\Lie \Eeight$ under the Levi subgroup $L_{\omega_7}$ of $P_{\omega_7}$.
\begin{verbatim}

> h=Cartan_type(id(8)-7,E8); h     #  Levi subgroup L(7) of P(7)  #
     E6A1T1
     
> m=res_mat(fundam(id(8)-7,E8),E8); m   # restriction matrix from E8 to L(7) #
     [[0,0,0,0,0,1,0, 4]
     ,[0,1,0,0,0,0,0, 6]
     ,[0,0,0,0,1,0,0, 8]
     ,[0,0,0,1,0,0,0,12]
     ,[0,0,1,0,0,0,0,10]
     ,[1,0,0,0,0,0,0, 8]
     ,[0,0,0,0,0,0,0, 6]
     ,[0,0,0,0,0,0,1, 3]
     ]

> branch(adjoint(E8),h,m,E8)
     1X[0,0,0,0,0,0,0, 0] +1X[0,0,0,0,0,0,1,-3] +1X[0,0,0,0,0,0,1, 3] +
     1X[0,0,0,0,0,0,2, 0] +1X[0,0,0,0,0,1,0,-2] +1X[0,0,0,0,0,1,1, 1] +
     1X[0,1,0,0,0,0,0, 0] +1X[1,0,0,0,0,0,0, 2] +1X[1,0,0,0,0,0,1,-1]

\end{verbatim}
The weight spaces are given by the last entry. Hence we see that 
the $0$-weight space is $\Lie L_{\omega_7}=\Lie \Esix \oplus \sL_2\oplus \K$, the $1$-weight space is $V_{\omega_6}\otimes \K^2=V(\alpha_7)$, the $2$-weight space is $V_{\omega_1}$ of $\Esix$, and the $3$-weight space is the standard representation $\K^2$ of $\sL_2$. Thus we get 
$$
\uu_{\omega_7} = V_{\omega_6}\otimes \K^2 \oplus V_{\omega_1} \oplus \K^2
$$
as a representation of $\Esix\times\SL_2$.
\end{exa} 

%\newpage
\renewcommand{\MR}{}
%%%%%%%%%%%%%%%%%
\bibliography{Bib-KrReZi}
\bibliographystyle{amsalpha}

\end{document}